   \def\MR#1{}
\newtheorem{thm}{Theorem}[section]
\newtheorem*{thm*}{Theorem}
\newtheorem{cor}[thm]{Corollary}
\newtheorem*{cor*}{Corollary}
\newtheorem{lem}[thm]{Lemma}
\newtheorem*{lem*}{Lemma}
\newtheorem{prop}[thm]{Proposition}
\newtheorem*{prop*}{Proposition}
\theoremstyle{definition}
\newtheorem{defn}[thm]{Definition}
\newtheorem*{defn*}{Definition}
\theoremstyle{remark}
\newtheorem{rem}[thm]{Remark}
\newtheorem*{rem*}{Remark}
\newtheorem*{problem*}{Problem}
\numberwithin{equation}{section}
\newcommand{\toiso}{\xrightarrow{\sim}}
\newcommand{\BQ}{\mathbb Q}
\newcommand{\BR}{\mathbb R}
\newcommand{\BC}{\mathbb C}
\newcommand{\BF}{\mathbb F}
\newcommand{\BA}{\mathbb A}
\newcommand{\BI}{\mathbb I}
\newcommand{\BH}{\mathbb H}
\newcommand{\BZ}{\mathbb Z}
\newcommand{\BP}{\mathbb P}
\newcommand{\BFK}{\mathbf K}
\newcommand{\BFR}{\mathbf R}
\newcommand{\Bmu}{\pmb \mu}
\newcommand{\Bf}{\mathbf f}
\newcommand{\Bk}{\mathbf k}
\newcommand{\CA}{\mathcal A}
\newcommand{\CE}{\mathcal E}
\newcommand{\CF}{\mathcal F}
\newcommand{\CG}{\mathcal G}
\newcommand{\CO}{\mathcal O}
\newcommand{\CP}{\mathcal P}
\newcommand{\CM}{\mathcal M}
\newcommand{\CN}{\mathcal N}
\newcommand{\CX}{\mathcal X}
\newcommand{\CY}{\mathcal Y}
\newcommand{\wmax}{\mathrm {max}}
\newcommand{\wmin}{\mathrm {min}}
\newcommand{\ol}{\overline}
\newcommand{\nil}{\mathrm{nil}}
\newcommand{\aut}{\mathrm{aut}}
\newcommand{\eend}{\mathrm{end}}
\newcommand{\wdim}{\mathrm{dim}}
\newcommand{\HLV}{\mathrm{HLV}}
\newcommand{\Sch}{\mathrm{Sch}}
\DeclareMathOperator{\Gr}{Gr}
\DeclareMathOperator{\AGr}{\widehat{Gr}}
\DeclareMathOperator{\Fl}{Fl}
\DeclareMathOperator{\AFl}{\widehat{Fl}}
\DeclareMathOperator{\ASpr}{\widehat{Spr}}
\DeclareMathOperator{\ANilp}{\widehat{Nilp}}
\DeclareMathOperator{\GL}{GL}
\DeclareMathOperator{\Hom}{Hom}
\DeclareMathOperator{\Aut}{Aut}
\DeclareMathOperator{\Iso}{Iso}
\DeclareMathOperator{\Hall}{Hall}
\DeclareMathOperator{\Mat}{Mat}
\DeclareMathOperator{\Lie}{Lie}
\DeclareMathOperator{\weight}{weight}
\DeclareMathOperator{\Coh}{Coh}
\DeclareMathOperator{\Par}{Par}
\DeclareMathOperator{\QPar}{QPar}
\DeclareMathOperator{\ParBun}{ParBun}
\DeclareMathOperator{\ParHiggs}{ParHiggs}
\DeclareMathOperator{\Bun}{Bun}
\DeclareMathOperator{\res}{res}
\DeclareMathOperator{\pExp}{Exp}
\DeclareMathOperator{\pLog}{Log}
\DeclareMathOperator{\Tr}{Tr}
\DeclareMathOperator{\Spec}{Spec}
\DeclareMathOperator{\Sym}{Sym}
\DeclareMathOperator{\Ext}{Ext}
\DeclareMathOperator{\inv}{inv}
\DeclareMathOperator{\ord}{ord}
\DeclareMathOperator{\kernel}{Ker}
\DeclareMathOperator{\image}{Im}
\DeclareMathOperator{\cokernel}{Coker}
\DeclareMathOperator{\Id}{Id}
\DeclareMathOperator{\rank}{rank}
\DeclareMathOperator{\type}{type}
\title[Poincar\'e polynomials of character varieties]{Poincar\'e polynomials of character varieties, Macdonald polynomials and affine Springer fibers}
\author{Anton Mellit}
\email{anton.mellit@univie.ac.at}
\address{Faculty of Mathematics, University of Vienna, \\
Oskar-Morgenstern-Platz 1, 1090 Vienna, Austria}
\begin{document}
\onehalfspacing

\begin{abstract}
We prove an explicit formula for the Poincar\'e polynomials of parabolic character varieties of Riemann surfaces with semisimple local monodromies, which was conjectured by Hausel, Letellier and Rodriguez-Villegas. Using an approach of Mozgovoy and Schiffmann the problem is reduced to counting pairs of a parabolic vector bundles and a nilpotent endomorphism of prescribed generic type. The generating function counting these pairs is shown to be a product of Macdonald polynomials and the function counting pairs without parabolic structure. The modified Macdonald polynomial $\tilde H_\lambda[X;q,t]$ is interpreted as a weighted count of points of the affine Springer fiber over the constant nilpotent matrix of type $\lambda$.
\end{abstract}

\maketitle

\tableofcontents

\section{Introduction} 

\subsection{The conjectures}
Let $\Sigma$ be a Riemann surface of genus $g$ and let $S=(s_1,\ldots,s_k)$ be a collection of $k$ marked points on $\Sigma$. Fix an integer $n>0$ and $k$ conjugacy classes $C_1,\ldots,C_k$ in $\GL_n$. In this paper we study the case when each $C_i$ can be represented by a diagonal matrix with given multiplicities of eigenvalues $r_{i,1}$, $r_{i,2}$, \ldots. Then one defines the character variety $\CM$ as the moduli space of local systems of rank $n$ on $\Sigma\setminus S$ with local monodromy around $s_i$ lying in $C_i$. We will assume that the eigenvalues of $C_i$ are \emph{generic} (see Definition \ref{defn:generic char var}). 

By the non-abelian Hodge theory of Hitchin \cite{hitchin1987self} and Simpson \cite{simpson1992higgs}, \cite{simpson1990harmonic} the character variety as the so-called \emph{Betti moduli space} is identified with the moduli space of Higgs bundles, which is called the \emph{Dolbeault moduli space}. A study of cohomology of this space was initiated by Hausel and Thaddeus for the case of rank $2$ in \cite{hausel2004generators} (without marked points).

The case of rank $3$ with marked points was studied by Garc\'ia-Prada, Gothen and Mu\~noz in \cite{garcia2007betti}. More partial results were obtained by Garc\'ia-Prada, Heinloth and Schmitt in \cite{garcia2011motives}.
 
In \cite{hausel2008mixed} Hausel and Rodriguez-Villegas computed the E-polynomials of character varieties without marked points. Then together with Letellier in \cite{hausel2011arithmetic} they extended their result to the situation with marked points. Contrary to the previous results, their formula was completely general and allowed them to guess a conjectural formula for the full mixed Hodge polynomials of character varieties in terms of modified Macdonald polynomials. In particular, by specialization their formula predicted the Poincar\'e polynomials of the character varieties.

Mozgovoy extended these conjectures in \cite{mozgovoy2012solutions} and gave a conjectural formula for the motives of the Dolbeault moduli spaces.

\subsection{Counting over finite fields}
The method of computing Poincar\'e polynomials of moduli spaces by counting points over finite fields was introduced by Harder and Narasmihan in \cite{harder1974cohomology}. Hausel suggested that the Poincar\'e polynomials of our moduli spaces in the general case can be obtained by counting Higgs bundles over finite fields.

In \cite{schiffmann2014indecomposable} Schiffmann followed this idea and computed the number of stable Higgs bundles on a curve over a finite field (for $q$ sufficiently large, which was later rectified in the follow-up paper with Mozgovoy \cite{mozgovoy2014counting}, \cite{mozgovoy2017counting}) and thus, in particular, obtained a recipe to compute the Poincar\'e polynomials in the case with no marked points.
The formula of Schiffmann was shown to imply the conjectural formula of Hausel and Rodriguez-Villegas by the author in \cite{mellit2017poincare}.

The case with marked points appeared to be more difficult. In particular, there was no known way of obtaining Macdonald polynomials by counting bundles of some sort. In this paper we prove the conjecture of Hausel, Letellier and Rodriguez-Villegas, and in the process we find such an interpretation of Macdonald polynomials. 

In simple terms, the ``baby version'' of our approach is the result (see \cite{macdonald1995symmetric}) that the number of partial flags over a finite field preserved by a fixed nilpotent matrix is given by the Hall-Littlewood polynomial (see Corollary \ref{cor:hall counting flags} for a precise statement adopted to our notations). We think of a vector space with a nilpotent operator as a special case of a vector bundle with a nilpotent endomorphism. Counting flags is analogous to counting parabolic structures at marked points. 

In Theorem \ref{thm: with macdonald} we show how the answer to the problem of  counting triples vector bundle + nilpotent endomorphism + parabolic structure is formulated in terms of Macdonald polynomials. As a bi-product, in Corollary \ref{cor:count gets macdonald} and Theorem \ref{thm:springer fiber} we obtain a purely local interpretation of Macdonald polynomials by counting nilpotent matrices over the power series ring, which is also interpreted as counting points in the affine Springer fiber over a constant nilpotent matrix.

\subsection{Plan of the paper}
An expert reader who knows Mozgovoy-Schiffmann's paper \cite{mozgovoy2014counting} well and believes that it extends directly to the parabolic case is advised to skip the rest of the paper and concentrate on Sections \ref{sec:linear algebra}, \ref{sec:modifications} and \ref{sec:counting vector bundles} where the results on counting bundles with nilpotent endomorphism and the affine Springer fiber (Section \ref{ssec:affine springer}) are shown.

In Section \ref{sec:hall and symmetric} we summarize definitions related to symmetric functions and Hall algebras, mostly to fix notations, but also to remind the reader on how to count flags fixed by a nilpotent matrix and obtain Hall-Littlewood polynomials.

In Section \ref{sec:category parabolic} we study abelian categories of coherent parabolic sheaves and coherent Higgs parabolic sheaves. We compute the Euler pairing, prove Serre duality, and explain Harder-Narasimhan theory. Some results of this section can also be found in the literature in slightly different formulations.

In Section \ref{sec:from to} we proceed in the way parallel to Mozgovoy-Schiffmann \cite{mozgovoy2014counting} to obtain formulas for the numbers of indecomposable parabolic bundles (Corollary \ref{cor:indecomposable}) and stable Higgs bundles (Corollary \ref{cor:counting generic}). We do not need the number of indecomposable parabolic bundles, but we include this result for completeness. Finally, we obtain the Poincar\'e polynomials of our moduli spaces in Theorem \ref{thm:formula poincare} and Theorem \ref{thm:formula poincare charvar}.

\subsection{Main ideas}
We say a few words about intuition behind this paper. The formula of Hausel, Letellier and Rodriguez-Villegas expresses the generating function of mixed Hodge polynomials as follows:
\[
(q-1)(1-t) \pLog\left[\sum_{\lambda\in\CP} \Omega^\HLV_\lambda(q,t,\sigma_\bullet) \prod_{i=1}^k \tilde H_\lambda[X_i;q,t] T^{|\lambda|}\right].
\]
The summation goes over the set of partitions $\lambda$. For each partition we have $\Omega^\HLV_\lambda$ a certain explicit rational function in $q$, $t$ and the Frobenius eigenvalues of the curve $\Sigma$. Then we have the modified Macdonald polynomials $H_\lambda[X_i;q,t]$, each evaluated in it own group of variables. Schiffmann's formula (without marked points, for a different invariant instead of the mixed Hodge polynomial and after a change of variable) looks similar:
\[
(q-1)(1-t) \pLog\left[\sum_{\lambda\in\CP} \Omega^{\Sch}_\lambda(q,t,\sigma_\bullet) T^{|\lambda|}\right].
\]
Here the function $\Omega^{\Sch}_\lambda(q,t,\sigma_\bullet)$ is a deformation of the function $\Omega^\HLV_\lambda(q,t,\sigma_\bullet)$ (see \cite{mellit2017poincare}). To prove the conjecture we try to make sense of the individual term for each partition. In Schiffmann's work each term $\Omega^{\Sch}_\lambda(q,t,\sigma_\bullet)$ corresponds to the weighted number of vector bundles with nilpotent endomorphism where the generic type of the endomorphism (i.e., the Jordan form of its matrix over a generic point of $\Sigma$) is specified by $\lambda$. Now we make a guess that the corresponding term for the parabolic case is given by
\begin{equation}\label{eq:factorization}
\Omega^{\Sch}_\lambda(q,t,\sigma_\bullet)\prod_{i=1}^k \tilde H_\lambda[X_i;q,t]
\end{equation}
At first, this guess seems surprising: using a physical analogy, the marked points do not interact with one another! Using SAGE \cite{sagemath} we counted some numbers of parabolic bundles with endomorphism for $\BP^1$ over the field of two elements, and the computations showed that the guess might be right.

Then the idea of the proof goes as follows: Suppose we know that something like \eqref{eq:factorization} holds with some unknown functions instead of Macdonald polynomials. Then we can try to use some of the properties that uniquely determine Macdonald polynomials to show that the unknown functions are indeed the Macdonald polynomials. Here we use the fact that Macdonald polynomials are uniquely identified by orthogonality with respect to the modified Hall scalar product, together with upper-triangularity with respect to the modified monomial basis. For instance, orthogonality is shown by a direct computation of the total number of triples bundle + parabolic structure + nilpotent endomorphism in the case of $\BP^1$ with two marked points.

So the problem is reduced to showing that the factorization of the form \eqref{eq:factorization} holds. For this we look at the marked points where the nilpotent endomorphism degenerates. We show how the endomorphism together with the underlying bundle can be ``straightened'' in finitely many ways to obtain a new bundle with endomorphism, this time non-degenerate at a given point. Then the counting problem of all endomorphisms can be reduced to counting non-degenerate endomorphisms. For each non-degenerate endomorphism we need to count in how many ways it can be ``de-straightened'' to obtain all degenerate endomorphisms. As it turns out, this last count does not depend on the curve or locations of marked points and we arrive to the formula of the right shape \eqref{eq:factorization}.

\subsection{Search for combinatorial interpretation}
The fact the Hall-Littlewood polynomials count partial flags respected by a nilpotent matrix can be shown purely combinatorially. Recall that the Schur polynomial $s_\lambda$ can be written as a sum $\sum_T x^T$ where $x=(x_1,\ldots,x_N)$ denotes the variables and the sum goes over the set of Young tableaux, which are labelings of the set of cells of $\lambda$ by numbers between $1$ and $N$ satisfying some inequalities for neighboring cells in the horizontal and the vertical direction. Then there is a $q$-deformation of this formula that produces Hall-Littlewood polynomials. Now we only enforce the inequalities in the vertical direction, but count every tableaux with weight $q^{\inv(T)}$, where $\inv(T)$ is the number of inversions. For details the reader is advised to read \cite{haglund2005combinatorial}, where a formula for Macdonald polynomials is given in a similar way. In that formula we also sum over tableaux, but now we do not enforce any inequalities neither in horizontal, nor in vertical direction. Each tableaux comes with a weight, which is a monomial in two variables $q$ and $t$. Setting $t=0$ one obtains the Hall-Littlewood polynomials. In the case of Hall-Littlewood polynomials it can be shown that each tableaux corresponds to a cell in the Schubert decomposition of the partial flag variety, and $q^{\inv(T)}$ counts the number of flags in the cell that are respected by the nilpotent matrix. Thus a natural guess would be that for our counts in the affine Springer fiber one can try to match cells of the corresponding affine Schubert decomposition with terms in the formula for the Macdonald polynomials in \cite{haglund2005combinatorial}. This is an interesting topic for further investigation.

\subsection{A speculation}
Most results of Section \ref{sec:linear algebra} can be extended to other local rings, such as the ring of p-adic integers $\BZ_q$. Thus it would be interesting to see if we can obtain a parallel theory if we replace the curve over a finite field by a number field. What would be the Macdonald polynomials for the primes at infinity, i.e. for $\Bk=\BR$ and $\Bk=\BC$?

\section{Hall algebras and symmetric functions}\label{sec:hall and symmetric}
\subsection{Symmetric functions and lambda rings}\label{subs:symmetric functions}
There are two ways to see symmetric functions (in infinitely many variables). One as functions, as in \cite{macdonald1995symmetric}, another as operations in lambda rings. Choose a base ring $R$. A \emph{symmetric function} $F$ of degree $d$ in $z_1, z_2,\ldots$ is a collection of polynomials $F_N\in R[z_1,\ldots,z_N]$  ($N\in\BZ_{\geq0}$) of degree $d$ invariant under the $S_N$-action and satisfying 
\[
F_{N-1}(z_1,\ldots,z_{N-1}) = F_N(z_1,\ldots,z_{N-1},0).
\]
The symmetric functions form a commutative graded ring, which we denote $\Sym[Z]$, where $Z$ stands for the sequence of the variables $Z=(z_1,z_2,\ldots)$. We can consider symmetric functions in several sequences of variables $Z_1$, \ldots, $Z_k$, which we denote $\Sym[Z_1,\ldots,Z_k]$. We naturally have
\[
\Sym[Z_1,\ldots,Z_k] = \bigotimes_{i=1}^k \Sym[Z_i].
\]
Let us recall the notations
\[
e_n(z_1,z_2,\ldots) = \sum_{i_1<\cdots<i_n} z_{i_1} \cdots z_{i_n},\quad h_n(z_1,z_2,\ldots) = \sum_{i_1\leq\cdots\leq i_n} z_{i_1} \cdots z_{i_n},
\]
\[
p_n(z_1,z_2,\ldots) = \sum_i z_i^n.
\]
For any sequence (in particular, for any partition) $\mu=(\mu_1,\ldots,\mu_m)$ we set
\[
h_\mu = \prod_{i=1}^m h_{\mu_i},
\]
and similarly for $e$ and $p$.
We have
\begin{prop}\label{prop:sym func}
The ring $\Sym[Z]$ is isomorphic to the polynomial rings $R[e_1,e_2,\ldots]$ and $R[h_1,h_2,\ldots]$ via the natural maps $e_i\to e_i(z_1,z_2,\ldots)$, $h_i\to h_i(z_1,z_2,\ldots)$. If $\BQ\subset R$ then $\Sym[Z]$ is isomorphic to $R[p_1, p_2,\ldots]$ in a similar way.
\end{prop}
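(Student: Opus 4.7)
The plan is to prove the three isomorphism statements in turn, reducing the second and third to the first via standard generating-function identities.

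First, I would establish that $\Sym[Z] \cong R[e_1, e_2, \ldots]$. The classical fundamental theorem of symmetric polynomials asserts that for each finite $N$, the invariant subring $R[z_1,\ldots,z_N]^{S_N}$ equals $R[e_1^{(N)},\ldots,e_N^{(N)}]$, where $e_i^{(N)} = e_i(z_1,\ldots,z_N)$, and that the $e_i^{(N)}$ are algebraically independent. The usual inductive proof works: given a nonzero symmetric $F_N$, consider its lexicographically leading monomial $z^\alpha$ with $\alpha_1 \geq \alpha_2 \geq \cdots \geq \alpha_N$; subtracting a suitable constant multiple of $(e_1^{(N)})^{\alpha_1-\alpha_2}(e_2^{(N)})^{\alpha_2-\alpha_3}\cdots$ kills this leading term, and induction on the lex order finishes the surjectivity; algebraic independence follows by inspection of leading monomials. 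Now to pass to $\Sym[Z]$: given $F = (F_N)_N \in \Sym[Z]$ of degree $d$, write $F_N = P_N(e_1^{(N)},\ldots,e_N^{(N)})$ with $P_N \in R[x_1,\ldots,x_N]$ of weighted degree $d$ (weighting $x_i$ by $i$). The stability relation $F_{N-1}(z_1,\ldots,z_{N-1}) = F_N(z_1,\ldots,z_{N-1},0)$, together with the fact that $e_i^{(N)}|_{z_N=0} = e_i^{(N-1)}$ for $i<N$ and $e_N^{(N)}|_{z_N=0}=0$, forces $P_{N-1}(x_1,\ldots,x_{N-1}) = P_N(x_1,\ldots,x_{N-1},0)$. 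Combined with the weighted-degree bound $d$, this means that once $N \geq d$ the polynomial $P_N$ does not depend on $x_{N}$, hence stabilizes to a single polynomial $P \in R[x_1,x_2,\ldots]$, which gives the desired isomorphism.

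Second, I would derive the $h$-statement from the identity
\[
\Bigl(\sum_{n\geq 0} e_n t^n\Bigr)\Bigl(\sum_{n\geq 0} (-1)^n h_n t^n\Bigr) = 1
\]
in $\Sym[Z][[t]]$, which follows directly from $\prod_i(1+z_i t) \cdot \prod_i (1-z_it)^{-1} = 1$ after passing to the stable limit. Equating coefficients of $t^n$ yields the recursion $h_n = e_1 h_{n-1} - e_2 h_{n-2} + \cdots + (-1)^{n-1} e_n$, which inductively expresses $h_n$ as a polynomial in $e_1,\ldots,e_n$ whose leading term in $e_n$ has coefficient $\pm 1$. Symmetrically, the same recursion lets one solve for $e_n$ as a polynomial in $h_1,\ldots,h_n$. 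Thus the subring $R[h_1,h_2,\ldots] \subset \Sym[Z]$ coincides with $R[e_1,e_2,\ldots] = \Sym[Z]$, and algebraic independence of the $h_i$ follows from algebraic independence of the $e_i$ together with the triangular change of variables.

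Third, for the $p$-statement under the hypothesis $\BQ \subset R$, I would invoke Newton's identity
\[
p_n - e_1 p_{n-1} + e_2 p_{n-2} - \cdots + (-1)^{n-1} e_{n-1} p_1 + (-1)^n n\, e_n = 0,
\]
again verifiable by comparing logarithmic derivatives of $\prod_i (1-z_it)$ in the stable limit. This solves for $p_n$ as an integer polynomial in $e_1,\ldots,e_n$, and conversely, using that $n$ is invertible in $R$, solves for $e_n$ as a polynomial in $p_1,\ldots,p_n$ with coefficients in $\BQ$. The triangular shape of the change of variables again transfers algebraic independence from the $e_i$ to the $p_i$, yielding $\Sym[Z] \cong R[p_1,p_2,\ldots]$.

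The argument is essentially bookkeeping; the only mildly delicate step is the stabilization of the polynomials $P_N$ in the inverse-limit argument, which is where the weighted-degree bound is crucial — without it one could not conclude that a single polynomial in infinitely many variables represents $F$.
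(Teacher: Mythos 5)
The paper states Proposition~\ref{prop:sym func} without proof, treating it as the classical fundamental theorem of symmetric functions (together with its $h$- and $p$-variants), so there is no paper argument to compare against. Your proof follows the standard route and is essentially correct: the finite-$N$ fundamental theorem via lexicographic induction, stabilization of the polynomials $P_N$ using the weighted-degree bound, and the change of variables via generating functions and Newton's identities. The stabilization step is handled correctly --- once $N>d$ the weighted degree forces $P_N$ to be independent of $x_N$, and the compatibility relation then makes the $P_N$ eventually constant.

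One small slip worth correcting: you cite the identity
\[
\Bigl(\sum_{n\geq 0} e_n t^n\Bigr)\Bigl(\sum_{n\geq 0} (-1)^n h_n t^n\Bigr) = 1
\]
as following from $\prod_i(1+z_i t) \cdot \prod_i (1-z_it)^{-1} = 1$. That product is not $1$; it equals $\bigl(\sum e_n t^n\bigr)\bigl(\sum h_n t^n\bigr)$. The identity you actually want follows from $\prod_i(1+z_i t)\cdot \prod_i(1+z_i t)^{-1}=1$, using $\prod_i(1+z_it)^{-1}=\sum_n (-1)^n h_n t^n$. Equivalently, write $\prod_i(1-z_it)=\sum_n(-1)^n e_n t^n$ and $\prod_i(1-z_it)^{-1}=\sum_n h_n t^n$, giving $\bigl(\sum(-1)^n e_n t^n\bigr)\bigl(\sum h_n t^n\bigr)=1$, which becomes your form after $t\mapsto -t$. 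The displayed identity itself, and the recursion and triangular change of variables you extract from it, are correct; only the "which follows directly from" clause should be fixed.
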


Next we define lambda rings. The definition is simpler when $\BQ\subset R$, so let us assume that this is the case. A \emph{lambda ring structure} on $R$ is a collection of homomorphisms $p_n:R\to R$ ($n\in\BZ_{>0}$) satisfying
\[
p_1[x]=x,\quad p_n[p_m[x]] = p_{mn}[x]\qquad(m,n\in\BZ_{>0},x\in R).
\]
It is customary to use square brackets for these operations.
A ring together with a lambda ring structure is called a \emph{lambda ring}.
For two lambda rings $R, R'$ a lambda ring homomorphism is a ring homomorphism $\varphi:R\to R'$ such that for all $n\in\BZ_{>0}$ and all $x\in R$ we have $\varphi(p_n[x]) = p_n[\varphi(x)]$. Suppose $R$ is the ring of polynomials or rational functions or power series or Laurent series in some variables $x_1, x_2,\ldots$. Then the \emph{usual lambda ring structure} is defined by
\[
p_n[x_i] = x_i^n.
\]
This, of course, depends on the choice of generators of the ring. The \emph{trivial lambda ring structure} on any ring $R$ can be defined by 
\[
p_n[x] = x\quad(x\in R).
\]

Note that we have used the letters $p_n$ both for the power sum symmetric functions, and for the operations in the lambda ring. This is explained by the following construction. Let $F\in \Sym[Z]$ and $x\in \Lambda$ where $\Lambda$ is some lambda ring containing the base ring $R$. We define the \emph{plethystic action} of $F$ on $x$ as follows: Using Proposition \ref{prop:sym func} write $F=f(p_1,p_2,\ldots)$ and then set
\[
F[x] = f(p_1[x], p_2[x],\ldots).
\]
Then we have $p_n[x]=p_n[x]$, which justifies the abuse of notation. This operation satisfies the following properties:
\begin{equation}\label{eq:plethysm}
(F G)[x] = F[x] G[x], \quad(F+G)[x] = F[x]+G[x],\quad \lambda[x] = \lambda
\end{equation}
\[
(F,G\in\Sym[Z],\lambda\in R, x\in\Lambda).
\]
We equip $\Sym[Z]$ with the usual lambda ring structure. In particular, we have
\[
p_n[p_m(z_1,z_2,\ldots)] = p_{mn}(z_1,z_2,\ldots)\quad (m,n\in\BZ_{>0}).
\]
We finally make another abuse of notation by identifying $Z$ with the sum
\[
Z = p_1(z_1,z_2,\ldots) = \sum_i z_i,
\]
which is justified by the following identity:
\[
p_n[Z] = p_n(z_1,z_2,\ldots).
\]

We have
\begin{prop}\label{prop:free lambda ring}
The lambda ring $\Sym[Z]$ is the \emph{free lambda ring} over $R$ generated by $Z$. This means that for any lambda ring $\Lambda$ containing $R$ and any element $x\in R$ there exists a unique lambda ring homomorphism $\varphi_x:\Sym[Z]\to \Lambda$ which acts identically on $R$ satisfying $\varphi_x(Z)=x$.
\end{prop}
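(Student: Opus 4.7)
The plan is to use Proposition~\ref{prop:sym func} to identify $\Sym[Z]$ with the polynomial ring $R[p_1, p_2, \ldots]$ (where I read the statement as $x \in \Lambda$; the element $Z$ itself is identified with $p_1$ under the convention explained just above the proposition). Since any lambda ring homomorphism $\varphi_x : \Sym[Z] \to \Lambda$ that fixes $R$ and sends $Z \mapsto x$ must satisfy
\[
\varphi_x(p_n) \eq \varphi_x(p_n[Z]) \eq p_n[\varphi_x(Z)] \eq p_n[x],
\]
the values of $\varphi_x$ on a generating set of $\Sym[Z]$ as an $R$-algebra are forced, which gives uniqueness immediately.

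For existence, define $\varphi_x$ as the unique $R$-algebra homomorphism $R[p_1, p_2, \ldots] \to \Lambda$ sending $p_n \mapsto p_n[x]$ for each $n \in \BZ_{>0}$. This exists by the universal property of the polynomial ring. It remains to check that $\varphi_x$ respects the lambda ring operations, i.e.\ that
\[
\varphi_x(p_n[F]) \eq p_n[\varphi_x(F)] \qquad (n \in \BZ_{>0},\ F \in \Sym[Z]).
\]
Both sides, viewed as functions of $F$, are ring homomorphisms $\Sym[Z] \to \Lambda$, since $p_n$ is a ring homomorphism on $\Sym[Z]$ and on $\Lambda$, and $\varphi_x$ is a ring homomorphism. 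Therefore it suffices to verify the identity on a set of ring generators of $\Sym[Z]$, namely $R \cup \{p_1, p_2, \ldots\}$. On $R$ it reduces to the assumption that the inclusion $R \hookrightarrow \Lambda$ is a lambda homomorphism; on $p_m$ it reads
\[
\varphi_x(p_n[p_m]) \eq \varphi_x(p_{mn}) \eq p_{mn}[x] \eq p_n[p_m[x]] \eq p_n[\varphi_x(p_m)],
\]
which follows from the defining relation $p_n[p_m[x]] = p_{mn}[x]$ of a lambda ring.

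The argument is essentially formal once the correct generators and compatibility relations are identified; there is no serious obstacle. The only subtle point to get right is that the lambda structure on $\Sym[Z]$ is the restriction of the ``usual'' structure on $R[z_1, z_2, \ldots]$ defined by $p_n[z_i] = z_i^n$ (together with the given lambda structure on the base ring $R$), so that the relation $p_n[p_m] = p_{mn}$ inside $\Sym[Z]$ is an \emph{identity}, not an extra axiom, and may be freely used in the verification above. The case $\BQ \not\subset R$ is handled in the standard way by replacing $p_n$ with $\lambda^n$ or using the $e_n$ presentation of Proposition~\ref{prop:sym func} and the Newton identities, but since the paper has already restricted to the rational case in the definition of lambda rings, this does not arise here.
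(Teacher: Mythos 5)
Your proof is correct and takes essentially the same route as the paper's: both reduce the verification that $\varphi_x$ commutes with the $p_n$ operations to the polynomial generators $p_m$ of $\Sym[Z]$ together with the relation $p_n[p_m]=p_{mn}$. The only difference is presentational — the paper computes $p_n[F]=f(p_n,p_{2n},\ldots)$ and substitutes directly, whereas you observe that both $F\mapsto\varphi_x(p_n[F])$ and $F\mapsto p_n[\varphi_x(F)]$ are ring homomorphisms and so need only be checked on $R\cup\{p_m\}$, which is a slightly cleaner packaging of the same idea; you are also right that the $x\in R$ in the statement is a typo for $x\in\Lambda$, and that the extension to a nontrivial lambda structure on $R$ is handled exactly as you describe (and as the paper remarks immediately after the proof).
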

\begin{proof}
For any such $\varphi_x$ we necessarily have $\varphi_x(p_n[Z]) = p_n[x]$. Thus such $\varphi_x$ is unique. To prove existence define $\varphi_x$ by the rule $\varphi_x(F) = F[x]$ for all $F\in\Sym[Z]$. By \eqref{eq:plethysm} this is a ring homomorphism. To show that it is a lambda ring homomorphism we consider any $F\in\Sym[Z]$ and $n\in\BZ_{>0}$. Let $F=f(p_1,p_2,\ldots)$. We have 
\[
p_n[F] = f(p_n,p_{2n},\ldots).
\]
Therefore
\[
\varphi_x(p_n[F]) = f(p_n[x], p_{2n}[x],\ldots) = p_n[f(p_1[x], p_2[x],\ldots)] = p_n[F[x]] = p_n[\varphi_x(F)].
\]
\end{proof}

When our base ring $R$ is itself a lambda ring with a non-trivial lambda ring structure, then we define a lambda ring structure on $\Sym[X]$ in such a way that the inclusion $R\subset \Sym[X]$ is a lambda ring homomorphism. Then Proposition \ref{prop:free lambda ring} extends to this situation. The only difference is that we should require $\Lambda$ to contain $R$ as lambda subring, and in the proof we have
\[
\varphi_x(p_n[F]) = p_n[f](p_n[x], p_{2n}[x],\ldots) = p_n[f(p_1[x], p_2[x],\ldots)] = p_n[F[x]] = p_n[\varphi_x(F)],
\]
where $p_n[f]$ means that we apply $p_n$ to the coefficients of $f$, which are elements of $R$.

\subsection{Plethystic exponential and zeta functions}
Next we demonstrate one application of lambda ring techniques. A plethystic exponential is defined as the following formal series of symmetric functions:
\[
\pExp[Z] = \sum_{n=0}^\infty h_n[Z] = \exp\left(\sum_{n=1}^\infty \frac{p_n[Z]}{n}\right)=\prod_{n=0}^\infty \frac{1}{1-z_i}.
\]
It satisfies the multiplicativity property 
\[
\pExp[Z + Z'] = \pExp[Z]\pExp[Z'],
\]
which is an identity of formal series of elements of $\Sym[Z,Z']$. Using the approach of the previous subsection we can make sense of $\Sym[x]$ for any element $x$ in any lambda ring $\Lambda$ provided that the infinite series makes sense.

Let $X$ be an algebraic variety over $\BF_q$. By a theorem of Dwork \cite{dwork1960rationality}, there exist numbers $a_1,a_2,\ldots,a_m\in\ol\BQ$ and $b_1, b_2,\ldots, b_{m'}\in\ol\BQ$ such that for any $k\in\BZ_{>0}$ we have
\[
|X(\BF_{q^k})| = \sum_{i=1}^m a_i^k - \sum_{i=1}^{m'} b_i^k.
\]
Let $\Lambda=\BQ[x_1,\ldots,x_m,y_1,\ldots,y_{m'}]$ with the usual lambda ring structure. The numbers $a_i, b_i$ define a ring homomorphism $\varphi_X:\Lambda\to \overline\BQ$, which is \emph{not} a lambda ring homomorphism.
Consider the element
\[
\CX = \sum_{i=1}^m x_i - \sum_{i=1}^{m'} y_i\in \Lambda.
\]
Then we have an identity for all $k\in\BZ_{>0}$
\[
|X(\BF_{q^k})| = \varphi_X(p_k[\CX]).
\]
The \emph{zeta function} of $X$ is defined by the following infinite product:
\[
\zeta_X(T) = \prod_{x\in X/\BF_q} \frac{1}{1-T^{\deg x}}\in\BQ[[T]].
\]
Here $x$ goes over the set of \emph{closed} points of $X$ and for each $x$ we denote by $\deg x$ the degree of the residue field of $x$ over $\BF_q$. We have the following alternative ways to write it:
\[
\zeta_X(T) = \exp\left(\sum_{n=1}^\infty \frac{T^n |X(\BF_{q^n})|}n\right) = \varphi_X\left(\exp\left(\sum_{n=1}^\infty \frac{p_n[T \CX]} n\right)\right) = \varphi_X\pExp[T\CX].
\]
We have 
\begin{prop}\label{prop:zeta}
Let $\CY$ be an element in a lambda ring $\Lambda'$. Then we have
\[
\prod_{x\in X/\BF_q} \pExp[p_{\deg x}[\CY T]] = \varphi_X \pExp[T\CX\CY].
\]
In the right hand side we evaluate $\pExp$ in $\Lambda\otimes\Lambda'[[T]]$ and then apply $\varphi_X$ to the components in $\Lambda$.
\end{prop}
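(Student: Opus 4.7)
The plan is to take logarithms of both sides and verify the resulting identity term-by-term in the $T$-adic topology. After applying $\varphi_X$, both sides lie in $1 + T \cdot \Lambda'[[T]]$ (or an appropriate coefficient extension thereof): the infinite product on the left converges because each factor is of the form $1 + O(T^{\deg x})$, and only finitely many closed points of $X$ have degree below any fixed bound. Consequently $\log$ is well-defined on both sides, and it suffices to match the two logarithms as formal power series in $T$.

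On the left side I would expand using $\pExp[A] = \exp\bigl(\sum_{n \geq 1} p_n[A]/n\bigr)$ together with the lambda-ring axiom $p_m \circ p_d = p_{md}$, the identity $p_n[T] = T^n$ (from the usual lambda structure on $T$), and the fact that each $p_n$ is a ring homomorphism. This collapses the double expansion to
\[
\log \prod_{x \in X/\BF_q} \pExp\bigl[p_{\deg x}[\CY T]\bigr] \eq \sum_{x,\, m \geq 1} \frac{p_{m \deg x}[\CY]}{m}\, T^{m \deg x}.
\]
Reindexing by $n = m \deg x$ gives the coefficient of $p_n[\CY]\, T^n / n$ as $\sum_{\deg x \mid n} \deg x$, which equals $|X(\BF_{q^n})|$ by the classical count: a closed point of degree $d$ corresponds to a $\Gal$-orbit of size $d$ in $X(\ol\BF_q)$ and contributes $d$ points to $X(\BF_{q^n})$ exactly when $d \mid n$.

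On the right side I would use that $\varphi_X$ is a ring homomorphism, so it commutes with $\log$ and $\exp$ of $T$-adically small series. Since $p_n$ is a ring homomorphism on $\Lambda \otimes \Lambda'[[T]]$, one has $p_n[T\CX\CY] = T^n\, p_n[\CX]\, p_n[\CY]$, and applying the defining identity $\varphi_X(p_n[\CX]) = |X(\BF_{q^n})|$ from the preceding discussion of the zeta function yields
\[
\log \varphi_X \pExp[T\CX\CY] \eq \sum_{n \geq 1} \frac{|X(\BF_{q^n})|\, p_n[\CY]\, T^n}{n},
\]
which matches the expression just obtained for the left side.

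I do not expect any genuine obstacle beyond bookkeeping: both sides are designed to repackage the Dirichlet-type relation $|X(\BF_{q^n})| = \sum_{d \mid n} d\, r_d$, where $r_d$ is the number of closed points of degree $d$. The one point requiring some care is the setup of the lambda-ring formalism on the tensor product $\Lambda \otimes \Lambda'[[T]]$ — one must fix the convention that $p_n$ acts diagonally on the tensor factors and as $T \mapsto T^n$ on the formal variable — and the justification that $\varphi_X$ may be applied componentwise inside $\log$, both of which are built directly into the constructions of Section \ref{subs:symmetric functions}.
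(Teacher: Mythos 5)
Your argument is correct, but it takes a genuinely different route from the paper's. You verify the identity directly: take plethystic logarithms of both sides, expand the left side using the lambda-ring axiom $p_m\circ p_d=p_{md}$ together with $p_n[\CY T]=p_n[\CY]T^n$, reindex by $n=m\deg x$, and invoke the arithmetic fact $\sum_{\deg x\mid n}\deg x=|X(\BF_{q^n})|$; the right side collapses to the same thing via $\varphi_X(p_n[\CX])=|X(\BF_{q^n})|$. The paper instead proves the statement by universality: it invokes Proposition \ref{prop:free lambda ring} to reduce to $\Lambda'=\Sym[Z]$, $\CY=Z$, then uses multiplicativity of $\pExp$ to reduce further to a single variable $\CY=z$, and finally observes that $T\mapsto zT$ is a lambda-ring homomorphism, so the claim follows by substitution from the already-established zeta-function identity $\prod_x(1-T^{\deg x})^{-1}=\varphi_X\pExp[T\CX]$. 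The paper's route avoids any new coefficient computation and showcases how lambda-ring freeness promotes a single-variable identity to the universal one; your route is more elementary and makes the underlying arithmetic content ($|X(\BF_{q^n})|=\sum_{d\mid n}d\,r_d$) explicit, at the cost of bookkeeping the reindexing and the $T$-adic continuity of $\varphi_X$ needed to push it through $\log$. Both are valid proofs.
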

\begin{proof}
By Proposition \ref{prop:free lambda ring} it is enough to assume $\Lambda'=\Sym[Z]$ and $\CY=Z=z_1+z_2+\cdots$. Then by multiplicativity of $\pExp$ it is enough to assume $\Lambda'=\BQ[z]$, $\CY=z$. In this case the homomorphism sending $T$ to $z T$ is a lambda ring homomorphism, so we obtain the desired identity from the following identity we have already established:
\[
\prod_{x\in X/\BF_q} \frac{1}{1-T^{\deg x}} = \varphi_X \pExp[T\CX].
\]
\end{proof}

\begin{rem}
We will abuse the notation by writing the identity in Proposition \ref{prop:zeta} as follows:
\[
\prod_{x\in X/\BF_q} \pExp[p_{\deg x}[\CY T]] = \pExp[T\CX\CY].
\]
The reader should remember that to make sense of this identity one needs to first evaluate the write hand side with
\[
\CX = \sum_{i=1}^m x_i - \sum_{i=1}^{m'} y_i\in \Lambda,
\]
treating $x_i$, $y_i$ as formal variables, and only afterwards specialize to the Frobenius eigenvalues.
\end{rem}

\subsection{Reproducing kernels}\label{subs:reproducing kernels}
Another useful property of the plethystic exponential is that it is a reproducing kernel for the Hall scalar product. Suppose we have a scalar product $F,G\to (F,G)$ on $\Sym[X]$ such that functions of different degrees are orthogonal. This is the same thing as a having a scalar product on the degree $d$ part of $\Sym[X]$ for each $d$. Choose any basis of $\Sym[X]$ indexed by partitions $(\alpha_\lambda)_{\lambda\in\CP}$ so that $\deg\alpha_\lambda=|\lambda|$. Let $(\beta_\lambda)_{\lambda\in\CP}$ be the dual basis. The \emph{reproducing kernel} is the infinite sum
\begin{equation}\label{eq:sum basis dual}
\sum_{\lambda\in\CP} \alpha_\lambda[X]\beta_\lambda[Y],
\end{equation}
viewed in the completion of $\Sym[X,Y]$. The basic property of the reproducing kernel is that it does not depend on the choice of basis. One can use this idea in the following way. Suppose sequences of symmetric functions $\alpha_\bullet=(\alpha_\lambda)_{\lambda\in\CP}$ and $\beta_\bullet =(\beta_\lambda)_{\lambda\in\CP}$ satisfy $\deg\alpha_\lambda=\deg\beta_\lambda=|\lambda|$ and suppose the sum \eqref{eq:sum basis dual} is equal to the reproducing kernel. Then we can conclude that $\alpha_\bullet$ and $\beta_\bullet$ are dual bases, i.e. $(\alpha_\lambda, \beta_\mu)=\delta_{\lambda,\mu}$.

Recall that the \emph{Hall scalar product} on $\Sym[X]$ is defined by 
\[
(h_\mu, m_\lambda) = \delta_{\mu\lambda} \qquad(\mu,\lambda\in\CP),
\]
where $m_\lambda$ stands for the monomial symmetric function, which is the sum of all monomials whose ordered list of exponents is $\lambda$. Using infinite product expansion one can show that
\[
\pExp[XY]=\sum_{\lambda\in\CP} h_\lambda[X] m_\lambda[Y],
\]
thus $\pExp[XY]$ is the reproducing kernel for the Hall scalar product. Using another expression for $\pExp$, for instance in terms of the power sum functions it is easy to show that $(p_\lambda,p_\mu)=0$ when $\lambda\neq\mu$ and calculate $(p_\lambda,p_\lambda)$. In this way we can easily obtain the so-called Cauchy formulas for other families symmetric functions and other scalar products.

\subsection{Macdonald polynomials}\label{ssec:macdonald}
The set of all partitions is denoted by $\CP$.
,
and 
\[
h_\mu = \prod_i h_{\mu_i}
\]
for any finite sequence $\mu=(\mu_1,\mu_2,\ldots)$.  Note that $F[1]=(F,h_n)$ for any symmetric function $F$ of degree $n$.
Denote by $M_{\succeq \lambda}$ resp. $M_{\preceq \lambda}$ the subspace of symmetric functions spanned by $m_\mu$ with $\mu\succeq\lambda$ resp. $\mu\preceq\lambda$. Recall that 
\[
\mu\preceq\lambda \qquad\Leftrightarrow\qquad \sum_{i=1}^k\mu_i\leq\sum_{i=1}^k\lambda_i\quad(k=1,2,\ldots).
\]

We will use the \emph{modified Macdonald polynomials}. Our base field is $\BQ(q,t)$, which we endow with the usual lambda ring structure.
\begin{defn}[\cite{garsia1996remarkable}]\label{defn:macdonald}
Macdonald polynomials $\tilde H_\lambda[X;q,t]\in \Sym[X]$ are unique symmetric functions satisfying:
\[
\tilde H_\lambda[(t-1)X]\in M_{\preceq \lambda},\qquad \tilde H_\lambda[(q-1)X]\in M_{\preceq \lambda'},\qquad \tilde H_\lambda[1]=1.
\]
\end{defn}

\begin{rem}
Directly from the definition we deduce the two special cases
\[
\tilde H_{(n)}[X;q,t] = \frac{e_n\left[\frac{X}{q-1}\right]}{e_n\left[\frac{1}{q-1}\right]}
=\frac{h_n\left[\frac{X}{1-q}\right]}{h_n\left[\frac{1}{1-q}\right]},
\]
\[
\tilde H_{1^n}[X;q,t] = \frac{e_n\left[\frac{X}{t-1}\right]}{e_n\left[\frac{1}{t-1}\right]}
=\frac{h_n\left[\frac{X}{1-t}\right]}{h_n\left[\frac{1}{1-t}\right]}.
\]
From the defining property of the Schur functions we deduce specializations
\[
\tilde H_\lambda[X;q,q^{-1}]= \frac{s_\lambda\left[\frac{X}{1-q}\right]}{s_\lambda\left[\frac{1}{1-q}\right]} \qquad (\lambda\in\CP).
\]
\end{rem}

Using reproducing kernels it is easy to see that for any function $S$ and any $F, G\in\Sym[X]$ we have
\[
(F[X], G[SX]) = (F[SX], G[X]).
\]
The $q,t$-scalar product is defined as follows:
\[
(F[X], G[X])_{q,t}:=(F[X], G[(q-1)(1-t) X]).
\]
We have
\begin{prop}
The Macdonald polynomials are orthogonal with respect to the $q,t$-scalar product
\end{prop}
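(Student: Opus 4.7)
The plan is to deduce the orthogonality from a self-adjoint operator argument, since the two triangularity conditions of Definition \ref{defn:macdonald} alone do not force orthogonality.

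First I would record a preliminary symmetry. Using the self-adjointness identity $(F[SX], G[X]) = (F[X], G[SX])$ twice (with $S=(q-1)$ and with $S=(1-t)$), I can rewrite
\[
(\tilde H_\lambda, \tilde H_\mu)_{q,t} = (\tilde H_\lambda[X], \tilde H_\mu[(q-1)(1-t)X]) = (\tilde H_\lambda[(q-1)X], \tilde H_\mu[(1-t)X]).
\]
The same manipulation also shows $(\cdot,\cdot)_{q,t}$ is symmetric. Plugging in the two defining triangularities, the first factor in the last expression lies in $M_{\preceq\lambda'}$ while the second lies in $M_{\preceq\mu}$. This rules out many pairs $(\lambda,\mu)$ but cannot by itself give orthogonality for all $\lambda\neq\mu$, because the meet $\lambda'\wedge\mu$ in the dominance lattice is never empty and the matrix $((m_\nu,m_\sigma))$ is only triangular, not diagonal.

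Second, I would supply the missing structure by exhibiting a $\BQ(q,t)$-linear operator $\Delta$ on $\Sym[X]\otimes\BQ(q,t)$ satisfying: (a) $\Delta$ is self-adjoint for $(\cdot,\cdot)_{q,t}$; (b) $\Delta\,\tilde H_\lambda=\epsilon_\lambda(q,t)\,\tilde H_\lambda$; and (c) the eigenvalues $\epsilon_\lambda$ are pairwise distinct. Granted (a)--(c), the standard eigenvector argument
\[
\epsilon_\lambda(\tilde H_\lambda,\tilde H_\mu)_{q,t} = (\Delta\tilde H_\lambda,\tilde H_\mu)_{q,t} = (\tilde H_\lambda,\Delta\tilde H_\mu)_{q,t} = \epsilon_\mu(\tilde H_\lambda,\tilde H_\mu)_{q,t}
\]
forces $(\tilde H_\lambda,\tilde H_\mu)_{q,t}=0$ whenever $\lambda\neq\mu$.

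Third, the main obstacle is constructing such a $\Delta$. The cleanest route is to transport one of Macdonald's classical difference operators from finitely many variables to $\Sym[X]$ via the plethystic identification relating the integral-form polynomial $J_\lambda[X;q,t]$ to $\tilde H_\lambda[X;q,t]$; one then checks that the self-adjointness of the Macdonald operator under the classical scalar product $(\cdot,\cdot)_\ast$ translates precisely into self-adjointness under $(\cdot,\cdot)_{q,t}$ (the plethysm by $(q-1)(1-t)$ is exactly the one converting $(\cdot,\cdot)_\ast$ into the Hall product, up to the sign correction $(-1)^{\ell(\lambda)}$ on power sums). Distinctness of the eigenvalues $\epsilon_\lambda$ is read off from the action on the leading $\preceq$-term, which is controlled by the first triangularity condition. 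Alternatively, since the result appears in this form in \cite{garsia1996remarkable}, one may simply cite their theorem; the argument above is essentially their proof reorganized around the plethystic formalism already set up in Section \ref{subs:symmetric functions}.
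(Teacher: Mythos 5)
Your opening claim—that the two triangularity conditions of Definition \ref{defn:macdonald} cannot by themselves force orthogonality—is wrong, and this is exactly the step where you diverge from the paper. You observe that one factor lies in $M_{\preceq\lambda'}$ and the other in (essentially) $M_{\preceq\mu}$, and conclude that since the Gram matrix $((m_\nu,m_\sigma))$ is not diagonal, no conclusion follows. But you should not leave both arguments in the monomial basis. The inclusion $\tilde H_\lambda[(q-1)X]\in M_{\preceq\lambda'}$ is equivalent, by the unitriangular change of basis $e_\nu = m_{\nu'} + (\text{lower})$, to $\tilde H_\lambda[(q-1)X]$ lying in the span of $\{e_\nu:\nu\succeq\lambda\}$. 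Flipping the sign of the variable and using $e_n[-X]=(-1)^n h_n[X]$ converts this to: $\tilde H_\lambda[(1-q)X]$ lies in the span of $\{h_\nu:\nu\succeq\lambda\}$. Now pair against the other triangularity, which says $\tilde H_\mu[(t-1)X]\in M_{\preceq\mu}$, and use the sign-flip adjunction to rewrite
\[
(\tilde H_\lambda[(q-1)X],\tilde H_\mu[(1-t)X]) = (\tilde H_\lambda[(1-q)X],\tilde H_\mu[(t-1)X]).
\]
Here the first argument is a combination of $h_\nu$ with $\nu\succeq\lambda$ and the second is a combination of $m_\sigma$ with $\sigma\preceq\mu$, and $(h_\nu,m_\sigma)=\delta_{\nu\sigma}$ \emph{is} diagonal. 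So nonvanishing forces some $\nu$ with $\lambda\preceq\nu\preceq\mu$, hence $\lambda\preceq\mu$; symmetry of the pairing then gives $\mu\preceq\lambda$ and $\lambda=\mu$. This is the paper's argument, and it is short and elementary.

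Your alternative route via a self-adjoint Macdonald operator with simple spectrum is a valid and standard way to prove orthogonality (it is essentially Macdonald's original approach), but as written it is only a plan: you do not construct $\Delta$, do not verify self-adjointness for $(\cdot,\cdot)_{q,t}$, and do not establish distinctness of eigenvalues, instead deferring each to external references. Even if completed, it would be substantially heavier machinery than is needed here, and more importantly it is motivated by a false belief that the direct triangularity argument must fail. I would recommend you rework the proof along the direct lines above.
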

\begin{proof}
Suppose $(\tilde H_\lambda, \tilde H_\mu)_{q,t}\neq 0$. This implies
\[
(\tilde H_\lambda[X], \tilde H_\mu[(q-1)(1-t)X])=  (\tilde H_\lambda[(q-1) X], \tilde H_\mu[(1-t) X]) \neq 0.
\]
We have $\tilde H_\lambda[(q-1) X]\in M_{\preceq\lambda'}$. Hence it is a linear combination of $e_\nu$ with $\nu\succeq\lambda$. Using $e_n[-X]=(-1)^n h_n[X]$ we deduce that $\tilde H_\mu[(1-q) X]$ is a linear combination of $h_\nu$ with $\nu\succeq\lambda$. Hence $\mu\succeq\lambda$. Since the pairing is symmetric, we also have $\mu\preceq\lambda$. Therefore $\lambda=\mu$.
\end{proof} 

Since the Hall pairing is non-degenerate, we have $(\widetilde{H}_\lambda,\widetilde{H}_\lambda)\neq 0$ for any partition $\lambda$. There is an explicit formula
\[
(\tilde H_\lambda, \tilde H_\lambda)_{q,t} = z_\lambda(q,t)= \prod_{a,l}(q^a - t^{l+1})(q^{a+1} - t^l),
\]
where the product runs over the arm- and leg-lengths of the hooks of $\lambda$ (see \cite{macdonald1995symmetric}, \cite{garsia1996remarkable}).

\begin{cor}\label{cor:unique macdonald}
The family of modified Macdonald polynomials is uniquely determined by the orthogonality property $(\tilde H_\lambda, \tilde H_\mu)_{q,t}\neq 0$ for $\lambda\neq\mu$, normalization $\tilde H_\lambda[1]=1$ and any of the two upper-triangularity properties:
\[
\tilde H_\lambda[(t-1)X]\in M_{\preceq \lambda},\qquad \tilde H_\lambda[(q-1)X]\in M_{\preceq \lambda'}.
\]
\end{cor}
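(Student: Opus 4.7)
My plan is a direct Gram–Schmidt argument using the orthogonal basis $\{\tilde H_\mu\}$ established in the previous proposition, together with the explicit nonvanishing $(\tilde H_\nu,\tilde H_\nu)_{q,t} = z_\nu(q,t) \neq 0$ recalled just above. It suffices to treat the version with $F_\lambda[(t-1)X]\in M_{\preceq\lambda}$; the other variant follows from the identity $\tilde H_\lambda[X;q,t]=\tilde H_{\lambda'}[X;t,q]$, which is a direct consequence of Definition~\ref{defn:macdonald} since swapping $(q,t)$ and $(\lambda,\lambda')$ exchanges the two triangularity conditions while preserving the normalization. Setting
\[
V_\lambda \;:=\; \{F\in\Sym[X] : F[(t-1)X]\in M_{\preceq\lambda}\},
\]
I observe that because $X\mapsto(t-1)X$ is a graded $\BQ(q,t)$-linear automorphism of $\Sym[X]$, the degree-$|\lambda|$ part of $V_\lambda$ has dimension $\#\{\mu:\mu\preceq\lambda\}$. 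Every $\tilde H_\mu$ with $\mu\preceq\lambda$ lies in $V_\lambda$ by Definition~\ref{defn:macdonald}, and these are linearly independent (being pairwise orthogonal and nonzero under $(-,-)_{q,t}$), so they form a basis of $V_\lambda$ in that degree. By hypothesis, $F_\lambda$ lies in this space.

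I then induct on a linear refinement of the dominance order within each fixed degree. For $\lambda$ minimal in dominance (for instance $\lambda=(1^{|\lambda|})$) the space $V_\lambda\cap\Sym^{|\lambda|}$ is one-dimensional, and the normalizations $F_\lambda[1]=\tilde H_\lambda[1]=1$ force $F_\lambda=\tilde H_\lambda$ directly. For the inductive step, assume $F_\mu=\tilde H_\mu$ for every $\mu\prec\lambda$, and expand
\[
F_\lambda \eq \sum_{\mu\preceq\lambda} c_\mu\,\tilde H_\mu.
\]
For each $\nu\prec\lambda$ the orthogonality hypothesis and the induction hypothesis combine to give
\[
0 \eq (F_\lambda,F_\nu)_{q,t} \eq (F_\lambda,\tilde H_\nu)_{q,t} \eq c_\nu\,(\tilde H_\nu,\tilde H_\nu)_{q,t},
\]
and since $(\tilde H_\nu,\tilde H_\nu)_{q,t}=z_\nu(q,t)\neq 0$ this yields $c_\nu=0$. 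Therefore $F_\lambda=c_\lambda \tilde H_\lambda$, and evaluating at $[1]$ and using both normalizations forces $c_\lambda=1$, completing the induction.

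The only real subtlety is organisational: dominance is a partial rather than a total order, so one must fix a linear extension and carefully treat every dominance-minimal partition of a given size as a base case, ensuring that the clause "for every $\mu\prec\lambda$" in the inductive hypothesis genuinely exhausts every partition strictly below $\lambda$. Beyond this bookkeeping the argument is entirely formal, and no additional input is required once the non-degeneracy of the self-pairings is granted.
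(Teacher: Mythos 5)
The paper states this corollary without giving a proof, so there is nothing to compare against; your Gram--Schmidt-style argument is the natural one and it is correct. A few small observations. Your reduction of the second triangularity variant to the first via $\tilde H_\lambda[X;q,t]=\tilde H_{\lambda'}[X;t,q]$ is valid, and it implicitly uses the fact that $(\cdot,\cdot)_{q,t}=(\cdot,\cdot)_{t,q}$ (since $(q-1)(1-t)=(t-1)(1-q)$), which is what lets you transport the orthogonality hypothesis across the swap of $q$ and $t$; it would be worth making this one-line remark explicit. In the dimension count for $V_\lambda$ you should restrict to partitions $\mu\vdash|\lambda|$, since the dominance order in this paper is defined on partitions of arbitrary size; this is implicit from taking the degree-$|\lambda|$ component, but easy to state. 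Finally, your base case for $\lambda=(1^n)$ is really just the $\nu$-vacuous instance of the inductive step, so the induction could be phrased uniformly; and the linear independence of $\{\tilde H_\mu\}_{\mu\preceq\lambda}$ is cleanest to justify by the non-vanishing of each self-pairing $(\tilde H_\mu,\tilde H_\mu)_{q,t}=z_\mu(q,t)$, which is exactly what you invoke. None of these affect correctness.
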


By specializing $t=0$ we obtain a version of \emph{Hall-Littlewood polynomials}. 
\[
H_\lambda[X;q] = \tilde H_\lambda[X;q,0].
\]
They can also be defined by setting $t=0$ in Definition \ref{defn:macdonald}, they are orthogonal with respect to 
\[
(F[X],G[X])_q = (F[X], G[(q-1) X]),
\]
and an analogue of Corollary \ref{cor:unique macdonald} holds for them. We have
\[
H_{(n)}[X;q] = \tilde H_{(n)}[X;q,t]= \frac{e_n\left[\frac{X}{q-1}\right]}{e_n\left[\frac{1}{q-1}\right]}
=\frac{h_n\left[\frac{X}{1-q}\right]}{h_n\left[\frac{1}{1-q}\right]},
\]
\[
H_{1^n}[X;q] = h_n[X].
\]

\subsection{Hall algebra}
Let $\CA$ be an abelian category. We assume $\CA$ is embedded as a full abelian subcategory closed under extensions, subobjects and quotient objects into a possibly larger abelian category where all the higher $\Ext$ groups are defined, for instance using injective resolutions. Recall that for any two objects $Y,Z\in\CA$ the abelian group $\Ext^1(Y,Z)$ classifies exact sequences
\begin{equation}\label{eq:short exact sequence}
0\to Z \xrightarrow{f} X \xrightarrow{g} Y \to 0
\end{equation}
up to the natural action of $\Aut(X)$. The stabilizer of a pair $(f,g)$ is identified with the set $\Hom(Y,Z)$. The category is said to be \emph{finitary} if $\Hom(X,Y)$ and $\Ext^1(X,Y)$ are finite sets for all $X,Y\in \CA$. If $\CA$ is finitary the \emph{Hall algebra} $\Hall(\CA)$ is defined as the free $\BQ$-vector space on symbols $[X]$ for $X\in\CA$ modulo relations
\[
[X]=[Y]\qquad\text{if $X\cong Y$ in $\CA$}.
\]
The Hall algebra is endowed with a coproduct
\[
\Delta [X] = \sum_{Z\subset X} [X/Z] \otimes [Z],
\]
which makes sense if $X$ has only finitely many subobjects. If this is not the case, we can still define $\Delta$ on a completion of $\Hall(\CA)$, for instance on the vector space of all functions on the isomorphism classes of objects of $\CA$. There is also a product
\[
[Y]*[Z] = \frac{1}{|\Hom(Y,Z)|} \sum_{\xi\in\Ext(Y,Z)} [X_\xi],
\]
where $X_\xi$ denotes the middle object $X$ in the short exact sequence corresponding to $\xi$. There is a natural non-degenerate pairing
\[
([X],[Y]) = |\Iso(X,Y)|,
\]
where $\Iso(X,Y)\subset\Hom(X,Y)$ is the set of all isomorphisms from $X$ to $Y$. For $X,Y,Z\in\CA$, the number of pairs $(f,g)$ such that $f\in\Hom(Z, X)$, $g\in\Hom(X,Y)$ and the sequence \eqref{eq:short exact sequence} is exact can be calculated in two ways as
\[
(\Delta[X], [Y]\otimes [Z]) = ([X], [Y]*[Z]).
\]
The coproduct is obviously co-associative. To see that the product is associative we can use the fact that the product is dual to the coproduct and the pairing is non-degenerate.
\begin{rem}
There are different conventions on defining Hall algebras, for instance see \cite{ringel1990hall} and \cite{green1995hall}. We chose ours so that it directly reflects questions of counting.
\end{rem}

A result of Green (\cite{green1995hall}, \cite{ringel1996green}) providing a deeper relation between $*$ and $\Delta$ can be stated as follows:
\begin{thm}\label{thm:green}
For any $X,Y\in\CA$ we have
\[
\Delta([X]*[Y]) = \sum \frac{|\Ext^1(X_1,Y_2)|}{|\Hom(X_1,Y_2)|} [X_1] * [Y_1] \otimes [X_2]*[Y_2],
\]
provided $\Ext^2(X_1,Y_2)$ vanish for all $X_1, Y_2$ in the sum above. We have used Sweedler notation $\Delta[X]=\sum [X_1]\otimes [X_2]$ meaning that the sum is a weighted sum over pairs $(X_1, X_2)$, and similarly for $Y$.
\end{thm}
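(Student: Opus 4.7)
The plan is to interpret both sides of Green's identity as weighted counts of $3\times 3$ commutative diagrams of short exact sequences in $\CA$, and to match coefficients by analyzing the forgetful map that extracts the outer rows and columns. Unfolding both sides: on the left, $[X]*[Y] = \frac{1}{|\Hom(X,Y)|}\sum_{\xi\in\Ext^1(X,Y)} [W_\xi]$, and applying $\Delta$ produces a weighted sum over pairs $(\xi, V)$ with $V \subset W_\xi$; on the right, expanding $\Delta[X] = \sum_{A\subset X}[X/A]\otimes[A]$, $\Delta[Y] = \sum_{B\subset Y}[Y/B]\otimes[B]$, and then applying the two products yields a sum indexed by quadruples $(A,B,\eta_1,\eta_2)$ with $\eta_1 \in \Ext^1(X/A, Y/B)$ and $\eta_2\in\Ext^1(A,B)$.

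Next I would establish the correspondence between the two indexing sets via the $3\times 3$ lemma. Given a pair $(\xi,V)$, set $B:=V\cap Y$ and $A:=\image(V\to X)$; elementary diagram chasing yields a commutative diagram with all rows and columns short exact:
\[
\begin{tikzcd}[column sep=small, row sep=small]
0\arrow[r] & B\arrow[r]\arrow[d] & V\arrow[r]\arrow[d] & A\arrow[r]\arrow[d] & 0\\
0\arrow[r] & Y\arrow[r]\arrow[d] & W_\xi\arrow[r]\arrow[d] & X\arrow[r]\arrow[d] & 0\\
0\arrow[r] & Y/B\arrow[r] & W_\xi/V\arrow[r] & X/A\arrow[r] & 0
\end{tikzcd}
\]
so that $Y_2=B$, $X_2=A$, $Y_1=Y/B$, $X_1=X/A$, and the top and bottom rows give $\eta_2$ and $\eta_1$ respectively. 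The quantitatively nontrivial step is the reverse direction: given compatible outer data $(A,B,\eta_1,\eta_2)$, one must count the middle rows $\xi \in \Ext^1(X,Y)$ producing this diagram. Using the long exact sequences in $\Ext^\bullet$ associated to $0\to A\to X\to X_1\to 0$ (in the first argument) and $0\to Y_2\to Y\to Y_1\to 0$ (in the second), together with the hypothesis $\Ext^2(X_1,Y_2)=0$ which makes the obstruction class vanish, this set is an $\Ext^1(X_1,Y_2)$-torsor. Passing to isomorphism classes in the Hall algebra, the subgroup $\Hom(X_1,Y_2)$ acts by modifications of the three-step filtration that leave the outer rows and columns unchanged, so the effective fiber has size $|\Ext^1(X_1,Y_2)|/|\Hom(X_1,Y_2)|$, matching the coefficient in Green's identity.

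The hard part will be the careful bookkeeping of automorphism and $\Hom$-factors: the definitions of $*$ and $\Delta$ carry explicit denominators and the Hall algebra identifies isomorphic objects, so one must track how the $|\Aut(-)|$ and $|\Hom(-,-)|$ factors coming from the two products, the two coproducts, and the isomorphism classes of diagrams all cancel. The cleanest way to manage this is to use the Riedtmann-Peng expression
\[
g^{W}_{X,Y}=\#\{V\subset W:V\cong Y,\,W/V\cong X\}=\frac{|\Ext^1(X,Y)_W|\,|\Aut(W)|}{|\Hom(X,Y)|\,|\Aut(X)|\,|\Aut(Y)|},
\]
which rewrites both sides in terms of these $g$-coefficients so that all automorphism factors cancel against the bijection with $3\times 3$ diagrams, reducing the identity to the torsor count of the previous paragraph.
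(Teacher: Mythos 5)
The paper does not actually prove Theorem~\ref{thm:green}; it quotes the statement and cites Green and Ringel, so there is no in-paper argument to compare against. Judged on its own, your plan follows the standard Green/Ringel route: expand both sides with the paper's normalizations of $*$ and $\Delta$, organize the comparison around $3\times 3$ diagrams with short exact rows and columns, observe that $\Ext^2(X_1,Y_2)=0$ guarantees every compatible outer frame $(A,B,\eta_1,\eta_2)$ can be filled, and then count the fillings. The Riedtmann--Peng identity you write down is correct and is indeed the right bookkeeping tool.

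The genuine gap is the central fiber count. You assert that the fiber of the map $\Phi\colon(\xi,V)\mapsto(A,B,\eta_1,\eta_2)$ is an $\Ext^1(X_1,Y_2)$-torsor which, after passing to isomorphism classes, collapses to effective size $|\Ext^1(X_1,Y_2)|/|\Hom(X_1,Y_2)|$. Neither half of this is right. Expanding both sides and comparing coefficients (over a fixed choice of the isomorphism classes of $A$ and $B$), the fiber of $\Phi$ must have cardinality
\[
\frac{|\Hom(X,Y)|\cdot|\Ext^1(X_1,Y_2)|}{|\Hom(X_1,Y_2)|\cdot|\Hom(X_1,Y_1)|\cdot|\Hom(X_2,Y_2)|}\,,
\]
where the three additional $\Hom$ factors come from the $1/|\Hom|$ normalization of $*$ on the left and of the two $*$'s on the right. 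The torsor-plus-quotient picture produces none of these, and ``modding out by $\Hom(X_1,Y_2)$'' is not a literal group quotient one gets to impose: passage to isomorphism classes is already encoded in the definitions of $*$ and $\Delta$, not an extra step. As a sanity check, when $A=0$, $B=Y$ the fiber has size $|\Ext^1(X,Y)|$, and when $A=X$, $B=0$ it has size $|\Hom(X,Y)|$; neither of these is predicted by your heuristic. Establishing the displayed formula in general is the technical core of Green's proof, and it requires chasing the long exact $\Ext$-sequences attached to all four short exact sequences in the outer frame and showing the alternating products of cardinalities reduce to exactly this expression. Your sketch names this as the hard part but then claims to reduce it to a torsor count that does not hold, so the gap is not closed.
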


\subsection{One-loop quiver}
Let $\CA$ be the category of nilpotent representations of one-loop quiver over a finite field $\Bk$, i.e. the category of pairs $(V, \theta)$ where $V$ is a finite-dimensional vector space over $\Bk$ and $\theta:V\to V$ is nilpotent. Although the Hall algebra of this category is known, e.g. see \cite{macdonald1995symmetric}, we reproduce its computation here for several reasons. One reason is to make exposition self-contained. Another reason is that this computation is the $t=0$ specialization of our result on counting vector bundles with a nilpotent endomorphism, so the reader may be interested in comparing the two, and possibly find a better unifying approach to them. A third reason is that this is a nice example of a computation which lets the reader to get accustomed to some techniques before we go into more difficult computations. 

The category $\CA$ is finitary and hereditary (i.e. all the higher Exts vanish), with vanishing Euler form
\[
\chi(X,Y) = \dim\Hom(X,Y)-\dim\Ext^1(X,Y) = 0.
\]
In particular, by Theorem \ref{thm:green} we know that $\Hall(\CA)$ is a bi-algebra. By the Jordan form theorem, the complete set of isomorphism classes is given by $\{[N_\lambda]:\lambda\in\CP\}$, where for each partition $\lambda$ we denote by $N_\lambda$ the nilpotent matrix acting on the vector space $\Bk^{|\lambda|}$ with Jordan blocks of sizes $\lambda_1',\lambda_2',\ldots$ for $\lambda'$ the conjugate partition. Using duality and the fact that transposition of matrices does not change the Jordan form, we have that for all $X,Y,Z\in\CA$ the number of short exact sequences
\[
0\to Z\to X\to Y \to 0
\]
equals to the number of short exact sequences
\[
0\to Y\to X\to Z \to 0.
\]
Thus $\Hall(\CA)$ is a commutative and co-commutative bi-algebra.

Denote
\[
E(z)=\sum_{\lambda\in\CP} \frac{[N_\lambda]}{|\Aut(N_\lambda)|} z^{|\lambda|}.
\]
Then we have $(E(z),[X])=z^{\dim X}$ for any object $X\in\CA$. Since the Euler form vanishes, we have for any $X,Y\in\CA$
\[
([X]*[Y], E(z)) = \frac{|\Ext^1(X,Y)|}{|\Hom(X,Y)|} z^{\dim X+\dim Y} = (X, E(z)) ([Y],E(z)).
\]
Hence 
\begin{equation}\label{eq:E multiplicative}
\Delta E(z)=E(z)\otimes E(z).
\end{equation}
Another property of $E(z)$ is 
\begin{prop}\label{prop:count nilpotent}
Let $|\Bk|=q$. Then we have 
\[
(E(u), E(v)) = \pExp\left[\frac{uv}{q-1}\right].
\]
\end{prop}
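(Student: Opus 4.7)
The plan is to carry out a direct computation in three steps: collapse the double sum defining $(E(u), E(v))$ to a single sum over partitions, reinterpret that sum as a groupoid count of nilpotent matrices, and recognize the resulting generating series as $\pExp[uv/(q-1)]$ via a classical $q$-series identity.

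First I would observe that $|\Iso(N_\lambda, N_\mu)| = \delta_{\lambda, \mu}|\Aut(N_\lambda)|$ because objects of $\CA$ are classified by Jordan type. Setting $w = uv$, the definition of the pairing gives
\[
(E(u), E(v)) \;=\; \sum_{\lambda, \mu \in \CP} \frac{u^{|\lambda|} v^{|\mu|}\,|\Iso(N_\lambda, N_\mu)|}{|\Aut(N_\lambda)|\,|\Aut(N_\mu)|} \;=\; \sum_{\lambda \in \CP} \frac{w^{|\lambda|}}{|\Aut(N_\lambda)|}.
\]

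Next I would group by $n = |\lambda|$ and apply the groupoid orbit-counting principle to the conjugation action of $\GL_n(\Bk)$ on the variety $\Nilp_n(\Bk) \subset \Mat_n(\Bk)$ of nilpotent matrices. Its orbits are indexed by partitions of $n$ via Jordan type, with stabilizer of $N_\lambda$ equal to $\Aut(N_\lambda)$, so $\sum_{|\lambda|=n}|\Aut(N_\lambda)|^{-1} = |\Nilp_n(\Bk)|/|\GL_n(\Bk)|$. The theorem of Fine and Herstein supplies $|\Nilp_n(\Bk)| = q^{n(n-1)}$, and combined with $|\GL_n(\Bk)| = q^{\binom{n}{2}} \prod_{i=1}^n (q^i-1)$ a short simplification produces
\[
\sum_{\lambda \in \CP} \frac{w^{|\lambda|}}{|\Aut(N_\lambda)|} \;=\; \sum_{n \geq 0} \frac{(w/q)^n}{\prod_{i=1}^n(1 - q^{-i})}.
\]

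The last expression is Euler's $q$-exponential series, equal to $\prod_{k \geq 1}(1 - wq^{-k})^{-1}$. To match this against $\pExp[w/(q-1)]$, I would expand $\frac{1}{q-1} = \sum_{k \geq 1} q^{-k}$ as a formal series in $q^{-1}$, apply multiplicativity $\pExp[A + B] = \pExp[A]\pExp[B]$, and use $\pExp[z] = (1-z)^{-1}$ for a single variable $z$; the whole identity lives in the completion $\BQ[[w, q^{-1}]]$. No single step is a serious obstacle: the only substantive input is the Fine-Herstein count of nilpotent matrices, and the rest is bookkeeping with $q$-series and the lambda ring structure.
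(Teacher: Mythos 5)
Your proof is correct, but it takes a genuinely different route from the paper's. Both arguments open with the same reduction $(E(u),E(v)) = \sum_{\lambda\in\CP} w^{|\lambda|}/|\Aut(N_\lambda)|$ with $w=uv$, but diverge afterwards. You convert the sum over $\lambda\vdash n$ into $|\Nilp_n(\Bk)|/|\GL_n(\Bk)|$ by orbit--stabilizer, quote the Fine--Herstein count $|\Nilp_n(\Bk)|=q^{n(n-1)}$, and then close with Euler's $q$-exponential identity to recognize $\prod_{k\geq 1}(1-wq^{-k})^{-1} = \pExp[w/(q-1)]$. The paper instead introduces the auxiliary category $\CG$ of pairs (vector space, automorphism), evaluates its weighted count as $1/(1-T)$ by conjugacy-class orbit-counting in $\GL_n$, factors this by the Jordan decomposition into an Euler product $\prod_{x\in\GL_1/\BF_q} \CN_{q^{\deg x}}(T^{\deg x})$, and identifies $\CN_q$ via the zeta-function formalism of Proposition \ref{prop:zeta} and uniqueness of the solution to the Euler-product equation. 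Your approach is shorter and more self-contained once Fine--Herstein is granted; the paper's is engineered to be reusable (the same $\GL_1$ zeta-function template and the step from automorphisms to nilpotents recur in Section \ref{sec:from to}) and avoids invoking the nilpotent count as an external input. One small point worth making explicit in your write-up: the multiplicativity step $\pExp[\sum_{k\geq 1} wq^{-k}] = \prod_{k\geq 1}(1-wq^{-k})^{-1}$ relies on each $wq^{-k}$ being a monomial (so $h_n$ acts by taking $n$-th powers) and on the usual lambda-ring structure with $p_n[q^{\pm 1}]=q^{\pm n}$ passing to the $q^{-1}$-adic completion, which is consistent with the conventions fixed in Subsection \ref{subs:symmetric functions}.
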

\begin{proof}
We have
\[
(E(u), E(v)) = 
\sum_{n=0}^\infty (uv)^n \sum_{\lambda\vdash n} \frac{1}{|\Aut(N_\lambda)|} = \CN_q(uv),
\]
where for a finite field of size $q$ we write
\[
\CN_q(T) = \sum_{n=0}^\infty T^n \sum_{\lambda\vdash n} \frac{1}{|\Aut(N_\lambda)|} = \sum_{X\in\CA/\sim} \frac{T^{\dim X}}{|\Aut(X)|}.
\]
Let us show that
\begin{equation}\label{eq:Nq T}
\CN_q(T) = \pExp\left[\frac{T}{q-1}\right].
\end{equation}
The proof is explained in a more combinatorial way in \cite{villegas2007counting}.
Consider the abelian category $\CG$ of pairs $(V, \eta)$ where $V$ is a finite-dimensional vector space and $\eta\in\Aut(V)$. Write
\[
G_q(T) = \sum_{X\in\CG/\sim} \frac{T^{\dim X}}{|\Aut(X)|}.
\]
An isomorphism class of an object of dimension $n$ in $\CG$ is the same thing as a conjugacy class of a matrix in $\GL_n(\BF_q)$. So we obtain
\[
G_q(T) = \sum_{n=0}^\infty T^n \sum_{g\in\GL_n(\BF_q)/\sim} \frac{1}{Z(g)} = \sum_{n=0}^\infty T^n = \frac{1}{1-T}
\]
by the orbit-counting theorem. On the other hand, by Jordan theorem 
\begin{equation}\label{eq:product jordan}
\frac{1}{1-T}=G_q(T) = \prod_{x\in\GL_1/\BF_q} \CN_{q^{\deg x}}(T^{\deg x}),
\end{equation}
where the product is over the set of closed points of the scheme $\GL_1/\BF_q$. Note that there is a unique collection of power series $\CN_{q}(T)$ satisfying the equation above for all $q$ where $q$ runs over the set of prime powers. So it is enough to check that \eqref{eq:Nq T} satisfies \eqref{eq:product jordan}. We have
\[
\prod_{x\in\GL_1/\BF_q} \frac{1}{1-T^{\deg x}} = \zeta_{\GL_1/\BF_q}(T) = \pExp[(q-1)T].
\]
By Proposition \ref{prop:zeta} this implies that for any element $X$ in any lambda ring we have
\[
\prod_{x\in\GL_1/\BF_q} \pExp[p_{\deg x}[XT]] = \pExp[(q-1)XT].
\]
In particular, for $X=\frac{1}{q-1}$ we obtain
\[
\prod_{x\in\GL_1/\BF_q} \pExp\left[\frac{T^{\deg(x)}}{q^{\deg(x)}-1}\right] = \frac{1}{1-T}.
\]
\end{proof}

Choose $N\gg 0$ and define for any $X\in\CA$
\[
I_N([X]) =I_N^{z_1,\ldots, z_N}([X])= \left([X], \prod_{i=1}^{N} E(z_i)\right) = \left(\Delta^{N-1}[X], \bigotimes_{i=1}^{N} E(z_i)\right). 
\]
This is a polynomial of degree $\dim X$, which by commutativity of $\Hall(\CA)$ is symmetric in $z_i$. Setting $z_N=0$ produces $I_N$. Thus there is a symmetric function in infinitely many variables which specializes to $I_N$ for any $N$. We denote this symmetric function by $I([X])$. Thus we obtain a map
\[
I:\Hall(\CA) \to \Sym[Z],
\]
where $\Sym[Z]$ denotes the ring of symmetric functions in the alphabet $(z_1,z_2,\ldots)$. More explicitly, we have
\[
I([V,\theta]) = \sum_{\lambda\vdash \dim V} m_\lambda[Z] |\{V_1\subset V_2\subset\cdots V_{l(\lambda)}=V:\theta V_i=V_i,\dim V_i/V_{i-1}=\lambda_i\}|.
\]
Equivalently, $(I([V,\theta]), h_\mu)$ is the number of partial flags of type $\mu$ fixed by $\theta$ for any sequence $\mu=(\mu_1,\ldots,\mu_k)$.

\begin{thm}\label{thm:hall algebra}
The map $I$ is an isomorphism of bi-algebras with pairing, where the coproduct on the ring of symmetric functions is defined by sending the power sum $p_n$ to $p_n\otimes 1 + 1\otimes p_n$ for all $n>0$ and the pairing is the $q$-modified Hall pairing $(F,G)_q = (F[(q-1)X], G[X])$. We have $I([N_\lambda]) = H_\lambda[Z;q]$ for every partition $\lambda$, where $H_\lambda[Z;q]$ is the $t=0$ specialization of the modified Macdonald polynomial $\tilde H_\lambda[Z;q,t]$.
\end{thm}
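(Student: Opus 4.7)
The plan factors into four steps, using throughout the observation $(I([X]), h_\mu) = N_\mu(X)$, where $N_\mu(X)$ counts $\theta$-stable partial flags of type $\mu$ in $X$.

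First I would verify that $I$ is a coalgebra morphism, i.e.\ $(I\otimes I)(\Delta[X]) = I([X])[Z_1 + Z_2]$, where the right-hand side uses the standard coproduct $\Delta_{\Sym}$ on $\Sym[Z]$ (power sums primitive, equivalently plethystic substitution). Pairing against $h_\mu[Z_1]\, h_\nu[Z_2]$ reduces the identity to
\[
N_{\mu\cup\nu}(X) = \sum_{Y\subset X} N_\nu(Y)\, N_\mu(X/Y),
\]
which just says that a $\theta$-stable flag of type $\mu\cup\nu$ in $X$ is a $\theta$-stable flag of type $\nu$ in some $Y\subset X$ followed, after quotienting, by a flag of type $\mu$ in $X/Y$. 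Next, \eqref{eq:E multiplicative} says $E(z)$ is group-like, so $I(E(z))$ is group-like in $\Sym[Z]$ and must equal $\pExp[F(z)\, Z]$ for some $F(z)\in\BQ[[z]]$. Pairing with $h_1$ and using that $N_{(1)}$ is the unique $1$-dimensional object of $\CA$ with $|\Aut N_{(1)}| = q-1$ identifies $F(z) = z/(q-1)$, so $I(E(z)) = \pExp[z\, Z/(q-1)]$.

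For the pairing, fix $[X]\in\CA$ and formal variables $v_1,\ldots,v_k$. Product-coproduct duality together with $(E(v),[Y])=v^{\dim Y}$ yields $([X], E(v_1)*\cdots*E(v_k))_{\Hall} = I_k([X])(v_1,\ldots,v_k)$, while the reproducing-kernel property of $\pExp[XY/(q-1)]$ for $(\cdot,\cdot)_q$ (Subsection \ref{subs:reproducing kernels}) gives
\[
(I([X]), \pExp[(v_1+\cdots+v_k)\, Z/(q-1)])_q = I([X])[v_1+\cdots+v_k] = I_k([X])(v_1,\ldots,v_k).
\]
Now $\Hall(\CA)$ is a bi-algebra by Theorem \ref{thm:green} and the vanishing Euler form, so each $E(v_1)*\cdots*E(v_k)$ is group-like, and the argument of the previous paragraph applied to it gives $I(E(v_1)*\cdots*E(v_k)) = \pExp[(v_1+\cdots+v_k)\, Z/(q-1)]$. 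Since the $v$-coefficients of these products span $\Hall(\CA)$, matching the two computations above shows $I$ preserves the pairings. Multiplicativity then follows formally from the self-duality of both bi-algebras under their pairings:
\[
(I(A*B),I(C))_q = (A*B,C)_{\Hall} = (A\otimes B,\Delta C)_{\Hall^{\otimes 2}} = (I(A)\cdot I(B),I(C))_q,
\]
and non-degeneracy of $(\cdot,\cdot)_q$ yields $I(A*B)=I(A)\cdot I(B)$.

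Finally, by the previous step $\{I([N_\lambda])\}$ is an orthogonal family under $(\cdot,\cdot)_q$, and $I([N_\lambda])[1] = N_{(|\lambda|)}(N_\lambda) = 1$ because the only flag of type $(|\lambda|)$ in $N_\lambda$ is the trivial one. The $t=0$ analogue of Corollary \ref{cor:unique macdonald} then reduces the identification $I([N_\lambda]) = H_\lambda[Z;q]$ to checking the upper-triangularity $I([N_\lambda])[(q-1)\, Z]\in M_{\preceq \lambda'}$. I expect this last verification to be the main technical obstacle, to be handled by a direct analysis of which compositions $\mu$ admit $\theta$-stable flags of type $\mu$ in $N_\lambda$ (the Jordan block sizes $\lambda'$ impose dominance-type restrictions on $\mu$) and the effect of the substitution $Z\mapsto(q-1)\, Z$ on the resulting monomial expansion.
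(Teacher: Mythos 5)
Your overall architecture — coalgebra map, then determine $I(E(z))$, then pairing, then multiplicativity via duality, then the Macdonald identification — is a reasonable reorganization of what the paper does, and the coalgebra step (reduction to the flag-counting identity by pairing against $h_\mu\otimes h_\nu$) and the duality argument for multiplicativity are both correct. But there is a genuine gap in the middle.

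The claim that a group-like element of $\Sym[Z][[z]]$ whose $z^n$-coefficient has degree $n$ must equal $\pExp[F(z)Z]$ for a scalar power series $F(z)$ is false. Group-like means the logarithm is primitive, and the primitives of degree $n$ are spanned by $p_n[Z]$; together with the degree constraint this gives only
\[
I(E(z)) \;=\; \exp\Bigl(\sum_{n\ge 1} c_n\, p_n[Z]\, z^n\Bigr)
\]
with the $c_n\in\BQ(q)$ \emph{a priori arbitrary}. The form $\pExp[c_1 z\,Z]=\exp\bigl(\sum_n p_n[c_1]\,p_n[Z]\,z^n/n\bigr)$ is a very special case, namely the one where $c_n = p_n[c_1]/n$ (so $c_n = 1/(n(q^n-1))$ once $c_1 = 1/(q-1)$). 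Pairing with $h_1$ only pins down $c_1$; it says nothing about $c_n$ for $n>1$. What actually forces the higher $c_n$ is Proposition \ref{prop:count nilpotent}, the computation $(E(u),E(v)) = \pExp\bigl[uv/(q-1)\bigr]$, i.e.\ the count of nilpotent matrices over $\BF_q$. The paper's computation of $I(E(u))$ runs directly through this: $I_N(E(u)) = \prod_i (E(u),E(z_i)) = \pExp\bigl[u\sum_i z_i/(q-1)\bigr]$. Your argument as written skips this input, and the same flaw recurs when you apply "the argument of the previous paragraph" to $E(v_1)*\cdots*E(v_k)$. Until $(E(u),E(v))$ is computed, $I(E(z))$ is not determined and neither the pairing statement nor the multiplicativity follows.

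For the last step, you correctly identify that orthogonality and the normalization $I([N_\lambda])[1]=1$ leave only the upper-triangularity to check, but you explicitly defer it. Note that the paper's route is different from (and cleaner than) the direct analysis you propose: rather than studying $I([N_\lambda])[(q-1)Z]$ monomial-by-monomial, it uses the already-proved pairing preservation to write $\bigl(I([N_\mu])[(q-1)Z],\,h_\lambda\bigr) = \bigl([N_\mu],\,\prod_i[N_{1^{\lambda_i}}]\bigr)$, and then bounds $\dim\kernel\theta^k$ for any $(\CE,\theta)$ occurring in the Hall product $\prod_i[N_{1^{\lambda_i}}]$ to conclude $\mu\preceq\lambda'$. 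Your plan of analyzing which $\mu$ admit $\theta$-stable flags in $N_\lambda$ and then tracking the substitution $Z\mapsto(q-1)Z$ is harder, because the substitution mixes the $m_\mu$ and the naive flag count is not the relevant coefficient.
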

\begin{proof}
For any $X,Y\in\CA$ we have
\[
I_N([X]*[Y]) = \left([X]\otimes [Y], \Delta \left(\prod_{i=1}^{N} E(z_i)\right)\right) = I_N([X]) I_N([Y])
\]
by \eqref{eq:E multiplicative}.
Hence we have
\[
I([X]*[Y]) = I([X]) I([Y]).
\]
We identify $\Sym[Z]\otimes\Sym[Z]$ with the ring $\Sym[Z,Z']$ of symmetric functions in two sets of variables $Z$, $Z'$. Then the coproduct on symmetric functions can be written as $\Delta F[Z]=F[Z+Z']$. For any $X\in\CA$ we have
\[
\left(I_N^{z_1,\ldots, z_N}\otimes I_N^{z_1',\ldots,z_N'}\right)(\Delta[X]) = \left([X], \prod_{i=1}^{N} E(z_i) \prod_{i=1}^{N} E(z_i')\right)
\]
\[
 = I_{2N}([X])(z_1,\ldots,z_N,z_1',\ldots,z_N').
\]
Thus 
\[
I(\Delta [X]) = \Delta I([X]).
\]
So we have shown that $I$ preserves product and coproduct. Then we calculate $I(E(u))$ as follows:
\[
I_N(E(u)) = \left(\Delta^{N-1} E(u), \bigotimes_{i=1}^{N} E(z_i)\right) = \prod_{i=1}^N (E(u), E(z_i)) = \pExp\left[\frac{u \sum_{i=1}^N z_i}{q-1}\right],
\]
\[
I(E(u)) = \pExp\left[\frac{u Z}{q-1}\right].
\]
The coefficients of series of the form $I\left(\prod_{i=1}^N E(u_i)\right)$ are the symmetric functions $h_\mu\left[\frac{Z}{q-1}\right]$. In particular, they span $\Sym[Z]$. Therefore $I$ is surjective. Since $I$ preserves the degree and the dimensions of degree $n$ parts of $\Hall(\CA)$ and $\Sym[Z]$ are both given by the number of partitions of size $n$, we deduce that $I$ is an isomorphism. So we know that the coefficients of series of the form $\prod_{i=1}^N E(u_i)$ span $\Hall(\CA)$. In order to verify that $I$ preserves the pairing it is enough to consider the following for all $X\in\CA$:
\[
\left([X], \prod_{i=1}^N E(u_i)\right) = I([X])(u_1,\ldots,u_N),
\]
\[
\left(I([X]), I\left(\prod_{i=1}^N E(u_i)\right)\right)_q = \left(I([X]), \pExp\left[\frac{Z \sum_{i=1}^N u_i}{q-1}\right]\right)_q = I([X])(u_1,\ldots,u_N).
\]
So we have shown that $I$ preserves the pairing.

Finally, let us show $I([N_\lambda]) = H_\lambda[Z;q]$. So far we know that $I([N_\lambda])$ are orthogonal with respect to the $q$-deformed Hall scalar product, which is the $t=0$ specialization of the $q,t$-deformed Hall scalar product. The normalization condition $(I([N_\lambda]), h_n)=1$ is evident. So it is enough to show that $I([N_\lambda])$ satisfy one of the two upper-triangularity properties obtained by specialization $t=0$ from the corresponding properties for the modified Macdonald polynomials (see Corollary \ref{cor:unique macdonald}). The matrix $N_{1^n}$ is a single Jordan block. For each $\mu$ there is exactly one flag of type $\mu$ fixed by $N_{1^n}$. Therefore $I(N_{1^n})=h_n[Z]$. For any partition $\lambda$ we have
\[
h_\lambda[Z] = I\left(\prod_{i=1}^{l(\lambda)} [N_{1^{\lambda_i}}] \right).
\]
Note that for each object $(V,\theta)$ appearing in $\prod_{i=1}^{l(\lambda)} [N_{1^{\lambda_i}}]$ we have
\[
\dim\kernel \theta^k \leq \sum_{i=1}^{l(\lambda)} \dim\kernel N_{1^{\lambda_i}}^k = \sum_{i=1}^{l(\lambda)} \min(\lambda_i, k).
\]
This implies
\[
\left(\prod_{i=1}^{l(\lambda)} [N_{1^{\lambda_i}}], [N_\mu]\right)\neq 0 \;\Rightarrow\; \mu \preceq \lambda' \Leftrightarrow \lambda\preceq\mu',
\]
therefore
\[
\left(I([N_\mu])[(q-1)Z], h_\lambda[Z]\right) = (I([N_\mu]), h_\lambda[Z])_q\neq 0 \;\Rightarrow\; \lambda\preceq\mu',
\]
so the expansion of $I([N_\mu])[(q-1)Z]$ in the monomial basis contains only monomials $m_\lambda$ with $\lambda\preceq\mu'$, which is precisely one of the upper-triangularity properties of the modified Macdonald polynomials, which does not change under specialization $t=0$.
\end{proof}

\begin{cor}\label{cor:hall counting flags}
The number of partial flags of type $\mu$ fixed by $N_\lambda$ over $\BF_q$ equals to $(H_\lambda[Z;q],h_\mu[Z])$.
\end{cor}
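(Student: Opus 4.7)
The plan is to deduce the corollary as an immediate consequence of Theorem \ref{thm:hall algebra} combined with the explicit monomial expansion of the map $I$ recorded just before the statement of that theorem; essentially nothing new needs to be proved.

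First I would recall the formula
\[
I([V,\theta]) \eq \sum_{\mu} m_\mu[Z]\cdot f_\mu(V,\theta),
\]
where $f_\mu(V,\theta)$ denotes the number of partial flags $V_1\subset V_2\subset\cdots\subset V_{l(\mu)}=V$ with $\theta V_i=V_i$ and $\dim V_i/V_{i-1}=\mu_i$. This formula is extracted from the definition
\[
I_N([X]) \eq \left([X],\,\prod_{i=1}^N E(z_i)\right)
\]
by expanding $E(z)=\sum_{n\geq 0} z^n \sum_{\nu\vdash n} \frac{[N_\nu]}{|\Aut N_\nu|}$ and observing that the coefficient of $z_1^{\mu_1}\cdots z_N^{\mu_N}$ in $\prod_i E(z_i)$, paired against $[V,\theta]$ via the Hall form $([X],[Y])=|\Iso(X,Y)|$, counts exactly the $\theta$-stable filtrations with successive quotient dimensions $\mu_i$ (the automorphism denominators in the definition of $E(z)$ cancel against $|\Iso(\cdot,\cdot)|$).

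Second, since the standard Hall scalar product on $\Sym[Z]$ satisfies $(h_\mu,m_\nu)=\delta_{\mu\nu}$, pairing the above expansion against $h_\mu[Z]$ extracts the $m_\mu$-coefficient, giving
\[
(I([V,\theta]),\, h_\mu[Z]) \eq f_\mu(V,\theta).
\]
Specializing $(V,\theta)=N_\lambda$ and invoking the identification $I([N_\lambda])=H_\lambda[Z;q]$ established in Theorem \ref{thm:hall algebra} produces the corollary. The only obstacle worth noting is the bookkeeping in verifying the monomial expansion of $I([V,\theta])$; the substantive content resides in Theorem \ref{thm:hall algebra} itself, and the corollary is simply the translation of that theorem into the language of flag counts.
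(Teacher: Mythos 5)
Your proposal matches the paper's approach exactly: the corollary is read off from the explicit monomial expansion of $I([V,\theta])$ stated just before Theorem \ref{thm:hall algebra}, combined with the identity $I([N_\lambda])=H_\lambda[Z;q]$ established in that theorem. The paper treats this as immediate and gives no separate argument, so your write-up is, if anything, more detailed than the original.
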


\section{Linear algebra over the power series ring}\label{sec:linear algebra}
\subsection{Notations}
For any commutative ring $R$ and $m,n\in\BZ_{\geq 0}$ we denote by $\Mat_{m\times n}(R)$ resp. $\Mat_n(R)$ the module of $m\times n$ matrices resp. $n\times n$ matrices over $R$. We denote by $\GL_n(R)$ the group of invertible matrices, i.e. matrices $g\in\Mat_n(R)$ such that $\det g$ is invertible in $R$. For a field $\Bk$ we denote by $\Bk[[x]]$ the power series ring and by $\Bk((x))$ the field of Laurent series. When $\Bk$ is clear from the context we denote 
\[
\BFK = \Bk((x)),\qquad \BFR = \Bk[[x]].
\]
The \emph{degree} of a matrix $g\in\GL_n(\BFK)$ is defined as $\deg g = \ord\det g$. For $N\geq 0$ we denote by $\Mat_n^{\geq -N}(\BFK)$ the set of matrices whose entries have poles of order at most $N$. We write
\[
\GL_n^+(\BFK) = \GL_n(\BFK)\cap\Mat_n(\BFR),
\]
\[
\GL_n^{\geq -N}(\BFK) = \GL_n(\BFK)\cap\Mat_n^{\geq -N}(\BFR).
\]
We denote by $\GL_{n,d}(\BFK)$, $\GL_{n,d}^+(\BFK)$,  $\GL_{n,d}^{\geq -N}(\BFK)$ the corresponding subsets of matrices of degree $d$. We define the \emph{affine Grassmanian} as
\[
\widehat\Gr_n(\Bk) = \GL_n(\BFK)/\GL_n(\BFR),
\]
and similarly $\widehat\Gr_n^+(\Bk)$, $\widehat\Gr_n^{\geq -N}(\Bk)$, $\widehat\Gr_{n,d}(\Bk)$, $\widehat\Gr_{n,d}^+(\Bk)$, $\widehat\Gr_{n,d}^{\geq -N}(\Bk)$. Note that all these quotients are right. Sometimes it is convenient to consider left quotients too, which we will indicate by subscript $L$, e.g.
\[
\widehat\Gr_n(\Bk)_L = \GL_n(\BFR) \backslash \GL_n(\BFK).
\]
Of course, the left and the right quotients can be identified by the transposition.

The set of $\GL_n(\BFR)$-conjugacy classes of nilpotent matrices is denoted as
\[
\ANilp_n(\Bk) = \{\theta\in\Mat_{n}(\BFR)\;\text{nilpotent}\}/\{\theta\sim g \theta g^{-1}:\, g\in\GL_n(\BFR)\}.
\]
\subsection{Basic facts}
Recall that $\BFR$ is Noetherian, which means that every submodule of a finitely generated module is finitely generated. Also $\BFR$ is a principal ideal domain, which implies that every finitely generated torsion-free module is free.  When dealing with matrices over $\BFR$ we will use the following:
\begin{prop}[Hermite normal form]\label{prop:hermite normal form}
For any $M\in\Mat_{m\times n}(\BFR)$ there exists $g\in \GL_m(\BFR)$ such that the matrix $M'=g M$ is upper triangular ($M'_{i,j}=0$ for $i>j$). 
\end{prop}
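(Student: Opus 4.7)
The plan is to prove this by Gaussian elimination via induction on the number of rows $m$. The two facts I will rely on are: (a) $\BFR = \Bk[[x]]$ is a discrete valuation ring with uniformizer $x$, so every nonzero $a \in \BFR$ factors as $a = u\, x^{\ord a}$ with $u \in \BFR^\times$, and consequently for any $a, b \in \BFR$ with $\ord a \leq \ord b$ the quotient $b/a$ lies in $\BFR$; and (b) the row-permutation matrices, as well as the elementary row-addition matrices $I + c\,E_{i,j}$ with $c \in \BFR$ and $i \neq j$, all lie in $\GL_m(\BFR)$ since their determinants are units. All of the moves in the algorithm amount to left-multiplication by such matrices.

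The base case $m = 1$ is vacuous. For the inductive step, I first clear the first column below the top entry. If the first column of $M$ is identically zero, nothing needs to be done. Otherwise, let $i_0 \in \{1,\dots,m\}$ minimize $\ord M_{i_0,1}$ among the nonzero entries of the first column, and swap rows $1$ and $i_0$ (left-multiplication by a permutation matrix in $\GL_m(\BFR)$) to move this minimal-valuation entry to position $(1,1)$. By fact (a), for every $i > 1$ the ratio $c_i := M_{i,1}/M_{1,1}$ lies in $\BFR$, and subtracting $c_i$ times row $1$ from row $i$ (left-multiplication by $I - c_i E_{i,1} \in \GL_m(\BFR)$) zeroes $M_{i,1}$ without altering row $1$. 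After this stage the first column has the form $(\ast, 0, \ldots, 0)^T$.

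Now I apply the induction hypothesis to the $(m-1) \times (n-1)$ submatrix $M[2:m,\,2:n]$, obtaining some $g' \in \GL_{m-1}(\BFR)$ such that $g'\, M[2:m,\,2:n]$ is upper triangular. Extending this to the block matrix $\mathrm{diag}(1, g') \in \GL_m(\BFR)$ and left-multiplying leaves the first row and first column of the intermediate matrix untouched, so the $(\ast,0,\ldots,0)^T$ shape of column one is preserved while the lower-right block becomes upper triangular; the resulting matrix $M'$ satisfies $M'_{i,j}=0$ for all $i > j$. Composing all the left-multiplications gives the required $g \in \GL_m(\BFR)$.

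There is no serious obstacle here beyond observing that $\BFR$ is Euclidean-like under the valuation; the entire content of the proposition is that any two elements of $\BFR$ are comparable by divisibility, which is exactly what permits the standard row reduction to terminate without leaving the ring.
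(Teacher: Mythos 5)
The paper states this proposition without proof, treating it as a classical fact about matrices over the discrete valuation ring $\BFR=\Bk[[x]]$. Your inductive Gaussian-elimination argument is correct and complete: the key observations that (a) a minimal-valuation pivot makes all the ratios $c_i=M_{i,1}/M_{1,1}$ integral, and (b) permutation and elementary transvection matrices lie in $\GL_m(\BFR)$, are exactly what is needed, and the block-diagonal extension $\mathrm{diag}(1,g')$ correctly preserves the cleared first column. One could quibble about phrasing the degenerate boundary cases (e.g.\ $n<m$, where the recursion eventually reaches a matrix with zero columns) but these are vacuously fine and do not affect the validity of the argument.
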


\begin{prop}[Smith normal form]\label{prop:smith normal form}
For any $M\in\Mat_{m\times n}(\BFR)$ there exists $g_1\in\GL_m(\BFR)$ and $g_2\in\GL_n(\BFR)$ such that $M'=g_1 M g_2$ is diagonal ($M'_{i,j}=0$ for $i\neq j$) and $\ord M'_{i+1,i+1}\geq \ord M'_{i,i}$.
\end{prop}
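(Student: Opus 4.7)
The plan is to prove this by induction on $\min(m,n)$, exploiting that $\BFR = \Bk[[x]]$ is a discrete valuation ring: the valuation $\ord$ is a total order on orders of nonzero elements, and any element of order $\geq k$ is divisible by any element of order exactly $k$. If $M=0$ there is nothing to prove. Otherwise, I pick an entry $M_{i_0,j_0}$ of minimal order $d_1=\min_{i,j}\ord M_{i,j}$, and move it to the $(1,1)$ position by left- and right-multiplication by permutation matrices (which lie in $\GL_m(\BFR)$ and $\GL_n(\BFR)$, since permutation matrices have unit determinant).

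Next I clear the first row and column. Since $\ord M_{1,1}=d_1$ is minimal, for every $i\geq 2$ we have $M_{i,1}/M_{1,1}\in\BFR$, so subtracting $(M_{i,1}/M_{1,1})$ times the first row from the $i$-th row is achieved by left-multiplication by an elementary matrix with entries in $\BFR$ and determinant $1$, hence an element of $\GL_m(\BFR)$. This kills $M_{i,1}$. Analogous column operations kill the remaining entries of the first row. The key observation is that these operations preserve the minimality property for the rest of the matrix: a modified entry $M_{i,j} - (M_{i,1}/M_{1,1}) M_{1,j}$ has order $\geq\min(\ord M_{i,j},\ord M_{1,j})-0\geq d_1$, since $M_{i,1}/M_{1,1}\in\BFR$. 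Thus after this step the matrix has the block-diagonal form $\mathrm{diag}(M'_{1,1},M'')$ with $M''\in\Mat_{(m-1)\times(n-1)}(\BFR)$ and all entries of $M''$ of order $\geq d_1$.

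Applying the inductive hypothesis to $M''$ yields $g'_1\in\GL_{m-1}(\BFR)$ and $g'_2\in\GL_{n-1}(\BFR)$ such that $g'_1 M'' g'_2$ is diagonal with orders $d_2\leq d_3\leq\cdots$ along the diagonal. Embedding $g'_1,g'_2$ as block-diagonal matrices $\mathrm{diag}(1,g'_1)$, $\mathrm{diag}(1,g'_2)$ and composing with the previously constructed transformations produces the desired $g_1,g_2$. Since every entry of $M''$ had order $\geq d_1$, the smallest diagonal order $d_2$ of $g'_1 M'' g'_2$ satisfies $d_2\geq d_1$, giving the required inequality $\ord M'_{2,2}\geq\ord M'_{1,1}$, and the induction yields the remaining inequalities automatically.

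There is no serious obstacle; the main point requiring care is to check at each step that the elementary and permutation operations used are implemented by matrices in $\GL_m(\BFR)$ and $\GL_n(\BFR)$ rather than merely in $\GL_m(\BFK)$ and $\GL_n(\BFK)$. This is ensured by the choice to pivot on an entry of minimum order: all the multipliers involved are quotients of entries by the pivot, and thus lie in $\BFR$ with unit-determinant elementary matrices.
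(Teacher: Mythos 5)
Your proof is correct. The paper states Smith normal form over $\BFR=\Bk[[x]]$ without proof (it is a standard fact about matrices over a discrete valuation ring), so there is no in-text argument to compare against; your inductive argument — pivot on an entry of minimal valuation, clear its row and column using elementary operations whose multipliers lie in $\BFR$ because the pivot has minimal order, observe the lower-right block retains the minimum-order bound, and recurse — is precisely the standard DVR proof of this result. The one place to be slightly careful, which you handle correctly, is that the row operations should be performed first so that the subsequent column operations do not disturb the already-cleared first column, and that the order bound $\geq d_1$ on the remaining block uses only that the multipliers $M_{i,1}/M_{1,1}$ have nonnegative order.
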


In both normal forms the orders of the diagonal entries $\ord M'_{i,i}$ do not depend on the choices of $g$ resp. $g_1, g_2$.

We have 
\begin{prop}\label{prop:finiteness grass}
If $\Bk$ is finite, then $\widehat\Gr_{n,d}^+(\Bk)$, $\widehat\Gr_{n,d}^{\geq -N}(\Bk)$ are finite sets for all $n,d,N$. Every equivalence class in $\widehat\Gr_n(\Bk)$ can be represented by a matrix whose entries are Laurent polynomials.
\end{prop}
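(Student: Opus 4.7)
My plan is to reformulate the right coset space $\widehat{\Gr}_n^+(\Bk)$ in terms of $\BFR$-lattices. Sending $M\in\GL_n^+(\BFK)$ to the sublattice $L_M:=M\BFR^n\subset\BFR^n$ gives a bijection between $\widehat{\Gr}_n^+(\Bk)$ and the set of full $\BFR$-sublattices of $\BFR^n$: indeed, $M'=Mh$ with $h\in\GL_n(\BFR)$ clearly gives the same image, and conversely $L_M=L_{M'}$ forces $M^{-1}M'\in\GL_n(\BFR)$. Smith normal form (Proposition~\ref{prop:smith normal form}) then shows that the degree $d=\ord\det M$ coincides with the length $\dim_\Bk(\BFR^n/L_M)$, since modulo $\GL_n(\BFR)$-action on both sides $M$ becomes diagonal with entries $x^{a_i}$, $\sum a_i=d$.

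With this dictionary in hand, the finiteness of $\widehat{\Gr}_{n,d}^+(\Bk)$ reduces to a short observation: if $\BFR^n/L$ has length $d$ over the discrete valuation ring $\BFR$, then by the structure theorem $x^d$ annihilates this quotient, so $x^d\BFR^n\subset L\subset\BFR^n$. Thus the lattices in question are in bijection with the $\BFR$-submodules of the finite module $\BFR^n/x^d\BFR^n\cong(\Bk[x]/x^d)^n$, of which there are finitely many whenever $\Bk$ is finite. For $\widehat{\Gr}_{n,d}^{\geq -N}(\Bk)$, the map $M\mapsto x^N M$ is $\GL_n(\BFR)$-equivariant for the right action and injects $\GL_{n,d}^{\geq -N}(\BFK)$ into $\GL_{n,d+nN}^+(\BFK)$, reducing this case to the one already handled.

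For the last assertion, given $M\in\GL_n(\BFK)$, I will pick $N\geq 0$ so that both $M$ and $M^{-1}$ lie in $\Mat_n^{\geq -N}(\BFK)$, fix any integer $K\geq N+1$, and truncate: write $M=P+R$ where $P$ is the Laurent polynomial matrix retaining the coefficients of $x^{j}$ for $-N\leq j<K$ and $R\in x^K\Mat_n(\BFR)$. Then $M^{-1}R\in x^{K-N}\Mat_n(\BFR)\subset x\Mat_n(\BFR)$, so $I-M^{-1}R$ reduces to $I$ modulo $x$ and is therefore invertible in $\GL_n(\BFR)$; hence $P=M(I-M^{-1}R)$ is a Laurent polynomial representative of the coset $M\GL_n(\BFR)$. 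The only step that requires real care is this last pole-order bookkeeping: the truncation must be performed far enough beyond the pole order of $M^{-1}$ for the correction factor to be $\equiv I\pmod{x}$ and hence a unit of $\GL_n(\BFR)$. Everything else is a routine DVR argument.
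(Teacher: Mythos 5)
Your argument is correct, and it takes a genuinely different route from the paper's. The paper applies Hermite normal form to reduce any coset in $\widehat\Gr_{n,d}^+$ to a lower-triangular representative with monomial diagonal $x^{m_1},\ldots,x^{m_n}$, $\sum m_i = d$, and then analyzes the residual $\GL_n(\BFR)$-stabilizer to show the remaining invariants live in the finite sets $\Bk[[t]]/t^{m_i}\Bk[[t]]$; as a byproduct it obtains the explicit generating function $\sum_d t^d\,|\widehat\Gr_{n,d}^+| = \prod_{i=0}^{n-1}(1-q^i t)^{-1}$, and the Laurent polynomial representative comes out implicitly from truncating the $a_{i,j}$. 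You instead translate cosets into full $\BFR$-lattices $L\subset\BFR^n$ with $\dim_\Bk(\BFR^n/L)=d$, observe that $x^d$ kills the quotient so every such lattice sits between $x^d\BFR^n$ and $\BFR^n$, and conclude finiteness from the finiteness of $(\Bk[x]/x^d)^n$; your Laurent-polynomial step is a separate truncation argument showing $M(I - M^{-1}R)\in\GL_n(\BFR)$ absorbs the tail. Both are sound; the paper's proof is more computational and delivers the exact count it later exploits, while yours is cleaner conceptually (one small imprecision: lattices of colength exactly $d$ merely \emph{inject} into the submodules of $\BFR^n/x^d\BFR^n$ rather than biject, but that is all the finiteness argument needs). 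Your direct truncation proof of the Laurent polynomial claim is, if anything, more transparent than the paper's implicit treatment.
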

\begin{proof}
This is well-known, but for convenience of a computationally inclined reader we give an explicit proof. We have that multiplication by $x^N$ gives a bijection $\widehat\Gr_{n,d}^{\geq -N}(\Bk)\cong \widehat\Gr_{n,d+Nn}^+(\Bk)$ so it is enough to consider $\widehat\Gr_{n,d}^+(\Bk)$.
We start with a matrix $g\in \Mat_n(\BFR)$ and let $g_i$ denote the $i$-th row:
\[
g = \begin{pmatrix}g_1\\ \vdots\\ g_n\end{pmatrix}.
\]
Applying Hermite normal form, Proposition \ref{prop:hermite normal form}, to the transpose of $g$ we see that any point of $\widehat\Gr_n$ can be represented by a lower-triangular matrix. Moreover, we can assume the diagonal entries are monomials $x^{m_1},\ldots,x^{m_n}$. We have $\sum m_i = d$, $m_i\geq 0$, so the number of possibilities for the tuple $(m_1,\ldots,m_n)$ is finite. Now suppose $g$ is in such form. The subset of matrices $h\in \GL_n(\BFR)$ such that $gh$ is again in this form is precisely the set of lower-triangular matrices of power series with $1$ on the diagonal. It is clear that multiplying by such $h$ is equivalent to adding to each $a_{i,j}$ an arbitrary series from $t^{m_i}\Bk[[t]$. Summarizing, the complete set of invariants of $g$ modulo $\GL_n(\Bk[[t]])$ is the integers $m_1,\ldots,m_n$ and the elements $a_{i,j}\in \Bk[[t]]/t^{m_i} \Bk[[t]]$ for $i>j$. So the set of possibilities is finite. We give an explicit formula of its size. Let $q=|\Bk|$. Then we have
\[
\sum_{d=0}^\infty t^d |\widehat\Gr^+_{n,d}(\Bk)| = \sum_{m_1,\ldots,m_n=0}^\infty  q^{m_2+2m_3+\cdots+(n-1)m_n} t^{\sum m_i} = \frac{1}{(1-t)(1-q t)\cdots(1-q^{m-1}t)}.
\]
\end{proof}

\subsection{Normal form of a nilpotent matrix}
We need to classify nilpotent matrices over $\BFR$ up to conjugation by $\GL_n(\BFR)$.

First note that a matrix over $\BFR$ is in particular a matrix over $\BFK$, and nilpotent matrices can be transformed into Jordan form over any field. The sizes of Jordan blocks form an invariant of a matrix. We prefer to consider a slightly different version of this:
\begin{defn}
Let $\theta$ be a nilpotent $n\times n$ matrix over a field. Its type $\type(\theta)$ is defined as a partition $\type(\theta)=(\lambda_1\geq\lambda_2\geq\cdots)$ where $\lambda_i = \dim \kernel \theta^i - \dim \kernel \theta^{i-1}$. The type of a matrix over $\BFR$ is defined as the corresponding type over $\BFK$, or equivalently by $\lambda_i = \rank \kernel \theta^i - \rank \kernel \theta^{i-1}$ where we have that $\kernel \theta^{i}$ is free for every $i$. The set of $\GL_n(\BFR)$-conjugacy classes of nilpotent matrices of type $\lambda$ is denoted by $\ANilp_\lambda(\Bk)\subset\ANilp_n(\Bk)$.
\end{defn}

For instance, if $\theta=0$ we have $\type(\theta)=(n)$. In general, if we transform $\theta$ to the Jordan form over a field then the sizes of the blocks are given by the conjugate partition $\type(\theta)'$. 
 
To every nilpotent $\theta\in\Mat_n(\BFR)$ we associate the \emph{kernel filtration}:
\[
\BFR^n = \kernel \theta^0 \supset \kernel \theta^1 \supset \kernel \theta^1\supset \cdots.
\]
We have an exact sequence
\begin{equation}\label{eq:kernel exact sequence}
0\to \kernel \theta^k \to \kernel\theta^{k+1} \to \image \theta^k|_{\kernel\theta^{k+1}} \to 0
\end{equation}
for every $k$. Therefore $\kernel\theta^{k+1}/\kernel\theta^{k}$ is free, and so the exact sequence splits for every $k$. Thus we obtain that any nilpotent matrix can be conjugated to a block-upper-triangular form, which we call a \emph{kernel form}:
\begin{equation}\label{eq: block upper triangular}
\theta=\begin{pmatrix}
0 & \theta_{1,2} & \theta_{1,3} & \cdots & \theta_{1,m}\\
0 & 0 & \theta_{2,3}& \cdots & \theta_{2,m}\\
\vdots & \vdots& \vdots & \cdots & \vdots\\
0 & 0 & 0 & \cdots & \theta_{m-1,m}\\
0 & 0 & 0 & \cdots & 0
\end{pmatrix},
\end{equation}
with blocks of sizes $\lambda_1,\lambda_2,\ldots,\lambda_m$ where $\lambda=\type(\theta)$ and $\theta_{i,i+1}$ has maximal rank $\lambda_{i+1}$ for all $i$.

Then we have a mild strengthening of this:
\begin{prop}\label{prop:strong kernel form}
For any nilpotent $\theta\in\Mat_n(\BFR)$ there exists $g\in\GL_n(\BFR)$ such that $\theta^g = g \theta g^{-1}$ is in the kernel form as above with each $\theta_{i,i+1}$ upper-triangular.
\end{prop}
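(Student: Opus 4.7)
My plan is to start from the kernel form \eqref{eq: block upper triangular} already established in the paragraph preceding the proposition and to further conjugate by a block-diagonal element $g = \mathrm{diag}(g_1, \ldots, g_m) \in \GL_n(\BFR)$ with $g_i \in \GL_{\lambda_i}(\BFR)$. Such a $g$ preserves the block upper triangular structure with zero diagonal blocks, and a direct computation gives $(g \theta g^{-1})_{i,j} = g_i \theta_{i,j} g_j^{-1}$ for every $i < j$; in particular each superdiagonal block transforms as $\theta_{i,i+1} \mapsto g_i \theta_{i,i+1} g_{i+1}^{-1}$. The task is thereby reduced to choosing $g_1, \ldots, g_m$ so that each such product is upper triangular.

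I would construct the $g_i$ by downward induction on $i$, starting from $g_m = \Id$. Assuming $g_{i+1}$ has already been chosen, I apply Hermite normal form (Proposition \ref{prop:hermite normal form}) to the rectangular matrix $\theta_{i,i+1} g_{i+1}^{-1} \in \Mat_{\lambda_i \times \lambda_{i+1}}(\BFR)$ to obtain $g_i \in \GL_{\lambda_i}(\BFR)$ for which $g_i \theta_{i,i+1} g_{i+1}^{-1}$ is upper triangular. Invertibility of $g_i$ and $g_{i+1}$ automatically preserves the maximal rank condition on $\theta_{i,i+1}$, so after all the $g_i$ have been chosen, the resulting matrix is again in kernel form, now with each superdiagonal block upper triangular.

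The main point of care is the direction of the induction. Once the block $(i,i+1)$ has been rectified, choosing any subsequent $g_j$ with $j < i$ does not alter it, because the superdiagonal block at $(i,i+1)$ depends only on $g_i$ and $g_{i+1}$. Going in the opposite order would require compensating later choices to avoid spoiling earlier ones, so the downward order is essential. Beyond getting this bookkeeping right, no step is genuinely difficult — the proposition amounts to a simultaneous application of Hermite normal form along the superdiagonal, made consistent by the block-diagonal form of the conjugating element.
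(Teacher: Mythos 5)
Your proof is correct and follows the same approach as the paper: conjugate by a block-diagonal $g=\mathrm{diag}(g_1,\ldots,g_m)$, set $g_m=\Id$, and choose $g_{m-1},\ldots,g_1$ by applying Hermite normal form (Proposition \ref{prop:hermite normal form}) to $\theta_{i,i+1}g_{i+1}^{-1}$. One small imprecision in your closing remark: the issue with upward induction is not that later choices would "spoil" earlier blocks (block $(i,i+1)$ always depends only on $g_i,g_{i+1}$), but rather that fixing $g_i$ and then trying to triangularize via $g_{i+1}$ would require a right-multiplication version of Hermite normal form, whereas the lemma as stated provides left multiplication — hence the downward order.
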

\begin{proof}
We may assume $\theta$ is already in a kernel form. Then we will apply a block-diagonal matrix $g$ with blocks $g_1, g_2, \ldots, g_m$. So $\theta_{i,i+1}$ is transformed to $g_i \theta_{i,i+1} g_{i+1}^{-1}$. We see that in order to achieve the result we can start with $g_m=\Id$, and then choose $g_{m-1}, g_{m-2}, \ldots$ one by one, at each step applying Proposition \ref{prop:hermite normal form} to transform $\theta_{i,i+1}$ into a Hermite normal form.
\end{proof}

Next we allow transformations by matrices in $\GL_n(\BFK)$ which are not completely arbitrary:
\begin{defn}
Let $\theta\in\Mat_{n}(\BFR)$ be a nilpotent matrix. A matrix $g\in\GL_n(\BFK)$ is \emph{kernel-strict} if restricts to an isomorphism
\[
\kernel \theta \toiso (\kernel g \theta g^{-1})\cap \BFR^n.
\]
\end{defn}
For instance, if both $\theta$ and $g\theta g^{-1}$ are in the kernel form, then kernel-strictness means that $g$ has block form
\[
\begin{pmatrix}
g_{1,1} & g_{1,2}\\
0 & g_{2,2}
\end{pmatrix}
\]
with $g_{1,1}\in\GL_{\lambda_1}(\BFR)$.

For a partition $\lambda$ we denote by $N_\lambda$ the matrix composed of blocks $N_{i,j}$ of sizes $\lambda_i\times \lambda_j$ with all blocks zero except 
\[
N_{i,i+1} = \begin{pmatrix}\Id_{\lambda_{i+1}} \\ 0_{(\lambda_{i}-\lambda_{i+1})\times \lambda_{i+1}}\end{pmatrix}.
\]
We call it the \emph{standard nilpotent matrix of type $\lambda$}.
We then have the following result:
\begin{lem}\label{lem:classify}
Let $\theta\in\Mat_n(\BFR)$ be nilpotent of type $\type(\theta)=\lambda$. Then there exists a kernel-strict $g\in\GL_n(\BFK)$ such that $g\theta g^{-1}=N_\lambda$. We have that $d=\deg g$ does not depend on the choice of $g$, and $d\geq 0$. There exists a bound $N(\lambda, d)$ depending only on $\lambda$ and $d$ such that $g$ can be chosen in $\GL_{n,d}^{\geq -N(\lambda, d)}$.
\end{lem}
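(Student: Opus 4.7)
The plan is to split the proof into three parts: existence of a kernel-strict $g$, independence of $d=\deg g$ from its choice, and the non-negativity together with the pole-order bound. First apply Proposition~\ref{prop:strong kernel form} to replace $\theta$ by some $\tilde\theta$ in strong kernel form via a $\GL_n(\BFR)$-conjugation, which is automatically kernel-strict, so one may assume $\theta=\tilde\theta$ is in this form.

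\textbf{Existence.} I would build a Jordan basis of $\tilde\theta$ adapted to the $\BFR$-lattice. The integer kernel $\kernel\tilde\theta$ carries a decreasing filtration by saturated $\BFR$-submodules $F_j := \kernel\tilde\theta\cap \tilde\theta^{j-1}(\BFK^n)$, of ranks $\lambda_j$. Choose an $\BFR$-basis $w_1,\dots,w_{\lambda_1}$ of $\kernel\tilde\theta$ such that $w_1,\dots,w_{\lambda_j}$ span $F_j$ for each $j$; then each $w_k$ lifts to some $v_k\in\BFK^n$ with $\tilde\theta^{\lambda'_k-1}v_k=w_k$, since $w_k\in F_{\lambda'_k}$. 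The chains $v_{k,j}:=\tilde\theta^{\lambda'_k-j}v_k$ give $n$ vectors in $\BFK^n$, which are $\BFK$-linearly independent by a standard descent argument applying the largest relevant power of $\tilde\theta$ to a putative relation and using the $\BFR$-independence of the $w_k$'s. Define $h\in\GL_n(\BFK)$ by $h(e_{k,j}):=v_{k,j}$, so that $h^{-1}N_\lambda h = \tilde\theta$ and $g := h^{-1}$ satisfies $g\tilde\theta g^{-1}=N_\lambda$. Since $h(e_{k,1})=w_k\in\kernel\tilde\theta$, both $h$ and $g$ are block upper-triangular in the $\lambda_1+(n-\lambda_1)$-decomposition with top-left block in $\GL_{\lambda_1}(\BFR)$; so $g$ is kernel-strict.

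\textbf{Independence of $d$.} If $g_1,g_2$ are two kernel-strict solutions, then $C:=g_1g_2^{-1}$ is a kernel-strict element of the centralizer of $N_\lambda$ in $\GL_n(\BFK)$. Viewing $\BFK^n$ as the $\BFK[t]$-module $\bigoplus_k \BFK[t]/t^{\lambda'_k}$ with $t$ acting as $N_\lambda$, centralizer elements correspond to matrices $(p_{k,k'}(t))$ of polynomial multipliers. Since $C$ preserves the filtration $\BFK^n\supset N_\lambda\BFK^n\supset N_\lambda^2\BFK^n\supset\cdots$, computing its determinant on each successive quotient (of dimension $\lambda_j$) yields the factorization $\det C=\prod_s (\det P_s)^s$, where $s$ runs over distinct Jordan block sizes and $P_s$ is the matrix of constant terms $p_{k,k'}(0)$ for $k,k'$ indexing size-$s$ blocks. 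Kernel-strictness forces each $P_s$ to appear as a diagonal block of the restriction of $C$ to $\kernel N_\lambda$, which lies in $\GL_{\lambda_1}(\BFR)$; hence each $\det P_s\in\BFR^*$, so $\det C\in\BFR^*$ and $\deg C=0$, giving $\deg g_1=\deg g_2$.

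\textbf{Non-negativity and pole bound.} From $g\tilde\theta^j g^{-1}=N_\lambda^j$ the conjugator $g$ is block upper-triangular in the finer $\lambda_1+\lambda_2+\cdots+\lambda_m$-decomposition, with diagonal blocks $g_i$ representing the induced isomorphisms on the successive quotients $\kernel\tilde\theta^i/\kernel\tilde\theta^{i-1}$. By a careful choice in the construction (or inductively on the length $m$ of $\lambda$), these $g_i$ can be arranged to lie in $\Mat_{\lambda_i}(\BFR)$, whence $\det g=\prod_i\det g_i\in\BFR$ and $d=\ord\det g\geq 0$. The pole bound $N(\lambda,d)$ comes from tracking the denominators introduced in the off-diagonal blocks of $g$ when inverting the upper-triangular blocks $U_i$ of $\tilde\theta_{i,i+1}$: their orders are controlled by the Smith invariants of the $U_i$'s, which sum to $d$, together with a combinatorial factor depending on $\lambda$ alone. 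The main obstacle is the determinant factorization $\det C=\prod_s(\det P_s)^s$ in the uniqueness step: matching the $\BFK[t]$-module description of the centralizer with the $\BFR$-kernel-strictness condition to identify the $P_s$ as diagonal blocks of $C|_{\kernel N_\lambda}$ requires careful bookkeeping.
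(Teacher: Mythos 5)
Your existence and independence arguments are correct, and they take a genuinely different route from the paper. For existence, the paper conjugates the strong-kernel-form matrix by an explicit block-diagonal $h\in\Mat_n(\BFR)$ (where $h_i$ is the top block of $h_{i-1}\theta_{i-1,i}$), then kills off-diagonal blocks with unipotent block-upper-triangular matrices; you instead build a Jordan basis over $\BFK$ compatible with the lattice $\kernel\theta\subset\BFR^n$. For independence of $d$, the paper uses the elementary observation that $h_i N_{i,i+1}=N_{i,i+1}h_{i+1}$ forces $h_{i+1}$ to be the top-left $\lambda_{i+1}\times\lambda_{i+1}$ block of $h_i$ and then iterates from $h_1\in\GL_{\lambda_1}(\BFR)$; your factorization $\det C=\prod_s(\det P_s)^s$ via the $\BFK[t]$-module description, combined with the observation that the $P_s$ are the diagonal blocks of the block-upper-triangular matrix $C|_{\kernel N_\lambda\cap\BFR^n}\in\GL_{\lambda_1}(\BFR)$, is a correct and somewhat more conceptual alternative. (These are two views on the same phenomenon.) For $d\geq 0$, your ``by a careful choice in the construction'' is misleading: for $\theta$ in kernel form, \emph{any} kernel-strict $g$ is automatically block upper-triangular with $g_i\in\Mat_{\lambda_i}(\BFR)$, since $g_i\theta_{i,i+1}=N_{i,i+1}g_{i+1}$ exhibits $g_{i+1}$ as the top rows of $g_i\theta_{i,i+1}$ and $g_1\in\GL_{\lambda_1}(\BFR)$ by kernel-strictness; there is no choice to be made, and the claim holds for the Jordan-basis $g$ as well.

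The pole bound $N(\lambda,d)$, however, is a genuine gap. The Jordan-basis construction only produces $v_k\in\BFK^n$ by an abstract lift, and nothing in it bounds the pole orders of the $v_k$'s, hence of $g=h^{-1}$, in terms of $\lambda$ and $d$ alone. Your closing sentence switches to an entirely different construction (``inverting the upper-triangular blocks $U_i$'') without carrying it out. In particular, even after normalizing the superdiagonal blocks to $N_{i,i+1}$, the off-diagonal blocks $\theta_{i,j}$ with $j>i+1$ still need to be killed; the paper does this by a sequence of unipotent block-upper-triangular conjugations, and it is precisely by tracking the pole growth through these iterated conjugations (starting from poles of order $\leq d$ in $\theta^h$) that the paper obtains a bound depending only on $\lambda$ and $d$. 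The phrase ``Smith invariants\ldots together with a combinatorial factor depending on $\lambda$ alone'' asserts the conclusion rather than proving it; you would need to either redo the block-conjugation construction carefully or show that the Jordan-basis $g$ can be corrected by left multiplication by $Z(N_\lambda)\cap\GL_n(\BFR)$ to have controlled poles, and neither is done.
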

\begin{proof}
We may assume that $\theta$ is already in the form of Proposition \ref{prop:strong kernel form}. Then we conjugate $\theta$ by a block-diagonal matrix $h$ with blocks $h_1, h_2, \ldots, h_m$ so that $\theta_{i,i+1}$ becomes $h_i \theta_{i,i+1} h_{i+1}^{-1}$. We will have each $h_i$ upper-triangular matrix with entries in $\BFR$. First we set $h_1=\Id_{\lambda_1}$. Then let $h_2$ be the top $\lambda_2\times\lambda_2$ block of $\theta_{1,2}$. Proceeding in this way we will define $h_i$ to be the top $\lambda_i\times\lambda_i$ block of $h_{i-1} \theta_{i-1,i}$, which is an upper-triangular $\lambda_{i-1}\times\lambda_i$ matrix so that its lower $(\lambda_{i}-\lambda_{i-1})\times \lambda_{i}$ block is guaranteed to be zero. As a result we obtain a block-diagonal matrix $h$ such that $\theta^h=h \theta h^{-1}$ is in the kernel form with blocks $\theta^h_{i,i+1}=N_{i,i+1}$:
\[
\theta^h=\begin{pmatrix}
0 & N_{1,2} & \theta_{1,3}^h & \cdots & \theta_{1,m}^h\\
0 & 0 & N_{2,3}& \cdots & \theta_{2,m}^h\\
\vdots & \vdots& \vdots & \cdots & \vdots\\
0 & 0 & 0 & \cdots & N_{m-1,m}\\
0 & 0 & 0 & \cdots & 0
\end{pmatrix}.
\]
Let $d = \ord\det h$. We have $h\in\Mat_n(\BFR)$ and $h^{-1}\in x^{-d} \Mat_n(\BFR)$. Therefore $\theta^h\in x^{-d} \Mat_n(\BFR)$. Next we conjugate $\theta h$ by block upper triangular matrices $f$ satisfying $f_{i,i}=\Id_{\lambda_i}$. Pick $i,j$ with $i<j-1$ and a $\lambda_i\times \lambda_j$ matrix $C$. Let $f$ be such that $f_{i,j}=C$, all the diagonal blocks are identity, and the rest are zero. Then conjugation by $f$ does the following: It takes a block row $j$, multiplies it on the left by $C$ and adds it to the block row $i$. Then it takes the block column $i$, multiplies it on the right by $C$ and subtracts it from the block row $j$. Thus we can turn to zero the blocks $\theta^h_{i,m}$ with $i<m-1$ modifying only blocks $\theta^h_{i,j}$ with $j<m$. Proceeding in this fashion for each $j=m,m-1,\ldots$ we will turn $\theta^h_{i,j}$ to zero modifying only blocks $\theta^h_{i',j'}$ with $j'<j$, so that the blocks with $j'>j$ remain zero.

In the end we set $g$ to be the product of $h$ and all these matrices $f$ so that we obtain $g \theta g^{-1} = N_\lambda$. The matrix $g$ is block-upper triangular with $g_{1,1}=h_{1,1}=\Id_{\lambda_1}$, hence it is kernel-strict. We have $\deg g = \deg h = d\geq 0$. The poles of $\theta^h$ have orders bounded by $d$. Thus we can bound the maximal order of the poles of the matrix $C$ in the above procedure at each step independently of $\theta$.

Finally, let us show that $d$ does not depend on the choice of $g$. Suppose $g, g'\in\GL_n(\BFK)$ are kernel-strict matrices such that $g \theta g^{-1} = g' \theta g'^{-1} = N_\lambda$. Then $h=g' g^{-1}$ is kernel-strict and commutes with $N_\lambda$. Hence $h$ must preserve the kernel filtration, which means that $h$ is block-upper-triangular. Denote the diagonal blocks of $h$ by $h_i$. We have that $h_i N_{i,i+1} = N_{i,i+1} h_{i+1}$. In particular, the set of entries of $h_{i+1}$ is a subset of the set of entries of $h_i$ for each $i$. We have $h_1\in\GL_{\lambda_1}(\BFR)$ because $h$ is kernel-strict. Thus all entries of $h_i$ are in $\BFR$. Applying the same argument for $h^{-1}$ we see that all entries of $h_i^{-1}$ are also in $\BFR$. Therefore $h_i\in\GL_{\lambda_i}(\BFR)$, so its determinant has order $0$. The determinant of $h$ is the product of the determinants of $h_i$, so we have $\deg h = 0$. Therefore $\deg g = \deg g'$.
\end{proof}

\begin{defn}
We define the \emph{degree} of nilpotent $\theta\in\Mat_n(\BFR)$ as $\deg g$ in the above construction. Thus we obtain a function $\deg: \ANilp_n(\Bk) \to \BZ_{\geq 0}$.
\end{defn}

\begin{cor}[Of the proof]\label{cor:non-degenerate}
Let $\theta\in \Mat_n(\BFR)$ be nilpotent of type $\lambda$ and $\theta(0)\in\Mat_n(\Bk)$ its specialization. The following conditions are equivalent:
\begin{enumerate}
\item $\theta \sim N_\lambda$ in $\ANilp_n(\Bk)$.
\item $\type(\theta(0))=\lambda$,
\item $\deg \theta=0$,
\end{enumerate}
\end{cor}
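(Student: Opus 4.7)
The three conditions fit into a cycle $(1) \Rightarrow (2)$, $(1) \Rightarrow (3)$, $(3) \Rightarrow (1)$, and $(2) \Rightarrow (3)$. I would start with the two implications out of (1), which are essentially formal. If $g\theta g^{-1} = N_\lambda$ with $g \in \GL_n(\BFR)$, then reducing modulo $x$ gives $g(0)\theta(0)g(0)^{-1} = N_\lambda$ with $g(0) \in \GL_n(\Bk)$, so $\type(\theta(0)) = \type(N_\lambda) = \lambda$, which is (2). Moreover such a $g$ automatically preserves $\BFR^n$ so is kernel-strict, and since $\det g$ is a unit of $\BFR$ we have $\deg g = 0$; by the uniqueness of $\deg$ in Lemma~\ref{lem:classify}, $\deg \theta = 0$, which is (3).

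The implication $(3) \Rightarrow (1)$ falls out of the construction in the proof of Lemma~\ref{lem:classify}. There one first brings $\theta$ to the strong kernel form of Proposition~\ref{prop:strong kernel form}, then conjugates by a block-diagonal matrix $h \in \Mat_n(\BFR)$ whose diagonal blocks $h_i$ are upper-triangular, and finally applies block-upper-triangular corrections $f$ with identity diagonal blocks. By construction $\deg \theta = \ord \det h = \sum_i \ord \det h_i$, and each $\ord \det h_i$ is a sum of nonnegative orders of diagonal entries of $h_i$. If (3) holds then all of these orders vanish, so each $h_i \in \GL_{\lambda_i}(\BFR)$ and $h \in \GL_n(\BFR)$. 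Consequently $\theta^h \in \Mat_n(\BFR)$, and the correction matrices $C$ used to build $f$ can then be chosen in $\Mat(\BFR)$, placing $g \in \GL_n(\BFR)$ and yielding (1).

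The final step $(2) \Rightarrow (3)$ is the only one requiring work beyond bookkeeping. Put $\theta$ in the strong kernel form~\eqref{eq: block upper triangular} and let $V_1, \ldots, V_m \subset \Bk^n$ be the corresponding blocks of $\theta(0)$. Block-upper-triangularity with zero diagonal gives $V_1 \oplus \cdots \oplus V_i \subseteq \kernel \theta(0)^i$, and under $\type(\theta(0)) = \lambda$ the dimensions force equality. The standard fact that for any nilpotent $T$ the induced maps $\kernel T^{i+1}/\kernel T^i \to \kernel T^i/\kernel T^{i-1}$ are injective then translates into injectivity of $\theta(0)_{i,i+1}\colon V_{i+1} \to V_i$; since this matrix is upper-triangular it must have a nonvanishing diagonal, so its top $\lambda_{i+1} \times \lambda_{i+1}$ block is invertible over $\Bk$. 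An induction on $i$ shows each $h_i(0)$ in the construction is invertible, hence $h \in \GL_n(\BFR)$ and $\deg \theta = 0$. The main obstacle throughout is precisely this comparison of kernel filtrations over $\BFR$ and over $\Bk$: one needs to know that when $\type(\theta(0))$ remains equal to $\type(\theta)$ no extra kernel appears upon specialization, and this rank stability is exactly what makes the relevant diagonal entries of $h$ invertible.
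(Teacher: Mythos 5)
Your proposal is correct and follows essentially the same route as the paper: $(1)\Rightarrow(2)$ by reducing modulo $x$, $(2)\Rightarrow(3)$ by showing the kernel filtration specializes compatibly so that the diagonal entries of $h(0)$ are nonzero, and $(3)\Rightarrow(1)$ by tracing $\ord\det h=0$ through the construction in Lemma~\ref{lem:classify}. Your extra justification via the standard injectivity of the maps $\kernel T^{i+1}/\kernel T^{i}\to\kernel T^{i}/\kernel T^{i-1}$ is just an unpacking of the paper's observation that $(\kernel\theta^i)\otimes_\BFR\Bk=\kernel\theta(0)^i$, and the implication $(1)\Rightarrow(3)$ is redundant once $(2)\Rightarrow(3)$ and $(1)\Rightarrow(2)$ are in place.
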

\begin{proof}
Clearly (i) implies (ii). To show that (ii) implies (iii) we note that in general $(\kernel\theta^i) \otimes_\BFR \Bk\subset \kernel \theta^i(0)$, so $\type(\theta(0))=\type\theta$ implies equality of dimensions, and hence equality $(\kernel\theta^i) \otimes_\BFR \Bk\subset \kernel \theta^i(0)$. Thus if $\theta$ is in a kernel form, then $\theta(0)$ is too. In particular, the diagonal entries of $\theta_{i,i+1}(0)$ in the above proof are non-zero. Thus the diagonal entries of $h(0)$ in the proof of Lemma \ref{lem:classify} are non-zero. Therefore $d=\ord\det h=0$.

To show that (iii) implies (i) we note that $h\in\Mat_n(\BFR)$ in the proof of Lemma \ref{lem:classify}, so $\ord\det h=d=0$ implies $h\in\GL_n(\BFR)$. Hence we have $\theta^h\in\Mat_n(\BFR)$. This shows that the matrices $f$ involved in the construction are also in $\GL_n(\BFR)$. Therefore $g\in\GL_n(\BFR))$.
\end{proof}

\begin{defn}
A nilpotent matrix $\theta$ satisfying the conditions of the above Corollary is said to be \emph{non-degenerate}.
\end{defn}

Unfortunately, we do not have an \emph{explicit} complete classification of nilpotent matrices. However, we have

\begin{cor}
Suppose $\Bk$ is a finite field. Then the set of $\GL_n(\BFR)$-conjugacy classes of nilpotent matrices over $\BFR$ of type $\lambda$ and degree $d$ is finite for all $\lambda$ and $d$.
\end{cor}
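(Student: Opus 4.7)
The plan is to encode each conjugacy class by a bounded amount of data in the affine Grassmannian, using Lemma \ref{lem:classify} as the bridge from $\ANilp_\lambda$ to $\widehat\Gr_n$.

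First, fix $\lambda$ and $d$. By Lemma \ref{lem:classify}, for every nilpotent $\theta\in\Mat_n(\BFR)$ of type $\lambda$ and degree $d$ we may pick a kernel-strict $g\in\GL_{n,d}^{\geq -N(\lambda,d)}(\Bk)$ with $g\theta g^{-1}=N_\lambda$. Let $Z(N_\lambda)\subset\GL_n(\BFK)$ denote the centralizer of $N_\lambda$. Any two choices $g, g'$ of such straightening matrix for a single $\theta$ satisfy $g' = zg$ for some $z\in Z(N_\lambda)$, while replacing $\theta$ by $h\theta h^{-1}$ with $h\in\GL_n(\BFR)$ replaces $g$ by $gh^{-1}$. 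Hence the assignment
\[
[\theta]\;\longmapsto\; Z(N_\lambda)\,g\,\GL_n(\BFR)\;\in\;Z(N_\lambda)\backslash\GL_n(\BFK)/\GL_n(\BFR)
\]
is a well-defined function on $\GL_n(\BFR)$-conjugacy classes.

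Next I would verify injectivity of this assignment. If two classes $[\theta_1]$ and $[\theta_2]$ hit the same double coset, then we can write $g_2 = z g_1 h$ with $z\in Z(N_\lambda)$ and $h\in\GL_n(\BFR)$, and then
\[
\theta_2 = g_2^{-1}N_\lambda g_2 = h^{-1}g_1^{-1}\,z^{-1}N_\lambda z\,g_1 h = h^{-1}\theta_1 h,
\]
so $\theta_1$ and $\theta_2$ are conjugate. Thus the set of conjugacy classes injects into the double coset space.

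Finally, I would bound the image. By construction each representative $g$ can be chosen in $\GL_{n,d}^{\geq -N(\lambda,d)}(\Bk)$, so the image is contained in the image of the composition
\[
\GL_{n,d}^{\geq -N(\lambda,d)}(\Bk)\,\twoheadrightarrow\,\widehat\Gr_{n,d}^{\geq -N(\lambda,d)}(\Bk)\,\twoheadrightarrow\,Z(N_\lambda)\backslash\widehat\Gr_{n,d}^{\geq -N(\lambda,d)}(\Bk),
\]
and Proposition \ref{prop:finiteness grass} guarantees that $\widehat\Gr_{n,d}^{\geq -N(\lambda,d)}(\Bk)$ is finite since $\Bk$ is finite. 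The image is therefore finite, and so is the set of conjugacy classes.

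There is no real obstacle here: Lemma \ref{lem:classify} already supplies the uniform pole bound $N(\lambda,d)$, which is the only nontrivial ingredient; everything else is a routine coset bookkeeping argument. The mildly delicate point to get right is that the choice of straightening $g$ is ambiguous only up to $Z(N_\lambda)$, which is exactly why the target has to be a double coset space rather than a single quotient.
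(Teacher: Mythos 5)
Your argument is correct and uses the same two ingredients in the same way as the paper: the uniform pole bound from Lemma \ref{lem:classify} and the finiteness of $\widehat\Gr_{n,d}^{\geq -N(\lambda,d)}(\Bk)$ from Proposition \ref{prop:finiteness grass}. The paper simply observes that $[g]\mapsto[g^{-1}N_\lambda g]$ is a surjection from the finite truncated Grassmannian onto the set of conjugacy classes; your additional injectivity check into the double coset space $Z(N_\lambda)\backslash\widehat\Gr_{n,d}^{\geq -N(\lambda,d)}(\Bk)$ is correct but not needed for the finiteness conclusion.
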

\begin{proof}
By Lemma \ref{lem:classify}, any nilpotent matrix $\theta$ of type $\lambda$ and degree $d$ can be obtained as $g^{-1} N_\lambda g$ with $g\in \GL_{n,d}^{\geq -N(\lambda, d)}(\BFK)$. Multiplying $g$ on the right by elements of $\GL_n(\BFR)$ produces equivalent matrices, so the number of nilpotent matrices of given type and degree does not exceed the number of elements of $\widehat\Gr_{n,d}^{\geq -D(\lambda, d)}(\Bk)$, which is finite by Proposition \ref{prop:finiteness grass}.
\end{proof}

\subsection{Classification data}\label{ssec:classification data}
Let $\Bk$ be a finite field. As we mentioned earlier, we do not have an explicit classification of nilpotent matrices over $\BFR$, but we can choose a classification in the sense explained below.
For each element of $\ANilp_n(\Bk)$ we first pick a representative $\theta$. Let $\lambda=\type \theta$, $d=\deg \theta$. Let $Z(\theta)$ denote the centralizer of $\theta$ inside $\GL_n(\BFR)$. Then we pick an orbit of the group  $Z(N_{\lambda})\times Z(\theta)$ naturally acting on the set of $g\in\GL_n(\BFK)$ such that $g \theta g^{-1} = N_{\lambda}$, $g$ is kernel-strict and the order of poles of $g$ is as small as possible, as in Lemma \ref{lem:classify}. The choice of such an orbit for each conjugacy class will be called a \emph{classification data} for $\Bk$. Picking a representative $g_\theta$ of such an orbit allows us to write the orbit as $M_\theta:=Z(N_{\lambda}) g_\theta Z(\theta)$. When $\theta'$ is another representative of the same conjugacy class, so that $\theta'=h \theta h^{-1}$ for $h\in\GL_n(\Bk)$, we set $M_{\theta'} = M_{\theta} h^{-1}$. Note that this set does not depend on the choice of $h$ and satisfies the same properties for $\theta'$ as $M_{\theta}$ does for $\theta$, i.e. for any $g\in M_{\theta'}$ we have $g \theta' g^{-1}=N_\lambda$, $g$ is kernel-strict, and $M_{\theta'}$ forms a single orbit under the action of $Z(N_{\lambda})\times Z(\theta')$. We have:

\begin{prop}\label{prop:finite weight}
Choose a classification data for $\Bk$. For any nilpotent $\theta\in\Mat_n(\BFR)$ the group $Z(N_{\lambda})$ resp. $Z(\theta)$ acts freely on $M_\theta$ on the left resp. on the right. The sets $Z(N_{\lambda})\backslash M_\theta$, $M_\theta /  Z(\theta)$ are finite.
\end{prop}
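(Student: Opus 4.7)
The plan is to handle the freeness and the finiteness claims separately, with the latter reduced to a bounded piece of the affine Grassmanian, which is already known to be finite by Proposition \ref{prop:finiteness grass}.

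For freeness, observe that $M_\theta \subset \GL_n(\BFK)$, and any group acts freely on itself (and hence on any subset) by left or right multiplication. Restricting to the subgroup $Z(N_\lambda) \subset \GL_n(\BFR)$ on the left and to $Z(\theta) \subset \GL_n(\BFR)$ on the right gives the two free actions asserted in the statement. This part is essentially tautological.

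For finiteness of $M_\theta/Z(\theta)$, I would use the natural map
\[
\pi : M_\theta/Z(\theta) \longrightarrow \widehat\Gr_n(\Bk) = \GL_n(\BFK)/\GL_n(\BFR),
\]
which is well-defined because $Z(\theta) \subset \GL_n(\BFR)$. The key step is injectivity: if $g_1, g_2 \in M_\theta$ with $g_1 = g_2 h$ for some $h \in \GL_n(\BFR)$, then both sides satisfy $g_i \theta g_i^{-1} = N_\lambda$, which forces $h \theta h^{-1} = \theta$, so $h \in Z(\theta)$. To bound the image, note that every element of $M_\theta = Z(N_\lambda) g_\theta Z(\theta)$ has the form $h g_\theta z$ with $h, z \in \GL_n(\BFR)$; such multiplication does not worsen the pole order, so by the minimality clause in the classification data (Lemma \ref{lem:classify}) each element lies in $\GL_{n,d}^{\geq -N(\lambda, d)}(\BFK)$, where $d = \deg\theta$. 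Consequently the image of $\pi$ lies inside $\widehat\Gr^{\geq -N(\lambda, d)}_{n,d}(\Bk)$, which is finite by Proposition \ref{prop:finiteness grass}.

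Finiteness of $Z(N_\lambda) \backslash M_\theta$ is proved by the symmetric argument, using the left-quotient version $\widehat\Gr_n(\Bk)_L = \GL_n(\BFR) \backslash \GL_n(\BFK)$; injectivity of the corresponding map comes from the equation $g_1 = h g_2$ with $h \in \GL_n(\BFR)$ forcing $h N_\lambda h^{-1} = N_\lambda$, i.e.\ $h \in Z(N_\lambda)$. There is no real obstacle; the only point that needs a brief verification is that multiplication on either side by matrices in $\GL_n(\BFR)$ preserves the pole bound of $g_\theta$, which is immediate from inspecting entries of matrix products.
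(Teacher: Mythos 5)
Your proposal is correct and follows essentially the same route as the paper: both arguments reduce finiteness of $Z(N_\lambda)\backslash M_\theta$ and $M_\theta/Z(\theta)$ to finiteness of $\AGr_{n,d}^{\geq -N(\lambda,d)}(\Bk)$ (Proposition \ref{prop:finiteness grass}) via an injection, which is exactly what the paper asserts. The only difference is expository: you fill in the injectivity argument (that $g_1=g_2 h$ with $h\in\GL_n(\BFR)$ forces $h\in Z(\theta)$, and the symmetric fact on the left) and the pole-bound stability under $\GL_n(\BFR)$-multiplication, both of which the paper takes as evident.
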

\begin{proof}
The first claim is clear. The second claims follows from Proposition \ref{prop:finiteness grass} and the fact that the following natural maps are injective:
\[
Z(N_{\lambda})\backslash M_\theta \hookrightarrow \GL_n(\BFR)\backslash\GL_{n,d}^{\geq -N(\lambda, d)}(\BFK)\cong\AGr_{n,d}^{\geq -N(\lambda, d)}(\Bk),
\]
\[
\qquad M_\theta/ Z(\theta) \hookrightarrow \GL_{n,d}^{\geq -N(\lambda, d)}(\BFK)/GL_n(\BFR)=\AGr_{n,d}^{\geq -N(\lambda, d)}(\Bk).
\]
\end{proof}

The ratio of the two sizes will be important later:
\begin{defn}\label{defn:weight}
Given a classification data over a finite field $\Bk$ the \emph{weight} of a nilpotent matrix $\theta\in\Mat_n(\BFR)$ is defined as the ratio
\[
\weight \theta = \frac{\left| M_\theta /  Z(\theta)\right|}{\left| Z(N_{\lambda})\backslash M_\theta\right|}.
\]
Clearly, it does not depend on the choice of representative of $[\theta]\in\ANilp_n(\Bk)$ and thus we obtain a function $\weight:\ANilp_n(\Bk)\to \BQ_{>0}$.
\end{defn}

\begin{rem}
It will be shown later that $\weight$ does not depend on the choice of a classification data, but we do not have a direct proof of this.
\end{rem}

\begin{rem}\label{rem:weight commensurability}
The weight can be interpreted as the commensurability index $[Z(N_\lambda):g Z(\theta) g^{-1}]$ for any $g\in M_\theta$.
\end{rem}

\section{Modifications of vector bundles}\label{sec:modifications}
\subsection{Affine Grassmanian}
Geometrically, points of the affine Grassmanian $\AGr_n(\Bk)$ parametrize all vector bundles of rank $n$ on the disk $\Spec_{\BFR}$ equipped with a trivialization on the punctured disk $\Spec_{\BFK}$. The points of the \emph{positive part} $\widehat\Gr_n^+(\Bk)$ parametrize all subbundles of rank $n$ of the trivial vector bundle of rank $n$ on the disk $\Spec_{\BFR}$.

\subsection{Extension and restriction of vector bundles}
Here we remind the reader how local modifications to a vector bundle can be described in terms of the affine Grassmanian. 

Let $\Sigma$ be a smooth complete curve over a subfield of $\Bk$, and let $s$ be a closed point of $\Sigma$ with residue field $\Bk$. Let $\CE\in\Bun(\Sigma)$ of rank $n$. Let $\CO_s$ denote the local ring at $s$ and $\widehat{\CO_s}$ the completed local ring at $s$. Since $\Sigma$ is smooth, we can identify $\widehat{\CO_s}$ with $\BFR=\Bk[[x]]$. We can also choose a \emph{trivialization} $\CE\otimes_{\CO_s} \BFR \cong \BFR^n$. Then for any element $[g]\in\widehat\Gr_n^+(\Bk)_L$ the \emph{extension} of $\CE$ is defined as follows. For any open $U\subset \Sigma$ we set
\[
\CE^g(U) = \begin{cases}
\CE(U) & \text{if $s\notin U$,}\\
\{s\in\CE(U\setminus\{s\}):\, g s\in \BFR^n\}  & \text{if $s\in U$.}
\end{cases}
\]
The \emph{restriction} of $\CE$ is defined using $g^{-1}$ for $[g]\in \widehat\Gr_n^+(\Bk)$. For any open $U\subset \Sigma$ we set
\[
\CE_g(U) = \begin{cases}
\CE(U) & \text{if $s\notin U$,}\\
\{s\in\CE(U\setminus\{s\}):\, g^{-1} s\in \BFR^n\}  & \text{if $s\in U$.}
\end{cases}
\]
We have
\begin{prop}
For any vector bundle $\CE$ with a choice of trivialization as above at a closed point $s$ with residue field $\Bk$ and $[g]\in\widehat\Gr_n^+(\Bk)$ resp. $[g]\in\widehat\Gr_n^+(\Bk)_L$ we have that $\CE_g$ resp. $\CE^g$ are vector bundles equipped with natural embeddings
\[
\CE_g\subset \CE\quad\text{resp.}\quad \CE \subset \CE^g,
\]
which are isomorphisms on $\Sigma\setminus\{s\}$.
\end{prop}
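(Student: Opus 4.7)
My plan is to (a) verify that the constructions descend to cosets and give sheaves, (b) identify them with $\CE$ on $\Sigma \setminus \{s\}$, and (c) pass to completed stalks at $s$ to deduce local freeness together with the claimed inclusions.

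For (a), replacing $g$ by $gh$ with $h \in \GL_n(\BFR)$ in the definition of $\CE_g$ (resp.\ $g$ by $hg$ in the definition of $\CE^g$) leaves the condition $g^{-1} s \in \BFR^n$ (resp.\ $gs \in \BFR^n$) unchanged, since $h$ and $h^{-1}$ preserve $\BFR^n$. Hence the constructions depend only on the relevant coset. The sheaf axioms follow at once from those of $\CE$ together with the fact that the added condition at $s$ is local on the formal neighbourhood of $s$: a section on an overlap $U_1 \cap U_2$ containing $s$ satisfies the condition iff its restrictions do, so compatible local data glue.

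For (b), on $\Sigma \setminus \{s\}$ both $\CE_g$ and $\CE^g$ coincide with $\CE$ by definition, so they are vector bundles there, and the natural map from $\CE_g$ to $\CE$ (resp.\ $\CE$ to $\CE^g$) restricts to the identity on $\Sigma \setminus \{s\}$. The remaining work is local at $s$. Using the trivialization $\CE \otimes_{\CO_s} \BFR \cong \BFR^n$ to view the completion of $\CE$ inside $\BFK^n$, one unravels
\[
\widehat{(\CE_g)}_s \;=\; \{v \in \BFK^n : g^{-1} v \in \BFR^n\} \;=\; g\, \BFR^n, \qquad \widehat{(\CE^g)}_s \;=\; \{v \in \BFK^n : gv \in \BFR^n\} \;=\; g^{-1} \BFR^n.
\]
Because $g \in \GL_n(\BFK)$, left multiplication by $g$, resp.\ $g^{-1}$, is a $\BFK$-linear automorphism of $\BFK^n$, so both $g\, \BFR^n$ and $g^{-1}\, \BFR^n$ are free $\BFR$-modules of rank $n$. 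By faithful flatness of $\widehat{\CO_s}$ over $\CO_s$, the stalks $(\CE_g)_s$ and $(\CE^g)_s$ are then free $\CO_s$-modules of rank $n$, proving local freeness at $s$ and completing the proof that $\CE_g$ and $\CE^g$ are vector bundles of rank $n$.

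For (c), the positivity assumption $[g] \in \widehat{\Gr}_n^+(\Bk)$ (resp.\ $\widehat{\Gr}_n^+(\Bk)_L$) means $g$ has a representative in $\Mat_n(\BFR)$, yielding inclusions of $\BFR$-lattices $g\, \BFR^n \subset \BFR^n$ and $\BFR^n \subset g^{-1}\, \BFR^n$ in $\BFK^n$. Combined with the tautological equality on $\Sigma \setminus \{s\}$ from (b), these globalize to the canonical embeddings $\CE_g \hookrightarrow \CE \hookrightarrow \CE^g$ that are isomorphisms away from $s$. I do not expect a serious obstacle; the only care needed is keeping the left/right conventions straight, so that the positivity of $g$ in $\widehat{\Gr}_n^+(\Bk)$ is correctly matched to the restriction $\CE_g$ and that of $\widehat{\Gr}_n^+(\Bk)_L$ to the extension $\CE^g$.
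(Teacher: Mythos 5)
Your proof is correct, but it follows a genuinely different route from the paper's. The paper's proof is a two-liner: it invokes the fact (from Proposition~\ref{prop:finiteness grass}) that $[g]$ can be represented by a matrix with entries that are Laurent polynomials---hence rational functions on $\Sigma$---and uses that rational matrix to trivialize $\CE_g$, $\CE^g$ directly on a Zariski-open neighbourhood of $s$. Your argument instead stays local: you pass to completed stalks, observe that the completions are the lattices $g\,\BFR^n$ and $g^{-1}\BFR^n$, and descend freeness from $\BFR=\widehat{\CO_s}$ to $\CO_s$ by faithful flatness. Both are legitimate, and yours has the advantage of not depending on rational representability (so it would survive in settings where Laurent-polynomial representatives aren't available), while the paper's is shorter and more geometric. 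One point you should tighten: the identification $\widehat{(\CE_g)}_s = g\,\BFR^n$ (and similarly for $\CE^g$) only equals $(\CE_g)_s\otimes_{\CO_s}\BFR$ once you know $(\CE_g)_s$ is a finitely generated $\CO_s$-module; otherwise tensoring and $x$-adic completion can disagree. This is not hard---for $\CE_g$ it follows because $(\CE_g)_s\subset\CE_s$ and $\CO_s$ is Noetherian, and for $\CE^g$ because $g^{-1}\BFR^n\subset x^{-\deg g}\BFR^n$ gives $(\CE^g)_s\subset x^{-\deg g}\CE_s$---but it should be made explicit before invoking faithful flatness. You should also note that what you call ``the completion of $\CE$'' lands in $\BFK^n$ only after tensoring the stalk with the fraction field, so the precise statement is $(\CE_g)_s=(\CE_s\otimes_{\CO_s}\mathrm{Frac}(\CO_s))\cap g\,\BFR^n$ inside $\BFK^n$.
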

\begin{proof}
It is straightforward to check that $\CE_g, \CE^g$ are sheaves. To see that they are vector bundles, we use the fact that $g$ can be represented by a matrix of rational functions on $\Sigma$, which provides a trivialization of $\CE^g$, $\CE_g$ on a neighborhood of $s$.
\end{proof}

Next we show that the constructions of $\CE^g$ and $\CE_g$ are universal in the following sense:
\begin{prop}\label{prop:modifications1}
For $\CE, \CF\in\Bun(\Sigma)$ of rank $n$ and $s$ a closed point of $\Sigma$ with residue field $\Bk$, let $\varphi:\CE\to\CF$ be an embedding such that $\varphi|_{\Sigma\setminus\{s\}}$ is an isomorphism. Then we have
\begin{enumerate}
\item For a choice of trivialization of $\CE$ at $s$ there is a unique $[g]\in \widehat\Gr_n^+(\Bk)_L$ and an isomorphism $\CE^g\xrightarrow{\sim} \CF$ so that $\varphi$ factors as $\CE\hookrightarrow \CE^g \xrightarrow{\sim} \CF$.
\item For a choice of trivialization of $\CF$ at $s$ there is a unique $[g]\in \widehat\Gr_n^+(\Bk)$ and an isomorphism $\CE\xrightarrow{\sim} \CF_g$ so that $\varphi$ factors as $\CE\xrightarrow{\sim} \CF_g \hookrightarrow \CF$.
\end{enumerate}
\end{prop}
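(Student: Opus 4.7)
The plan is to reduce to a local question at $s$ where the classification becomes that of $\BFR$-lattices in $\BFK^n$, and then glue the resulting local identification with the data already available on $\Sigma\setminus\{s\}$. Since $\varphi|_{\Sigma\setminus\{s\}}$ is an isomorphism, the only content of the proposition is at the point $s$.

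For part (i), after completing at $s$ the chosen trivialization identifies $\CE\otimes_{\CO_s}\BFR$ with $\BFR^n$, hence $\CE\otimes_{\CO_s}\BFK$ with $\BFK^n$. Because $\varphi$ is a generic isomorphism, the induced map on generic fibers identifies $\CF\otimes_{\CO_s}\BFK$ with $\BFK^n$ as well, and under this identification $\CF\otimes_{\CO_s}\BFR$ becomes a rank-$n$ $\BFR$-lattice $L\subset\BFK^n$ with $\BFR^n\subset L$. I will then invoke the elementary classification: any such $L$ has the form $L=g^{-1}\BFR^n$ for some $g\in\GL_n(\BFK)$; existence follows by picking an $\BFR$-basis of $L$ and inverting the change-of-basis matrix (equivalently Proposition \ref{prop:smith normal form}). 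The containment $\BFR^n\subset g^{-1}\BFR^n$ is equivalent to $g\in\Mat_n(\BFR)\cap\GL_n(\BFK)=\GL_n^+(\BFK)$, and the identity $g^{-1}\BFR^n=g'^{-1}\BFR^n\iff g'g^{-1}\in\GL_n(\BFR)$ shows that the class $[g]\in\widehat\Gr_n^+(\Bk)_L=\GL_n(\BFR)\backslash\GL_n^+(\BFK)$ is uniquely determined by $L$.

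With this choice of $g$, I claim $\CE^g$ and $\CF$ agree as sheaves on $\Sigma$. Away from $s$ both sheaves equal $\CE$ (using $\varphi$ to identify them on $\Sigma\setminus\{s\}$), and near $s$ the defining condition ``$gs\in\BFR^n$'' means precisely that the germ lies in $g^{-1}\BFR^n=L=\CF\otimes_{\CO_s}\BFR$. Gluing the identification on $\Sigma\setminus\{s\}$ with the local identification around $s$ produces a sheaf isomorphism $\CE^g\xrightarrow{\sim}\CF$, and by construction $\varphi$ factors as $\CE\hookrightarrow\CE^g\xrightarrow{\sim}\CF$. Uniqueness of the pair $([g],\CE^g\xrightarrow{\sim}\CF)$ reduces to the uniqueness of $L$ above.

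Part (ii) is entirely dual: a trivialization of $\CF$ at $s$ identifies $\CF\otimes_{\CO_s}\BFR$ with $\BFR^n\subset\BFK^n$, and $\CE\otimes_{\CO_s}\BFR$ becomes a rank-$n$ sublattice $L'\subset\BFR^n$, which uniquely has the form $L'=g\BFR^n$ with $g\in\GL_n^+(\BFK)$ determined up to right multiplication by $\GL_n(\BFR)$, i.e.\ by a unique class $[g]\in\widehat\Gr_n^+(\Bk)$. One then identifies $\CE\cong\CF_g$ exactly as above. No substantive obstacle is expected; the only delicate point is bookkeeping of the two quotient conventions, namely tracking whether the containment of lattices goes $\BFR^n\subset L$ or $L'\subset\BFR^n$, which determines whether $[g]$ lies in $\widehat\Gr_n^+(\Bk)_L$ or $\widehat\Gr_n^+(\Bk)$.
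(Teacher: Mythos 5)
Your proof is correct and follows essentially the same line of reasoning as the paper's: both reduce the problem to the completed local ring at $s$, where the embedding $\varphi$ becomes an inclusion of rank-$n$ lattices in $\BFK^n$, and then invoke the standard classification of $\BFR$-lattices. The paper phrases this via the matrix $h$ of $\varphi\otimes\BFR$ (after picking trivializations of \emph{both} bundles) and shows $[g]=[h]$ by a factorization argument; you phrase it via the lattice $L=\CF\otimes\BFR$ viewed in $\BFK^n$, which avoids explicitly fixing a second trivialization in part (i) but is otherwise the same computation. One small point worth spelling out: you state that uniqueness of the pair $([g],\,\CE^g\toiso\CF)$ ``reduces to the uniqueness of $L$,'' but that only handles uniqueness of $[g]$; uniqueness of the isomorphism itself needs the separate (easy) observation that $\CE^g/\CE$ is torsion and $\CF$ is torsion-free, so any such isomorphism is determined by its restriction to $\CE$, where it must equal $\varphi$. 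This is exactly the remark the paper makes, and it completes the argument.
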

\begin{proof}
For a purpose of proof we choose trivializations of both $\CE$ and $\CF$. Note that $\CE^g\otimes \BFR$ comes with a natural trivialization in such a way that the map $\CE\hookrightarrow \CE^g$ is given by the matrix $g$. Similarly, $\CF_g$ has a natural trivialization in such a way that the map $\CF_g\to \CF$ is given by the matrix $g$.

 Tensoring $\varphi$ with $\BFR$ gives a matrix of power series, which we denote by $h$. Let us prove the uniqueness. In statement (i) we see that $h$ factors as $g$ followed by an automorphism. Thus $g$ and $h$ must be in the same orbit with respect to the left multiplication by $\GL_n(\BFR)$, which is equivalent to $[g]=[h]$. In the statement (ii) similarly we obtain $[g]=[h]$. Thus the class of $g$ is unique in both statements. Since $\CF$ is torsion-free and $\CE^g/\CE$ is torsion, the isomorphism in (i) is unique. The isomorphism in (ii) is unique too. So we have proved the uniqueness.
 
 To prove existence we set $g=h$ in (ii). In (i) we construct $\alpha:\CE^g\xrightarrow{\sim}\CF$ as follows. For $U\subset\Sigma$ we have two cases. If $s\notin U$ we set $\alpha(U)=\varphi(U)$. If $s\in U$ note that $\varphi(U\setminus\{s\})$ restricts to a bijection between $\CE^g(U)\subset \CE(U\setminus\{s\})$ and $\CF(U)\subset \CF(U\setminus\{s\})$. So we set $\alpha(U)$ to be this restriction. We proceed similarly in (ii).
\end{proof}

\subsection{Extension and restriction with poles}
If $g$ in the previous subsection has poles, we can still construct extension and restriction of $\CE$, but their relation with $\CE$ is slightly more complicated:
\begin{defn}
Let $\CE$ be a vector bundle of rank $n$ with trivialization at $s$, and let $[g]\in\widehat\Gr_n^+(\Bk)_L$ resp. $[g]\in\widehat\Gr_n^+(\Bk)$ be represented by a matrix $g$ with poles of order at most $N$. Then we define 
\[
\CE^g = \CE(-N)^{x^N g}\quad \text{resp.} \quad \CE_g = \CE(N)_{x^N g},
\]
where $\CE(N)=\CE^{x^N\Id_n}$ resp. $\CE(-N)=\CE_{x^N\Id_n}$ are the positive resp. negative twists of $\CE$ at $s$.
\end{defn}

Proposition \ref{prop:modifications1} generalizes as follows:
\begin{prop}\label{prop:modifications2}
For $\CE, \CF\in\Bun(\Sigma)$ of rank $n$ and $s$ a closed point of $\Sigma$ with residue field $\Bk$, let $\varphi:\CE(-N s)\hookrightarrow \CF$ be an embedding such that $\varphi|_{\Sigma\setminus\{s\}}$ is an isomorphism. Then we have
\begin{enumerate}
\item For a choice of trivialization of $\CE$ at $s$ there is a unique $[g]\in \widehat\Gr_n^{\geq -N}(\Bk)_L$ and an isomorphism $\CE^g\xrightarrow{\sim} \CF$ so that $\varphi$ factors as $\CE(-Ns)\hookrightarrow \CE^g \xrightarrow{\sim} \CF$.
\item For a choice of trivialization of $\CF$ at $s$ there is a unique $[g]\in \widehat\Gr_n^{\geq -N}(\Bk)$ and an isomorphism $\CE\xrightarrow{\sim} \CF_g$ so that $\varphi$ factors as $\CE(-Ns)\xrightarrow{\sim} \CF_g(-Ns) \hookrightarrow \CF$.
\end{enumerate}
\end{prop}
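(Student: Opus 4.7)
The plan is to reduce both parts of Proposition \ref{prop:modifications2} to Proposition \ref{prop:modifications1} by absorbing the pole data into a twist at $s$. The key observation is that by the definition given just above the statement, $\CE^g = \CE(-Ns)^{x^N g}$ and $\CF_g = \CF(Ns)_{x^N g}$ whenever $g$ has poles of order at most $N$, and in either case the matrix $x^N g$ lies in $\GL_n^+(\BFK)$. Moreover, left multiplication by $x^N \Id_n$ commutes with the left action of $\GL_n(\BFR)$, so $h \mapsto x^{-N}h$ descends to a bijection $\widehat\Gr_n^+(\Bk)_L \toiso \widehat\Gr_n^{\geq -N}(\Bk)_L$, and analogously on the right.

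For part (i), I would begin by noting that the chosen trivialization of $\CE$ at $s$ canonically induces one of $\CE(-Ns) = \CE_{x^N\Id_n}$. Since $\varphi: \CE(-Ns) \hookrightarrow \CF$ is an embedding that is an isomorphism away from $s$, Proposition \ref{prop:modifications1}(i) applies and produces a unique class $[h] \in \widehat\Gr_n^+(\Bk)_L$ together with an isomorphism $\CE(-Ns)^h \toiso \CF$ through which $\varphi$ factors as $\CE(-Ns) \hookrightarrow \CE(-Ns)^h \toiso \CF$. Setting $g = x^{-N} h$ identifies $\CE^g$ with $\CE(-Ns)^h$, and the bijection on coset spaces shows that $[g] \in \widehat\Gr_n^{\geq -N}(\Bk)_L$ is the unique class with this property. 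The factorization through $\CE^g$ transports verbatim from the one through $\CE(-Ns)^h$.

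Part (ii) is handled symmetrically: tensoring $\varphi$ with $\CO(Ns)$ yields an embedding $\varphi': \CE \hookrightarrow \CF(Ns)$ that is an isomorphism off $s$, and the trivialization of $\CF$ at $s$ induces one of $\CF(Ns) = \CF^{x^N\Id_n}$. Applying Proposition \ref{prop:modifications1}(ii) gives a unique $[h] \in \widehat\Gr_n^+(\Bk)$ and an isomorphism $\CE \toiso \CF(Ns)_h$ through which $\varphi'$ factors. Using $\CF_g = \CF(Ns)_{x^N g}$ and the bijection $h \mapsto x^{-N} h$ on right cosets, one extracts the unique $[g] \in \widehat\Gr_n^{\geq -N}(\Bk)$ and the isomorphism $\CE \toiso \CF_g$; tensoring the resulting factorization back with $\CO(-Ns)$ yields the chain $\CE(-Ns) \toiso \CF_g(-Ns) \hookrightarrow \CF$ demanded by the statement.

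There is no serious obstacle here: the entire content of the proposition is already in Proposition \ref{prop:modifications1}, and the rest is bookkeeping about how the twist $\otimes\CO(\pm Ns)$ interacts with the affine Grassmannian identifications. The only points needing care are verifying that the map $h \mapsto x^{-N}h$ is well-defined on cosets (which follows because scalar matrices are central), and that the induced trivializations on $\CE(-Ns)$ and $\CF(Ns)$ match the ones implicit in the definitions of $\CE^g$ and $\CF_g$ so that the factorizations agree on the nose. Both checks are routine.
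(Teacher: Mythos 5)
Your reduction to Proposition \ref{prop:modifications1} by absorbing the pole into the twist $\otimes\CO(\pm Ns)$ is exactly the intended argument; the paper states Proposition \ref{prop:modifications2} without proof precisely because it is this straightforward generalization, and your bookkeeping (the bijection $[h]\mapsto[x^{-N}h]$ between $\widehat\Gr_n^+$ and $\widehat\Gr_n^{\geq -N}$ cosets, and matching the natural trivializations of $\CE(-Ns)$ and $\CF(Ns)$) is correct. No gaps.
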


\section{Counting vector bundles with nilpotent endomorphisms}\label{sec:counting vector bundles}
Let $\Sigma$ be a smooth complete curve over $\BF_q$. Although we are mostly interested in the category of coherent sheaves $\Coh(\Sigma)$, some of the statements below make sense for an arbitrary abelian category $\CA$, so we formulate them in this generality.

\subsection{Truncations} When we count vector bundles we need a suitable truncation of the category of vector bundles. The properties that we require of such a truncation are as follows:
\begin{defn}\label{defn:suitable truncation}
Let $\CA$ be an abelian category and let $\tau$ be a property of objects of $\CA$. We call it a \emph{suitable truncation} if it is closed under subobjects and extensions, i.e. for any short exact sequence in $\CA$.
\[
0\to A \to B \to C \to 0
\]
 the following holds:
\begin{enumerate}
\item If $B$ satisfies $\tau$, then $A$ satisfies $\tau$.
\item If $A$ and $C$ both satisfy $\tau$, then $B$ satisfies $\tau$.
\end{enumerate}
\end{defn}

For a suitable truncation $\tau$ on $\CA$ we denote by $\CA^\tau$ the full subcategory of objects of $\CA$ satisfying $\tau$. If $\tau$ is a suitable truncation on $\Coh(\Sigma)$, then we write $\Bun^\tau(\Sigma)$ for the intersection $\Coh^\tau(\Sigma)\cap \Bun(\Sigma)$. Note that the property of being a torsion-free sheaf is itself a suitable truncation, so $\Bun^\tau(\Sigma)=\Coh^{\tau'}(\Sigma)$ where $\tau'$ means ``satisfies $\tau$ and is torsion-free''. A basic example of $\tau$ is $\leq 0$, where we say that $A\in\Coh(\Sigma)$ satisfies $\leq 0$ if and only if for every subobject $B\subset A$ we have $\deg B\leq 0$. Note that this implies torsion-free. Note that our notion of truncation is dual to Schiffmann's. Shiffmann's truncation $\geq 0$ is closed under extensions and quotients. However, the property of being a vector bundle is not closed under quotients, so our notion is a little more convenient. We collect here a couple of useful properties:
\begin{prop}
If $A=B\oplus C$ in $\CA$, then $A$ satisfies $\tau$ if and only if both $B$ and $C$ satisfy $\tau$.
\end{prop}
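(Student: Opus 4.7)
The plan is to derive both directions directly from the two closure properties in Definition \ref{defn:suitable truncation}, exploiting the fact that a direct sum decomposition gives both canonical inclusions (making $B$ and $C$ subobjects of $A$) and a canonical short exact sequence.

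For the forward direction, I would observe that if $A = B \oplus C$, then the inclusions $B \hookrightarrow A$ and $C \hookrightarrow A$ exhibit $B$ and $C$ as subobjects of $A$ (each fits into a short exact sequence $0 \to B \to A \to C \to 0$ and $0 \to C \to A \to B \to 0$ respectively, using the projections). Applying property (i) of Definition \ref{defn:suitable truncation} to each of these sequences shows that if $A$ satisfies $\tau$, then both $B$ and $C$ satisfy $\tau$.

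For the backward direction, I would use the canonical short exact sequence
\[
0 \to B \to B \oplus C \to C \to 0
\]
arising from the direct sum structure. Assuming $B$ and $C$ both satisfy $\tau$, property (ii) of Definition \ref{defn:suitable truncation} immediately yields that $A = B \oplus C$ satisfies $\tau$.

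There is no real obstacle here; the proposition is a formal consequence of the axioms, and the only content is to recognize that a direct sum simultaneously realizes each summand as a subobject and realizes the total as an extension of one summand by the other. No additional hypotheses on $\CA$ (such as finiteness or the existence of higher Exts) are needed.
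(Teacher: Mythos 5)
Your proof is correct and matches the paper's approach; the paper simply records the proof as "Straightforward," and your argument supplies exactly the routine details: the summands are subobjects (hence inherit $\tau$ by closure under subobjects), and the direct sum is an extension of one summand by the other (hence satisfies $\tau$ by closure under extensions).
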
 
\begin{proof}
Straightforward.
\end{proof}

\begin{prop}\label{prop:truncation nilpotent}
Let $A\in \CA$, and let $\theta:A\to A$ be nilpotent. Then $A$ satisfies $\tau$ if and only if $\kernel\theta$ satisfies $\tau$.
\end{prop}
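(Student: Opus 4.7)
The plan is to handle the two directions of the equivalence separately. The ``only if'' direction is immediate: $\kernel\theta$ is a subobject of $A$, so by property (i) of a suitable truncation it inherits $\tau$. All the work is in the ``if'' direction.

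For the converse, I would induct on the nilpotency index of $\theta$. Choose $n\geq 1$ with $\theta^n=0$, giving the kernel filtration
\[
0=\kernel\theta^0\subset\kernel\theta\subset\kernel\theta^2\subset\cdots\subset\kernel\theta^n=A.
\]
The claim is that each quotient $\kernel\theta^{j+1}/\kernel\theta^j$ is a subobject of $\kernel\theta$. Granting this, property (i) shows that each such quotient satisfies $\tau$, and then property (ii) applied inductively to the short exact sequences
\[
0\to \kernel\theta^j\to \kernel\theta^{j+1}\to \kernel\theta^{j+1}/\kernel\theta^j\to 0
\]
starting from the base case $\kernel\theta^0=0$ (or $\kernel\theta$, which satisfies $\tau$ by hypothesis) propagates $\tau$ all the way up to $A=\kernel\theta^n$.

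The only actual content is establishing the embedding $\kernel\theta^{j+1}/\kernel\theta^j\hookrightarrow \kernel\theta^j/\kernel\theta^{j-1}$ for $j\geq 1$, since iterating it gives $\kernel\theta^{j+1}/\kernel\theta^j\hookrightarrow \kernel\theta$. The map is induced by $\theta$: since $\theta(\kernel\theta^{j+1})\subset \kernel\theta^j$ and $\theta(\kernel\theta^j)\subset \kernel\theta^{j-1}$, there is a well-defined morphism $\bar\theta_j:\kernel\theta^{j+1}/\kernel\theta^j\to \kernel\theta^j/\kernel\theta^{j-1}$. To see it is a monomorphism one verifies in the abelian category $\CA$ that the preimage of $\kernel\theta^{j-1}\subset \kernel\theta^j$ under $\theta|_{\kernel\theta^{j+1}}$ is exactly $\kernel\theta^j$; this is the equality of subobjects $\kernel(\theta^j|_{\kernel\theta^{j+1}})=\kernel\theta^j\cap\kernel\theta^{j+1}=\kernel\theta^j$, which is formal from $\theta^j=\theta^{j-1}\circ\theta$.

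There is essentially no hard step; the only mild care is phrasing the injectivity of $\bar\theta_j$ without elements, but it reduces to a standard kernel-computation in an abelian category. Once that embedding is in hand, closure of $\tau$ under subobjects and extensions does the rest.
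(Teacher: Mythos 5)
Your proof is correct and takes essentially the same route as the paper: both arguments induct along the kernel filtration and embed each successive quotient $\kernel\theta^{k+1}/\kernel\theta^k$ into $\kernel\theta$, then invoke closure of $\tau$ under subobjects and extensions. The only cosmetic difference is that the paper identifies $\kernel\theta^{k+1}/\kernel\theta^k$ with $\image\bigl(\theta^k|_{\kernel\theta^{k+1}}\bigr)\subset\kernel\theta$ in one step, whereas you build the embedding as an iterate of the single-step monomorphism $\bar\theta_j$ — the same map, factored.
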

\begin{proof}
If $A$ satisfies $\tau$, then $\kernel \theta$ is a subobject of $A$, so it satisfies $\tau$. To prove the other implication, note that we have a short exact sequence for each $k>0$
\begin{equation}
0\to \kernel \theta^k \to \kernel\theta^{k+1} \to \image \theta^k|_{\kernel\theta^{k+1}} \to 0.
\end{equation}
Here $\image \theta^k|_{\kernel\theta^{k+1}}\subset \kernel\theta$, so we can inductively deduce that $\kernel\theta^{k+1}$ satisfies $\tau$ for all $k$.
\end{proof}

\subsection{Counting}
Let $\Sigma$ be a smooth complete curve over $\BF_q$ and $\tau$ a suitable truncation. Since we want to count bundles, we assume the following
\begin{defn}
A suitable truncation $\tau$ is locally finite if the number of isomorphism classes of bundles of rank $n$ and degree $\geq d$ satisfying $\tau$ is finite for all $n$ and $d$.
\end{defn}
For instance, $\tau =$ ``$\leq 0$'' satisfies this property. Denote by $\Bun^\tau_\nil(\Sigma)$ the category of pairs $(\CE, \theta)$ where $\CE\in\Bun^\tau(\Sigma)$ and $\theta:\CE\to \CE$ nilpotent.

Let the rank of $\CE$ be $n$.
The \emph{global type} $\type(\theta)$ is defined to be the partition of size $n$ with entries $\type(\theta)_i = \rank \kernel \theta^i - \rank \kernel \theta^{i-1}$, $i=1,2,\ldots$. This coincides with the type defined earlier for $\theta$ viewed as a matrix over the field of functions on $\Sigma$.  Denote by $\Bun^\tau_{\lambda}(\Sigma)$ resp. $\Bun^\tau_{\nil,n}(\Sigma)$ the full subcategory of $\Bun^\tau_{\nil}(\Sigma)$ of pairs $(\CE, \theta)$ such that $\type \theta = \lambda$ resp. $\rank\CE=n$. For $s$ a closed point of $\Sigma$ we also have $\type \theta(s)$ as the type of $\theta$ restricted to the fiber of $\CE$ over $s$. We introduce the following counting functions
\[
\Omega_{\lambda}^\tau(t) = \sum_{(\CE,\theta)\in\Bun^\tau_{\lambda}(\Sigma)/\sim} \frac{t^{-\deg \CE}}{|\Aut(\CE, \theta)|} \in \BQ((t)),
\]
\[
\Omega_{n}^\tau(t) = \sum_{(\CE,\theta)\in\Bun^\tau_{\nil,n}(\Sigma)/\sim} \frac{t^{-\deg \CE}}{|\Aut(\CE, \theta)|} = \sum_{\lambda \vdash n} \Omega_{\lambda}^\tau(t).
\]
Let $S=(s_1,s_2,\ldots,s_k)$ be a collection of distinct closed points of $\Sigma$ of degrees $d_1,d_2,\ldots,d_k$.  We can incorporate the types of $\theta(s_i)$ into our count in the following way:
\[
\Omega_{\lambda,S}^\tau[X_\bullet;t] = \sum_{(\CE,\theta)\in\Bun^\tau_\lambda(\Sigma)/\sim} \frac{t^{-\deg \CE}}{|\Aut(\CE, \theta)|} \prod_{i=1}^k H_{\type \theta(s_i)}[X_i;q^{d_i}],
\]
and 
\[
\Omega_{n,S}^\tau[X_\bullet;t] = \sum_{\lambda \vdash n} \Omega_{\lambda,S}^\tau[X_\bullet;t].
\]
The result is a symmetric function in $k$ infinite groups of variables $X_1, X_2,\ldots, X_k$ where $X_i=(x_{i,1}, x_{i,2},\ldots)$ with coefficients in $\BQ((t))$. Here we use a version of Hall-Littlewood polynomials obtained by $t=0$ specialization of the modified Macdonald polynomials (see Subsection \ref{ssec:macdonald}),
\[
H_\lambda[X;q]=\tilde H_\lambda[X;q,0].
\]

By Corollary \ref{cor:hall counting flags}, the Hall-Littlewood polynomials have the following interpretation. If $M$ is a nilpotent matrix over $\BF_q$ of type $\lambda$, then we have
\[
H_\lambda[X;q] = \sum_{\mu\vdash n} m_\mu[X]\; |\{\text{partial flags of type $\mu$ preserved by $M$}\}|.
\]

Thus we obtain another interpretation of $\Omega_{n,\lambda,S}^\tau(t)$ as follows. Let $\Bmu=(\mu^{(1)}, \ldots, \mu^{(k)})$ be a collection of partitions, one for each marked point. A \emph{parabolic bundle} of type $\Bmu$ is a pair $(\CE, \Bf)$ of a bundle $\CE$ and a collection $\Bf=(f_1,\ldots, f_k)$ where $f_i$ is a partial flag of type $\mu^{(i)}$ in the fiber $\CE(s_i)$ for each $i$. Denote the category of parabolic bundles of type $\Bmu$ by $\Bun(\Sigma;S,\Bmu)$. Denote by $\Bun^\tau_\nil(\Sigma;S,\Bmu)$ the category of parabolic bundles with nilpotent endomorphism $(\CE, \Bf, \theta)$ such that $\CE$ satisfies $\tau$. Then we have
\begin{equation}\label{eq:omega with flags}
\Omega_{\lambda,S}^\tau[X_\bullet;t] = \sum_{\Bmu=(\mu^{(1)}, \ldots, \mu^{(k)})} \prod_{i=1}^k m_{\mu^{(i)}}[X_i] \sum_{\substack{(\CE, \Bf, \theta) \in \Bun^\tau_\nil(\Sigma;S,\Bmu)/\sim \\ \type \theta=\lambda}} \frac{t^{-\deg \CE}}{|\Aut(\CE, \Bf, \theta)|}.
\end{equation}

The main result of this section is to show
\begin{thm}\label{thm: with macdonald}
For a smooth complete curve $\Sigma/\BF_q$, a locally finite suitable truncation $\tau$, a collection of $k$ closed points $S\subset \Sigma$ of degrees $d_1,\ldots,d_k$ and a partition $\lambda\vdash n$, we have the following factorization of the weighted number of parabolic bundles of rank $n$ satisfying $\tau$ with a nilpotent endomorphism of type $\lambda$:
\[
\Omega_{\lambda,S}^\tau[X_\bullet;t] = \Omega_{\lambda}^\tau(t) \prod_{i=1}^k \tilde H_\lambda[X_i;q^{d_i}, t^{d_i}],
\]
where $\Omega_{\lambda}^\tau(t)$ is the corresponding number without parabolic structure.
\end{thm}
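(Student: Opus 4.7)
The proof proceeds in two stages. First I will establish that the left-hand side has the factorised shape
$$\Omega_{\lambda,S}^\tau[X_\bullet;t] = C_\lambda(t)\prod_{i=1}^k F_\lambda[X_i;q^{d_i},t^{d_i}]$$
for some a priori unknown symmetric functions $F_\lambda[X;q,t]$ depending only on $\lambda$, $q$, and $t$. Then I will identify $F_\lambda$ with $\tilde H_\lambda$, and match the prefactor with $\Omega_\lambda^\tau(t)/\prod_i F_\lambda[1;q^{d_i},t^{d_i}]$, by invoking the uniqueness characterisation of Macdonald polynomials from Corollary \ref{cor:unique macdonald}.

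For the first stage I combine the local straightening of Section \ref{sec:linear algebra} with the modification theory of Section \ref{sec:modifications}. Given $(\CE,\Bf,\theta)$ contributing to the count, choose trivialisations of $\CE$ at each $s_i$; then $\theta|_{s_i}$ is a nilpotent matrix over $\BFR_i=\widehat{\CO_{s_i}}$ of generic type $\lambda$, and Lemma \ref{lem:classify} provides a kernel-strict $g_i\in\GL_n(\BFK_i)$ with $g_i\,\theta|_{s_i}\,g_i^{-1}=N_\lambda$. Proposition \ref{prop:modifications2} assembles these into a modification $\CE\subset\CE'$ on which the extended endomorphism $\theta'$ is non-degenerate (Corollary \ref{cor:non-degenerate}) at every $s_i$; the degree shifts by $\sum_i d_i\deg[\theta|_{s_i}]$. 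Using the straightening I reorganise the weighted sum into: a sum over non-degenerate $(\CE',\theta')$, then at each $s_i$ a sum over the local class $[\theta|_{s_i}]\in\ANilp_n(\BF_{q^{d_i}})$ of type $\lambda$, then over de-straightening data in $Z(N_\lambda)\backslash M_{\theta|_{s_i}}$, and finally over flags in the resulting fibre, which by Corollary \ref{cor:hall counting flags} is controlled by Hall--Littlewood polynomials. The crucial observation is that distinct marked points interact only through the shared bundle $\CE'$ and not through the de-straightening data, so each $s_i$ contributes an independent local factor depending solely on $\lambda$, $q^{d_i}$, $t^{d_i}$. Tracking automorphisms using Definition \ref{defn:weight} and Remark \ref{rem:weight commensurability} yields the claimed factorisation; specialising each $X_i$ to $1$ then identifies the prefactor as $\Omega_\lambda^\tau(t)/\prod_i F_\lambda[1;q^{d_i},t^{d_i}]$.

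For the second stage I verify that $G_\lambda[X;q,t]:=F_\lambda[X;q,t]/F_\lambda[1;q,t]$ satisfies the three defining properties of $\tilde H_\lambda[X;q,t]$. Normalisation $G_\lambda[1]=1$ is automatic. The upper-triangularity $G_\lambda[(q-1)X;q,t]\in M_{\preceq\lambda'}$ reduces to the analogous triangularity of the Hall--Littlewood polynomial $H_\mu[X;q]$ already established in Theorem \ref{thm:hall algebra}: the local sum over specialisation types $\mu$ produces only $\mu\succeq\lambda$, so $\mu'\preceq\lambda'$ and the inclusion $M_{\preceq\mu'}\subset M_{\preceq\lambda'}$ is preserved. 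Orthogonality under the modified $(q,t)$-Hall pairing is the delicate input. I will prove it by computing $\sum_\lambda\Omega_{\lambda,S}^\tau[X,Y;t]$ for $\Sigma=\BP^1$ with two degree-one marked points $S=\{0,\infty\}$ in two ways: the factorisation already established gives $\sum_\lambda\Omega_\lambda^\tau(t)G_\lambda[X;q,t]G_\lambda[Y;q,t]$, while a direct count using the Birkhoff--Grothendieck classification of bundles on $\BP^1$ together with the one-loop Hall algebra machinery of Section \ref{sec:hall and symmetric} expresses the same sum as a reproducing kernel for the $(q,t)$-Hall product. Comparing forces the $\{G_\lambda\}$ to form an orthogonal family, and Corollary \ref{cor:unique macdonald} then identifies $G_\lambda=\tilde H_\lambda$.

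The principal obstacle is the bookkeeping in the first stage. The straightening is defined only after choosing a trivialisation at $s_i$; the trivialisation is unique only modulo the local centraliser $Z(N_\lambda)$; and the bundle automorphism group $\Aut(\CE,\theta)$ does not split as a product along the marked points. These three actions must be disentangled so that the weighted count behaves as if each $s_i$ carried a genuinely independent local factor. Controlling this interplay is precisely the purpose of the classification data and the weight function $\weight\theta$ introduced in Section \ref{ssec:classification data}, and checking that they assemble into the clean factorised expression above is the technical heart of the argument.
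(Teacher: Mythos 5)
Your proposal matches the paper's proof essentially step for step: Stage~1 is the content of Theorem~\ref{thm:existence of factorization} (straightening via Lemma~\ref{lem:classify}, the modification formalism of Section~\ref{sec:modifications}, and the weight/classification-data bookkeeping of Section~\ref{ssec:classification data}), while Stage~2 combines the genus-$0$ two-point computation of Subsection~\ref{subs: P1}, the upper-triangularity of Proposition~\ref{prop:F upper triangular}, and the uniqueness of Macdonald polynomials from Corollary~\ref{cor:unique macdonald}. The only cosmetic difference is that you straighten at all marked points simultaneously, whereas the paper peels them off one at a time by induction on $k$, which makes the disentangling of the automorphism group a bit easier to formalize but changes nothing substantive.
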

The following subsections are devoted to a proof of this theorem.

\subsection{Existence of a factorization}
We first show that the theorem holds for some unknown functions in place of the modified Macdonald polynomials. This subsection is devoted to a proof of the following, where we first assume a choice of a classification data for all finite fields, and then show that the result is independent of the choice:
\begin{thm}\label{thm:existence of factorization}
For any smooth complete curve $\Sigma$ over $\BF_q$, a locally finite suitable truncation $\tau$, a partition $\lambda$, a sequence of points $S$ we have
\[
\Omega_{\lambda,S}^\tau[X_\bullet;t] = \Omega_{\lambda}^\tau(t) \prod_{i=1}^k F_{\lambda,q^{d_i}}[X_i;t^{d_i}],
\]
where $F_{\lambda,q}[X;t] = \frac{C_{\lambda,q}[X;t]}{C_{\lambda,q}[1;t]}$ and
\[
C_{\lambda, q}[X;t] = \sum_{[\eta]\in\ANilp_\lambda(\BF_q)} t^{\deg\eta} \weight(\eta) H_{\type \eta(0)}[X;q].
\]
\end{thm}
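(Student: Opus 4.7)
The plan is to stratify the sum defining $\Omega_{\lambda,S}^\tau[X_\bullet;t]$ according to the local isomorphism class $[\eta_i]\in\ANilp_\lambda(\BF_{q^{d_i}})$ of the endomorphism $\theta$ at each marked point $s_i$, and then prove a local factorization for each stratum. By Corollary \ref{cor:non-degenerate} the type $\type\theta(s_i)$ is determined by $[\eta_i]$, so the Hall-Littlewood weight $H_{\type\theta(s_i)}[X_i;q^{d_i}]$ depends only on $[\eta_i]$ and can be pulled out of each stratum. Writing
\[
A([\eta_\bullet])=\sum_{(\CE,\theta)\text{ with loc.\ types }[\eta_\bullet]}\frac{t^{-\deg\CE}}{|\Aut(\CE,\theta)|},
\]
the target factorization reduces to the assertion that $A([\eta_\bullet])/\prod_{i=1}^k t^{d_i\deg\eta_i}\weight(\eta_i)$ is independent of $[\eta_\bullet]$; summing over $[\eta_\bullet]$ together with $\Omega_\lambda^\tau(t)=\sum_{[\eta_\bullet]}A([\eta_\bullet])$ would then identify the common value as $\Omega_\lambda^\tau(t)/\prod_i C_{\lambda,q^{d_i}}[1;t^{d_i}]$, and reinserting the Hall-Littlewood factors would assemble the answer into $\Omega_\lambda^\tau(t)\prod_i F_{\lambda,q^{d_i}}[X_i;t^{d_i}]$.

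To establish this local factorization I would use the straightening procedure of Lemma \ref{lem:classify}. Having fixed a choice of classification data, for each $[\eta_i]$ I pick a representative $\eta_i$ together with an element $g_i\in M_{\eta_i}$ satisfying $g_i\eta_ig_i^{-1}=N_\lambda$. Given a pair $(\CE,\theta)$ with local types $[\eta_\bullet]$ and a choice of local trivialization at each $s_i$ identifying $\theta|_{s_i}$ with $\eta_i$, Proposition \ref{prop:modifications2} applied with $g_i$ yields a modified bundle $\CE'$ equipped with a naturally induced nilpotent endomorphism $\theta'$ which is non-degenerate (locally isomorphic to $N_\lambda$) at every marked point. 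Since $g_i$ can be taken with $\deg g_i=\deg\eta_i\geq 0$, the degree shift satisfies $\deg\CE'=\deg\CE+\sum_{i=1}^k d_i\deg\eta_i$, which supplies exactly the factor $\prod_i t^{d_i\deg\eta_i}$ in the desired identity.

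The main obstacle will be the groupoid-theoretic bookkeeping needed to extract the weight factors $\weight(\eta_i)$ from the comparison of automorphisms, since the straightening is not a functor between groupoids of bare pairs $(\CE,\theta)$ but depends on the auxiliary data of local trivializations and the choices $g_i\in M_{\eta_i}$. To handle this cleanly I would pass to an intermediate groupoid of quadruples $(\CE,\theta,\{\mathrm{triv}_{s_i}\},\{g_i\})$, on which the product $\prod_i Z(\eta_i)$ acts on the source side (by changing local trivializations at $s_i$ and correspondingly acting on $g_i$ on the right) while $\prod_i Z(N_\lambda)$ acts on the target side (by changing trivializations of $\CE'$ and acting on $g_i$ on the left). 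The straightening realizes an equivalence between the two quotients, and by Remark \ref{rem:weight commensurability} the commensurability index $[Z(N_\lambda):g_iZ(\eta_i)g_i^{-1}]=\weight(\eta_i)$ is precisely the measure of the discrepancy between these two group actions, producing the factor $\prod_i\weight(\eta_i)$ in the stacky count of the stratum. Once this is combined with the degree tracking, the local factorization follows; moreover the resulting closed form for $F_{\lambda,q}[X;t]$ is manifestly independent of the classification data, which settles the point flagged in Section \ref{ssec:classification data}.
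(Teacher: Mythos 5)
Your proposal follows essentially the same route as the paper: straighten the endomorphism at each marked point via Lemma~\ref{lem:classify} and Proposition~\ref{prop:modifications2}, track the degree shift $\deg\CE'=\deg\CE+\sum_i d_i\deg\eta_i$, and extract the weight factors through the commensurability interpretation of Remark~\ref{rem:weight commensurability}. The only organizational difference is that the paper handles one marked point at a time by induction on $k$, inserting the partition of unity
$1=\sum_{[g]\in Z(N_\lambda)\backslash M_{\theta_s}}|Z(N_\lambda)\backslash M_{\theta_s}|^{-1}$
and swapping the order of summation to land on the fiber count $C_{\CF,\theta'}$, whereas you propose to stratify by all local types $[\eta_\bullet]$ simultaneously; since modifications at distinct closed points do not interact, these are equivalent and both reduce to the same groupoid bookkeeping. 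You are right that preservation of the truncation $\tau$ under straightening (via kernel-strictness and Proposition~\ref{prop:truncation nilpotent}) is the point that makes $A([\eta_\bullet])/\prod_i t^{d_i\deg\eta_i}\weight(\eta_i)$ independent of $[\eta_\bullet]$, though you should state this step explicitly.

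One genuine error in the last sentence: the closed form for $F_{\lambda,q}[X;t]$ is \emph{not} manifestly independent of the classification data, because $C_{\lambda,q}$ is built from $\weight(\eta)$, which by Definition~\ref{defn:weight} depends on the chosen classification data through the orbit $M_\eta$. The paper establishes independence as a separate corollary by specializing the theorem to $\Sigma=\BP^1$ with a single marked point and $\tau=\text{``}\leq 0\text{''}$, using that the unique $\tau$-satisfying bundle of degree $0$ is the trivial one. This is a nontrivial extra argument, not a formal consequence of the formula. (Also, the fact that $\type\theta(s_i)=\type\eta_i(0)$ depends only on $[\eta_i]$ is immediate and does not require Corollary~\ref{cor:non-degenerate}, which is about a different statement.)
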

\begin{proof}
We first assume a choice of a classification data as in Subsection \ref{ssec:classification data} for all finite fields $\BF_q$. 
We will proceed by induction on $k$. Let $S'=(s_1,\ldots, s_{k-1})$ and $s=s_k$ with $\deg s = d$. Let $\Bk$ be the residue field of $s$. We can choose an identification of $\Bk$ with $\BF_{q^d}$ and therefore a classification data for $\Bk$. 

Consider any $(\CE, \theta)\in\Bun^\tau_\nil(\Sigma)$ of type $\lambda$. We first choose a trivialization at $s$ so that $\theta\otimes\hat\CO_s$ gives a nilpotent matrix, which we denote by $\theta_s$ and write
\[
1 = \sum_{[g]\in Z(N_{\lambda})\backslash M_{\theta_s}} \frac{1}{|Z(N_{\lambda})\backslash M_{\theta_s}|},
\]
where the quotient is finite by Proposition \ref{prop:finite weight}.
Note that we have a natural bijection $Z(N_{\lambda})\backslash M_{\theta_s}\cong \GL_n(\BFR)\backslash \GL_n(\BFR) M_{\theta_s}$, so we can write
\[
1 = \sum_{[g]\in \GL_n(\BFR)\backslash \GL_n(\BFR) M_{\theta_s}} \frac{1}{|Z(N_{\lambda})\backslash M_{\theta_s}|}.
\]
Each $[g]$ above is a point on $\AGr_n(\Bk)_L$, so we can apply the construction of bundle $\CE^{g}$. Since $g\theta_s g^{-1}\in\Mat_n(\BFR)$, we have that $\theta$ uniquely extends to $\theta^{g}$ on $\CE^{g}$, and since $g$ is kernel-strict, we have that $\kernel \theta^{g}$ is isomorphic to $\kernel \theta$. By Proposition \ref{prop:truncation nilpotent} this implies that $\CE$ satisfies $\tau$ if and only if $\CE^{g}$ does. Moreover, we have $\type \theta^{g}=\lambda$ and $\type \theta^{g}(s_i)=\type \theta(s_i)$ for all $i<k$. Let us denote the full subcategory of $\Bun^\tau_\lambda(\Sigma)$ consisting of pairs $(\CF, \theta')$ such that $\type \theta'(s)=\lambda$ by $\Bun'^\tau_\lambda(\Sigma)$. We then have
\[
1 = \sum_{[g]\in \GL_n(\BFR)\backslash \GL_n(\BFR) M_{\theta_s}} \frac{1}{|Z(N_{\lambda})\backslash M_{\theta_s}|} \sum_{\substack{(\CF, \theta')\in \Bun'^\tau_{\lambda}/\sim\\ \varphi:(\CE^{g},\theta^{g})\toiso (\CF,\theta')}}\frac{1}{|\Aut(\CF, \theta')|}.
\]
\[
=\sum_{(\CF, \theta')\in \Bun'^\tau_{\lambda}/\sim} \frac{1}{|\Aut(\CF, \theta')|} \sum_{\substack{[g]\in \GL_n(\BFR)\backslash \GL_n(\BFR) M_{\theta_s} \\ \varphi:(\CE^g,\theta^g)\toiso (\CF,\theta')}} \frac{1}{|Z(N_{\lambda})\backslash M_{\theta_s}|}.
\]

Let us choose a trivialization of $\CF$ at $s$ for each $(\CF, \theta')$ in the sum above such that the matrix of $\theta'\otimes\widehat\CO_s$ is $N_\lambda$. 
By part (i) of Proposition \ref{prop:modifications2} the pairs $([g], \varphi)$ in the above summation are in bijection with the corresponding subset of embeddings $\CE(-Ns)\hookrightarrow \CF$. The relevant subset can be described as those embeddings whose matrix at $s$ is in $M_{\theta_s}$. By part (ii) of the same proposition we can identify it with the corresponding subset of pairs $g, \psi$ where $\psi:\CE\toiso \CF_g$ and $[g]\in M_{\theta_s} \GL_n(\BFR)/ \GL_n(\BFR)$. Thus we obtain
\[
1 = \sum_{(\CF, \theta')\in \Bun'^\tau_{\lambda}/\sim} \frac{1}{|\Aut(\CF, \theta')|} \sum_{\substack{[g]\in M_{\theta_s} \GL_n(\BFR)/ \GL_n(\BFR) \\ \psi:(\CE,\theta)\toiso (\CF_g,\theta'_g)}} \frac{1}{|Z(N_{\lambda})\backslash M_{\theta_s}|}.
\]
Now summing over all $(\CE, \theta)$ and using the equality $\deg \CE = \deg \CF - d \deg\theta_s$ gives
\[
\Omega_{n,\lambda,S}^\tau[X_\bullet;t] = \sum_{\substack{(\CF, \theta')\in \Bun'^\tau_{\lambda}/\sim}} \frac{t^{-\deg \CF}}{|\Aut(\CF, \theta')|} \prod_{i=1}^{k-1} H_{\type \theta'(s_i)}[X_i;q^{d_i}] C_{\CF,\theta'}[X_k;t],
\]
where
\[
C_{\CF, \theta'}[X;t]=\sum_{(\CE,\theta)\in\Bun^\tau_{\lambda}(\Sigma)/\sim} \frac{t^{d \deg \theta_s}}{|\Aut(\CE, \theta)|} \sum_{\substack{[g]\in M_{\theta_s} \GL_n(\BFR)/ \GL_n(\BFR) \\ \psi:(\CE,\theta)\toiso (\CF_g,\theta'_g)}} \frac{H_{\type \theta(s)}[X;q^{d}]}{|Z(N_{\lambda})\backslash M_{\theta_s}|}
\]
\[
= \sum_{\substack{[\eta]\in\ANilp_\lambda(\Bk)\\ [g]\in M_{\eta} \GL_n(\BFR)/ \GL_n(\BFR)}} \frac{t^{d \deg\eta} H_{\type \eta(0)}[X;q^{d}]}{|Z(N_{\lambda})\backslash M_{\eta}|} 
\sum_{\substack{(\CE,\theta)\in\Bun^\tau_{\lambda}(\Sigma)/\sim \\ \psi:(\CE,\theta)\toiso (\CF_g,\theta'_g)}} \frac{1}{|\Aut(\CE, \theta)|}.
\]
Since $(\CF_g,\theta_g')$ satisfies $\tau$ for all $g$ above, the second summation always produces $1$. So we obtain 
\[
C_{\CF, \theta'}[X;t] = \sum_{\substack{[\eta]\in\ANilp_\lambda(\Bk)\\ [g]\in M_{\eta} \GL_n(\BFR)/ \GL_n(\BFR)}} \frac{t^{d \deg\eta} H_{\type \eta(0)}[X;q^{d}]}{|Z(N_{\lambda})\backslash M_{\eta}|}.
\]
For each $[\eta]$ the set $M_{\eta} \GL_n(\BFR)/ \GL_n(\BFR)$ is in bijection with $M_{\eta}/Z(\eta)$, which is finite by Proposition \ref{prop:finite weight}. Thus we finally arrive at 
\[
C_{\CF, \theta'}[X;t] = \sum_{[\eta]\in\ANilp_\lambda(\Bk)} t^{d \deg\eta} \weight(\eta) H_{\type \eta(0)}[X;q^{d}],
\]
which does not depend on $\CF, \theta'$. So we have
\[
C_{\CF, \theta'}[X;t]= C_{\lambda,q^{d}}[X;t^{d}],
\]
where
\[
C_{\lambda, q}[X;t] = \sum_{[\eta]\in\ANilp_\lambda(\BF_q)} t^{\deg\eta} \weight(\eta) H_{\type \eta(0)}[X;q].
\]
Write
\[
C_{\lambda,q}[1;t] = \sum_{[\eta]\in\ANilp_\lambda(\Bk)} t^{\deg\eta} \weight(\eta).
\]
We have that $C_{\lambda,q}[1]\in 1 + t\BQ[[t]]$ because by Corollary \ref{cor:non-degenerate} the only degree $0$ class is $N_\lambda$, and its weight is clearly $1$. From the following counts
\[
\Omega_{\lambda,S}^\tau[X_1,\ldots,X_k;t] = \Omega'^\tau_{\lambda,S}[X_1,\ldots,X_{k-1};t] C_{\lambda,q^d}[X_k;t^d],
\]
\[
\Omega_{\lambda,S'}^\tau[X_1,\ldots,X_{k-1};t] = \Omega'^\tau_{\lambda,S}[X_1,\ldots,X_{k-1};t] C_{\lambda,q^d}[1;t^d],
\]
where
\[
\Omega'^\tau_{\lambda,S}[X_1,\ldots,X_{k-1};t] = \sum_{\substack{(\CF, \theta')\in \Bun'^\tau_{\lambda}/\sim}} \frac{t^{-\deg \CF}}{|\Aut(\CF, \theta')|} \prod_{i=1}^{k-1} H_{\type \theta'(s_i)}[X_i;q^{d_i}],
\]
we obtain
\[
\Omega_{\lambda,S}^\tau[X_1,\ldots,X_{k};t] = \Omega_{\lambda,S'}^\tau[X_1,\ldots,X_{k-1};t] \frac{C_{\lambda,q^d}[X;t^d]}{C_{\lambda,q^d}[1;t^d]}.
\]
So the desired factorization holds.
\end{proof}

\begin{cor}[Of the proof]
We have that $\weight \theta$ for a nilpotent matrix $\theta\in\Mat_n(\BFR)$ over a finite field $\Bk$ does not depend on the choice of classification data.
\end{cor}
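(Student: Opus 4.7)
The plan is to exploit that Theorem \ref{thm:existence of factorization} holds for every choice of classification data, while its left-hand side $\Omega^\tau_{\lambda,S}[X_\bullet;t]$ and the factor $\Omega^\tau_\lambda(t)$ are intrinsic counts of genuine objects. Consequently the remaining factor $F_{\lambda,q}[X;t]=C_{\lambda,q}[X;t]/C_{\lambda,q}[1;t]$ must also be independent of the classification data, and I would upgrade this to pointwise independence of $\weight$ by a one-class-at-a-time comparison.

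For the first step I would pick a test case in which $\Omega^\tau_\lambda(t)\neq 0$. Taking $\Sigma=\BP^1_{\BF_q}$, a single marked point $s$ of degree one, and the truncation $\tau=$``$\leq 0$'', the pair $(\CO^n,N_\lambda)$ contributes a nonzero constant term to $\Omega^\tau_\lambda(t)$, so the factorization yields $F_{\lambda,q}[X;t]=\Omega^\tau_{\lambda,(s)}[X;t]/\Omega^\tau_\lambda(t)$, which is intrinsic. Thus for any two classification data $\mathcal{D},\mathcal{D}'$ with weights $\weight,\weight'$ and sums $C,C'$,
\[
\frac{C_{\lambda,q}[X;t]}{C_{\lambda,q}[1;t]}=\frac{C'_{\lambda,q}[X;t]}{C'_{\lambda,q}[1;t]}.
\]
Now suppose $\mathcal{D},\mathcal{D}'$ agree everywhere except at a single class $[\theta_0]$, and set $\Delta=\weight(\theta_0)-\weight'(\theta_0)$, $e=\deg\theta_0$, $\mu=\type\theta_0(0)$. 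The difference is concentrated in one term, so $C_{\lambda,q}=C'_{\lambda,q}+t^e\Delta\,H_\mu[X;q]$ and $C_{\lambda,q}[1;t]=C'_{\lambda,q}[1;t]+t^e\Delta$. Substituting into the ratio identity, cross-multiplying, and cancelling produces
\[
t^e\Delta\bigl(H_\mu[X;q]\,C'_{\lambda,q}[1;t]-C'_{\lambda,q}[X;t]\bigr)=0.
\]
If $[\theta_0]\neq[N_\lambda]$, Corollary \ref{cor:non-degenerate} gives $e>0$ and $\mu\neq\lambda$; assuming $\Delta\neq0$ one may cancel $t^e\Delta$ to get $H_\mu[X;q]\,C'_{\lambda,q}[1;t]=C'_{\lambda,q}[X;t]$, and setting $t=0$ (where only $[N_\lambda]$ contributes, so $C'_{\lambda,q}[1;0]=1$ and $C'_{\lambda,q}[X;0]=H_\lambda[X;q]$) forces $H_\mu=H_\lambda$, a contradiction. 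The class $[N_\lambda]$ itself is unambiguous, since its admissible set is just $Z(N_\lambda)\subset\GL_n(\BFR)$, which forms a single $Z(N_\lambda)\times Z(N_\lambda)$-orbit, giving weight $1$ canonically.

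For arbitrary $\mathcal{D}_1,\mathcal{D}_2$ and any class $[\theta]$, let $\mathcal{D}_3$ agree with $\mathcal{D}_1$ except at $[\theta]$, where it uses $\mathcal{D}_2$'s choice; the single-class argument gives $\weight_{\mathcal{D}_1}(\theta)=\weight_{\mathcal{D}_3}(\theta)=\weight_{\mathcal{D}_2}(\theta)$. The main point of care is establishing the nonvanishing of $\Omega^\tau_\lambda(t)$ so that $F_{\lambda,q}$ can be isolated cleanly; everything else reduces to a short algebraic identity in $\Sym[X]\otimes\BQ((t))$.
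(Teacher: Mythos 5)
Your argument is correct, but it follows a genuinely different route than the paper's. The paper's proof (also specializing to $\Sigma=\BP^1$, $k=1$, $\tau=$``$\leq 0$'') goes \emph{inside} the proof of Theorem~\ref{thm:existence of factorization}: it observes that all degree-$0$ pairs in $\Bun'^\tau_\lambda$ are isomorphic to $(\CO_{\BP^1}^n,N_\lambda)$, and then reads off the explicit intrinsic formula
\[
\weight \eta = z_\lambda(q) \sum_{\substack{(\CE, \theta)\in \Bun^{\leq 0}_{\lambda}(\BP^1)/\sim\\ \deg\CE=-d,\, [\theta_0]=[\eta]}}\frac{1}{|\Aut(\CE,\theta)|},
\]
directly from the intermediate identity in that proof. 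You instead use only the \emph{statement} of Theorem~\ref{thm:existence of factorization}: nonvanishing of $\Omega^\tau_\lambda(t)$ shows that $F_{\lambda,q}=C_{\lambda,q}[X;t]/C_{\lambda,q}[1;t]$ is intrinsic, and then a one-class-at-a-time perturbation argument, anchored at $t=0$ where only $[N_\lambda]$ contributes, forces $\Delta=0$ via $H_\mu\neq H_\lambda$. Your one-class-at-a-time step is genuinely needed: intrinsicness of $C_{\lambda,q}$ alone would only pin down the aggregates $\sum_{\deg\eta=e,\ \type\eta(0)=\mu}\weight(\eta)$, since the representation of $C_{\lambda,q}$ in the Hall--Littlewood basis collapses classes with the same $(\deg,\type(\cdot)(0))$; varying a single orbit choice is what makes the extraction pointwise, combined with the observation that $\weight\theta$ depends only on the orbit $M_\theta$ chosen at $[\theta]$. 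What the paper's version buys is an explicit closed formula for $\weight\eta$ as an intrinsic weighted count over $\BP^1$, which is stronger information than mere well-definedness; your version buys modularity (no need to reopen the proof of Theorem~\ref{thm:existence of factorization}) at the cost of being indirect. One small point to make fully rigorous in your write-up is the anchoring claim $\weight N_\lambda = 1$: this requires that the ``poles as small as possible'' normalization in the definition of classification data forces $M_{N_\lambda}\subset\GL_n(\BFR)$, so that $M_{N_\lambda}=Z(N_\lambda)$ is the unique admissible orbit; the paper asserts this too (``its weight is clearly $1$'') but it is worth spelling out since it is the base case of your induction.
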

\begin{proof}
Let $\Sigma=\BP^1/\Bk$, $k=1$, $s=0$, $\tau=$ ``$\leq 0$''. Let $[\eta]\in \ANilp_n(\Bk)$, $\type \eta = \lambda$ $\deg \eta=d$. Note that the only bundle of degree $0$ satisfying $\tau$ is the trivial bundle. All endomorphisms of the trivial bundle are constant, so all pairs $(\CF, \theta')$ of degree $0$ are equivalent to the pair $(\CO_{\BP^1}^n, N_\lambda)$. By following the proof of Theorem \ref{thm:existence of factorization} we obtain
\[
\sum_{\substack{(\CE, \theta)\in \Bun^{\leq 0}_{\lambda}(\BP^1)/\sim\\ \deg\CE=-d,\, [\theta_0]=[\eta]}}\frac{1}{|\Aut(\CE,\theta)|} = \frac{\weight \eta}{|\Aut(\CO_{\BP^1}^n, N_\lambda)|} = \frac{\weight\eta}{z_\lambda(q)},
\]
where $z_\lambda(q)$ is the size of the centralizer of $N_\lambda$ in $\GL_n(\Bk)$. Thus we have
\[
\weight \eta = z_\lambda(q) \sum_{\substack{(\CE, \theta)\in \Bun^{\leq 0}_{\lambda}(\BP^1)/\sim\\ \deg\CE=-d,\, [\theta_0]=[\eta]}}\frac{1}{|\Aut(\CE,\theta)|},
\]
which indeed depends only on $\Bk, \lambda$ and $[\eta]$, but not on the choice of the classification data.
\end{proof}

\subsection{Computation on $\BP^1$ with $2$ marked points}\label{subs: P1}
In this subsection we compute $\Omega_{n,(0,\infty)}^{\leq 0}(\BP^1)[X,Y;t]$ over $\Bk=\BF_q$,
i.e. the weighted numbers of vector bundles on $\BP^1$ with no positive degree subbundles with nilpotent endomorphism and parabolic structures at $0$ and $\infty$. A vector bundle over $\BP^1$ is a direct sum of line bundles. Let the multiplicities of the line bundles be given by a composition $\mu_1+\ldots+\mu_m=n$ with $\mu_i>0$ and the negative degrees of the line bundles by numbers $0\leq d_1\leq d_2\leq\ldots\leq d_m$, so that 
\[
\CE = \CO(-d_1)^\mu_1 \oplus\cdots\oplus \CO(-d_m)^\mu_m.
\] 
We will decompose matrices below into blocks of sizes $\mu_1,\ldots,\mu_m$ so that an $n\times n$ matrix $M$ corresponds to matrices $M_{i,j}$ with $1\leq i,j\leq m$ of shapes $\mu_i\times\mu_j$.
The automorphisms of $\CE$ are block-upper-triangular. In the diagonal blocks we have arbitrary invertible constant matrices. In the off-diagonal block $i,j$ we have arbitrary $\mu_i\times\mu_j$ matrix of polynomials of degree $\leq d_j-d_i$. Thus we have
\[
|\Aut(\CE)| = \prod_{i=1}^m |\GL_{\mu_i}(\Bk)| \prod_{i<j} q^{\mu_i\mu_j(d_j-d_i+1)}.
\]
A nilpotent endomorphism $\theta$ of $\CE$ is again given by block-upper-triangular matrix with block sizes $\mu_1,\ldots,\mu_m$. In the diagonal blocks we have arbitrary nilpotent constant matrices. In the off-diagonal block $i,j$ we have arbitrary $\mu_i\times\mu_j$ matrix of polynomials of degree $\leq d_j-d_i$. If we specify $A=\theta(0)$ and $B=\theta(\infty)$ then we specify the diagonal blocks, which must coincide, and the highest and the lowest coefficient of each of the off-diagonal block. Thus the number of nilpotent endomorphisms with given $A$ and $B$ is $\prod_{i<j} q^{\mu_i\mu_j(d_j-d_i-1)}$. Let $Q_\mu$ be the set of pairs of nilpotent block-upper-triangular matrices $A$, $B$ satsifying $A_{i,i}=B_{i,i}$ for all $i$. The total contribution of $\CE$ to $\Omega_{n,(0,\infty)}^{\leq 0}(\BP^1)[X,Y;t]$ is given by the following expression: 
\[
t^{\sum_i d_i \mu_i} C_\mu[X,Y],
\]
where
\[
C_\mu[X,Y]=
\frac{\sum_{(A,B)\in Q_\mu} H_{\type A}[X;q] H_{\type B}[Y;q]}{\prod_{i=1}^m |\GL_{\mu_i}(\Bk)| \prod_{i<j} q^{2\mu_i\mu_j}}.
\]
Notice that $|\GL_{\mu_i}(\Bk)| \prod_{i<j} q^{\mu_i\mu_j}=|P_\mu|$ where $P_\mu$ is the parabolic subgroup of $\GL_n(\Bk)$ consisting of block-upper triangular matrices. We have that $P_\mu$ is the stabilizer in $\GL_n(\Bk)$ of the standard flag of type $\mu$. Therefore we can rewrite $C_\mu[X,Y]$ as follows:
\[
C_\mu[X,Y] = \frac{\sum_{(A,B,F)\in R_\mu} H_{\type A}[X;q] H_{\type B}[Y;q]}{|\GL_n(\Bk)| \prod_{i<j} q^{\mu_i\mu_j}},
\]
where $R_\mu$ is the set of triples $(A,B,F)$ such that $A,B\in\Mat_n(\Bk)$ are nilpotent, $F$ is a flag of type $\mu$, $A,B$ preserve $F$ and the actions of $A$ and $B$ on the associated graded space with respect to $F$ coincide. All such triples can be obtained as follows. We split the sum according to the types of $A$ and $B$. Let $\type A = \lambda$, $\type B = \nu$. Then $A = g N_\lambda g^{-1}$, and the number of $g$ giving the same $A$ equals $z_\lambda(q)$, which is the centralizer of $N_\lambda$ in $\GL_n(\Bk)$. Similarly we write $B=h N_\mu h^{-1}$. Let $F_{\lambda,\mu}$ denote the set of flags of type $\mu$ preserved by $N_\lambda$. Then $C_\mu[X,Y]$ is rewritten as follows:
\[
C_\mu[X,Y] = \sum_{\lambda,\nu\vdash n} C_{\mu,\lambda,\nu} H_{\lambda}[X;q] H_{\nu}[Y;q],
\] 
where 
\[
C_{\mu,\lambda,\nu} = \frac{\left|\{F_1\in F_{\lambda,\mu},\, F_2\in F_{\nu,\mu},\,g_1,g_2\in\GL_n(\Bk):\,g_1 F_1 = g_2 F_2,\,*\}\right| }{z_\lambda(q) z_\nu(q) |\GL_n(\Bk)| \prod_{i<j} q^{\mu_i\mu_j}}.
\]
Here $*$ means the following condition: the actions induced by $g_1 N_\lambda g_1^{-1}$ and $g_2 N_\nu g_2^{-1}$ on the associated graded w.r.t. $g_1 F_1=g_2 F_2$ must coincide. This is equivalent to the condition that the associated graded of $N_\lambda$ on $F_1$ must coincide with the associated graded of $g_1^{-1} g_2 N_\nu g_2^{-1} g_1$ on $F_1$. Let $g=g_1^{-1} g_2$. Clearly, the number of pairs $g_1, g_2$ producing the same $g$ equals $|\GL_n(\Bk)|$. This cancels $|\GL_n(\Bk)|$ in the denominator. The condition on $g$ is that it must send $F_2$ to $F_1$, and on the associated graded it must conjugate the action of $N_\nu$ to that of $N_\lambda$. The number of $g$ with the same associated graded equals $\prod_{i<j} q^{\mu_i\mu_j}$, which cancels out with the corresponding product in the denominator. For each $F_1\in F_{\lambda,\mu}$ denote by $\kappa_i(F_1)=\type N_\lambda|_{(F_1)_i/(F_1)_{i-1}}$ and by $\kappa(F_1)$ the sequence of partitions $(\kappa_1(F_1),\ldots, \kappa_m(F_m))$. Similarly define $\kappa(F_2)$.
\[
C_{\mu,\lambda,\nu} = \frac{\sum_{\substack{F_1\in F_{\lambda,\mu},\, F_2\in F_{\nu,\mu} \\ \kappa(F_1)=\kappa(F_2)}} \prod_{i=1}^m z_{\kappa(F_1)_i}(q)}{z_\lambda(q) z_\nu(q)}.
\]
We will use the explicit description of the Hall algebra of nilpotent matrices over a field given in Theorem \ref{thm:hall algebra}. The $q$-deformed scalar product $(H_\lambda[Z;q], H_\nu[Z;q])_q$ precisely equals $z_\lambda(q)$ if $\lambda=\nu$ and vanishes otherwise.
So we have
\[
C_{\mu,\lambda,\nu} = \frac{\sum_{F_1\in F_{\lambda,\mu},\, F_2\in F_{\nu,\mu}} \prod_{i=1}^m (H_{\kappa(F_1)_i}[Z;q], H_{\kappa(F_2)_i}[Z;q])_q}{z_\lambda(q) z_\nu(q)}
\]
\[
= \frac{\left(\sum_{F_1\in F_{\lambda,\mu}} \bigotimes_{i=1}^m H_{\kappa(F_1)_i}[Z;q],\; \sum_{F_2\in F_{\nu,\mu}} \bigotimes_{i=1}^m H_{\kappa(F_2)_i}[Z;q]\right)_q}{z_\lambda(q) z_\nu(q)}.
\]
In particular, we see that $C_{\mu,\lambda,\nu}=C_{\mu,\lambda,\nu}(q)$ is a rational function of $q$.
Note that $\sum_{F_1\in F_{\lambda,\mu}} \bigotimes_{i=1}^m H_{\kappa(F_1)_i}[Z;q]$ is precisely the $\mu_1,\mu_2,\ldots,\mu_m$-degree part of the $m-1$-st iterated coproduct in the Hall algebra applied to $H_\lambda[Z;q]$ and similarly for the term with $\nu$. Note also that by Cauchy formula
\[
\sum_{\lambda\in\CP} \frac{H_\lambda[Z;q] H_\lambda[X;q]}{z_\lambda(q)} = \pExp\left[\frac{ZX}{q-1}\right] = \sum_{\lambda\in\CP} h_\lambda\left[\frac{X}{q-1}\right] m_\lambda[Z],
\]
\[
\sum_{\nu\in\CP} \frac{H_\lambda[Y;q] H_\lambda[Z;q]}{z_\lambda(q)} =\sum_{\nu\in\CP} h_\lambda\left[\frac{Z}{q-1}\right] m_\lambda[Y],
\]
and both elements are group-like. Thus we obtain
\[
C_\mu[X,Y;q] = \prod_{i=1}^m \left(\sum_{\lambda\vdash \mu_i} h_\lambda\left[\frac{X}{q-1}\right] m_\lambda[Z], \sum_{\nu\vdash \nu_i} h_\lambda\left[\frac{Z}{q-1}\right] m_\lambda[Y]\right)_q
\]
\[
=\prod_{i=1}^m \sum_{\lambda\vdash \mu_i} h_\lambda\left[\frac{X}{q-1}\right] m_\lambda[Y] = \prod_{i=1}^m h_{\mu_i}\left[\frac{XY}{q-1}\right].
\]
Now we can finish the calculation:
\[
\sum_{n=0}^\infty \Omega_{n,(0,\infty)}^{\leq 0}(\BP^1)[X,Y;t] = \sum_{d,\mu} t^{\sum_i d_i \mu_i} C_\mu[X,Y;q] = \prod_{d=0}^\infty \sum_{k=0}^\infty t^{d k} h_{k}\left[\frac{XY}{q-1}\right]
\]
\[
=\prod_{d=0}^\infty \pExp\left[\frac{t^d XY}{q-1}\right] = \pExp\left[\frac{XY}{q-1}\sum_{d=0}^\infty t^d\right] = \pExp\left[\frac{XY}{(q-1)(1-t)}\right].
\]

\subsection{Upper-triangularity}
We are ready to identify the unknown functions $F_{\lambda,q}[X;t]$ with Macdonald polynomials. Note that we do not even know that the dependence on $q$ is given by a rational function.
\begin{prop}\label{prop:F upper triangular}
For any partition $\lambda$ we have $F_{\lambda,q}[1;t]=1$ and $F_{\lambda,q}[(q-1)X;t]\in M_{\preceq \lambda'}$.
\end{prop}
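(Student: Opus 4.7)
The normalization $F_{\lambda,q}[1;t]=1$ is immediate from the definition $F_{\lambda,q}[X;t] = C_{\lambda,q}[X;t]/C_{\lambda,q}[1;t]$, and the denominator is invertible in $\BQ[[t]]$ since $C_{\lambda,q}[1;t]\in 1+t\BQ[[t]]$ (as already observed in the proof of Theorem \ref{thm:existence of factorization}, the unique class of degree $0$ is $N_\lambda$, which has weight $1$). So the entire content lies in showing that $C_{\lambda,q}[(q-1)X;t]\in M_{\preceq\lambda'}$, and then dividing by the scalar $C_{\lambda,q}[1;t]$.

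The plan is to work term by term in the defining expansion
\[
C_{\lambda,q}[X;t] \;=\; \sum_{[\eta]\in\ANilp_\lambda(\BF_q)} t^{\deg\eta}\,\weight(\eta)\,H_{\type\eta(0)}[X;q].
\]
The defining upper-triangularity of the modified Macdonald polynomials (Definition \ref{defn:macdonald}), specialized at $t=0$, gives
\[
H_\mu[(q-1)X;q] \;\in\; M_{\preceq\mu'}
\]
for every partition $\mu$. So each summand, after the substitution $X\mapsto(q-1)X$, lives in $M_{\preceq \type\eta(0)'}$. It therefore suffices to establish the comparison $\type\eta(0)\succeq \lambda$ in the dominance order (equivalently $\type\eta(0)'\preceq\lambda'$), which immediately gives $M_{\preceq\type\eta(0)'}\subseteq M_{\preceq\lambda'}$ and concludes the proof.

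The key step, and the only one requiring any work, is the inequality $\type\eta(0)\succeq\type\eta=\lambda$. For a nilpotent $\eta\in\Mat_n(\BFR)$, specialization at the closed point of $\Spec\BFR$ can only enlarge the kernel filtration: the natural map
\[
(\kernel \eta^i)\otimes_\BFR \Bk \;\hookrightarrow\; \kernel \eta(0)^i
\]
gives $\dim_\Bk \kernel\eta(0)^i \;\geq\; \rank_\BFR \kernel\eta^i$ for every $i\geq 0$. Summing the definitions of the two types, this reads
\[
\sum_{j\leq i}\type\eta(0)_j \;=\; \dim\kernel\eta(0)^i \;\geq\; \rank\kernel\eta^i \;=\; \sum_{j\leq i}\lambda_j,
\]
which is precisely $\type\eta(0)\succeq\lambda$, hence $\type\eta(0)'\preceq\lambda'$. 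I expect this to be the main (and essentially only) conceptual point; everything else is either a definition unwinding or an appeal to a property already recorded in the paper.
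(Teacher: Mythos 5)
Your proof is correct and follows essentially the same route as the paper's: normalization is immediate, and upper-triangularity reduces to $\type\eta(0)\succeq\type\eta$ together with the defining property $H_\nu[(q-1)X;q]\in M_{\preceq\nu'}$. The only cosmetic difference is that the paper cites upper-semicontinuity of $\dim\kernel\theta^i(s)$ while you spell out the underlying injection $(\kernel\eta^i)\otimes_\BFR\Bk\hookrightarrow\kernel\eta(0)^i$, which is the same fact unwound.
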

\begin{proof}
The first claim is clear from $F_{\lambda,q}[X;t] = \frac{C_{\lambda,q}[X;t]}{C_{\lambda,q}[1;t]}$. To show the second claim notice that $\type\theta(0) \succeq \type \theta$ for any $[\theta]\in\ANilp(\BF_q)$, which follows from the fact that $\dim\kernel\theta^i(s)$ is an upper-semicontinuous function of $s$ for all $i$. Thus we see that $C_{\lambda,q}$ is a linear combination of Hall-Littlewood polynomials $H_\nu[X;q]$ with $\nu\succeq\lambda$, or equivalently $\nu'\preceq \lambda'$. By Definition \ref{defn:macdonald} we have for each such $\nu$
\[
H_\nu[(q-1)X;q] \in M_{\preceq \nu'} \subset M_{\preceq \lambda'}.
\]
\end{proof}

Now we are ready to complete the proof
\begin{proof}[Proof of Theorem \ref{thm: with macdonald}]
By the computation in Subsection \ref{subs: P1} and Theorem \ref{thm:existence of factorization}, in the case of $\BP^1$ we have
\[
\pExp\left[\frac{XY}{(q-1)(1-t)}\right] = \sum_{\lambda\in\CP} \Omega^{\leq 0}_{\lambda}(t) F_{\lambda,q}[X;t] F_{\lambda,q}[Y;t].
\]
In particular we see that the operator $\Sym[Z]\to\Sym[Z]$ defined by
\[
G\to \sum_{\lambda\in\CP} \Omega^{\leq 0}_{\lambda}(t) F_{\lambda,q}[Z;t] ( F_{\lambda,q}[X;t], G[X])_{q,t}
\]
is the identity operator. Therefore $F_{\lambda,q}$ span the space of symmetric functions. Since the number of $F_{\lambda,q}$ of degree $d$ equals to the number of partitions, we conclude that $F_{\lambda,q}$ form a basis of $\Sym[Z]$ over the field of Laurent series in $t$. We substitute $F_\mu[X;t]$ into the operator above and obtain
\[
(F_{\lambda,q}[X;t], F_{\mu,q}[X;t])_{q,t} = \delta_{\lambda,\mu} \frac{1}{\Omega^{\leq 0}_{\lambda}(t)}.
\]
By Proposition \ref{prop:F upper triangular} and Corollary \ref{cor:unique macdonald} we conclude that for all partitions $\lambda$
\[
F_{\lambda,q} = \tilde H_{\lambda,q}.
\]
Notice that we have also obtained a proof of the following identity for $\Sigma=\BP^1$:
\[
\Omega^{\leq 0}_{\lambda}(t) = \frac{1}{(\tilde H_\lambda, \tilde H_\lambda)_{q,t}}.
\]
\end{proof}

\begin{cor}[Of the proof]\label{cor:genus 0}
When $\Sigma=\BP^1/\BF_q$ and $S=(s_1,\ldots,s_k)$ a collection of points of degrees $d_1,\ldots,d_k$ we have 
\[
\Omega_{\lambda,S}^{\leq 0} [X_\bullet;t] = \frac{\prod_{i=1}^k \tilde H_\lambda[X_i;q^{d_i}, t^{d_i}]}{(\tilde H_\lambda, \tilde H_\lambda)_{q,t}}
\]
\end{cor}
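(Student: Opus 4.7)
The plan is to assemble the corollary as an immediate consequence of results already obtained, without any new computation. Specifically, everything we need was established either in the statement of Theorem~\ref{thm: with macdonald} itself or along the way during its proof, and the corollary simply combines these.

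First, I would apply Theorem~\ref{thm: with macdonald} to $\Sigma=\BP^1/\BF_q$ with the (locally finite, suitable) truncation $\tau = $ ``$\leq 0$''. This directly gives the factorization
\[
\Omega_{\lambda,S}^{\leq 0}[X_\bullet;t] \eq \Omega_{\lambda}^{\leq 0}(t) \prod_{i=1}^k \tilde H_\lambda[X_i;q^{d_i}, t^{d_i}].
\]
So the only remaining task is to identify the scalar $\Omega_\lambda^{\leq 0}(t)$ (which does not depend on $S$) with $1/(\tilde H_\lambda, \tilde H_\lambda)_{q,t}$.

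Next I would recall the explicit Cauchy-type computation carried out in Subsection~\ref{subs: P1} for $\BP^1$ with two marked points at $0$ and $\infty$, which yielded
\[
\sum_{n=0}^\infty \Omega_{n,(0,\infty)}^{\leq 0}(\BP^1)[X,Y;t] \eq \pExp\!\left[\frac{XY}{(q-1)(1-t)}\right].
\]
Combining this with the ``unknown-function'' factorization of Theorem~\ref{thm:existence of factorization}, one obtains
\[
\pExp\!\left[\frac{XY}{(q-1)(1-t)}\right] \eq \sum_{\lambda\in\CP} \Omega_{\lambda}^{\leq 0}(t)\, F_{\lambda,q}[X;t]\, F_{\lambda,q}[Y;t].
\]
The right-hand side is the reproducing kernel for a pairing with respect to which the $F_{\lambda,q}$ are orthogonal, while the left-hand side is the reproducing kernel for the modified Hall pairing $(\cdot,\cdot)_{q,t}$ (cf.\ Subsection~\ref{subs:reproducing kernels}). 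Pairing $F_{\mu,q}$ against both sides therefore forces
\[
(F_{\lambda,q}, F_{\mu,q})_{q,t} \eq \delta_{\lambda,\mu}\, \frac{1}{\Omega_\lambda^{\leq 0}(t)}.
\]

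Finally, the identification $F_{\lambda,q} = \tilde H_\lambda$ (carried out in the proof of Theorem~\ref{thm: with macdonald} via upper-triangularity from Proposition~\ref{prop:F upper triangular} together with Corollary~\ref{cor:unique macdonald}) turns the last display into $(\tilde H_\lambda, \tilde H_\lambda)_{q,t} = 1/\Omega_\lambda^{\leq 0}(t)$. Substituting this back into the factorization from the first step yields the desired formula. There is no real obstacle here: the content has been extracted in the course of proving Theorem~\ref{thm: with macdonald}, and the corollary is purely a matter of bookkeeping.
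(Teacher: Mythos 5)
Your proposal is correct and follows exactly the paper's own argument: the factorization comes from Theorem~\ref{thm: with macdonald}, and the normalization $\Omega_\lambda^{\leq 0}(t) = 1/(\tilde H_\lambda, \tilde H_\lambda)_{q,t}$ is precisely the identity extracted inside that theorem's proof from the $\BP^1$-with-two-points Cauchy computation, the reproducing-kernel argument, and the identification $F_{\lambda,q}=\tilde H_\lambda$. The paper labels this ``Of the proof'' for the same reason you give---the value of $\Omega_\lambda^{\leq 0}(t)$ on $\BP^1$ is a byproduct of establishing the theorem, so the corollary is indeed just bookkeeping.
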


When $g>0$ we use a result of Schiffmann \cite{schiffmann2014indecomposable}, which says that there are exist explicit functions $\Omega_{g,\lambda}\in\BQ(q,t)[\sigma_1,\ldots,\sigma_{2g}]$ such that for any smooth complete curve $\Sigma/\BF_q$ of genus $g$ with zeta function
\[
\zeta_\Sigma(T) = \frac{\prod_{i=1}^{g}(1-\sigma_i T)(1-q\sigma_i^{-1}T)}{(1-T)(1-qT)},
\]
we have
\[
\Omega_{\lambda}^{\leq 0}(t) =\Omega_{g,\lambda} (q,t,\sigma_1,\ldots,\sigma_{2g}). 
\]
We denote by $\sigma$ the collection $\sigma=(\sigma_1,\ldots,\sigma_g)$.
\begin{cor}\label{cor:genus g}
Let $g\geq 0$. For any smooth complete curve $\Sigma/\BF_q$ and $S=(s_1,\ldots,s_k)$ a collection of points of degrees $d_1,\ldots,d_k$ we have 
\[
\Omega_{\lambda,S}^{\leq 0} [X_\bullet;t] = \Omega_{g,\lambda}(q,t,\sigma) \prod_{i=1}^k \tilde H_\lambda[X_i;q^{d_i}, t^{d_i}],
\]
where $\Omega_{g,\lambda}$ and $\sigma_i$ are defined above.
\end{cor}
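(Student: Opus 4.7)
The plan is to obtain this corollary as a direct concatenation of two already-available inputs: the factorization theorem for parabolic counts (Theorem \ref{thm: with macdonald}) and Schiffmann's explicit formula for the non-parabolic count. First, I would apply Theorem \ref{thm: with macdonald} with the locally finite suitable truncation $\tau = {\leq 0}$. Closure under subobjects is built into the definition of $\leq 0$, closure under extensions is immediate from additivity of degree, local finiteness follows from Harder--Narasimhan theory over a curve, and $\tau = {\leq 0}$ forces torsion-freeness, so that we really are counting vector bundles. This yields at once the factorization
\[
\Omega_{\lambda,S}^{\leq 0}[X_\bullet;t] \; = \; \Omega_{\lambda}^{\leq 0}(t) \prod_{i=1}^k \tilde H_\lambda[X_i; q^{d_i}, t^{d_i}],
\]
reducing the whole problem to identifying the parabolic-free factor $\Omega_\lambda^{\leq 0}(t)$ with $\Omega_{g,\lambda}(q,t,\sigma)$.

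Next, for $g = 0$ this identification is already inside the proof of Theorem \ref{thm: with macdonald}: the explicit computation of Subsection \ref{subs: P1}, together with the orthogonality/triangularity argument used to pin down the unknown functions $F_{\lambda,q}$, produced as a by-product the equation
\[
\Omega^{\leq 0}_\lambda(t) \; = \; \frac{1}{(\tilde H_\lambda, \tilde H_\lambda)_{q,t}},
\]
and we simply define $\Omega_{0,\lambda}(q,t)$ to be this expression. Corollary \ref{cor:genus 0} is then a literal special case of the desired statement.

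For $g \geq 1$ I would invoke the main theorem of \cite{schiffmann2014indecomposable}, which produces rational functions $\Omega_{g,\lambda}(q,t,\sigma_1,\ldots,\sigma_{2g}) \in \BQ(q,t)[\sigma_1,\ldots,\sigma_{2g}]$ depending only on $g$ and $\lambda$, such that for every smooth complete curve $\Sigma/\BF_q$ of genus $g$ with the zeta-function normalization stated just above the corollary, the evaluation at the Frobenius eigenvalues recovers exactly the weighted count of pairs (vector bundle with no positive-degree subbundle) $+$ (nilpotent endomorphism of generic type $\lambda$) on $\Sigma$. Plugging this equality $\Omega_\lambda^{\leq 0}(t) = \Omega_{g,\lambda}(q,t,\sigma)$ into the factorization above yields the claimed formula.

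The only real obstacle is bookkeeping: one must check that Schiffmann's convention for the truncation and the generating series matches ours. Schiffmann works with the dual condition $\geq 0$ (closed under extensions and quotients) rather than $\leq 0$ (closed under extensions and subobjects), but the counts are related by Serre duality (twisting by the canonical bundle), and this compatibility is precisely what Section \ref{sec:from to} establishes in the process of adapting the Mozgovoy--Schiffmann argument to the parabolic setting. Once that translation is recorded, no further mathematical input is needed and the corollary follows.
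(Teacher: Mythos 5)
Your proposal matches the paper's own argument: apply Theorem \ref{thm: with macdonald} with $\tau=\text{``}\leq 0\text{''}$ to reduce to identifying the parabolic-free factor $\Omega^{\leq 0}_\lambda(t)$, then read off $\Omega_{0,\lambda}=1/(\tilde H_\lambda,\tilde H_\lambda)_{q,t}$ for $g=0$ from Corollary \ref{cor:genus 0} (itself a by-product of the proof of Theorem \ref{thm: with macdonald}) and invoke Schiffmann's formula from \cite{schiffmann2014indecomposable} for $g>0$. One small correction to your final bookkeeping remark: the bridge between the $\leq 0$ convention used here and Schiffmann's $\geq 0$ convention is ordinary vector-bundle dualization $\CE\mapsto\CE^\vee$ (which turns degree-$\geq 0$ quotients into degree-$\leq 0$ subbundles and transposes the nilpotent endomorphism without changing its type), not Serre duality --- twisting by $\Omega^1_\Sigma$ shifts degrees by $(2g-2)\cdot\operatorname{rank}$ and would not land back on the $\leq 0$ condition; also this translation is not what Section \ref{sec:from to} is about (that section passes from nilpotent endomorphisms to Higgs fields).
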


\subsection{Further results}
Comparing Theorems \ref{thm:existence of factorization} and \ref{thm: with macdonald} we obtain a following formula for the weighted count of nilpotent matrices over $\BF_q[[t]]$. Recall
\[
C_{\lambda, q}[X;t] = \sum_{[\eta]\in\ANilp_\lambda(\BF_q)} t^{\deg\eta} \weight(\eta) H_{\type \eta(0)}[X;q],
\]
and we have
\[
\frac{C_{\lambda, q}[X;t]}{C_{\lambda, q}[1;t]} = \tilde H_\lambda[X;q,t].
\]
We can determine the function $C_{\lambda, q}[1;t]$:
\begin{prop}
The function $C_{\lambda, q}[1;t]$ is given by
\[
C_{\lambda, q}[1;t] = \prod_{\substack{a,l\\l\neq 0}}\frac{1}{1-t^l q^{-a-1}},
\]
where the product is over the arm- and leg-lengths $a,l$ of the hooks of $\lambda$ with $l\neq 0$.
\end{prop}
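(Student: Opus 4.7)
The plan is to pin down $C_{\lambda,q}[1;t]$ by using the identity
\[
C_{\lambda,q}[X;t] \;=\; C_{\lambda,q}[1;t]\,\tilde H_\lambda[X;q,t]
\]
established in the proof of Theorem~\ref{thm: with macdonald} and comparing coefficients in the Hall-Littlewood basis. Grouping the defining sum for $C_{\lambda,q}[X;t]$ by $\mu=\type\eta(0)$,
\[
C_{\lambda,q}[X;t] \;=\; \sum_\mu d^\lambda_\mu(t)\,H_\mu[X;q],
\qquad
d^\lambda_\mu(t) \;=\; \sum_{\substack{[\eta]\in\ANilp_\lambda(\BF_q)\\ \type\eta(0)=\mu}} t^{\deg\eta}\,\weight(\eta).
\]

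The key observation is that $d^\lambda_\lambda(t)=1$: the condition $\type\eta(0)=\lambda=\type\eta$ is precisely the non-degeneracy condition, so by Corollary~\ref{cor:non-degenerate} we have $\deg\eta=0$ and $[\eta]=[N_\lambda]$, contributing $t^0\cdot\weight(N_\lambda)=1$. Writing $\tilde H_\lambda[X;q,t]=\sum_\mu h^\lambda_\mu(q,t)\,H_\mu[X;q]$ and equating the $H_\lambda$-coefficients on both sides of the identity above yields
\[
C_{\lambda,q}[1;t] \;=\; \frac{1}{h^\lambda_\lambda(q,t)}.
\]
By orthogonality of Hall-Littlewood polynomials under $(\cdot,\cdot)_q$ we have $h^\lambda_\lambda(q,t) = (\tilde H_\lambda[X;q,t],\,H_\lambda[X;q])_q/z_\lambda(q)$, so the problem is reduced to the purely symmetric-function identity
\[
h^\lambda_\lambda(q,t) \;=\; \prod_{\substack{(a,l)\in\lambda\\ l\neq 0}} \bigl(1 - t^l q^{-a-1}\bigr).
\]

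I expect this last identity to be the main obstacle; it is essentially a classical Macdonald-polynomial computation. One clean route is to translate to the integral form $J_\lambda$, where principal specializations are standard arm/leg hook products over the cells of $\lambda$, and then use the passage from $J_\lambda$ to $\tilde H_\lambda$ together with the specialization $H_\lambda[X;q]=\tilde H_\lambda[X;q,0]$ to extract the coefficient via the $q$-deformed Hall pairing. A second verification, closer in spirit to the rest of the paper, uses the factorization
\[
\Omega^{\leq 0}_\lambda(\BP^1)(t)\;=\;\Omega'^{\leq 0}_{\lambda,(0)}(t)\cdot C_{\lambda,q}[1;t]
\]
extracted from the proof of Theorem~\ref{thm:existence of factorization}, together with the independent evaluation $\Omega^{\leq 0}_\lambda(\BP^1)(t)=1/z_\lambda(q,t)$, and a direct computation of $\Omega'^{\leq 0}_{\lambda,(0)}$ on $\BP^1$ using the splitting of bundles into line bundles and the constrained form of a nilpotent endomorphism non-degenerate at the marked point; comparison with the conjectured product formula then completes the proof.
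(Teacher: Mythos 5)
Your reduction is correct and follows essentially the same strategy as the paper. You rewrite $C_{\lambda,q}$ in the Hall--Littlewood basis, observe via Corollary~\ref{cor:non-degenerate} that the coefficient of $H_\lambda[X;q]$ is exactly $1$ (the unique degree-zero non-degenerate class $N_\lambda$ has weight $1$), and conclude $C_{\lambda,q}[1;t]=1/h^\lambda_\lambda(q,t)$ where $h^\lambda_\lambda$ is the leading Hall--Littlewood coefficient of $\tilde H_\lambda[X;q,t]$. The paper does the same thing, phrased slightly differently: it applies the substitution $[(q-1)X]$ and compares the coefficient of $m_{\lambda'}$ on both sides of $C_{\lambda,q}[X;t]=C_{\lambda,q}[1;t]\,\tilde H_\lambda[X;q,t]$, which amounts to the same extraction because $H_\mu[(q-1)X;q]$ hits $m_{\lambda'}$ only when $\mu=\lambda$.

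The one thing you defer, namely the evaluation
\[
h^\lambda_\lambda(q,t) \;=\; \prod_{\substack{(a,l)\in\lambda\\ l\neq 0}}(1-t^l q^{-a-1}),
\]
is precisely what the paper works out explicitly and it is the content of the proposition; in its notation it is the identity $c_\lambda(q,t)/c_\lambda(q,0)$ with $c_\lambda(q,t)=\prod_{a,l}(q^{a+1}-t^l)$, obtained from $\tilde H_\lambda[X;q,t]=t^{n(\lambda)+|\lambda|}J_\lambda\bigl[\frac{X}{t-1};q,t^{-1}\bigr]$, the normalization $J_\lambda=\prod_{a,l}(1-q^at^{l+1})P_\lambda$ with $P_\lambda=m_\lambda+\text{lower}$, and the $q\leftrightarrow t$, $\lambda\leftrightarrow\lambda'$ symmetry of Definition~\ref{defn:macdonald}. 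So your ``route 1'' is not a separate verification to be supplied later; it is the proof, and it needs to be carried out. Your alternative ``route 2,'' via the $\BP^1$ identity $\Omega_\lambda^{\leq 0}(t)=\Omega'^{\leq 0}_{\lambda,(s)}(t)\cdot C_{\lambda,q}[1;t]$ together with $\Omega^{\leq 0}_\lambda(\BP^1)=1/z_\lambda(q,t)$, is a genuinely different plan; it would require an independent direct computation of the count $\Omega'^{\leq 0}_{\lambda,(s)}(\BP^1)$ of bundles with a nilpotent endomorphism non-degenerate at $s$, which is plausible but not obviously shorter than the integral-form calculation, and you have not supplied it.
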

\begin{proof}
By Corollary \ref{cor:non-degenerate} we have that $C_{\lambda, q}[X;t]$ has a unique term with $\type\eta(0)=\lambda$, and this term has coefficient $1$. Thus we have
\begin{equation}\label{eq:C to H}
C_{\lambda, q}[X;t] = H_\lambda[X;q] + \langle H_\nu[X;q]: \nu\succeq\lambda\rangle,
\end{equation}
where by $\langle \rangle$ we denote the linear span of a given set of elements. Recall (\cite{garsia1996remarkable})
\[
\tilde H_\lambda[X;q,t]=t^{n(\lambda)} J_{\lambda}\left[\frac{X}{1-t^{-1}};q,t^{-1}\right] = t^{n(\lambda)+|\lambda|} J_{\lambda}\left[\frac{X}{t-1};q,t^{-1}\right],
\]
where $J_\lambda$ is the \emph{integral form Macdonald polynomial} \cite{macdonald1995symmetric} and $n(\lambda)$ is the sum of the leg lengths of hooks of $\lambda$. The integral form is defined by 
\[
J_\lambda[X;q,t] = \prod_{a,l}(1-q^a t^{l+1}) P_\lambda[X;q,t]
\]
where 
\[
P_\lambda[X;q,t] = m_\lambda[X;q,t]+\langle m_\nu: \nu\preceq\lambda\rangle.
\]
Thus we have
\[
\tilde H_\lambda[(t-1) X;q,t] = \prod_{a,l}(t^{l+1}-q^a) m_\lambda + \langle m_\nu: \nu\preceq\lambda\rangle.
\]
Since Definition \ref{defn:macdonald} is symmetric if we interchange $q$ and $t$ simultaneously replacing $\lambda$ by $\lambda'$ we obtain
\[
\tilde H_\lambda[(q-1) X;q,t] = c_\lambda(q,t) m_{\lambda'} + \langle m_\nu: \nu\preceq\lambda'\rangle,
\]
where $c_\lambda(q,t)=\prod_{a,l}(q^{a+1}-t^l)$.
Substitution $t=0$ gives
\[
H_\lambda[(q-1) X;q] = c_\lambda(q,0) m_{\lambda'} + \langle m_\nu: \nu\preceq\lambda'\rangle.
\]
So \eqref{eq:C to H} implies
\[
C_{\lambda,q}[(q-1) X;q] = c_\lambda(q,0) m_{\lambda'} + \langle m_\nu: \nu\preceq\lambda'\rangle.
\]
Thus we have
\[
C_{\lambda,q}[1;t] = \frac{c(q,0)}{c(q,t)} = \prod_{\substack{a,l\\l\neq 0}}\frac{1}{1-t^l q^{-a-1}}.
\]
\end{proof}

Thus the complete formula for the weighted count reads
\begin{cor}\label{cor:count gets macdonald}
\[
C_{\lambda, q}[X;t] = \sum_{[\eta]\in\ANilp_\lambda(\BF_q)} t^{\deg\eta} \weight(\eta) H_{\type \eta(0)}[X;q] = \frac{\tilde H_\lambda[X;q,t]}{\prod_{a,l:l\neq 0}(1-t^l q^{-a-1})}.
\]
\end{cor}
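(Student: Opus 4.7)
The plan is to assemble two ingredients that are already in place. The first equality in the statement is simply the definition of $C_{\lambda,q}[X;t]$, so all the content lies in the identification with $\tilde H_\lambda[X;q,t]/\prod_{a,l:l\neq 0}(1-t^l q^{-a-1})$.

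First, I would invoke the factorization
\[
F_{\lambda,q}[X;t] = \frac{C_{\lambda,q}[X;t]}{C_{\lambda,q}[1;t]}
\]
from Theorem \ref{thm:existence of factorization}, combined with the identification $F_{\lambda,q}[X;t] = \tilde H_\lambda[X;q,t]$ established in the proof of Theorem \ref{thm: with macdonald}. Rearranging gives
\[
C_{\lambda,q}[X;t] \;=\; C_{\lambda,q}[1;t]\cdot \tilde H_\lambda[X;q,t].
\]

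Second, the preceding proposition supplies the explicit evaluation
\[
C_{\lambda, q}[1;t] \;=\; \prod_{\substack{a,l\\l\neq 0}}\frac{1}{1-t^l q^{-a-1}},
\]
where the product runs over the arm- and leg-lengths of the hooks of $\lambda$ with $l\neq 0$.

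Substituting the second formula into the first gives the claimed identity. The main conceptual obstacle has already been overcome in the preceding proposition, where one extracts the leading coefficient of $C_{\lambda,q}[(q-1)X;t]$ in the monomial basis (in the ordering $m_{\lambda'}$) using the integral-form Macdonald polynomial $J_\lambda$ and compares with the $t=0$ specialization; here, one simply reaps the result. No further computation is required, and in particular the corollary gives a purely local (indeed, explicit) generating function for the weighted count of nilpotent matrices over $\BF_q[[x]]$ of fixed generic type $\lambda$, refined by the type of the reduction at the closed point.
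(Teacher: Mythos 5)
Your proof is correct and coincides with the paper's own derivation: the paper introduces this corollary precisely by combining $C_{\lambda,q}[X;t]/C_{\lambda,q}[1;t]=\tilde H_\lambda[X;q,t]$ (from Theorems \ref{thm:existence of factorization} and \ref{thm: with macdonald}) with the closed form for $C_{\lambda,q}[1;t]$ proved in the preceding proposition. No substantive gap.
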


As an interesting experiment one may try to make sense of Corollary \ref{cor:genus g} when $S$ is the collection of \emph{all} points of $\Sigma$: let us count pairs $(\CE,\theta)$ such that $\theta$ is nowhere degenerate. Proof of the following is left to the reader:
\begin{cor}
For a smooth complete curve $\Sigma/\BF_q$ and a partition $\lambda$ we have
\[
\sum_{\CE\in\Bun^{\leq 0}(\Sigma)/\sim} \frac{t^{-\deg\CE}}{|\Aut\CE|} |\theta:\CE\to\CE \;\text{nilpotent, $\type \theta(s)=\lambda$ for all $s\in\Sigma$}|
\]
\[
= \Omega_{g,\lambda}(q,t,\sigma) \prod_{a,l:l\neq 0} \frac{1}{\zeta_\Sigma(t^l q^{-a-1})}.
\]
\end{cor}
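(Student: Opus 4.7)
The strategy is to specialize Corollary \ref{cor:genus g} to a finite set of marked points $S$, project each marked-point factor onto its $H_\lambda$-coefficient, and then let $S$ exhaust the set of closed points of $\Sigma$.

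Since the Hall--Littlewood polynomials $\{H_\mu[X;q]\}_{\mu\vdash n}$ form a $\BQ(q)$-basis of the degree $n$ part of $\Sym[X]$, there is a well-defined linear functional $\pi_\lambda^{(q)}\colon\Sym[X]\to\BQ(q)$ characterized by $\pi_\lambda^{(q)}(H_\mu[X;q])=\delta_{\lambda,\mu}$. Applied in the $i$-th group of variables with parameter $q^{d_i}$, this extracts from
\[
\Omega_{\lambda,S}^{\leq 0}[X_\bullet;t]=\sum_{(\CE,\theta)\in\Bun^{\leq 0}_\lambda(\Sigma)/\sim}\frac{t^{-\deg\CE}}{|\Aut(\CE,\theta)|}\prod_{i=1}^k H_{\type\theta(s_i)}[X_i;q^{d_i}]
\]
precisely the sub-sum over those $(\CE,\theta)$ with $\type\theta(s_i)=\lambda$ for every $i$. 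The key computation is $\pi_\lambda^{(q)}\tilde H_\lambda[X;q,t]$. By Corollary \ref{cor:count gets macdonald},
\[
\tilde H_\lambda[X;q,t]=\prod_{a,l:\,l\neq 0}(1-t^l q^{-a-1})\cdot\sum_{[\eta]\in\ANilp_\lambda(\BF_q)}t^{\deg\eta}\,\weight(\eta)\,H_{\type\eta(0)}[X;q].
\]
By Corollary \ref{cor:non-degenerate}, the only $[\eta]$ satisfying $\type\eta(0)=\lambda$ is the class of $N_\lambda$, which has degree $0$ and weight $1$; hence the $H_\lambda[X;q]$-coefficient of the sum equals $1$, giving $\pi_\lambda^{(q^d)}\tilde H_\lambda[X;q^d,t^d]=\prod_{a,l:\,l\neq 0}(1-t^{ld}q^{-(a+1)d})$. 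Combined with Corollary \ref{cor:genus g} this yields, for every finite $S=(s_1,\ldots,s_k)$,
\[
\sum_{\substack{(\CE,\theta)\in\Bun^{\leq 0}_\lambda(\Sigma)/\sim\\ \type\theta(s_i)=\lambda\;\forall\, i}}\frac{t^{-\deg\CE}}{|\Aut(\CE,\theta)|}=\Omega_{g,\lambda}(q,t,\sigma)\prod_{i=1}^k\prod_{a,l:\,l\neq 0}(1-t^{ld_i}q^{-(a+1)d_i}).
\]

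To conclude, one passes to the limit $S\nearrow\Sigma/\BF_q$, viewing both sides as formal series in $t$. For each fixed $t$-coefficient, the left-hand side is a finite sum over pairs $(\CE,\theta)$ (by local finiteness of the truncation ``$\leq 0$''), and for each such pair the locus where $\type\theta(s)\succ\lambda$ consists of finitely many closed points; the imposed condition therefore stabilizes once $S$ contains them all. On the right-hand side one interchanges the products over $s$ and over hooks $(a,l)$ (a formal identity, legal because each factor $(1-t^{l\deg s}q^{-(a+1)\deg s})\in 1+t\BQ[[t]]$) and applies the Euler product
\[
\prod_{s\in\Sigma/\BF_q}(1-T^{\deg s})=\frac{1}{\zeta_\Sigma(T)}
\]
with $T=t^l q^{-a-1}$, yielding the claimed formula. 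The only mildly delicate point is justifying this swap and limit; it is routine once one notes that for each fixed exponent of $t$ only finitely many factors $(a,l)$ and only finitely many points $s$ contribute non-trivially.
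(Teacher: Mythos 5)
Your proof is correct and follows exactly the route the paper hints at (``make sense of Corollary \ref{cor:genus g} when $S$ is the collection of all points of $\Sigma$''), for which the paper leaves the details to the reader. The key computation $\pi_\lambda^{(q)}\tilde H_\lambda[X;q,t]=\prod_{a,l:\,l\neq 0}(1-t^l q^{-a-1})$ is correctly derived from Corollary \ref{cor:count gets macdonald} together with Corollary \ref{cor:non-degenerate} (the unique degenerate type $\lambda$ class is $N_\lambda$, of degree $0$ and weight $1$), and the identification of the left-hand side with the orbit count $\sum_\CE \frac{t^{-\deg\CE}}{|\Aut\CE|}|\{\theta\}|$ is the standard Burnside argument. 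The passage to the limit $S\nearrow\Sigma/\BF_q$ is justified correctly: coefficient-by-coefficient in $t$, the left side is a finite sum stabilizing once $S$ contains the finitely many degeneracy points of the finitely many $(\CE,\theta)$ contributing, and on the right side the Euler product for $\zeta_\Sigma$ at $T=t^lq^{-a-1}$, $l\geq 1$, converges $t$-adically.
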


Similarly, one can obtain counts for nilpotent endomorphisms with arbitrary prescribed types at all points. Of course, for all but finitely many points the type should be equal to the generic type. Otherwise the answer is zero.

\subsection{Nilpotent affine Springer fiber}\label{ssec:affine springer}
Let us interpret Corollary \ref{cor:count gets macdonald} in terms of the affine Springer fiber. We will use $\Bk=\BF_q$, $\BFR=\Bk[[x]]$, $\BFK=\Bk((x))$ and other notations from Section \ref{sec:linear algebra}. Let $\lambda, \mu$ be partitions of $n$. To $\lambda$ we associate the constant nilpotent matrix $N_\lambda$. To $\mu$ we associate the Iwahori subgroup
\[
I_\mu = \{g\in\GL_n(\BFR):g(0)\in P_\mu\}
\]
where $P_\mu\subset\GL_n(\Bk)$ is the corresponding parabolic subgroup, i.e. the group of block-upper-triangular matrices with blocks of sizes $\mu_1,\mu_2,\ldots$. Then the \emph{partial affine flag variety} is defined by
\[
\AFl_\mu = \GL_n(\BFK)/I_\mu.
\]
We have a natural projection $\AFl_\mu\to\AGr_n$. The \emph{affine Springer fiber} over a matrix $M\in\Mat_n(\BFR)$ is defined as the subset of flags respected by $M$, i.e.
\[
\ASpr_\mu(M) = \{[g]\in\AFl_\mu: g^{-1}M g\in \Lie I_\mu\},
\]
where
\[
\Lie I_\mu = \{m\in\Mat_n(\BFR):m(0)\in \Lie P_\mu\}
\]
and $\Lie P_\mu \subset \Mat_n(\Bk)$ is the subset of block-upper-triangular matrices (not necessarily invertible).

We would like to interpret the coefficients of the Macdonald polynomial in the monomial basis as counting points with weights on the affine Springer fibers. The centralizer of $N_\lambda$ in $\GL_n(\BFK)$, denoted by $Z_\BFK(N_\lambda)$, naturally acts on $\ASpr_\mu(N_\lambda)$ on the left and we have a natural identification 
\[
Z_\BFK(N_\lambda)\backslash\ASpr_\mu(N_\lambda)\cong\{[g]\in Z_\BFK(N_\lambda)\backslash \GL_n(\BFK)/I_\mu: g^{-1} N_\lambda g \in\Lie I_\mu\}\cong\ANilp_{\lambda,\mu}(\Bk),
\]
where
\[
\ANilp_{\lambda,\mu}(\Bk)= \{\theta\in\Lie I_\mu: \text{nilpotent, $\type\theta=\lambda$}\}/\{\theta\sim g\theta g^{-1}:g\in I_\mu\}.
\]
On the other hand, we have
\[
\ANilp_\lambda(\Bk)\cong \{\theta\in\Mat_n(\BFR): \text{nilpotent, $\type\theta=\lambda$}\}/\{\theta\sim g\theta g^{-1}:g\in GL_n(\BFR)\}.
\]
So there is a natural map 
\[
\pi:\ANilp_{\lambda,\mu}(\Bk)\to \ANilp_\lambda(\Bk)
\]
whose fiber over $[\theta]$ can be identified with 
\[
Z(\theta)\backslash \Fl_\theta(\Bk), \qquad \Fl_\theta(\Bk)=\{[g]\in \GL_n(\BFR)/I_\mu: g^{-1} \theta g \in\Lie I_\mu\},
\]
where $Z(\theta)$ is the centralizer of $\theta$ in $\GL_n(\BFR)$. We see that the set $\Fl_\theta(\Bk)$ is the set of flags preserved by $\theta(0)$. By orbit counting we obtain
\[
(C_{\lambda, q}[X;t], h_\mu[X])= \sum_{[\theta]\in\ANilp_\lambda(\Bk)} t^{\deg\theta} \weight(\theta) |\Fl_\theta(\Bk)| 
\]
\[
= \sum_{[\theta]\in\ANilp_{\lambda,\mu}(\Bk)} t^{\deg\theta} \weight(\theta)\; |Z(\theta)/(Z(\theta)\cap I_\mu)|.
\]
Let us denote for any $\theta\in\Lie I_\mu$
\[
\weight_\mu(\theta) = \weight(\theta)\; |Z(\theta)/(Z(\theta)\cap I_\mu)|.
\]
\begin{rem}
Recall the notion of classification data $M_\theta$ from Section \ref{ssec:classification data}.  Note that because the action of $Z(\theta)$ on $M_\theta$ is free, the new weight can be defined similarly to Definition \ref{defn:weight} so that we have
\[
\weight_\mu(\theta) = \frac{|M_\theta/(Z(\theta)\cap I_\mu)|}{|Z(N_\lambda)\backslash M_\theta|}.
\]
Similarly to Remark \ref{rem:weight commensurability} we also have that the weight can be interpreted as the commensurability index:
\[
\weight_\mu(\theta) = [Z(N_\lambda):g (Z(\theta)\cap I_\mu) g^{-1}].
\]
for any $g\in M_\theta$.
\end{rem}
The conclusion is
\begin{thm}\label{thm:springer fiber}
For any $n$ and partitions $\lambda,\mu\vdash n$ and a finite field $\Bk$ of size $q$ we have a bijection $Z_\BFK(N_\lambda)\backslash\ASpr_\mu(N_\lambda)\cong\ANilp_{\lambda,\mu}(\Bk)$
and 
\[
\sum_{[\theta]\in\ANilp_{\lambda,\mu}(\Bk)} t^{\deg\theta} \weight_\mu(\theta) = \frac{(\tilde H_\lambda[X;q,t],h_\mu[X])}{\prod_{a,l:l\neq 0}(1-t^l q^{-a-1})},
\]
where the product on the right hand side goes over the arm- and leg-lengths $a,l$ of the hooks of $\lambda$ such that $l\neq 0$.
\end{thm}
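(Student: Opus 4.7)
The proof is essentially a matter of assembling pieces already developed in the preceding subsection. First I would establish the bijection $Z_\BFK(N_\lambda)\backslash\ASpr_\mu(N_\lambda)\cong\ANilp_{\lambda,\mu}(\Bk)$ by pure unwinding: a coset $[g]\in\AFl_\mu$ lies in $\ASpr_\mu(N_\lambda)$ precisely when $\theta:=g^{-1}N_\lambda g\in\Lie I_\mu$, and this $\theta$ is nilpotent of type $\lambda$; replacing $g$ by $z g h$ with $z\in Z_\BFK(N_\lambda)$ and $h\in I_\mu$ changes $\theta$ to $h^{-1}\theta h$, so the double quotient parametrizes $I_\mu$-conjugacy classes of such $\theta$, which is $\ANilp_{\lambda,\mu}(\Bk)$.

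Next I would derive the weighted sum. The natural forgetful map $\pi:\ANilp_{\lambda,\mu}(\Bk)\to\ANilp_\lambda(\Bk)$ has fiber $Z(\theta)\backslash\Fl_\theta(\Bk)$ over $[\theta]$, where $\Fl_\theta(\Bk)$ is the set of partial flags of type $\mu$ on $\Bk^n$ respected by the specialization $\theta(0)$. By Corollary \ref{cor:hall counting flags}, $|\Fl_\theta(\Bk)|=(H_{\type\theta(0)}[X;q],h_\mu[X])$. Pairing the defining expansion of $C_{\lambda,q}[X;t]$ against $h_\mu[X]$ and applying orbit counting to pass from the $\ANilp_\lambda$-sum weighted by $|\Fl_\theta(\Bk)|$ to an $\ANilp_{\lambda,\mu}$-sum gives
\[
(C_{\lambda,q}[X;t],h_\mu[X])=\sum_{[\theta]\in\ANilp_{\lambda,\mu}(\Bk)} t^{\deg\theta}\,\weight(\theta)\,|Z(\theta)/(Z(\theta)\cap I_\mu)|,
\]
where the factor $|Z(\theta)/(Z(\theta)\cap I_\mu)|$ arises from converting the $I_\mu$-orbit count to a count of flags modulo $Z(\theta)$. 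By definition this is $\sum t^{\deg\theta}\weight_\mu(\theta)$.

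Finally, applying Corollary \ref{cor:count gets macdonald} to replace $C_{\lambda,q}[X;t]$ by $\tilde H_\lambda[X;q,t]/\prod_{a,l:l\neq 0}(1-t^l q^{-a-1})$ yields the stated identity. None of these steps is really an obstacle since each ingredient is already in place; the only genuinely delicate point is the bookkeeping that converts the first-quotient parametrization of an $I_\mu$-conjugacy class into the correct weight $\weight_\mu(\theta)$, which is precisely what the remark following the definition of $\weight_\mu$ (the commensurability-index interpretation of $\weight_\mu(\theta)=[Z(N_\lambda):g(Z(\theta)\cap I_\mu)g^{-1}]$) is designed to handle.
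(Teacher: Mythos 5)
Your proposal is correct and follows essentially the same route as the paper's own derivation in Subsection \ref{ssec:affine springer}: the bijection by unwinding the double quotient, the identification of the fiber of $\pi$ with $Z(\theta)\backslash\Fl_\theta(\Bk)$, the use of Corollary \ref{cor:hall counting flags} to evaluate $(H_{\type\theta(0)},h_\mu)$, the orbit-count conversion introducing the index $|Z(\theta)/(Z(\theta)\cap I_\mu)|$, and the final substitution via Corollary \ref{cor:count gets macdonald}. The only point worth stating a bit more carefully (and which the paper also treats lightly) is that the $Z(\theta)$-orbit of $[g]\in\Fl_\theta(\Bk)$ has size $|Z(\theta)/(Z(\theta)\cap gI_\mu g^{-1})| = |Z(g^{-1}\theta g)/(Z(g^{-1}\theta g)\cap I_\mu)|$, which depends only on the $I_\mu$-conjugacy class $[g^{-1}\theta g]\in\ANilp_{\lambda,\mu}(\Bk)$, so that the reindexing of the sum is legitimate.
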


\section{The category of parabolic sheaves}\label{sec:category parabolic}
In this section we study the category of parabolic coherent sheaves, see \cite{heinloth2004coherent}. Unfortunately we could not find sufficiently general results with proofs about this category in the literature, so we build the theory from scratch below.

\subsection{Parabolic sheaves}\label{ssec:parabolic sheaves}
Let $\Sigma$ be a smooth complete curve over a field $\Bk$ and let $S=(s_1,s_2,\ldots,s_k)$ be a collection of distinct closed points of $\Sigma$ of degrees $d_1,d_2,\ldots,d_k$. Let $N$ be an integer. Let $\BI_N=\{0,\ldots,N-1\}$. Throughout this section we keep $\Sigma$ and $N$ fixed, so sometimes we omit them from the notation.
For a collection of sheaves $\CF_{\bar i}$ indexed by vectors $\bar i=(i_1,\ldots,i_k)\in \BI_N^k$ we define $\CF_{\bar i}$ for any vector $\bar i\in\BZ^k$ by
\[
\CF_{\bar i} = \CF_{i_1\% N, \ldots, i_k\% N}\otimes \CO\left(\sum_{j=1}^k \left\lfloor \frac{i_j}{N} \right\rfloor [s_j]\right),
\]
where $a\% b$ denotes the residue of $a$ modulo $b$. Denote by $\delta_j$ the vector with coordinates $\delta_{j,j'}$.
\begin{defn}
A \emph{parabolic quasi-coherent sheaf} is a collection of quasi-coherent sheaves $\CF=(\CF_{\bar i}:\bar i\in I^k)$ together with morphisms
\[
\varphi_{j}:\CF_{\bar i}\to\CF_{\bar i + \delta_j}\qquad(\bar i \in \BI_N^k, j\in \BI_N),
\]
satisfying the following properties:
\begin{enumerate}
\item For any $j, j'$ we have $\varphi_j \varphi_{j'} = \varphi_{j'} \varphi_j$,
\item for any $j$ the morphism $\varphi_j$ restricted to $\Sigma\setminus\{s_j\}$ is an isomorphism,
\item for any $j$ the $N$-th iteration $\varphi_j^N:\CF_{\bar i} \to \CF_{\bar i+N\delta_j} = \CF_{\bar i}(s_j)$ coincides with the natural map.
\end{enumerate}
A \emph{parabolic coherent sheaf} is a parabolic quasi-coherent sheaf whose components are coherent. The category $\Par_{N,S}(\Sigma)$ of parabolic coherent sheaves is defined as the category whose objects are parabolic coherent sheaves and whose morphisms $\CF\to\CF'$ are collections of morphisms $\CF_{\bar i}\to \CF_{\bar i}'$ commuting with all $\varphi_j$. Similarly we define the category of parabolic quasi-coherent sheaves $\QPar_{N,S}(\Sigma)$.
\end{defn}

The category of parabolic coherent resp. quasi-coherent sheaves is abelian with component-wise kernels and cokernels. If any $\CF_{\bar i}$ is a vector bundle then all $\CF_{\bar i}$ are vector bundles and all $\varphi_j$ are injective. Such parabolic sheaves are called \emph{parabolic bundles}. The data of a parabolic bundle is simply the data of a bundle $\CF_{0,\ldots,0}=\CF$ and a filtration of length $N$ of each fiber $\CF(s_i)$ by vector spaces. Note that the condition ``$\CF$ is a parabolic bundle and $\CF_0\in\Bun^{\leq 0}$'' is a suitable truncation in the sense of Definition \ref{defn:suitable truncation}. We denote by $\ParBun_{N,S}(\Sigma)$ the additive category of parabolic bundles, and by $\ParBun_{N,S}^{\leq 0}(\Sigma)$ the additive category of parabolic bundles satisfying $\CF_0\in\Bun^{\leq 0}$.

The \emph{rank} and the \emph{degree} of a parabolic coherent sheaf $\CF\in\Par_S$ is defined as the rank and the degree of the corresponding $(0,\ldots,0)$-component. The numerical invariants $r_{i,j}$ called \emph{parabolic jumps} for $i=1,\ldots,k$ and $j=1,\ldots,N$ are defined by
\[
r_{i,j}(\CF) = \frac{\deg \CF_{j\delta_i} - \deg \CF_{(j-1)\delta_i}}{d_i}.
\]
Note that for all $i$ we have
\[
\sum_{j=1}^N r_{i,j}(\CF)= \rank\CF.
\]
The rank, degree and $r_{i,j}$ are all additive for short exact sequences.

Any usual sheaf $\CF$ can be viewed as a parabolic sheaf by setting $\CF_{\bar i}=\CF$ for all $\bar i\in\BI_N^k$ and all the maps $\varphi_j$ the identity maps except $\varphi_j:\CF=\CF_{\bar i}\to\CF_{\bar i+\delta_j}=\CF(s_j)$ the natural map when $i_j=N-1$.

\subsection{Iterated construction}
Alternatively, one can define the category $\Par_{S}$ by adding one point at a time. We set $\Par_{\varnothing}=\Coh(\Sigma)$. Then for any $S$ let $S'=(s_1,\ldots,s_{k-1})$, $s=s_k$. The category $\Par_{S}$ is defined as the category of \emph{necklaces} of objects of $\Par_{S'}$:
\[
\cdots \to \CF_{N-1}(-s) \to \CF_0 \to\cdots\to \CF_{N-1}\to \CF_0(s) \to \cdots,
\]
where the above sequence is periodic, each map is an isomorphism on $\Sigma\setminus\{s\}$ and the composition of any $N$ consecutive maps is the natural map $\CF_i\to\CF_i(s)$.

Let $R_i:\Par_{S}\to \Par_{S'}$ be the functor that sends a necklace $\CF_\bullet$ to $\CF_i$. Let $I_i:\Par_{S'} \to \Par_{S}$ be the functor that sends $\CE\in \Par_{S'}$ to $\CF_\bullet$ defined by $\CF_{i+j}=\CE$ for $0\leq j< N$ with all the maps the identity, except $\CE\to\CE(s)$ which is the natural map. We have natural identifications for all $\CE\in \Par_{S'}$, $\CF\in\Par_{S}$:
\begin{equation}\label{eq:adjunction}
\Hom(\CE, R_i\CF) = \Hom(I_i \CE, \CF),\qquad \Hom(R_i\CF, \CE) = \Hom(\CF, I_{i-N+1} \CE)
\end{equation}
which can be seen from the following diagrams
\[
\begin{tikzcd}
\cdots \arrow{r} & \CE(-s) \arrow{r}\arrow{d}  & \CE \arrow{r}{=}\arrow{d} & \CE \arrow{r}{=}\arrow{d}  & \CE \arrow{r}{=}\arrow{d}  & \cdots\\
\cdots \arrow{r}& \CF_{i-1}\arrow{r} & \CF_{i}\arrow{r} & \CF_{i+1}\arrow{r} & \CF_{i+2}\arrow{r} & \cdots
\end{tikzcd}
\]
\[
\begin{tikzcd}
\cdots \arrow{r}& \CF_{i-2} \arrow{r}\arrow{d} & \CF_{i-1} \arrow{r}\arrow{d} & \CF_{i} \arrow{r}\arrow{d} & \CF_{i+1} \arrow{r}\arrow{d} & \cdots\\
\cdots \arrow{r}{=} & \CE \arrow{r}{=} & \CE \arrow{r}{=}  & \CE  \arrow{r}  & \CE(s) \arrow{r} & \cdots
\end{tikzcd}
\]

The functions $R_i$ and $I_i$ are defined similarly for quasi-coherent sheaves. Note that both $R_i$ and $I_i$ are exact. Hence category $\QPar_S$ has enough injectives. This can be seen by induction because $I_i$ preserves injectives and one can embed any object $\CE\in\QPar_S$ into the direct sum 
\[
\CE\to\bigoplus_{i=0}^{N-1} I_{i-N+1}(\CE_i).
\]
So we define the higher $\Ext$ functors on $\QPar_S$ using injective resolutions and then restrict the definition to $\Par_S$.
Exactness of $R_i$ and $I_i$ implies
\begin{prop}
The natural adjunctions extend to the Ext functors:
\[
\Ext^j(\CE, R_i\CF) \cong \Ext^j(I_i \CE, \CF),\qquad \Ext^j(R_i\CF, \CE) \cong \Ext^j(\CF, I_{i-N+1} \CE)
\]
for all $i$ and $j$.
\end{prop}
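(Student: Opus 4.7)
The statement is the standard fact that an adjoint pair between abelian categories lifts to derived functors whenever the relevant resolutions are preserved, and here the proof is particularly clean because both $R_i$ and $I_i$ are exact. My plan is to use injective resolutions on the ``input'' side of $\Ext$ in each variable and then to move the functor across the adjunction on the chain-level.

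The key preliminary observation is that the two adjunctions stated just before the proposition package together as a triple of adjoints $I_i \dashv R_i \dashv I_{i-N+1}$. In particular, $R_i$ is the right adjoint of the exact functor $I_{i-N+1}$, so $R_i$ preserves injectives; symmetrically, $I_{i-N+1}$ is the right adjoint of the exact functor $R_i$, so $I_{i-N+1}$ preserves injectives. (Both facts are the standard categorical lemma that a right adjoint of an exact functor preserves injectives, applied in the quasi-coherent categories where we already know there are enough injectives.) Since $i$ ranges over all residues mod $N$, this covers every $I_j$ that appears on the right side of an adjunction.

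For the first isomorphism, choose an injective resolution $\CF \to J^\bullet$ in $\QPar_S$. Because $R_i$ is exact, $R_i J^\bullet$ is a resolution of $R_i \CF$, and because $R_i$ preserves injectives, each $R_i J^n$ is injective. Thus $R_i J^\bullet$ computes $\Ext^j(\CE, R_i\CF)$, and the chain-level adjunction
\[
\Hom_{\QPar_{S'}}(\CE, R_i J^\bullet) \;=\; \Hom_{\QPar_S}(I_i \CE, J^\bullet)
\]
identifies its cohomology with $\Ext^j(I_i\CE, \CF)$. For the second isomorphism, run the same argument but resolve the second variable: pick an injective resolution $\CE \to J^\bullet$ in $\QPar_{S'}$, use that $I_{i-N+1}$ is exact and preserves injectives so that $I_{i-N+1} J^\bullet$ is an injective resolution of $I_{i-N+1} \CE$, and conclude via
\[
\Hom_{\QPar_S}(R_i \CF, J^\bullet) \;=\; \Hom_{\QPar_S}(\CF, I_{i-N+1} J^\bullet).
\]
Finally, a $\delta$-functor / universality check (or just naturality of each step above) shows the isomorphisms are compatible in $j$, so one really obtains isomorphisms of Ext groups for every $j \geq 0$.

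There is no real obstacle once the adjoint triple is recognized; the only thing to be careful about is that Ext on $\Par_S$ is defined via injective resolutions in the larger category $\QPar_S$, so one must observe that the argument genuinely takes place in the quasi-coherent setting and then restricts to coherent inputs. One should also note that $R_i$ and $I_i$ on $\Par_S$ do not extend one another's images into injective objects of $\Par_S$ itself — that is precisely why the enlargement to $\QPar_S$ was set up in the first place, and it is exactly what makes the proof go through uniformly.
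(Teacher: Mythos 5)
Your proof is correct in substance and is the natural (and essentially only) way to elaborate the paper's one-line assertion that exactness of $R_i$ and $I_i$ implies the proposition; the paper itself gives no proof, so your write-up supplies exactly the missing argument via resolutions and injective-preservation.

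One small but genuine slip in the bookkeeping: you write that "$R_i$ is the right adjoint of the exact functor $I_{i-N+1}$," but in the adjoint triple $I_i \dashv R_i \dashv I_{i-N+1}$ the functor $R_i$ is the \emph{right} adjoint of $I_i$ and the \emph{left} adjoint of $I_{i-N+1}$. As written, your sentence asserts $I_{i-N+1} \dashv R_i$, which is backwards. The conclusion you draw — that $R_i$ preserves injectives because it is right adjoint to an exact functor — is still true, but the exact left adjoint in question is $I_i$, not $I_{i-N+1}$. (Your parallel claim, that $I_{i-N+1}$ preserves injectives because it is right adjoint to the exact $R_i$, is stated correctly.) There is also a minor typo in the second chain-level identity: the left-hand side should be $\Hom_{\QPar_{S'}}(R_i\CF, J^\bullet)$, not $\Hom_{\QPar_S}$, since $J^\bullet$ resolves $\CE$ in $\QPar_{S'}$. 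Neither issue affects the validity of the proof.

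Your closing remark about working in $\QPar_S$ rather than $\Par_S$ is well placed: the paper defines higher $\Ext$ by injective resolutions in $\QPar_S$ and only afterwards restricts to coherent objects, and the injective-preservation facts you rely on live naturally in the quasi-coherent categories.
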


\subsection{Generators and Euler form}
For any $i$ denote by $\Bk_{s,i}\in\Par_{S}$ the object whose $i$-th component is the skyscraper sheaf at $s$ and all the other components are $0$.

We have 
\begin{prop}\label{prop:generating}
The category $\Par_{S}$ is generated under extensions by objects of the form $I_0(\CE)$ for $\CE\in\Par_{S'}$ and objects of the form $\Bk_{s,i}$ for $i=0,\ldots,N-1$.
\end{prop}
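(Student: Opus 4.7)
The plan is to proceed by three reductions, each given by a short exact sequence whose kernel or cokernel is either a generator or is handled by induction. Viewing $\CF\in\Par_S$ as a necklace of objects of $\Par_{S'}$, the two classes of generators account for the two possible ``obstructions'': a nontrivial necklace structure (handled by $I_0(\CE)$) and torsion concentrated at the extra point $s=s_k$ (handled by $\Bk_{s,i}$). The first step is to split off the componentwise parabolic torsion $T\subset\CF$ defined by $T_i=\mathrm{Tor}(\CF_i)$; since the necklace maps $\varphi_i$ send torsion to torsion, $T$ is a parabolic $S$-subsheaf, and the quotient $\CF/T$ is a parabolic $S$-bundle (its $\varphi_i$'s are isomorphisms off $s$ with torsion-free targets, hence injective). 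It then suffices to generate $T$ and $\CF/T$ separately.

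For $T$, I would decompose further by support at $s$, writing $T=T_s\oplus T_{\neq s}$ where $T_{s,i}$ is the $s$-local part of $T_i$. Since $s\notin S'$, localization at $s$ commutes with the $\Par_{S'}$-structure, so this is a splitting in $\Par_S$. On $T_{\neq s}$ every $\varphi_i$ is an isomorphism (iso off $s$, support off $s$), including the final twist $\varphi_{N-1}:T_{\neq s,N-1}\to T_{\neq s,0}(s)$, so $T_{\neq s}\cong I_0(T_{\neq s,0})$ is a single generator. For $T_s$ I would induct on the total length $\ell(T_s)=\sum_i\mathrm{length}(T_{s,i})$. The key observation is that the natural map $U\to U(s)$ has nonzero kernel for any nonzero torsion sheaf $U$ at $s$ (locally $\CO/\pi^k\to\pi^{-1}\CO/\pi^{k-1}\CO$ annihilates $\pi^{k-1}$). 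Since this natural map equals $\varphi_{N-1}\circ\cdots\circ\varphi_0$ restricted to $T_{s,0}$, some $\varphi_i|_{T_s}$ must have nonzero kernel; a simple skyscraper subsheaf at $s$ inside that kernel yields an embedding $\Bk_{s,i}\hookrightarrow T_s$, and the quotient has strictly smaller length.

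For $\CF/T$, the adjunction map $\alpha:I_0((\CF/T)_0)\to\CF/T$ coming from $\mathrm{id}:(\CF/T)_0\to(\CF/T)_0$ has $i$-th component $\varphi_{i-1}\circ\cdots\circ\varphi_0$, which is injective since $\CF/T$ is a bundle. Its cokernel $\CG$ satisfies $\CG_0=0$, and each $\CG_i$ is a quotient of two parabolic $S'$-bundles agreeing off $s$, hence a parabolic $S'$-sheaf supported at $s$. So $\CG$ falls under exactly the argument used for $T_s$ and is an iterated extension of $\Bk_{s,i}$'s. Combining the short exact sequences $0\to T\to\CF\to\CF/T\to 0$, $0\to T_s\to T\to T_{\neq s}\to 0$, and $0\to I_0((\CF/T)_0)\to\CF/T\to\CG\to 0$ realizes $\CF$ as an iterated extension of the required generators.

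The main technical nuisance is to check that the torsion and support decompositions genuinely descend to $\Par_S$ and $\Par_{S'}$, which rests on $s\notin S'$ so that $s$-localization commutes with the $\Par_{S'}$-twists; beyond that, the only piece of nonformal content is the one-line local computation that $U\to U(s)$ has nontrivial kernel for any nonzero torsion sheaf $U$ supported at $s$, which is what forces the induction on $T_s$ (and on $\CG$) to terminate by producing a copy of $\Bk_{s,i}$.
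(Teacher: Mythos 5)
Your proof is correct but takes a genuinely different route. The paper avoids torsion considerations entirely: it applies the adjunction $\iota\colon I_0 R_0\CF\to\CF$ directly to $\CF$; the cokernel $\CG$ has $R_0\CG=0$, hence is a ``broken necklace'' (an $A_{N-1}$-quiver representation with finite-length components at $s$), and the image $\CE=\image\iota$ has surjective maps $\CE_j\to\CE_{N-1}$, so the second adjunction $\CE\to I_0 R_{N-1}\CE$ is surjective with kernel another broken necklace $\CG'$ satisfying $R_{N-1}\CG'=0$. Two adjunctions thus sandwich $\CF$ between one $I_0$-generator and two quiver pieces, and a quiver representation with a vanishing vertex is automatically an iterated extension of the simples $\Bk_{s,i}$. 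You instead first split off the componentwise torsion $T$ and further decompose by support, $T=T_s\oplus T_{\neq s}$, so that $\CF/T$ is a parabolic bundle on which the adjunction $I_0((\CF/T)_0)\to\CF/T$ is injective and a single adjunction suffices. The cost of this route is that $T_s$ may have no vanishing component, so the necklace does not break automatically; to find a simple subobject you need the additional local fact that the natural map $U\to U(s)$ has nonzero kernel on any nonzero $s$-torsion sheaf $U$ (locally multiplication by the uniformizer), which forces your induction on length to terminate. The paper never needs this observation because its quiver pieces always have a zero vertex. What your approach buys is transparency — bundle versus torsion, $s$-supported versus not — and it avoids having to check surjectivity of the internal maps of $\image\iota$; what the paper's buys is brevity and uniformity, since it treats an arbitrary $\CF$ without first splitting into geometric types.
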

\begin{proof}
For any $\CF\in\Par_{S}$ consider the adjunction map $\iota:I_0R_0 \CF \to \CF$ and take its cokernel $\CG=\cokernel \iota$.
We have that $\CG$ satisfies $R_0 \CG=0$. Such sheaves are simply representations of the $A_{N-1}$ quiver
\[
0\to \CG_1\to\cdots\to \CG_{N-1}\to 0
\]
and each $\CG_i$ is a direct sum of finitely many skyscrapers at $s$. So $\CG$ can be obtained by extensions from objects of the form $\Bk_{s,i}$. Let $\CE=\image\iota$. We have that each map $\CE_0\to\CE_{i}$ for $0\leq i\leq N-1$ is surjective. Now consider another adjunction map $\iota': \CE\to I_0 R_{N-1}\CE$. This map is surjective and its kernel $\CG'$ satisfies $R_{N-1} \CG'=0$. Thus again it corresponds to a representation of the $A_{N-1}$ quiver and can be obtained by extensions from $\Bk_{s,i}$. Then we have that $\CE$ is an extension of $I_0 R_{N-1}\CE$ by $\CG'$ and $\CF$ is an extension of $\CG$ by $\CE$.
\end{proof}

\begin{prop}\label{prop:hereditary}
The category $\Par_{S}$ is hereditary. Let $\chi_S$ denote the Euler form on $K(\Par_S)$. We have 
\[
\chi_S(I_0(\CE),\CF) = \chi_{S'}(\CE, \CF_0),\quad \chi_S(\Bk_{s,i}, \CF) = - d_k d_{k,i+1}(\CF)
\]
for all $\CF\in\Par_S$, $\CE\in\Par_{S'}$, $i=0,1,\ldots,N-1$.
\end{prop}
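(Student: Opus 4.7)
The plan is to proceed by induction on $k$, the number of marked points, using the iterated description of $\Par_S$ as necklaces of objects of $\Par_{S'}$ at $s = s_k$. The base case $k = 0$ gives $\Par_\varnothing = \Coh(\Sigma)$, which is hereditary since $\Sigma$ is a smooth complete curve, with Euler form given by Riemann--Roch: $\chi(\CO_\Sigma, \CG) = \deg \CG + (1-g) \rank \CG$.

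The central input will be the derived form of the adjunction \eqref{eq:adjunction}. Since $R_i$ is exact and admits the exact right adjoint $I_{i-N+1}$, it preserves injectives, so applying the Hom-adjunction termwise to an injective resolution gives
\[
\Ext^j_{\Par_S}(I_i \CE, \CF) = \Ext^j_{\Par_{S'}}(\CE, R_i \CF), \qquad \Ext^j_{\Par_S}(\CF, I_{i-N+1} \CE) = \Ext^j_{\Par_{S'}}(R_i \CF, \CE)
\]
for all $i$ and $j \geq 0$. For the hereditary statement, since the class of $\CF$ with $\Ext^j(\CF, -) = 0$ for $j \geq 2$ is closed under extensions, by Proposition \ref{prop:generating} it suffices to check vanishing on the generators $I_0(\CE)$ and $\Bk_{s,i}$. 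The first case follows at once from the derived adjunction together with the inductive hypothesis on $\Par_{S'}$. For the second, I plan to exhibit a short exact sequence
\[
0 \to I_{i+1}(\CO_\Sigma) \to I_i(\CO_\Sigma) \to \Bk_{s,i} \to 0
\]
in $\Par_S$, where the inclusion is the natural map $\CO_\Sigma(-s) \hookrightarrow \CO_\Sigma$ at position $i$, the identity at positions $i+1, \ldots, i+N-1$, and $\CO_\Sigma \hookrightarrow \CO_\Sigma(s)$ at position $i+N$ (forced by the periodicity $\CF_{j+N} = \CF_j(s)$); the component-wise cokernel is a skyscraper $\Bk_s$ at positions congruent to $i$ modulo $N$ and zero elsewhere, that is, exactly $\Bk_{s,i}$. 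The long exact sequence of $\Ext$ combined with the previous case then gives $\Ext^j(\Bk_{s,i}, -) = 0$ for $j \geq 2$.

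The Euler form formulas then follow. The identity for $I_0(\CE)$ is immediate from the derived adjunction. For $\Bk_{s,i}$, additivity of $\chi_S$ along the short exact sequence above, combined with the adjunction applied to $I_i$ and $I_{i+1}$, gives
\[
\chi_S(\Bk_{s,i}, \CF) = \chi_{S'}(\CO_\Sigma, R_i \CF) - \chi_{S'}(\CO_\Sigma, R_{i+1} \CF).
\]
Iterating the adjunction through the remaining points of $S'$ down to $\Coh(\Sigma)$ and invoking Riemann--Roch yields $\chi_{S'}(\CO_\Sigma, \CG) = \deg \CG + (1-g) \rank \CG$ for any $\CG \in \Par_{S'}$. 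Since $\rank R_i \CF = \rank R_{i+1} \CF = \rank \CF$, the $(1-g)$ contributions cancel and the remaining difference is exactly $\deg \CF_{i\delta_k} - \deg \CF_{(i+1)\delta_k} = -d_k\, r_{k,i+1}(\CF)$ by the definition of the parabolic jumps.

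The step demanding the most care is the verification of the short exact sequence for $\Bk_{s,i}$: one must check that the component maps specified above assemble into a genuine morphism of necklaces respecting the periodicity and commuting with the wrap maps $\varphi_j^N$. Once this bookkeeping is settled, the rest of the argument is a mechanical consequence of the derived adjunction formalism and Riemann--Roch.
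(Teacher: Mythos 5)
Your plan follows the paper's own strategy almost exactly: induction on $k$, reduce to the generators $I_0(\CE)$ and $\Bk_{s,i}$ of Proposition~\ref{prop:generating}, handle $I_0(\CE)$ by the derived adjunction, and handle $\Bk_{s,i}$ via the short exact sequence $0 \to I_{i+1}(\CO) \to I_i(\CO) \to \Bk_{s,i} \to 0$. The Euler form computations at the end are also correct.

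However, there is a real gap in the hereditariness step for $\Bk_{s,i}$. The long exact sequence obtained by applying $\Hom(-,\CF)$ to the short exact sequence ends with
\[
\Ext^1(I_i(\CO), \CF) \longrightarrow \Ext^1(I_{i+1}(\CO), \CF) \longrightarrow \Ext^2(\Bk_{s,i}, \CF) \longrightarrow \Ext^2(I_i(\CO), \CF) = 0.
\]
Vanishing of $\Ext^{\geq 2}$ on the objects $I_j(\CO)$ (the ``previous case'') kills $\Ext^j(\Bk_{s,i},\CF)$ for $j \geq 3$, but for $j = 2$ it only shows that $\Ext^2(\Bk_{s,i}, \CF)$ is a \emph{quotient} of $\Ext^1(I_{i+1}(\CO),\CF)$; you still have to prove that the left-hand map, which under the adjunction is $\Ext^1(\CO,\CF_i) \to \Ext^1(\CO,\CF_{i+1})$ induced by the necklace map, is surjective. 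That surjectivity is not formal. The paper establishes it by factoring through $\Ext^1(\CO,\CF_{i+1}(-s)) \to \Ext^1(\CO,\CF_i) \to \Ext^1(\CO,\CF_{i+1})$ and identifying the composite with the surjection $\Ext^1(\CO(s),\CF_{i+1}) \to \Ext^1(\CO,\CF_{i+1})$ coming from $0 \to \CO \to \CO(s) \to \Bk_s \to 0$ on the curve. Your proof needs this extra step before the claim $\Ext^2(\Bk_{s,i},-) = 0$ is justified.
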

\begin{proof}
We proceed by induction in the size of $S$. It is enough to show that any object $\CE$ from the generating set of Proposition \ref{prop:generating} satisfies $\Ext^j(\CE, \CF)=0$ for any $\CF\in\Par_{S}$ and $j>1$.

For any $\CE\in\Par_{S'}$ we have $\Ext^j(I_0(\CE), \CF)=\Ext^j(\CE, \CF_0 )$ and vanishes for $j>1$ by the induction hypothesis.

For the objects $\Bk_{s,i}$ we use the short exact sequence
\[
0\to I_{i+1}(\CO) \to I_i(\CO)\to \Bk_{s,i}\to 0
\]
to obtain the long exact sequence for any $\CF\in\Par_{S}$.
\[
0\to\Hom(\Bk_{s,i},\CF) \to \Hom(\CO, \CF_i) \to  \Hom(\CO, \CF_{i+1})
\]
\[
\to \Ext^1(\Bk_{s,i},\CF) \to \Ext^1(\CO, \CF_i) \to \Ext^1(\CO, \CF_{i+1})\to \Ext^2 (\Bk_{s,i},\CF) \to 0.
\]
Since $\CO$ is coming from $\Coh(\Sigma)$ we can replace $\CF_i$, $\CF_{i+1}$ by their $(0,\ldots,0)$-components, which are objects in $\Coh(\Sigma)$.
It is enough to show that the map $\Ext^1(\CO, \CF_i) \to \Ext^1(\CO, \CF_{i+1})$ is surjective. This will follow from surjectivity of the following composition:
\begin{equation}\label{eq:ext map surjective}
\Ext^1(\CO, \CF_{i+1}(-s)) \to \Ext^1(\CO, \CF_i) \to \Ext^1(\CO, \CF_{i+1}),
\end{equation}
where $\CF_{i+1}(-s)\to \CF_{i+1}$ is the natural map. The composition can also be described as
\[
\Ext^1(\CO(s), \CF_{i+1}) \to \Ext^1(\CO, \CF_{i+1}),
\]
which is surjective because it is the end of the long exact sequence for $\Hom(-,\CF_{i+1})$ applied to 
\[
0\to \CO \to \CO(s) \to \Bk_s \to 0.
\]
Thus  \eqref{eq:ext map surjective} is surjective and $\Ext^2$ vanishes. The values of the Euler form are determined from the exact sequences above.
\end{proof}

Next we determine the Euler form:
\begin{prop}\label{prop:euler form}
The Euler form on $\Par_S$ is given by
\[
\chi_{\Par_S}(\CE, \CF) = (1-g) \rank \CE \rank \CF + \rank\CE \deg \CF -\rank\CF \deg \CE
\]
\nopagebreak
\[
- \sum_{i=1}^k d_i \sum_{1\leq j<j'\leq N} r_{i,j}(\CE) r_{i,j'}(\CF).
\]
\end{prop}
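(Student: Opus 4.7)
The plan is to induct on $k=|S|$. The base case $k=0$ identifies $\Par_\varnothing$ with $\Coh(\Sigma)$, and the claimed formula reduces to the classical Riemann--Roch Euler form $\chi(\CE,\CF)=(1-g)\rank\CE\cdot\rank\CF+\rank\CE\cdot\deg\CF-\rank\CF\cdot\deg\CE$ on a smooth projective curve, with the parabolic-jump sum empty.

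For the inductive step, write $S=S'\sqcup\{s\}$ with $s=s_k$ of degree $d_k$. Both sides of the claimed identity are bilinear on $K(\Par_S)\otimes K(\Par_S)$, so by Proposition \ref{prop:generating} it suffices to verify it when $\CE$ runs over the two families of generators $\{I_0(\CE'):\CE'\in\Par_{S'}\}$ and $\{\Bk_{s,i}:0\le i\le N-1\}$, with $\CF\in\Par_S$ arbitrary. Proposition \ref{prop:hereditary} already evaluates the left-hand side $\chi_{\Par_S}(\CE,\CF)$ on each of these generators, so the task is to compute the right-hand side of the claimed formula on the same generators and match.

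For $\CE=\Bk_{s,i}$ one reads off the numerical invariants: $\rank=0$, $\deg=d_k\cdot\mathbf{1}[i=0]$, all jumps $r_{i',\cdot}$ vanish for $i'<k$, and in the $s_k$-direction the sequence $r_{k,\cdot}(\Bk_{s,i})$ has a single $+1$ and a single $-1$ on two adjacent positions (with the wrap-around convention $\CF_{N\delta_k}=\CF_{\bar 0}(s_k)$ handling the boundary case $i=0$). Substituting, the double sum $\sum_{j<j'}r_{k,j}(\Bk_{s,i})\,r_{k,j'}(\CF)$ telescopes and combines with the $-\rank\CF\cdot\deg\Bk_{s,i}$ term to give $-d_k\,r_{k,i+1}(\CF)$ uniformly, matching Proposition \ref{prop:hereditary}. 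For $\CE=I_0(\CE')$ one observes that $\rank$, $\deg$, and the jumps $r_{i',\cdot}$ for $i'<k$ of $I_0(\CE')$ agree with those of $\CE'$, while in the new direction only $r_{k,N}(I_0(\CE'))=\rank\CE'$ is nonzero; because $j=N$ cannot appear as the left index of an ordered pair $j<j'\le N$, the new-direction double sum is identically zero, and the remaining terms assemble into exactly the formula for $\chi_{\Par_{S'}}(\CE',\CF_0)$ predicted by the inductive hypothesis. This coincides with $\chi_{\Par_S}(I_0(\CE'),\CF)$ by Proposition \ref{prop:hereditary}, completing the induction.

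The only real obstacle is bookkeeping: one must translate between the $\BI_N^k$-indexed definition of parabolic sheaves and the necklace description $\Par_S\simeq$ (necklaces in $\Par_{S'}$) used to define $I_0$ and $R_0$, and one must be careful that the periodic-twist convention is compatible with the way $r_{k,N}$ reaches around for the generators $\Bk_{s,0}$ and $I_0(\CE')$. Once these conventions are pinned down, the entire verification on generators is a direct computation.
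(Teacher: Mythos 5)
Your proposal is correct and follows essentially the same route as the paper: use Proposition \ref{prop:generating} to reduce (by bilinearity on the Grothendieck group) to checking the formula for $\CE$ among the generators $I_0(\CE')$ and $\Bk_{s,i}$ with $\CF$ arbitrary, compute the numerical invariants $\rank$, $\deg$, $r_{\bullet,\bullet}$ of each generator, and match against the values of $\chi_{\Par_S}$ given by Proposition \ref{prop:hereditary}, with the $I_0(\CE')$ case closing the induction via $\chi_{\Par_{S'}}(\CE',\CF_0)$. The paper leaves the bookkeeping implicit (``inserting the values into the formula''), whereas you carry out the telescoping and the wrap-around case $i=0$ explicitly, but the argument is the same.
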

\begin{proof}
For any $\CE\in\Par_{S'}$ we have
\[
\rank I_0(\CE)=\rank\CE,\; \deg I_0(\CE)=\deg \CE,\; r_{i,j} I_0(\CE) = 
\begin{cases}r_{i,j}(\CE) & i<k\\
0 & i=k,j<N\\
\rank(\CE) & i=k,j=N.
\end{cases}
\]
For $\Bk_{s,j}$ with $j=1,\ldots,N-1$ we have 
\[
\rank \Bk_{s,j}=0,\; \deg \Bk_{s,j}=0,\; r_{i,j'}(\Bk_{s,j}) = 
\begin{cases}0 & \text{$i<k$ or $i=k$, $j'\notin\{j,j+1\}$}\\
1 & i=k,j'=j\\
-1 & i=k,j'=j+1.
\end{cases}
\]
For $\Bk_{s,0}$ we have 
\[
\rank \Bk_{s,0}=0,\; \deg \Bk_{s,0}=d,\; r_{i,j'}(\Bk_{s,0}) = 
\begin{cases}0 & \text{$i<k$ or $i=k$, $j'\notin\{1,N\}$}\\
1 & i=k,j'=N\\
-1 & i=k,j'=1.
\end{cases}
\]
By Proposition \ref{prop:generating} it is enough to check the formula for objects of the above three types in place of $\CE$ and arbitrary objects in place of $\CF$. Inserting the values into the formula and comparing with Proposition \ref{prop:hereditary} we see that the formula is correct.
\end{proof}

\subsection{Serre duality}
Finally we have
\begin{thm}[Serre duality]\label{thm:Serre duality}
Let $D:\Par_S \to \Par_S$ be an auto-equivalence defined by 
\[
D(\CF)_{\bar i} = \Omega^1_\Sigma\otimes \CF_{(N-1)\sum_{i}\delta_i+\bar i}.
\]
We have the following natural isomorphism for all $\CE, \CF\in\Par_S$:
\[
\Ext^1(\CE, \CF)^* \cong \Hom(\CF, D \CE),
\]
where $*$ denotes the dualization over the base field.
\end{thm}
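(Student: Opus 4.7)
The plan is to proceed by induction on $k=|S|$, mirroring the iterated construction $\Par_S=\text{(necklaces in }\Par_{S'})$ built in the previous subsections. The base case $k=0$ is classical Serre duality on $\Coh(\Sigma)$ with dualizing sheaf $\Omega^1_\Sigma$.

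For the inductive step, write $D$ for the functor on $\Par_S$ in the theorem statement and $D'$ for the analogous functor on $\Par_{S'}$; the inductive hypothesis supplies the natural isomorphism $\Ext^1_{\Par_{S'}}(\CE',\CF')^*\cong \Hom_{\Par_{S'}}(\CF',D'\CE')$. Both sides of the desired isomorphism are half-exact in $\CE$: the right side is left-exact because $D$ is exact (defined componentwise by shift and tensoring with $\Omega^1$) and $\Hom(\CF,-)$ is left-exact, while the left side fits into a six-term exact sequence obtained by dualizing the $\Hom$–$\Ext^1$ long exact sequence, which exists because $\Par_S$ is hereditary by Proposition \ref{prop:hereditary}. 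Hence, by Proposition \ref{prop:generating} and a standard five-lemma argument applied to the generating short exact sequences, it suffices to construct a natural isomorphism $\eta_\CE$ for $\CE$ ranging over the two families $\CE=I_0(\CE')$ with $\CE'\in\Par_{S'}$ and $\CE=\Bk_{s,i}$ with $i=0,\dots,N-1$, and to check compatibility of $\eta$ with morphisms between generators.

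For $\CE=I_0(\CE')$, the first adjunction in \eqref{eq:adjunction} gives $\Ext^1(I_0\CE',\CF)\cong\Ext^1(\CE',R_0\CF)$, which by the inductive hypothesis dualizes to $\Hom(R_0\CF,D'\CE')$. On the other side, an explicit inspection of the definition of $D$ in coordinates shows
\[
D\, I_0(\CE')\;\cong\; I_1\bigl(D'\CE'\otimes\CO(s)\bigr);
\]
indeed the shift by $(N-1)\sum_{j<k}\delta_j$ in the non-$s$ directions combined with $\Omega^1\otimes$ realizes $D'$ componentwise, whereas the shift by $N-1$ in the $s$-direction rotates the necklace $I_0$ into $I_1$ and introduces a single twist by $\CO(s)$. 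Applying the second adjunction of \eqref{eq:adjunction} then yields
\[
\Hom\bigl(\CF, I_1(D'\CE'\otimes\CO(s))\bigr)\;\cong\;\Hom\bigl(R_0(\CF)(s),D'\CE'(s)\bigr)\;\cong\;\Hom(R_0\CF,D'\CE'),
\]
which matches the $\Ext^1$-side. For $\CE=\Bk_{s,i}$, a direct unpacking shows $D\,\Bk_{s,i}\cong\Bk_{s,(i+1)\bmod N}$ (tensored with the one-dimensional fibre $(\Omega^1_\Sigma)_s$). Applying $\Hom(-,\CF)$ to the resolution $0\to I_{i+1}\CO\to I_i\CO\to\Bk_{s,i}\to 0$ from the proof of Proposition \ref{prop:hereditary} and using $\Hom(I_j\CO,\CF)=H^0(\Sigma,\CF_j)$, $\Ext^1(I_j\CO,\CF)=H^1(\Sigma,\CF_j)$, the computation of $\Ext^1(\Bk_{s,i},\CF)$ reduces to classical Serre duality on $\Sigma$ applied to the map $\CF_i\to\CF_{i+1}$, producing a canonical pairing with $\Hom(\CF,\Bk_{s,(i+1)\bmod N}\otimes(\Omega^1)_s)=\Hom(\CF,D\Bk_{s,i})$.

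The main obstacle is the careful bookkeeping required to verify the component formula $D\,I_0(\CE')\cong I_1(D'\CE'\otimes\CO(s))$ — in particular, tracking the twist by $\CO(s)$ arising from the wrap-around in the $s$-direction — and ensuring that the two families of candidate isomorphisms $\eta_{I_0\CE'}$ and $\eta_{\Bk_{s,i}}$ are compatible under the morphisms linking the two families (computed via the adjunctions $\Hom(I_0\CE',\Bk_{s,i})=\Hom(\CE',R_0\Bk_{s,i})$ and $\Hom(\Bk_{s,i},I_0\CE')=\Hom(R_{N-1}\Bk_{s,i},\CE')$), so that $\eta$ extends uniquely to a natural transformation on all of $\Par_S$ before the five-lemma promotes it to an isomorphism.
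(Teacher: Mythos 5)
Your induction on $k$ with classical Serre duality as the base case is also the paper's strategy, and your component computations $D\,I_0(\CE')\cong I_1(D'\CE'\otimes\CO(s))$ and $D\,\Bk_{s,i}\cong\Bk_{s,(i+1)\bmod N}\otimes(\Omega^1_\Sigma)_s$ are correct, as is the adjunction chain for $I_0(\CE')$. But the reduction to generators has a genuine gap. Proposition \ref{prop:generating} only says that $\Par_S$ is generated under \emph{iterated} extensions by $I_0(\CE')$ and $\Bk_{s,i}$; it does not place a general $\CE$ in a short exact sequence $0\to\CE\to P\to Q\to 0$ with $P,Q$ generators, so mere left-exactness of the two candidate functors does not let you pin down their value on $\CE$ as a kernel. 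Moreover, the ``five-lemma'' you invoke cannot even be set up until a natural transformation is defined on all of $\Par_S$, and even if it were, the dualized long exact sequence passes through both $\Hom$ and $\Ext^1$, so a five-lemma argument across a generating extension would also require the companion duality $\Hom(\CE,\CF)^*\cong\Ext^1(\CF,D\CE)$ on generators, compatibly — which you neither state nor verify. You flag the compatibility of the two families of candidate isomorphisms as ``the main obstacle'' but do not resolve it; that check is the substance of the proof.

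The paper sidesteps all of this by choosing a different resolution, one that never leaves the essential image of the $I_j$'s: it embeds $\CE\hookrightarrow\widetilde\CE=\bigoplus_i I_{i-N+1}(\CE_i)$, surjects $\widetilde\CF=\bigoplus_i I_i(\CF_i)\twoheadrightarrow\CF$, and exhibits $H(\CE,\CF)$ (for either candidate functor $H$) as the kernel of $H(\widetilde\CE,\widetilde\CF)\to H(\widetilde\CE,\CF')\oplus H(\CE',\widetilde\CF)$. Every term other than $H(\CE,\CF)$ then has at least one argument of the form $I_j(-)$, so adjunction together with the inductive hypothesis supplies the isomorphisms directly, $H(\CE,\CF)$ is recovered uniquely as a kernel (no skyscraper generators $\Bk_{s,i}$ and no companion duality are needed), and the single compatibility to verify is that the two adjunction-induced isomorphisms on $H(\widetilde\CE,\widetilde\CF)$ agree — which reduces summand by summand to a commuting square of twists by $\CO(ms)$. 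If you wish to keep your route through $\Bk_{s,i}$, you would need to make the reduction precise (an explicit two-step resolution, together with both dualities on generators) and then actually carry out the compatibility verification, which is where the work really lies.
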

\begin{proof}
As usual, we proceed by induction on the number of marked points $k$. The base case $k=0$ is the usual Serre duality. Now suppose we have the statement for $S'=S\setminus\{s\}$, $s=s_k$. Denote the functor $D$ on the category $\Par_{S}$ by $D_S$. We have for any $\CF_\bullet\in\Par_S$ viewed as a necklace
\[
D_S(\CF)_i = D_{S'}(\CF_{i+N-1}).
\]
For any $\CE,\CF\in\Par_S$ define 
\[
\widetilde\CE = \bigoplus_{i=0}^{N-1} I_{i-N+1}(\CE_i),\quad \widetilde\CF = \bigoplus_{i=0}^{N-1} I_{i}(\CF_i).
\]
The adjunction maps give us an injection $\CE\to\widetilde\CE$ and a surjection $\widetilde\CF\to\CF$, so we obtain short exact sequences
\[
0\to\CE\to\widetilde\CE\to\CE'\to 0,\quad 0\to\CF'\to\widetilde\CF\to\CF\to 0.
\]
Denote by $H(\CG,\CG')$ be any of the two functors $\Ext(\CG,\CG')^*$, $\Hom(\CG',D_S \CG)$. Note that $D_S$ is exact. Thus we have an exact square
\[
\begin{tikzcd}
0\arrow[r] & H(\CE, \CF')\arrow[r] & H(\widetilde \CE, \CF')\arrow[r] & H(\CE', \CF')\\
0\arrow[r] & H(\CE, \widetilde\CF) \arrow[r]\arrow[u] & H(\widetilde \CE, \widetilde\CF)\arrow[r] \arrow[u]& H(\CE', \widetilde\CF)\arrow[u]\\
0\arrow[r] & H(\CE, \CF) \arrow[r]\arrow[u] & H(\widetilde \CE, \CF)\arrow[r]\arrow[u] & H(\CE', \CF)\arrow[u]\\
 & 0\arrow[u] & 0\arrow[u] & 0\arrow[u]
\end{tikzcd}
\]
and by diagram chasing a short exact sequence
\begin{equation}\label{eq:H short sequence}
0\to H(\CE, \CF) \to H(\widetilde \CE, \widetilde\CF) \to H(\widetilde \CE, \CF')\oplus H(\CE', \widetilde\CF).
\end{equation}
For any $\CG'\in\Par_{S'}$, any $i\in\BZ$ and any $\CG\in\Par_S$ we have the following sequence of natural isomorphisms:
\[
\Ext^1(I_i(\CG'), \CG)^* \cong \Ext^1(\CG', \CG_i)^* \cong\Hom(\CG_i, D_{S'}\CG') \cong \Hom(\CG, I_{i-N+1}(D_{S'} \CG'))
\]
\[
 = \Hom(\CG, D_S I_i(\CG'))
\]
and similarly
\[
\Ext^1(\CG, I_i(\CG'))^* \cong \Hom(\CG', D_{S'} \CG_{i+N-1})\cong\Hom(I_i(\CG'), D_S \CG').
\]
Applying this to each direct summand of $\widetilde \CE$ resp. $\widetilde \CF$ we obtain for all $\CG\in\Par_S$
\begin{equation}\label{eq:two constructions}
\Ext^1(\widetilde \CE, \CG)^* \cong \Hom(\CG, D_S \widetilde\CE)\quad\text{resp.}\quad
\Ext^1(\CG, \widetilde \CE)^* \cong \Hom(\CE, D_S \widetilde\CG).
\end{equation}
Since these isomorphisms are natural in $\CG$ we can fit the two sequences \eqref{eq:H short sequence} into a commutative diagram
\[
\begin{tikzcd}
0\arrow[r]& \Ext^1(\CE, \CF)^* \arrow[r]& \Ext^1(\widetilde \CE, \widetilde\CF)^* \arrow[r] \arrow{d}{\cong}& \Ext^1(\widetilde \CE, \CF')^*\oplus \Ext^1(\CE', \widetilde\CF)^* \arrow{d}{\cong}\\
0\arrow[r]& \Hom(\CF, D_S \CE) \arrow[r]& \Hom(\widetilde \CF, D_S \widetilde\CE) \arrow[r]& \Hom(\CF', D_S \widetilde\CE)\oplus \Hom(\widetilde\CF, D_S \CE')
\end{tikzcd},
\]
provided that we show that the two constructions of the isomorphism $\Ext^1(\widetilde \CE, \widetilde\CF)^*\cong \Hom(\widetilde \CF, D_S(\widetilde\CE))$ obtained by use of the two isomorphisms \eqref{eq:two constructions} respectively agree. Indeed, then the desired isomorphism $\Ext^1(\CE, \CF)^*\cong \Hom(\CF, D_S \CE)$ is uniquely obtained from the diagram.

Since $\widetilde\CE$ and $\widetilde\CG$ are direct sums of corresponding objects it is enough to compare for any $i,j$ and $\CG$, $\CG'$ the following
\[
\Ext^1(I_i(\CG'), I_j(\CG))^* \cong \Ext^1(\CG', R_i(I_j(\CG)))^*  \cong \Hom(R_i(I_j(\CG)), D_{S'}\CG')
\]
\[
 \cong \Hom(I_j(\CG), I_{i-N+1}(D_{S'}\CG')),
\]
\[
\Ext^1(I_i(\CG'), I_j(\CG))^* \cong \Ext^1(R_{j+N-1} I_i(\CG'), \CG)^* \cong \Hom(\CG, D_{S'} R_{j+N-1} I_i(\CG'))
\]
\[
\cong \Hom(I_j(\CG), I_{i-N+1}(D_{S'}\CG')). 
\]
Let $m=\lfloor\frac{i-j}{N}\rfloor$. We have $R_i(I_j(\CG)) = \CG(m s)$, $\lfloor\frac{j+N-1-i}{N}\rfloor=-m$, $R_{j+N-1} I_i(\CG')=\CG'(-m)$, so the fact that the two isomorphisms agree follows from commutativity of the diagram
\[
\begin{tikzcd}
\Ext^1(\CG', \CG(ms))^* \arrow{r}{\cong}\arrow{d}{\cong} &\Hom(\CG(ms), D_{S'} \CG')\arrow{d}{\cong} \\
\Ext^1(\CG'(-ms), \CG)^* \arrow{r}{\cong} &\Hom(\CG, D_{S'} \CG'(-ms))
\end{tikzcd}
\]
The fact that the diagram commutes can be shown by induction starting from the corresponding diagram in $\Coh(\Sigma)$ and noticing that all the steps in our construction of Serre duality behave as expected with respect to twists by line bundles.
\end{proof}

Note that for any $\CE\in\Par_S$ we have
\[
\rank D\CE = \rank\CE,\quad \deg D\CE = \deg\CE+\rank\CE\left(2g-2+\sum_{i=1}^k d_i\right) - \sum_{i=1}^k d_i r_{i,N},
\]
\[
r_{i,j}(D\CE) = r_{i,j-1} (\CE).
\]
\subsection{Parabolic Higgs bundles}
Strictly speaking, we cannot apply the well-known results of Gothen and King \cite{gothen2005homological} in our situation because parabolic sheaves are not sheaves. However, it is quite easy to generalize their results. Let $\CA$ be an abelian category and $D:\CA\to\CA$ any exact endofunctor. Consider the category $\CA_D$ of pairs $\overline\CE=(\CE,\theta)$ where $\CE\in\CA$, $\theta\in\Hom(\CE,D\CE)$. This is an abelian category with obvious definitions of morphisms, kernels and cokernels. We then have
\begin{thm}\label{thm:higgs category}
Suppose $\CA$ is closed under countable products and has enough injectives.  Then $\CA_D$ has enough injectives and we have the following functorial long exact sequence for any $\overline\CE=(\CE,\theta)$, $\overline\CE'=(\CE,\theta')$ in $\CA_D$:
\[
0\to \Hom(\overline\CE, \overline\CE') \to \Hom(\CE,\CE') \xrightarrow{\theta'\circ - D(-)\circ\theta} \Hom(\CE,D \CE')
\]
\[
\to \Ext^1(\overline\CE, \overline\CE') \to \Ext^1(\CE,\CE') \xrightarrow{\theta'\circ - D(-)\circ\theta} \Ext^1(\CE,D \CE')\to\cdots
\]
\end{thm}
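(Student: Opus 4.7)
The proof splits naturally into two parts: constructing enough injectives, and then deriving the long exact sequence from a short exact sequence of $\Hom$-complexes obtained from a well-chosen injective resolution.

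The plan for enough injectives is to construct a right adjoint $R\colon\CA\to\CA_D$ to the forgetful functor $U\colon\CA_D\to\CA$. Using that $\CA$ admits countable products, set
\[
R(X)\eq\Bigl(\prod_{n\geq 0} D^n X,\ \sigma_X\Bigr),
\]
where $\sigma_X$ is the shift whose $n$-th component is the projection onto the $(n+1)$-st factor (identifying $D\prod_n D^n X$ with $\prod_n D^{n+1} X$). A morphism $\varphi\colon(\CE,\theta)\to R(X)$ decomposes as components $\varphi_n\colon\CE\to D^n X$, and the compatibility $\sigma_X\circ\varphi = D\varphi\circ\theta$ forces $\varphi_{n+1}=D\varphi_n\circ\theta$, so $\varphi$ is determined by its $0$-th component $\varphi_0\colon\CE\to X$. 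This gives the adjunction $U\dashv R$. Since $U$ is exact, $R$ preserves injectives. For any $\overline{\CE}=(\CE,\theta)$, choose an injection $\CE\hookrightarrow J$ with $J$ injective in $\CA$; the adjoint morphism $\overline{\CE}\to R(J)$ has $0$-th component this injection, hence is itself mono in $\CA_D$. Thus $\CA_D$ has enough injectives.

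For the long exact sequence, the plan is to build an injective resolution $\overline{\CF}\to\overline{I}^\bullet$ in $\CA_D$ inductively with $\overline{I}^n=R(J^n)$ for suitable $J^n\in\CA$, and then exploit the following short exact sequence of cochain complexes:
\[
0\to \Hom_{\CA_D}(\overline{\CE},\overline{I}^\bullet)\to \Hom_\CA(\CE,U\overline{I}^\bullet)\xrightarrow{d}\Hom_\CA(\CE,DU\overline{I}^\bullet)\to 0,
\]
with $d(f)=\sigma\circ f - Df\circ\theta$. The kernel statement is the definition of a morphism in $\CA_D$; the key point is surjectivity. This relies crucially on the special "shift" structure of $R(J^n)$: given $g\colon\CE\to \prod_{n\geq 0}D^{n+1}J$ with components $g_n$, set $f_0=0$ and recursively $f_{n+1}=g_n+Df_n\circ\theta$ to obtain a preimage. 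Passing to the long exact sequence of cohomology then yields the desired long exact sequence, since the $i$-th cohomology of the leftmost complex is $\Ext^i_{\CA_D}(\overline{\CE},\overline{\CF})$ by definition.

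The main obstacle is identifying the cohomology of the middle and right complexes with $\Ext^i_\CA(\CE,\CF)$ and $\Ext^i_\CA(\CE,D\CF)$. Since $U$ is exact, $U\overline{I}^\bullet$ is automatically a resolution of $\CF$ in $\CA$, and $DU\overline{I}^\bullet$ a resolution of $D\CF$; what is needed is that each $UR(J^n)=\prod_m D^mJ^n$ is injective in $\CA$, which reduces to asking that $D$ preserves injectives (products of injectives being automatically injective). In the intended application $D$ is the Serre functor of Theorem \ref{thm:Serre duality}, which is an auto-equivalence and therefore has an exact left adjoint $D^{-1}$, forcing $D$ to preserve injectives; the same holds whenever $D$ is invertible. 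I expect the delicate point of the proof to be either this preservation of injectives or, in its absence, a replacement argument that constructs the resolution $\overline{I}^\bullet$ more carefully (for instance, starting from genuine injective resolutions of $\CF$ and of $D\CF$ in $\CA$ and lifting the structure map $\theta'$ to a morphism of complexes, then assembling these into an object of $\CA_D$) so as to bypass the need for $D$ to preserve injectives.
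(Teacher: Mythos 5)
Your construction of enough injectives matches the paper's exactly: the same right adjoint $R(X)=\bigl(\prod_{n\geq0}D^nX,\ \text{shift}\bigr)$ to the forgetful functor, the same adjunction identifying $\Hom_{\CA_D}(\overline\CE,RX)$ with $\Hom_\CA(\CE,X)$, and the same observation that exactness of the forgetful functor makes $R$ preserve injectives.

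For the long exact sequence you take a genuinely different route, and it contains a real gap relative to the theorem's stated generality. You build a full injective resolution $\overline\CE'\to R(J^\bullet)$ of the target in $\CA_D$ and extract the long exact sequence from the degreewise short exact sequence of Hom-complexes; the middle and right complexes are $\Hom_\CA\bigl(\CE,UR(J^\bullet)\bigr)$ and $\Hom_\CA\bigl(\CE,DUR(J^\bullet)\bigr)$, and identifying their cohomology with $\Ext^\bullet_\CA(\CE,\CE')$ and $\Ext^\bullet_\CA(\CE,D\CE')$ requires each $UR(J^n)=\prod_m D^mJ^n$ to be injective in $\CA$, i.e.\ requires $D$ to preserve injectives. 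You flag this yourself, and it does hold in the application (the Serre functor is an auto-equivalence), but the theorem asks only that $D$ be exact, and an exact endofunctor need not preserve injectives (for instance $-\otimes_R M$ for a flat non-projective module $M$). Your sketched replacement argument (lifting $\theta'$ across separately chosen resolutions of $\CE'$ and $D\CE'$ and reassembling an object of $\CA_D$) is not fleshed out and is not obvious: nothing forces the resulting complex to carry a compatible shift, and $D$ applied to a resolution of $\CE'$ need not be a resolution by injectives.

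The paper avoids the issue entirely. Instead of a full injective coresolution it uses only the two-term coresolution
\[
0\to\overline\CE'\to R\CE'\to RD\CE'\to 0
\]
in $\CA_D$, with the injection the adjunction unit (components $(\theta')^i:\CE'\to D^i\CE'$) and the surjection built from $\Id-D^i\theta'$; this sequence exists for any exact $D$. Applying $\Hom(\overline\CE,-)$ gives the long exact sequence of $\Ext^\bullet_{\CA_D}(\overline\CE,-)$, and then each term $\Ext^j(\overline\CE,R\CG)$ is converted to $\Ext^j(\CE,\CG)$ in $\CA$ by the derived adjunction, which needs only that the forgetful functor is exact and that $R$ preserves injectives (not that $D$ does). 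This two-term coresolution together with the derived adjunction is precisely the device that removes the extraneous hypothesis on $D$ that your argument relies on.
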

\begin{proof}
The forgetful functor $L:\CA_D\to\CA$ has an exact right adjoint $R$ constructed as follows. For any $\CF\in\CA$ take
\[
R(\CF) = \left(\prod_{i=0}^\infty D^i \CF, \text{shift}\right),
\]
where 
\[
\text{shift}: \prod_{i=0}^\infty D^i \CF \to D\left(\prod_{i=0}^\infty D^i \CF\right)=\prod_{i=0}^\infty D^{i+1} \CF
\]
 is defined as the identity on each component $D^i\CF\xrightarrow{=} D(D^{i-1}\CF)$ with an obvious shift of indices. Then any map $f:\CE\to\CF$ for $\overline\CE=(\CE,\theta)\in\CA_D$, $\CF\in\CA$ uniquely extends to a morphism $(\CE,\theta)\to(R\CF,\text{shift})$. Its $i$-th component is given by $D^i(f) \theta^i$. So we have adjunction
 \[
\Hom(L \overline\CE, \CF) \cong \Hom(\overline\CE, R \CF).  
 \]
Since $L$ is exact, $R$ preserves injectives. For any $\overline\CE\in\CA_D$ the natural map $\overline\CE \to RL\overline\CE$ is an injection, hence $\CA_D$ has enough injectives. Moreover, the adjunction extends to all the higher $\Ext$ functors
 \begin{equation}\label{eq:adjunctions higgs}
\Ext^i(L \overline\CE, \CF) \cong \Ext^i(\overline\CE, R \CF).
 \end{equation}
For any $\overline\CE=(\CE,\theta)\in\CA_D$ we have a short exact sequence
\[
0\to \overline\CE \to R \CE \to RD\CE \to 0,
\]
where the $i$-th component of the map $R\CE \to RD\CE$ is given by the difference 
\[
\Id_{D^i\CE} - D^i(\theta): D^i \CE \to D^{i} \CE \oplus D^{i+1}\CE.
\]
For any $\overline{F}\in\CA_D$ we obtain the long exact sequence
\[
0\to \Hom(\overline{F}, \overline\CE) \to \Hom(\overline{F}, R \CE) \to \Hom(\overline{F}, RD\CE)\to\cdots.
\]
Applying adjunctions \eqref{eq:adjunctions higgs} we obtain the desired long exact sequence.
\end{proof}

In the situation $\CA=\QPar_{N,S}(\Sigma)$ and $D$ the Serre functor of Theorem \ref{thm:Serre duality} the category $\CA_D$ is called the category of \emph{parabolic quasi-coherent Higgs sheaves}. The category $\ParHiggs_{N,S}(\Sigma)$ of \emph{parabolic Higgs bundles} resp. \emph{parabolic coherent Higgs sheaves} is the full subcategory of pairs $(\CE,\theta)$ such that $\CE$ is a parabolic bundle resp. parabolic coherent sheaf. We obtain the following
\begin{cor}\label{cor:category higgs}
The category of parabolic coherent Higgs sheaves has global dimension $2$. For $\overline\CE,\overline\CF$ parabolic coherent Higgs sheaves the Euler form is given by 
\[
\chi(\overline\CE,\overline\CF)=\chi(\CE,\CF)+\chi(\CF,\CE)=\left(2-2g-\sum_{i=1}^k d_i\right)\rank\CE\rank\CF + \sum_{i=1}^k d_i \sum_{j=1}^N r_{i,j}(\CE) r_{i,j}(\CF)
\]
and we have $\Hom(\overline\CF,\overline\CE)\cong \Ext^2(\overline\CE,\overline\CF)^*$.
\end{cor}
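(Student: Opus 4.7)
The plan is to apply Theorem \ref{thm:higgs category} with $\CA=\QPar_{N,S}(\Sigma)$ and $D$ the Serre functor from Theorem \ref{thm:Serre duality}. The hypotheses are in place: $\QPar_{N,S}(\Sigma)$ has enough injectives (established in the discussion preceding Proposition \ref{prop:hereditary}) and is closed under countable products (component-wise, as in $\QCoh(\Sigma)$). The theorem then gives a functorial long exact sequence
\[
0\to \Hom(\overline\CE,\overline\CF)\to \Hom(\CE,\CF)\xrightarrow{c}\Hom(\CE,D\CF)\to \Ext^1(\overline\CE,\overline\CF)\to \Ext^1(\CE,\CF)\xrightarrow{c}\Ext^1(\CE,D\CF)\to \Ext^2(\overline\CE,\overline\CF)\to \Ext^2(\CE,\CF)\to\cdots,
\]
where $c=\theta_\CF\circ -\,-\,D(-)\circ\theta_\CE$. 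Since $\Par_{N,S}(\Sigma)$ is hereditary by Proposition \ref{prop:hereditary}, the last group vanishes and the sequence truncates with $\Ext^2(\overline\CE,\overline\CF)=\cokernel c$ on Ext$^1$'s; moreover $\Ext^i(\overline\CE,\overline\CF)=0$ for $i\geq 3$, yielding global dimension at most $2$ (and equal to $2$ once we prove the Serre-type identity below).

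For the Euler form I would read off $\chi(\overline\CE,\overline\CF)=\chi(\CE,\CF)-\chi(\CE,D\CF)$ from the long exact sequence. Serre duality in $\Par_{N,S}(\Sigma)$ (Theorem \ref{thm:Serre duality}) yields $\chi(\CE,D\CF)=-\chi(\CF,\CE)$ since $\Hom(\CE,D\CF)\cong \Ext^1(\CF,\CE)^\ast$ and $\Ext^1(\CE,D\CF)\cong \Hom(D\CF,D\CE)^\ast=\Hom(\CF,\CE)^\ast$. Hence $\chi(\overline\CE,\overline\CF)=\chi(\CE,\CF)+\chi(\CF,\CE)$. Substituting Proposition \ref{prop:euler form} and using that the linear terms in $\deg$ cancel, the remaining simplification uses $\sum_j r_{i,j}(\CE)=\rank\CE$ to rewrite
\[
\sum_{j<j'}\bigl(r_{i,j}(\CE)r_{i,j'}(\CF)+r_{i,j}(\CF)r_{i,j'}(\CE)\bigr)=\rank\CE\rank\CF-\sum_{j}r_{i,j}(\CE)r_{i,j}(\CF),
\]
which after combining with the $(1-g)\rank\CE\rank\CF$ term gives the stated formula.

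For the Serre-type identity $\Hom(\overline\CF,\overline\CE)\cong \Ext^2(\overline\CE,\overline\CF)^\ast$ the idea is to dualize the tail of the long exact sequence. Dualizing $\Ext^1(\CE,\CF)\xrightarrow{c}\Ext^1(\CE,D\CF)\to \Ext^2(\overline\CE,\overline\CF)\to 0$ and applying Serre duality on $\Par_{N,S}(\Sigma)$ identifies the dualized groups with $\Hom(\CF,D\CE)$ and $\Hom(\CF,\CE)$ respectively. I would then argue that the resulting dualized map coincides (up to sign) with the commutator map $\theta_\CE\circ -\,-\,D(-)\circ\theta_\CF\colon \Hom(\CF,\CE)\to\Hom(\CF,D\CE)$, whose kernel is $\Hom(\overline\CF,\overline\CE)$ by the long exact sequence applied to $(\overline\CF,\overline\CE)$.

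The main obstacle is precisely this compatibility check: one must verify that the Serre duality isomorphism of Theorem \ref{thm:Serre duality} is natural enough that the adjoint of post-composition with $\theta_\CF$ (resp.\ pre-composition with $D\theta_\CE$) becomes pre-composition with $\theta_\CF$ (resp.\ post-composition with $\theta_\CE$) after identifying $\Hom(D\CF,D\CE)$ with $\Hom(\CF,\CE)$. This reduces, via the inductive construction of $D$ over the marked points, to the analogous bifunctoriality statement for classical Serre duality on $\Coh(\Sigma)$, which is standard.
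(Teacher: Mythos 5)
Your argument follows the paper's proof essentially step by step: apply Theorem \ref{thm:higgs category}, truncate the long exact sequence using heredity of $\Par_{N,S}(\Sigma)$, derive the Euler form via Serre duality $\chi(\CE,D\CF)=-\chi(\CF,\CE)$ together with Proposition \ref{prop:euler form}, and dualize the tail of the sequence to obtain $\Hom(\overline\CF,\overline\CE)\cong\Ext^2(\overline\CE,\overline\CF)^*$. The compatibility of the Serre duality isomorphism with the commutator maps, which you rightly flag as the substantive point, is exactly the step the paper's proof uses silently when passing from $\kernel(\Hom(\CF,\CE)\to\Hom(\CF,D\CE))$ to $\kernel(\Ext^1(\CE,D\CF)^*\to\Ext^1(\CE,\CF)^*)$, so you are not missing anything that the original establishes.
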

\begin{proof}
By Theorem \ref{thm:higgs category}, since $\QPar_S$ is hereditary we have a long exact sequence
\[
0\to \Hom(\overline\CE, \overline\CF) \to \Hom(\CE,\CF) \to \Hom(\CE,D \CF)
\]
\[
\to \Ext^1(\overline\CE, \overline\CF) \to \Ext^1(\CE,\CF) \to \Ext^1(\CE,D \CF)\to\Ext^2(\overline\CE, \overline\CF)\to 0.
\]
This gives the Euler form
\[
\chi(\overline\CE, \overline\CF) = \chi(\CE,\CF) - \chi(\CE,D\CF) = \chi(\CE,\CF) + \chi(\CF,\CE),
\]
where the last equation holds by Serre duality. Using Proposition \ref{prop:euler form} we obtain the formula for the Euler form. Finally notice that $\Hom(\overline\CF,\overline\CE)$ is 
\[
\kernel(\Hom(\CF,\CE)\to\Hom(\CF,D\CE)) \cong
\kernel(\Ext^1(\CE,D\CF)^*\to\Ext^1(\CE, \CF)^*)
\]
\[
\cong \cokernel(\Ext^1(\CE, \CF)\to \Ext^1(\CE, D\CF))^*\cong \Ext^2(\overline\CE,\overline\CF)^*.
\]
\end{proof}

\subsection{Harder-Narasimhan theory}
We apply Harder-Narasimhan theory (see \cite{harder1974cohomology} or \cite{bridgeland2007stability}) to the categories $\ParBun_{N,S}(\Sigma)$ and $\ParHiggs_{N,S}(\Sigma)$.
\begin{defn}
A \emph{stability condition} on $\ParBun_{N,S}(\Sigma)$ or $\ParHiggs_{N,S}(\Sigma)$ is a collection of numbers $\alpha=(\alpha_{i,j})$, $\alpha_{i,j}\in\BR$ for $i=1,\ldots,k$ and $j=1,\ldots,N$ satisfying the following condition:
\[
\alpha_{i,1}\geq \alpha_{i,2}\geq\cdots\cdots\alpha_{i,N}\geq \alpha_{i,1}-d_i.
\]
Note that $\alpha_{i,j}=0$ is a valid stability condition, which we denote by $0$.
The $\alpha$-degree of $\CE\in\Par_{N,S}(\Sigma)$ is defined by 
\[
\deg_\alpha \CE = \deg\CE+\sum_{i=1}^k\sum_{j=1}^N \alpha_{i,j}r_{i,j}(\CE),
\]
Note that $\deg_\alpha\CE\geq 0$ for all torsion $\CE$.
The \emph{slope} of a parabolic bundle $\CE$ is defined by
\[
\mu_\alpha(\CE) = \frac{\deg_\alpha\CE}{\rank\CE}.
\]
An object $\CF\in\ParBun_{N,S}(\Sigma)$ resp. $\ol\CF=(\CF,\theta)\in\ParHiggs_{N,S}(\Sigma)$ is \emph{semistable} with respect to $\alpha$ if it is not zero and for all non-zero subobjects $\CE\subset\CF$ resp. $\ol\CE\subset\ol\CF$  we have $\mu_\alpha(\CE)\leq\mu_\alpha(\CF)$. If the inequality is strict for all proper subobjects, the object is called \emph{stable}.
\end{defn}
\begin{thm}[Harder-Narasimhan]\label{thm:harder narasimhan}
For any stability condition $\alpha$ and any $\CE\in \ParBun_{N,S}(\Sigma)$ there exists a unique filtration (called Harder-Narasimhan filtration)
\[
0=\CE_0\subset\CE_1 \subset\cdots\subset\CE_{m-1}\subset\CE_m=\CE
\]
such that $\CE_i/\CE_{i-1}$ is a semistable parabolic bundle for $i=1,2,\ldots,m$ and the following holds:
\[
\mu_\alpha(\CE_1) > \mu_\alpha(\CE_2/\CE_1) > \ldots >\mu_\alpha(\CE_m/\CE_{m-1}).
\]
Analogous statement holds for arbitrary $\ol\CE\in\ParHiggs_{N,S}(\Sigma)$.
\end{thm}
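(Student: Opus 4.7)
The plan is to run the classical Harder--Narasimhan machinery, which applies to any abelian category with an additive integer-valued rank function and an additive real-valued degree function, once one has the following two ingredients: (a) for each fixed $\CE$, the set of slopes $\mu_\alpha(\CF)$ as $\CF$ ranges over non-zero subobjects is bounded above and attains its supremum, and (b) the saturation of a subobject is still a subobject and has at least the same slope. The additivities of $\rank$, $\deg$, and the $r_{i,j}$ on short exact sequences are immediate from the definitions; condition (b) follows from the hypothesis $\deg_\alpha \CG \geq 0$ for torsion $\CG$, applied to the torsion object $\tilde\CF/\CF$. Hence the substantive content lies in the boundedness/attainment in (a).

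To establish (a) for parabolic bundles, first note that $r_{i,j}(\CF) \in \{0,\ldots,\rank\CE\}$, so the parabolic type of any sub-parabolic-bundle $\CF \subset \CE$ ranges over a finite set. Fix a type and a rank $r$; then the parabolic contribution to $\deg_\alpha \CF$ is fixed, and $\CF_{(0,\ldots,0)}$ is a coherent subsheaf of rank $r$ of the vector bundle $\CE_{(0,\ldots,0)}$. The classical bound on degrees of rank-$r$ subsheaves of a vector bundle on a smooth complete curve bounds $\deg \CF$, and hence $\mu_\alpha(\CF)$, from above. Once the parabolic type is fixed, $\deg_\alpha \CF$ takes values in a discrete translate of $\BZ$, so the supremum is attained.

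Given (a) and (b), the rest is standard. Choose $\CE_1 \subset \CE$ realizing the maximum slope and, among such subobjects, of maximum rank. Condition (b) combined with rank-maximality forces $\CE_1$ to be saturated, so $\CE/\CE_1$ is again a parabolic bundle; slope-maximality forces $\CE_1$ to be semistable; and any subobject of slope $\geq \mu_\alpha(\CE_1)$ sits inside $\CE_1$, since otherwise its sum with $\CE_1$ would contradict the choice of $\CE_1$. Applying the same construction to $\CE/\CE_1$ and pulling back gives an ascending chain $\CE_1 \subset \CE_2 \subset \cdots$, which terminates when the rank exhausts $\rank \CE$. The strict drop of slopes at each step is automatic. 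Uniqueness follows from the classical observation that there is no non-zero morphism from a semistable of slope $s$ into a semistable of slope $s' < s$: thus the first step of one HN filtration must lie inside the first step of any other, and induction finishes.

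For the parabolic Higgs case the argument goes through verbatim in $\ParHiggs_{N,S}(\Sigma)$. A subobject of $\ol\CE=(\CE,\theta)$ is a parabolic subsheaf $\CF \subset \CE$ with $\theta(\CF) \subset D\CF$; its parabolic-sheaf saturation $\tilde\CF$ is still $\theta$-invariant, since the composite $\tilde\CF \hookrightarrow \CE \xrightarrow{\theta} D\CE \twoheadrightarrow D(\CE/\tilde\CF)$ vanishes on the dense open locus where $\tilde\CF=\CF$ and its target is torsion-free. The slope bound in (a) transfers immediately because sub-Higgs-bundles form a subset of sub-parabolic-bundles, and (b) still holds. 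The main technical obstacle is the boundedness lemma (a); everything else is a direct transcription of the classical argument. The subtlety is that the $\alpha_{i,j}$ are real parameters which could in principle cause accumulation of slopes, which is why one must discretize by fixing the parabolic type before invoking the classical degree bound.
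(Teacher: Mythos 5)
The paper does not actually prove this theorem; it delegates to the standard Harder--Narasimhan references \cite{harder1974cohomology}, \cite{bridgeland2007stability} after having set up the additive $\rank$, $\deg_\alpha$ and noting that $\deg_\alpha\geq 0$ on torsion objects. Your proposal supplies the standard argument those references contain, correctly isolating the one point that genuinely needs checking in the parabolic setting (boundedness and attainment of slopes, via finiteness of the set of parabolic types $r_{i,j}(\CF)$ of subobjects together with the classical degree bound on rank-$r$ coherent subsheaves of a fixed bundle), and the correct observation for the Higgs case (the parabolic saturation of a $\theta$-invariant subsheaf is again $\theta$-invariant, by torsion-freeness of $D(\CE/\tilde\CF)$).

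One imprecision to flag: the sentence ``Condition (b) combined with rank-maximality forces $\CE_1$ to be saturated'' is not literally correct. Saturation does not change rank, and the paper's stability conditions allow $\alpha_{i,j}=\alpha_{i,j+1}$, so $\deg_\alpha$ can vanish on a nonzero torsion parabolic sheaf (e.g.\ on $\Bk_{s_i,j}$ when $\alpha_{i,j}=\alpha_{i,j+1}$). In that case a subobject $\CE_1$ of maximal slope and maximal rank and its saturation $\tilde\CE_1\supsetneq\CE_1$ can have the same slope and rank, so rank-maximality does not distinguish them. The fix is trivial: replace the chosen $\CE_1$ by its saturation, or select the inclusion-maximal subobject among those of maximal slope and maximal rank (it exists by Noetherianity and is automatically saturated). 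The uniqueness argument at the end compares only saturated steps, so it is unaffected. With this adjustment the proposal is a correct filling-in of what the paper leaves to the references.
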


Let us denote $\mu_\alpha^{\max}(\CE)$ resp. $\mu_\alpha^{\min}(\CE)$ the numbers $\mu_\alpha(\CE_1)$ resp. $\mu_\alpha(\CE_m/\CE_{m-1})$. They have an alternative description as 
\[
\mu_\alpha^{\wmax}(\CE) = \max_{0\neq \CF\subset\CE} \mu_\alpha(\CF),\quad
\mu_\alpha^{\wmin}(\CE) = \min_{\CE\twoheadrightarrow\CF\neq 0} \mu_\alpha(\CF).
\]
Note that for any two stability conditions $\alpha, \alpha'$ there exist a constant $C\in\BR$ such that for all objects $\CE$
\[
\mu_\alpha^\wmax(\CE)\leq\mu_{\alpha'}^\wmax(\CE)+C,\quad \mu_\alpha^\wmin(\CE)\geq\mu_{\alpha'}^\wmin(\CE)-C.
\]
Note also the following estimate for any parabolic bundle $\CE$:
\begin{equation}\label{eq:mu of D}
\mu_\alpha(D\CE) \leq \mu_\alpha(\CE) + 2g-2+\sum_{i=1}^k d_i.
\end{equation}

Following Mozgovoy and Schiffmann \cite{mozgovoy2014counting} we show
\begin{prop}\label{prop:estimate slope}
For every stability condition $\alpha$ suppose an object $\CE\in\ParBun_{N,S}(\Sigma)$ satisfies one of the following two conditions:
\begin{enumerate}
\item $\CE$ is indecomposable,
\item There exists $\theta:\CE\to D\CE$ such that $(\CE,\theta)$ is semistable.
\end{enumerate}
then we have
\[
\mu^\wmax(\CE) \leq \mu(\CE) + C (\rank \CE-1),\quad \mu^\wmin(\CE) \geq \mu(\CE) - C (\rank \CE-1)
\]
where $C=2g-2+\sum_{i=1}^k d_i$.
\end{prop}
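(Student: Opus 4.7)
The strategy is to prove the uniform bound
\[
\mu_i-\mu_{i+1}\leq C\qquad(i=1,\ldots,m-1)
\]
on consecutive slopes in the Harder--Narasimhan filtration of $\CE$ and then deduce the proposition by averaging. Let $0=\CE_0\subset\CE_1\subset\cdots\subset\CE_m=\CE$ be that filtration (Theorem~\ref{thm:harder narasimhan}), with slopes $\mu_i=\mu_\alpha(\CE_i/\CE_{i-1})$ and ranks $r_i=\rank(\CE_i/\CE_{i-1})$, so $\mu_1>\cdots>\mu_m$, $\mu_\alpha^\wmax(\CE)=\mu_1$ and $\mu_\alpha^\wmin(\CE)=\mu_m$. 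Granting the consecutive-slope bound, $\mu_1-\mu_i\leq C(i-1)$, whence
\[
\mu_1-\mu_\alpha(\CE)=\frac{1}{\rank\CE}\sum_{i=1}^m r_i(\mu_1-\mu_i)\leq C(m-1)\leq C(\rank\CE-1),
\]
and the bound for $\mu_\alpha^\wmin$ is symmetric. A preliminary I will use repeatedly is
\[
\mu_\alpha^\wmax(D\CF)\leq\mu_\alpha^\wmax(\CF)+C\qquad(\CF\in\ParBun_{N,S}(\Sigma)),
\]
which follows from \eqref{eq:mu of D} together with the observation that, since $D$ is an exact autoequivalence (Theorem~\ref{thm:Serre duality}), every non-zero subobject of $D\CF$ has the form $D\CG$ for a non-zero $\CG\subset\CF$.

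For case (i), fix $i\in\{1,\ldots,m-1\}$ and look at the short exact sequence $0\to\CE_i\to\CE\to\CE/\CE_i\to 0$. Serre duality gives $\Ext^1(\CE/\CE_i,\CE_i)^*\cong\Hom(\CE_i,D(\CE/\CE_i))$. If the Hom space is non-zero, any non-zero morphism has image $I$ which is simultaneously a quotient of $\CE_i$ and a subobject of $D(\CE/\CE_i)$, whence
\[
\mu_i=\mu_\alpha^\wmin(\CE_i)\leq\mu_\alpha^\wmin(I)\leq\mu_\alpha^\wmax(I)\leq\mu_\alpha^\wmax(D(\CE/\CE_i))\leq\mu_{i+1}+C.
\]
Otherwise $\Ext^1(\CE/\CE_i,\CE_i)=0$, the sequence splits, and $\CE\cong\CE_i\oplus\CE/\CE_i$ with both summands non-zero (since $1\leq i\leq m-1$) contradicts indecomposability via Krull--Schmidt, which applies because $\Hom$-spaces in $\ParBun_{N,S}(\Sigma)$ are finite-dimensional over $\Bk$. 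Hence $\mu_i-\mu_{i+1}\leq C$.

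For case (ii), assume for contradiction that $\mu_i-\mu_{i+1}>C$ for some $i<m$ and form
\[
\bar\theta_i\colon\CE_i\hookrightarrow\CE\xrightarrow{\theta}D\CE\twoheadrightarrow D(\CE/\CE_i).
\]
The same image estimate forces $\bar\theta_i=0$, so $\theta(\CE_i)\subset D\CE_i$ and $(\CE_i,\theta|_{\CE_i})$ is a sub-Higgs-bundle of $(\CE,\theta)$. Since $i<m$ and the $\mu_j$ are strictly decreasing, $\mu_\alpha(\CE_i)>\mu_\alpha(\CE)$, contradicting semistability of $(\CE,\theta)$.

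The main subtlety is the preliminary estimate on $\mu_\alpha^\wmax(D\CF)$: the correction terms in the slope formula for $D$ depend on the parabolic jumps of $\CF$, so $D$ does not preserve semistability in general, and one must phrase the argument in terms of $D$-images of arbitrary subobjects rather than attempting to compare the HN filtrations of $\CF$ and $D\CF$ directly. Everything else is a routine Harder--Narasimhan manipulation.
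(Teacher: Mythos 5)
Your proof is correct and follows essentially the same route as the paper's: bound consecutive Harder--Narasimhan slopes via the short exact sequence $0\to\CE_i\to\CE\to\CE/\CE_i\to 0$, use Serre duality to produce a nonzero map $\CE_i\to D(\CE/\CE_i)$ in case (i), and extract a destabilizing sub-Higgs-bundle $(\CE_i,\theta|_{\CE_i})$ in case (ii). The one place you add genuine value is in isolating and justifying the lemma $\mu_\alpha^\wmax(D\CF)\leq\mu_\alpha^\wmax(\CF)+C$ from the fact that $D$ is an exact autoequivalence preserving parabolic bundles -- the paper cites \eqref{eq:mu of D} directly and leaves this step implicit; the Krull--Schmidt invocation is harmless but unnecessary, since a nontrivial splitting already contradicts indecomposability by definition.
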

\begin{proof}
Since we have $\mu^\wmax(\CE)\leq\mu(\CE)\leq\mu^\wmin(\CE)$ it is enough to show that $\mu^\wmax(\CE)-\mu^\wmin(\CE)\leq C (\rank\CE-1)$. Note that the Harder-Narasimhan filtration for $\CE$ has at most $\rank\CE$ steps and we have
\[
\mu^\wmax(\CE)-\mu^\wmin(\CE) = \mu(\CE_1) - \mu(\CE_m/\CE_{m-1}) = \sum_{i=1}^{m-1} \mu(\CE_{i}/\CE_{i-1}) - \mu(\CE_{i+1}/\CE_{i}).
\]
Thus it is enough to show that each gap $\mu(\CE_{i}/\CE_{i-1}) - \mu(\CE_{i+1}/\CE_{i})\leq C$. We have a short exact sequence
\[
0\to \CE_i \to \CE \to \CE/\CE_i \to 0
\]
and we have 
\[
\mu(\CE_{i}/\CE_{i-1}) - \mu(\CE_{i+1}/\CE_{i}) = \mu^\wmin(\CE_i) - \mu^\wmax(\CE/\CE_i).
\]

Case 1. Suppose $\CE$ is indecomposable. This implies that the short exact sequence does not split and $\Ext(\CE/\CE_i,\CE_i)\neq 0$. By Serre duality $\Hom(\CE_i, D(\CE/\CE_i))\neq 0$. Thus 
\[
\mu^\wmin(\CE_i)\leq \mu^\wmax(D (\CE/\CE_i))\leq\mu^\wmax(\CE/\CE_i) + C
\]
by \eqref{eq:mu of D}.

Case 2. Suppose $(\CE,\theta)$ is semistable. Then we have that $\theta$ induces a non-zero map $\CE_i\to D(\CE/\CE_i)$, for otherwise $(\CE,\theta)$ would contain a subobject $(\CE_i, D \CE_i)$ whose slope is larger than that of $\CE$. Again, we obtain $\Hom(\CE_i, D(\CE/\CE_i))\neq 0$ and proceed as in Case 1.
\end{proof}

\section{From bundles with nilpotent endomorphism to Higgs bundles and character varieties}\label{sec:from to}
In this section we generalize methods of \cite{mozgovoy2014counting} to the parabolic situation. This generalization is straightforward once we replace the category of coherent sheaves by the category of parabolic coherent sheaves. So we skip through some of the details.

\subsection{Consequences of polynomiality}
Let $\Sigma/\BF_q$ be a smooth complete curve and let $S=(s_1,\ldots,s_k)$ be a collection of points on $\Sigma(\BF_q)$. Although some of the statements will go through for points of arbitrary degree, we restrict our attention to points of to degree $1$ only. Write the zeta function of $\Sigma$ as
\[
\zeta_\Sigma(T) = \frac{\prod_{i=1}^{g}(1-\sigma_i T)(1-q\sigma_i^{-1}T)}{(1-T)(1-qT)},
\]
and denote by $\sigma$ the collection $\sigma=(\sigma_1,\ldots,\sigma_g)$.
Using Corollary \ref{cor:genus 0} in the case if $\BP^1$ or Corollary \ref{cor:genus g} for arbitrary genus we obtain a function
\[
\Omega_{g,k}[X_\bullet;T,q,t,\sigma] = \sum_{\lambda\in\CP} \Omega_{g,\lambda}(q,t,\sigma) T^{|\lambda|}\prod_{i=1}^k \tilde H_\lambda[X_i;q,t]
\]
which counts parabolic bundles with nilpotent endomorphism in the following way. Choose $N\in\BZ_{>0}$ and let $X_i=x_{i,1}+\cdots x_{i,N}$. Recall that for any symmetric function $F$ the plethystic substitution $F[X_i]$ coincides with the usual evaluation $F_N(x_{i,1}, x_{i,2},\ldots,x_{i,N})$ (Section \ref{subs:symmetric functions}). For any $\CE\in \ParBun_{N,S}$ denote 
\[
w(\CE) = T^{\rank\CE} t^{-\deg\CE} \prod_{i=1}^k\prod_{j=1}^N x_{i,j}^{r_{i,j}(\CE)},
\]
see Section \ref{ssec:parabolic sheaves}. We extend the notions of degree, rank and slope to monomials in $T,t,x_{i,j}$ in the obvious way. From \eqref{eq:omega with flags} we obtain
\[
\Omega_{g,k}[X_\bullet;T,q,t,\sigma] = \sum_{\substack{(\CE,\theta)\in\ParBun_{N,S,\nil}^{\leq 0}(\Sigma)/\sim}} \frac{w(\CE)}{|\Aut(\CE,\theta)|},
\]
where $\ParBun_{N,S,\nil}^{\leq 0}(\Sigma)$ is the category of pairs $(\CE,\theta)$ with $\CE\in\ParBun_{N,S}^{\leq 0}(\Sigma)$ and $\theta:\CE\to\CE$ nilpotent. Define $\BH$ by 
\[
\Omega_{g,k}[X_\bullet;T,q,t,\sigma] = \pExp\left[\frac{1}{q-1} \BH_{g,k}[X_\bullet;T,q,t,\sigma]\right].
\]

We define the HLV kernel (see \cite{mellit2016integrality}) of genus $g$ with $k$ punctures by 
\[
\Omega_{g,k}^{\HLV}[X_\bullet;T,q,t,\sigma] = \sum_{\lambda\in\CP} \frac{\prod_{i=1}^{g} N_\lambda(\sigma_i^{-1})}{N_\lambda(1)} T^{|\lambda|}\prod_{i=1}^k \tilde H_\lambda[X_i;q,t],
\]
where 
\[
N_\lambda(u)=\prod_{\square\in\lambda}(q^{a(\square)}-u t^{1+l(\square)})(q^{a(\square)+1}-u^{-1} t^{l(\square)}).
\]
We also define $\BH_{g,k}^{\HLV}$ as a plethystic logarithm so that
\[
\Omega_{g,k}^\HLV[X_\bullet;T,q,t,\sigma] = \pExp\left[\frac{1}{(q-1)(1-t)} \BH_{g,k}^\HLV[X_\bullet;T,q,t,\sigma]\right]
\]
In the case $g=0$ we have
\[
\Omega_{0,k}[X_\bullet;T,q,t] = \Omega_{0,k}^\HLV[X_\bullet;T,q,t],
\]
\[
\BH_{0,k}[X_\bullet;T,q,t] = \frac{1}{1-t}\BH_{0,k}^\HLV[X_\bullet;T,q,t].
\]
The following is shown in \cite{mellit2016integrality}:
\begin{thm}
For all $g,k\in\BZ_{\geq 0}$ the coefficients of $\BH_{g,k}^\HLV[X_\bullet;T,q,t,\sigma]$ in $T$ and $X_\bullet$ are polynomials in $q,t,\sigma_i^{\pm 1}$.
\end{thm}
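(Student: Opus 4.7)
The plan is to reduce polynomiality to a small set of base cases using the scalar-product structure of the modified Macdonald basis, and then to treat those base cases by invoking Haiman-type integrality for Macdonald polynomials.

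First, I would establish gluing identities for the kernels $\Omega_{g,k}^\HLV$. Because $\{\tilde H_\lambda\}$ is orthogonal with respect to $(-,-)_{q,t}$ with norm $N_\lambda(1)$, the operation of increasing $k$ by one (attaching a pair of pants) is realised as a contraction against the Cauchy kernel $\pExp[X Y/((q-1)(1-t))]$ in a new alphabet $Y$ via the pairing $(-,-)_{q,t}$. Similarly, increasing $g$ by one (adding a handle) is realised by inserting the operator
\[
\sum_{\lambda\in\CP} \frac{N_\lambda(\sigma_{g+1}^{-1})\,\tilde H_\lambda[X;q,t]\otimes\tilde H_\lambda[Y;q,t]}{N_\lambda(1)}
\]
and contracting two alphabets. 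Iterating these two operations expresses every $\Omega_{g,k}^\HLV$ in terms of a single building block, for example $(g,k)=(0,3)$, and the corresponding statement for $\BH_{g,k}^\HLV$ then follows by standard multiplicativity of $\pLog$ under disjoint tensor structure together with appropriate integrality of each gluing operator.

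Second, I would treat the base case $(0,3)$. By the excerpt's identities $\Omega_{0,k}^\HLV=\Omega_{0,k}$ and $\BH_{0,k}^\HLV=(1-t)\BH_{0,k}$, it suffices to prove that $\BH_{0,3}$ lies in $\frac{1}{1-t}\BZ[q,t,X_\bullet,T]$ (no $\sigma$'s appear in genus zero). Using Corollary \ref{cor:genus 0}, $\BH_{0,3}$ is (up to the factor $q-1$) the plethystic logarithm of a weighted count of parabolic bundles on $\BP^1$ with nilpotent endomorphism. A priori this is a rational function in $q,t$; polynomiality in $q$ is evident from the enumerative interpretation (passing from objects to indecomposables via $\pLog$), while Laurent-polynomiality in $t$ comes from the truncation $\tau=\mbox{``}\le 0\mbox{''}$. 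To upgrade the result to polynomiality in $t$ of the quotient by $(1-t)$, one expands $\tilde H_\lambda$ in the Schur basis via the modified Kostka coefficients $\tilde K_{\mu\lambda}(q,t)\in\BZ_{\ge 0}[q,t]$ (Haiman's theorem) and tracks the cancellation of the $N_\lambda(1)$ denominators using the orthogonality of the $\tilde H_\lambda$.

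The main obstacle is precisely this last cancellation: the denominators $N_\lambda(1)=\prod_{\square\in\lambda}(q^a-t^{l+1})(q^{a+1}-t^l)$ in $\Omega^\HLV$ do not cancel term-by-term in the defining sum, and clearing them requires a combinatorial identity at the level of $\pLog$. The natural approach is to argue via a wheel-type relation: the residues of the coefficients of $\Omega^\HLV$ along each factor of $N_\lambda(1)$ are governed by partition surgery operations $\lambda\to\lambda\pm\square$, and one proves inductively in $|T|$ that these residues cancel in pairs after $\pLog$. The $q\leftrightarrow t$, $\lambda\leftrightarrow\lambda'$ symmetry of $\tilde H_\lambda$ cuts the case analysis roughly in half. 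Once the identity is checked for $(g,k)=(0,3)$, the gluing formulas of the first step propagate polynomiality to all $(g,k)$.
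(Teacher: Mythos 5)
The theorem you are attempting to prove is not proved in the present paper at all: it is cited verbatim from Mellit's separate paper \cite{mellit2016integrality} (``The following is shown in [\dots]''). So there is no proof here to compare against, and the question is whether your proposal would constitute an independent proof. It would not, for the reasons below.

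The central gap is in the gluing step. The TQFT-style gluing you describe is correct at the level of $\Omega^\HLV_{g,k}$: since $(\tilde H_\lambda,\tilde H_\mu)_{q,t}=\delta_{\lambda\mu}N_\lambda(1)$, contracting two copies of $\Omega^\HLV$ against a shared alphabet via $(-,-)_{q,t}$ does produce the kernel for the glued surface, and one can in principle build all $\Omega^\HLV_{g,k}$ from $\Omega^\HLV_{0,3}$. But the theorem is about $\BH^\HLV_{g,k}=(q-1)(1-t)\pLog\bigl[\Omega^\HLV_{g,k}\bigr]$, and $\pLog$ does not commute with contraction of alphabets. If $\Omega=\pExp[\BH]$ and $\Omega'$ is obtained by pairing $\Omega$ against a kernel in one alphabet, there is no formula expressing $\BH'=\pLog[\Omega']$ in terms of $\BH$ plus ``integrality of the gluing operator''; the whole point of the plethystic logarithm is to extract connected/indecomposable data, which gluing destroys. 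Your phrase ``standard multiplicativity of $\pLog$ under disjoint tensor structure'' applies only to genuine products $\pExp[A+B]=\pExp[A]\pExp[B]$, not to bilinear contractions. So even granting the base case, the reduction to it does not go through at the $\BH$ level, and in fact there is no known way to deduce polynomiality of $\BH^\HLV_{g,k}$ from polynomiality of $\BH^\HLV_{0,3}$ by such gluing.

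The base case argument is also incomplete as stated: the appeal to Haiman's $\tilde K_{\mu\lambda}(q,t)\in\BZ_{\geq 0}[q,t]$ gives you polynomiality of the Schur expansion of each $\tilde H_\lambda$, but this does nothing to cancel the $N_\lambda(1)$ denominators across the sum over $\lambda$; you acknowledge this and gesture at a ``wheel-type relation'' governing residues under $\lambda\to\lambda\pm\square$, but that is precisely the hard content of the theorem, and no mechanism for proving it is offered. The actual proof in \cite{mellit2016integrality} does not proceed via TQFT gluing of kernels and a genus-zero base case; it works directly with the full kernel and an explicit recursion built from Macdonald operators, controlling the poles as a whole rather than reducing to $(g,k)=(0,3)$. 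If you want to prove the theorem yourself, that is the reference to study; as written, your proposal is a plausible heuristic picture but has two unbridged gaps, each of which on its own is fatal.
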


Then by \cite{mellit2017poincare}, the proof of Theorem 5.2 is easily adopted to show the following\footnote{In \cite{mellit2017poincare} the variables $q,t$ are compared to the variables $q,z$ of Schiffmann as follows: $(q,t)=(z,q)$. Here we use $(q,t)=(q,z)$ instead, so that $q$ stands for the number of elements in the field. At the same time our partitions specifying the nilpotent types are conjugate to those of Schiffmann. We appologize for the inconvenience it may cause.}:
\begin{thm}
For all $g,k\in\BZ_{\geq 0}$ the coefficients of $(1-t)\BH_{g,k}[X_\bullet;T,q,t,\sigma]$ in $T$ and $X_\bullet$ are polynomials in $q,t,\sigma_i^{\pm 1}$ and we have
\[
(1-t)\BH_{g,k}[X_\bullet;T,q,t,\sigma]\Big|_{t=1} = \BH_{g,k}^\HLV[X_\bullet;T,q,t,\sigma]\Big|_{t=1}.
\]
\end{thm}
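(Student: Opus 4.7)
The plan is to adapt the argument of Theorem~5.2 of \cite{mellit2017poincare} (which treats the case $k=0$) to the parabolic setting. The crucial structural observation is that both $\Omega_{g,k}$ and $\Omega_{g,k}^\HLV$ expand in exactly the same $\lambda$-indexed basis, namely the symmetric functions $T^{|\lambda|}\prod_{i=1}^k \tilde H_\lambda[X_i;q,t]$, with scalar coefficients $\Omega_{g,\lambda}(q,t,\sigma)$ and $\prod_i N_\lambda(\sigma_i^{-1})/N_\lambda(1)$ respectively. Since the $X_\bullet$-dependence appears identically in both generating functions, and the modified Macdonald polynomials $\tilde H_\lambda[X;q,t]$ have coefficients that are polynomial in $q$ and $t$ (visible from the integral form $J_\lambda = \prod_{a,l}(1-q^a t^{l+1}) P_\lambda$), the parabolic factor cannot introduce new poles in $t$ or new denominators in $q,t,\sigma_i$.

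First I would expand $\BH_{g,k} = (q-1)\pLog[\Omega_{g,k}]$ and $\BH_{g,k}^\HLV=(q-1)(1-t)\pLog[\Omega_{g,k}^\HLV]$ in powers of $T$ and in the alphabets $X_\bullet$. Using the inversion of $\pExp$, each $T^n$-coefficient of the $\pLog$ is a finite sum of iterated plethystic compositions of the terms $\Omega_{g,\lambda} T^{|\lambda|} \prod_i \tilde H_\lambda[X_i;q,t]$ for $|\lambda|\leq n$, and similarly on the HLV side. Crucially, the plethystic substitution $p_m[\tilde H_\lambda[X_i;q,t]]=\tilde H_\lambda[X_i;q^m,t^m]$ preserves polynomiality in $q,t$. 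Thus the problem reduces to a term-wise identity at the level of the scalar coefficients $\Omega_{g,\lambda}$ versus $\prod_i N_\lambda(\sigma_i^{-1})/N_\lambda(1)$, exactly the form handled in \cite{mellit2017poincare}.

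The heart of the argument is the identity that Schiffmann's function $\Omega_{g,\lambda}(q,t,\sigma)$, times the $(1-t)^{-1}$ rescaling coming from the different plethystic exponentials, matches $\prod_i N_\lambda(\sigma_i^{-1})/N_\lambda(1)$ modulo contributions that vanish after multiplication by $(1-t)$ and specialization at $t=1$. This is proven in \cite{mellit2017poincare} through a detailed comparison using the explicit form of Schiffmann's formula. In the present setting one inserts the common factor $\prod_i \tilde H_\lambda[X_i;q,t]$ into every summand; because the identities in \cite{mellit2017poincare} are diagonal in $\lambda$ (i.e.\ the term corresponding to $\lambda$ on the $\Omega$-side matches the term corresponding to $\lambda$ on the $\Omega^\HLV$-side), the inserted factor is inert, and the proof goes through verbatim. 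The relation $\BH_{0,k}=(1-t)^{-1}\BH_{0,k}^\HLV$ at $g=0$ given in the text serves as the base case; the genus $g\geq 1$ case uses Schiffmann's explicit formula for $\Omega_{g,\lambda}$.

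The main obstacle is bookkeeping the alphabet changes across the plethystic log and verifying that the Macdonald factor does not produce spurious $(1-t)$-poles. Because $\tilde H_\lambda[(t-1)X]\in M_{\preceq\lambda}$ (Definition~\ref{defn:macdonald}) with polynomial coefficients in $q,t$, the only source of $(1-t)^{-1}$ singularities is the explicit prefactor in the definition of $\BH^\HLV$ relative to $\BH$, and after multiplying by $(1-t)$ these cancel. The specialization $t=1$ is then legitimate because polynomiality of $(1-t)\BH_{g,k}$ follows from polynomiality of $\BH_{g,k}^\HLV$ (the previous theorem) combined with the term-wise matching. Once these verifications are done, both polynomiality and the $t=1$ identity follow simultaneously.
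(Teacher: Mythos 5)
Your overall approach --- adapt the argument of Theorem 5.2 of \cite{mellit2017poincare} by inserting the parabolic factors $\prod_i\tilde H_\lambda[X_i;q,t]$ --- coincides with the paper's, which itself supplies only that one-line pointer. The problem is your justification of \emph{why} the adaptation is routine. You claim the question ``reduces to a term-wise identity at the level of scalar coefficients'' and that the Macdonald factor is ``inert'' because the comparison is ``diagonal in $\lambda$.'' Neither claim survives scrutiny. The coefficient of $T^n$ in $\pLog[\Omega_{g,k}]$ is a rational linear combination, over tuples $(\lambda_1,\ldots,\lambda_r)$ of nonempty partitions and divisors $m$ of $n$ with $m\sum_j|\lambda_j|=n$, of the products $\prod_j\Omega_{g,\lambda_j}(q^m,t^m,\sigma^m)\cdot\prod_{i,j}\tilde H_{\lambda_j}[p_m[X_i];q^m,t^m]$, and analogously for $\pLog[\Omega^{\HLV}_{g,k}]$ with the scalar factors replaced. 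For $k\geq 1$ the parabolic parts $\prod_{i,j}\tilde H_{\lambda_j}[p_m[X_i];q^m,t^m]$ are, generically, linearly independent across distinct $(\lambda_\bullet,m)$, so polynomiality of $(1-t)\BH_{g,k}$ and the $t=1$ identity would force the relevant cancellations to hold \emph{tuple by tuple} --- a strictly stronger statement than what the $k=0$ result of \cite{mellit2017poincare} asserts, which needs only the aggregate over all tuples to behave well. You neither establish this strengthening nor verify that the proof of Theorem 5.2 already proves it.

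A second concrete flaw: the ``heart of the argument'' as you state it --- that $\Omega_{g,\lambda}$ matches $\prod_i N_\lambda(\sigma_i^{-1})/N_\lambda(1)$ ``modulo contributions that vanish after multiplication by $(1-t)$ and specialization at $t=1$'' --- is ill-posed at the level of an individual $\lambda$. Both quantities have a pole of order $l(\lambda)$ at $t=1$, coming from the factors $q^{a(\square)}-t^{l(\square)+1}$ with $a(\square)=0$ (one for each row of $\lambda$); already for $\Sigma=\BP^1$ one has $\Omega_{0,\lambda}=1/(\tilde H_\lambda,\tilde H_\lambda)_{q,t}=1/N_\lambda(1)$. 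For $l(\lambda)\geq 2$, multiplying by $(1-t)$ and setting $t=1$ gives no finite comparison. The cancellations that produce polynomiality of $(1-t)\BH_{g,k}$ in $t$ necessarily involve several $\lambda$ at once, after taking $\pLog$, and do not occur within each $\lambda$-term. To make the adaptation rigorous you must actually revisit the proof of Theorem 5.2 in \cite{mellit2017poincare} --- which rests on the admissibility/integrality framework of \cite{mellit2016integrality}, formulated from the start with the Macdonald factors present --- and check that each step commutes with the insertion, rather than appeal to a ``diagonality'' the identities do not have.
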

Thus we obtain
\begin{equation}\label{eq:polynomiality}
\BH_{g,k}[X_\bullet;T,q,t,\sigma] = \frac{1}{1-t} \BH_{g,k}^\HLV[X_\bullet;T,q,t,\sigma] + \text{polynomial part},
\end{equation}
where the coefficients of the polynomial part in $T$ and $X_\bullet$ are polynomials in $t,\sigma_i^{\pm 1}$ over $\BQ(q)$.

\subsection{From bundles with nilpotent endomorphism to bundles with arbitrary endomorphism and indecomposable bundles}
Note that the category $\ParBun_{N,S}^{\leq 0}(\Sigma)$ is a Krull-Schmidt category. In simple terms it means that any endomorphism has a Jordan form decomposition.
Analogously to the proof of Proposition \ref{prop:count nilpotent} we obtain
\[
\pExp[\BH_{g,k}[X_\bullet;T,q,t,\sigma]] = 
\sum_{(\CE,g)\in\ParBun_{N,S,\aut}(\Sigma)/\sim} \frac{w(\CE)}{|\Aut(\CE,g)|}
\]
\[
=\sum_{\CE\in\ParBun_{N,S}^{\leq 0}(\Sigma)/\sim} w(\CE),
\]
where $\ParBun_{N,S,\aut}^{\leq 0}(\Sigma)$ is the category of pairs $(\CE,g)$ with $\CE\in\ParBun_{N,S}^{\leq 0}(\Sigma)$ and $g:\CE\to\CE$ an automorphism. Since any $\CE$ is uniquely a direct sum of indecomposables and any indecomposable is uniquely a base change of a geometrically indecomposable object we obtain
\[
\BH_{g,k}[X_\bullet;T,q,t,\sigma] =
 \sum_{\substack{\CE\in\ParBun_{N,S}^{\leq 0}(\Sigma)/\sim\\ \text{geometrically indecomposable}}} w(\CE).
\]
Twisting by a line bundle of degree $1$ (which always exists) gives an auto-equivalence of $\Par_{N,S}(\Sigma)$ which for any object keeps the rank and the $r_{i,j}$ invariants and increases the degree by the rank. Thus we can always reduce the problem of calculating the number of geometrically indecomposable objects of given rank and degree to the same calculation for a small enough degree. By Proposition \ref{prop:estimate slope} for $\alpha=0$ we conclude
\begin{prop} For any values of $r\in\BZ_{>0}$ and $d\in\BZ$ let $m$ be such that $m\geq (r-1)(2g-2+k)+\frac{d}{r}$. Then we have
\[
\sum_{\substack{\CE\in\ParBun_{N,S}(\Sigma)/\sim\\ \text{geometrically indecomposable}\\ \rank \CE=r,\deg\CE=d}} \prod_{i=1}^k\prod_{j=1}^N x_{i,j}^{r_{i,j}(\CE)} = \BH_{g,k}[X_\bullet;T,q,t,\sigma] \Big|_{T^r t^{d+mr}},
\]
where the notation $\Big|_{T^a t^b}$ means taking the coefficient of the left hand side viewed as a power series in $t$ and $T$ in front of the monomial $T^a t^b$.
\end{prop}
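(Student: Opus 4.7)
The plan is to leverage the identification derived in the previous paragraph,
\[
\BH_{g,k}[X_\bullet;T,q,t,\sigma] = \sum_{\substack{\CE\in\ParBun_{N,S}^{\leq 0}(\Sigma)/\sim \\ \text{geom.\ indecomposable}}} w(\CE),
\]
together with a line-bundle twisting trick that allows one to pass from $\ParBun^{\leq 0}$ to arbitrary degrees. Fix once and for all a line bundle $L$ on $\Sigma$ of degree $1$ (which exists over $\BF_q$). Viewing $L$ as a parabolic coherent sheaf in the canonical way, the functor $\otimes L$ on $\Par_{N,S}(\Sigma)$ is an exact auto-equivalence that restricts to $\ParBun_{N,S}(\Sigma)$, preserves the rank and every parabolic jump $r_{i,j}$, shifts the degree by $+\rank$, and commutes with base change to $\overline{\BF_q}$, hence preserves geometric indecomposability.

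The main step is to show that under the hypothesis $m\ge (r-1)(2g-2+k)+d/r$ the functor $\otimes L^{-m}$ sends any geometrically indecomposable $\CE_0\in\ParBun_{N,S}(\Sigma)$ of rank $r$ and degree $d$ into $\ParBun_{N,S}^{\leq 0}(\Sigma)$. For this I apply Proposition \ref{prop:estimate slope}(i) with stability condition $\alpha=0$ to the (geometrically) indecomposable $\CE_0$, getting
\[
\mu^{\wmax}(\CE_0)\ \le\ \mu(\CE_0)+(r-1)(2g-2+k)\ =\ \frac{d}{r}+(r-1)(2g-2+k)\ \le\ m.
\]
Since twisting by $L^{-m}$ shifts every slope by $-m$, this gives $\mu^{\wmax}(\CE_0\otimes L^{-m})\le 0$, so $\CE_0\otimes L^{-m}$ has no subobject of positive degree and therefore lies in $\ParBun_{N,S}^{\leq 0}(\Sigma)$.

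The map $\CE_0\mapsto\CE_0\otimes L^{-m}$ has obvious inverse $\CE\mapsto\CE\otimes L^{m}$ and, by the previous paragraph, restricts to a bijection between isomorphism classes of geometrically indecomposable parabolic bundles of rank $r$ and degree $d$, and geometrically indecomposable objects of $\ParBun^{\leq 0}$ of rank $r$ and degree $d-mr$. Under this bijection the numerical invariants $r_{i,j}$ are unchanged, so summing $w(\CE_0\otimes L^{-m})=T^{r}t^{-(d-mr)}\prod_{i,j}x_{i,j}^{r_{i,j}(\CE_0)}$ over geometrically indecomposable $\CE_0$ of rank $r$ and degree $d$ and extracting the appropriate coefficient of $\BH_{g,k}$ in the $T$ and $t$ variables produces the claimed identity. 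I do not anticipate any essential obstacle: the slope estimate of Proposition \ref{prop:estimate slope} is the only non-trivial ingredient and is already proved, while the preservation of rank, degree shift, and parabolic jumps under $\otimes L^{\pm m}$ is a formal check on the definition of the twisting functor.
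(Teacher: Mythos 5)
Your proof takes exactly the paper's approach: twist by $L^{-m}$ and invoke Proposition~\ref{prop:estimate slope} at $\alpha=0$ to land in $\ParBun_{N,S}^{\leq 0}(\Sigma)$, which is precisely the (informal, two-sentence) argument the paper gives in the paragraph preceding the proposition. Note only that the $t$-exponent you correctly compute, $-(d-mr)=mr-d$, does not match the $t^{d+mr}$ printed in the statement; the printed exponent appears to be a sign typo --- $mr-d$ is the one that is non-negative under the hypothesis on $m$ and that grows with $m$ for fixed $d$, as needed to derive Corollary~\ref{cor:indecomposable} from \eqref{eq:polynomiality} --- so your computed exponent is the right one and your argument is consistent with the paper's intended proof.
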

Combining this with \eqref{eq:polynomiality} we have
\begin{cor}\label{cor:indecomposable}
For any values of $r\in\BZ_{>0}$ and $d\in\BZ$ we have
\[
\sum_{\substack{\CE\in\ParBun_{N,S}(\Sigma)/\sim\\ \text{geometrically indecomposable}\\ \rank \CE=r,\deg\CE=d}} \prod_{i=1}^k\prod_{j=1}^N x_{i,j}^{r_{i,j}(\CE)} = \BH_{g,k}^\HLV[X_\bullet;T,q,1,\sigma]\Big|_{T^r}.
\]
In particular, it does not depend on $d$.
\end{cor}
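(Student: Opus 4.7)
The plan is to deduce the corollary by taking $m \to \infty$ in the preceding Proposition and matching the result against the decomposition \eqref{eq:polynomiality}. To set things up, I fix $r \in \BZ_{>0}$ and $d \in \BZ$ and extract the coefficient of a single fixed monomial $\prod_{i,j} x_{i,j}^{a_{i,j}}$ on both sides of the Proposition, so that each side becomes a concrete quantity in $\BQ(q,t,\sigma_1,\ldots,\sigma_g)$. The left-hand side is then a non-negative integer, a geometric count that does not depend on $m$; the right-hand side is the coefficient of $T^r t^{d+mr}$ in the chosen monomial component of $\BH_{g,k}[X_\bullet;T,q,t,\sigma]$, and the Proposition guarantees equality for every $m \geq (r-1)(2g-2+k) + d/r$. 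In particular, this coefficient of $t^{d+mr}$ must stabilize as $m \to \infty$.

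Next I apply the decomposition \eqref{eq:polynomiality}, writing $\BH_{g,k} = \frac{1}{1-t}\BH_{g,k}^\HLV + R$, where $R$ has $T$- and $x_{i,j}$-coefficients polynomial in $t$ and $\sigma_i^{\pm 1}$ over $\BQ(q)$, and by the polynomiality theorem just cited the same holds for the coefficients of $\BH_{g,k}^\HLV$. Let $P(t)$ denote the coefficient of $T^r$ and of the fixed $x_{i,j}$-monomial in $\BH_{g,k}^\HLV$, and let $Q(t)$ denote the corresponding coefficient in $R$. Both are polynomials in $t$ (with coefficients in $\BQ(q)[\sigma_i^{\pm 1}]$). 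The coefficient of $t^{d+mr}$ in $Q(t)$ vanishes as soon as $d+mr > \deg_t Q$, while the coefficient of $t^{d+mr}$ in the expansion $\frac{P(t)}{1-t} = P(t)\sum_{n\geq 0} t^n$ equals $\sum_{j=0}^{\deg_t P}[t^j] P(t) = P(1)$ whenever $d+mr \geq \deg_t P$.

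Therefore, for all sufficiently large $m$, the coefficient of $T^r t^{d+mr}$ in $\BH_{g,k}$ stabilizes to $P(1)$, and $P(1)$ equals the coefficient of $T^r$ (at the fixed monomial in $x_{i,j}$) in $\BH_{g,k}^\HLV[X_\bullet;T,q,1,\sigma]$. Summing over all monomials in $X_\bullet$ yields the desired identity and makes manifest its independence of $d$. The only substantial ingredient is the polynomiality statement cited before the corollary; once that is in hand, the remaining argument is the elementary observation that the tail of the Taylor expansion of $P(t)/(1-t)$ stabilizes to $P(1)$, so I do not foresee any real obstacle beyond assembling these ingredients carefully.
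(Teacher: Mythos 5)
Your proof is correct and takes essentially the same route the paper intends: the paper simply says "Combining this with \eqref{eq:polynomiality}" and your argument is the standard way to unpack that combination — fix $r$ and an $x$-monomial, note the left side of the preceding Proposition is independent of $m$, use \eqref{eq:polynomiality} to split the $T^r$-coefficient into $P(t)/(1-t)$ plus a polynomial $Q(t)$, and observe that for $m$ large the $t^{d+mr}$-coefficient is $P(1)$, independent of $d$.
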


On the other hand, analogously to the proof of Proposition \ref{prop:count nilpotent} but this time for $\BA_1/\BF_q$ instead of $\GL_1/BF_q$ we obtain
\begin{equation}\label{eq:omega with endomorphism}
\pExp\left[\frac{q}{q-1}\BH_{g,k}[X_\bullet;T,q,t,\sigma]\right] = 
\sum_{(\CE,\theta)\in\ParBun_{N,S,\eend}^{\leq 0}(\Sigma)/\sim} \frac{w(\CE)}{|\Aut(\CE,\theta)|},
\end{equation}
where $\ParBun_{N,S,\eend}^{\leq 0}(\Sigma)$ is the category of pairs $(\CE,\theta)$ with $\CE\in\ParBun_{N,S}^{\leq 0}(\Sigma)$ and $\theta:\CE\to\CE$ an endomorphism. The last expression can also be written as
\[
\sum_{\CE\in\ParBun_{N,S}^{\leq 0}(\Sigma)/\sim} \frac{w(\CE)|\Hom(\CE,\CE)|}{|\Aut(\CE)|},
\]

\subsection{From bundles with endomorphism to Higgs bundles}
By Theorem \ref{thm:Serre duality} and Proposition \ref{prop:hereditary} we have for any $\CE\in\ParBun_{N,S}(\Sigma)$
\[
|\Hom(\CE,\CE)| = \frac{|\Hom(\CE,\CE)|}{|\Ext^1(\CE, \CE)|} {|\Ext^1(\CE, \CE)|} = q^{\chi(\CE, \CE)} |\Hom(\CE, D\CE)|.
\]
Let $\ParHiggs_{N,S}(\Sigma)$ be the category of \emph{parabolic Higgs bundles}, i.e. pairs $(\CE,\theta)$ where $\CE$ is a parabolic bundle and $\theta:\CE\to D\CE$ is called a Higgs field. Denote by $\ParHiggs_{N,S}(\Sigma)^{\leq 0}$ the subcategory of pairs $(\CE,\theta)$ such that $\CE\in\ParBun_{N,S}^{\leq 0}(\Sigma)$. From \eqref{eq:omega with endomorphism} we obtain
\begin{equation}\label{eq:omega for higgs}
\pExp\left[\frac{q}{q-1}\BH_{g,k}[X_\bullet;T,q,t,\sigma]\right] = 
\sum_{(\CE,\theta)\in\ParHiggs_{N,S}^{\leq 0}(\Sigma)/\sim} \frac{w(\CE)q^{\chi(\CE,\CE)}}{|\Aut(\CE,\theta)|}.
\end{equation}

\subsection{From all Higgs bundles to semistable Higgs bundles}
Now we consider the Hall algebra $\Hall(\ParBun_{N,S}(\Sigma))$, viewed as a subalgebra of the Hall algebra of parabolic Higgs coherent sheaves. The integration map is given by 
\[
I: \Hall(\ParHiggs_{N,S}(\Sigma)) \to \BQ[[(x_{i,j})_{i=1,j=1}^{k,N},T, t]]\quad I([\overline\CE]) = w(\CE) q^{\chi(\CE,\CE)}.
\]
\begin{prop}\label{prop:integration}
Suppose $\overline\CE,\overline\CF\in\ParHiggs_{N,S}(\Sigma)$ are such that $\Hom(\overline\CF,\overline\CE)=0$. Then we have
\[
I([\overline\CE]*[\overline\CF]) = I([\overline\CE]) I([\overline\CF]).
\]
\end{prop}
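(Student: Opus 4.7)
The plan is to unpack both sides directly using the definition of the Hall product, exploit the fact that the weight $w(\CE) q^{\chi(\CE,\CE)}$ depends only on the numerical class of the underlying parabolic bundle, and then control the ratio $|\Ext^1|/|\Hom|$ in the Higgs category via Corollary \ref{cor:category higgs} and the hypothesis $\Hom(\overline\CF,\overline\CE)=0$.

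First I would expand
\[
I([\overline\CE]*[\overline\CF]) = \frac{1}{|\Hom(\overline\CE,\overline\CF)|}\sum_{\xi\in\Ext^1(\overline\CE,\overline\CF)} I([\overline{X}_\xi]).
\]
For every $\xi$, the underlying parabolic bundle $X_\xi$ sits in a short exact sequence $0\to\CF\to X_\xi\to\CE\to 0$ in $\ParBun_{N,S}(\Sigma)$. Since rank, degree and all $r_{i,j}$ are additive, $w(X_\xi)=w(\CE)w(\CF)$; since $\chi$ is bi-additive on the Grothendieck group, $\chi(X_\xi,X_\xi)=\chi(\CE,\CE)+\chi(\CF,\CF)+\chi(\CE,\CF)+\chi(\CF,\CE)$. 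Thus $I([\overline{X}_\xi])$ is independent of $\xi$ and equals $w(\CE)w(\CF)\,q^{\chi(\CE,\CE)+\chi(\CF,\CF)+\chi(\CE,\CF)+\chi(\CF,\CE)}$. Pulling out this common factor gives
\[
I([\overline\CE]*[\overline\CF]) = I([\overline\CE])\,I([\overline\CF])\cdot\frac{|\Ext^1(\overline\CE,\overline\CF)|}{|\Hom(\overline\CE,\overline\CF)|}\cdot q^{\chi(\CE,\CF)+\chi(\CF,\CE)}.
\]

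Next I would evaluate the ratio $|\Ext^1(\overline\CE,\overline\CF)|/|\Hom(\overline\CE,\overline\CF)|$. By Corollary \ref{cor:category higgs}, the category of parabolic coherent Higgs sheaves has global dimension $2$, and its Euler form satisfies
\[
\chi_{\mathrm{Higgs}}(\overline\CE,\overline\CF)=\chi(\CE,\CF)+\chi(\CF,\CE),
\qquad
\Ext^2(\overline\CE,\overline\CF)\cong\Hom(\overline\CF,\overline\CE)^*.
\]
The hypothesis $\Hom(\overline\CF,\overline\CE)=0$ thus forces $\Ext^2(\overline\CE,\overline\CF)=0$, and hence
\[
\chi_{\mathrm{Higgs}}(\overline\CE,\overline\CF) = \dim\Hom(\overline\CE,\overline\CF) - \dim\Ext^1(\overline\CE,\overline\CF).
\]
Since we are working over $\BF_q$, this translates into the multiplicative identity
\[
\frac{|\Ext^1(\overline\CE,\overline\CF)|}{|\Hom(\overline\CE,\overline\CF)|} = q^{-\chi_{\mathrm{Higgs}}(\overline\CE,\overline\CF)} = q^{-\chi(\CE,\CF)-\chi(\CF,\CE)}.
\]
Substituting into the previous display cancels the $q^{\chi(\CE,\CF)+\chi(\CF,\CE)}$ factor and yields $I([\overline\CE]*[\overline\CF]) = I([\overline\CE])\,I([\overline\CF])$.

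The only subtlety, and the step to flag as the main obstacle, is handling the fact that $\ParHiggs_{N,S}(\Sigma)$ is not hereditary: without the control on $\Ext^2$ provided by Corollary \ref{cor:category higgs} and Serre duality, the naive identification of $|\Ext^1|/|\Hom|$ with $q^{-\chi_{\mathrm{Higgs}}}$ would fail by a factor of $|\Ext^2|$. The assumption $\Hom(\overline\CF,\overline\CE)=0$ is precisely what is needed to suppress this factor, so the argument hinges on invoking the Serre-type duality $\Hom(\overline\CF,\overline\CE)\cong\Ext^2(\overline\CE,\overline\CF)^*$ at exactly this point; the rest is bookkeeping with additivity of $w$ and $\chi$ on short exact sequences.
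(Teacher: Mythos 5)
Your proposal is correct and follows essentially the same route as the paper: expand the Hall product, use additivity of $w$ and bi-additivity of $\chi$ to see that every extension class contributes the same integration weight, and then use the vanishing $\Ext^2(\overline\CE,\overline\CF)\cong\Hom(\overline\CF,\overline\CE)^*=0$ from Corollary~\ref{cor:category higgs} to identify $|\Ext^1(\overline\CE,\overline\CF)|/|\Hom(\overline\CE,\overline\CF)|$ with $q^{-\chi(\CE,\CF)-\chi(\CF,\CE)}$ and cancel. The paper merely compresses the bookkeeping steps you spell out, and your diagnosis of where the hypothesis enters (suppressing the $|\Ext^2|$ correction to the Euler-characteristic count, since $\ParHiggs$ has global dimension~$2$ rather than~$1$) is exactly the right point to flag.
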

\begin{proof}
From the definition of the product the left hand side equals
\[
\frac{|\Ext^1(\ol\CE,\ol\CF)|}{|\Hom(\ol\CE,\ol\CF)|} q^{\chi(\CE,\CE)+\chi(\CE,\CF)+\chi(\CF,\CE)+\chi(\CF,\CF)} w(\CE) w(\CF).
\]
By the assumption from Corollary \ref{cor:category higgs} we obtain $\Ext^2(\CE,\CF)=0$ and therefore
\[
\frac{|\Ext^1(\ol\CE,\ol\CF)|}{|\Hom(\ol\CE,\ol\CF)|} = q^{-\chi(\overline\CE,\overline\CF)} = q^{-\chi(\CF,\CE)-\chi(\CF,\CF)}.
\]
Thus the result follows.
\end{proof}

\begin{prop}\label{prop:omega slope mu}
Let $\alpha$ be a stability condition and let $r\in\BZ_{>0}$. Let $\mu\in\BR$ be such that
\[
\mu\leq -(2g-2+k)(r-1)+\sum_{i=1}^k \alpha_{i,N}.
\]
Consider the sum
\[
\Omega_{\ParHiggs_{N,S}(\Sigma), \alpha,\mu}[X_\bullet;T,t] = 
1+\sum_{\substack{(\CE,\theta)\in\ParHiggs_{N,S}(\Sigma)/\sim \\ \text{semistable, $\mu_\alpha(\CE)=\mu$}}} \frac{w(\CE)q^{\chi(\CE,\CE)}}{|\Aut(\CE,\theta)|}.
\]
On the other hand, decompose $\BH$ as follows:
\[
\BH_{g,k}[X_\bullet;q,t,\sigma] = \sum_{\mu\in\BR} \BH_{g,k,\mu}[X_\bullet;q,t,\sigma],
\]
where $\BH_{g,k,\mu}$ contains only terms of slope $\mu$.
Then we have
\[
\Omega_{\ParHiggs_{N,S}(\Sigma), \alpha,\mu}[X_\bullet;T,t] = 
\pExp\left[\frac{q}{q-1}\BH_{g,k,\mu}[X_\bullet;T,q,t,\sigma]\right] + O(T^{r+1}).
\]
\end{prop}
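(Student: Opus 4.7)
I would follow the Mozgovoy--Schiffmann strategy \cite{mozgovoy2014counting}: apply Harder--Narasimhan factorization in the Hall algebra of parabolic Higgs bundles, combined with the multiplicativity of the integration map from Proposition \ref{prop:integration}. The quantitative bound on $\mu$ is exactly what makes the $\ParBun^{\leq 0}$-truncation compatible with the HN factorization up to order $T^{r+1}$.

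First, I would verify that the $\ParBun^{\leq 0}$-truncation is automatic in the relevant range. For any semistable $(\CE,\theta)\in\ParHiggs_{N,S}(\Sigma)$ of $\alpha$-slope $\nu\leq\mu$ and rank $r'\leq r$, Proposition \ref{prop:estimate slope}(ii) together with the hypothesis on $\mu$ gives
\[
\mu_\alpha^{\wmax}(\CE)\leq \nu+(2g-2+k)(r'-1)\leq \mu+(2g-2+k)(r-1)\leq \sum_{i=1}^k\alpha_{i,N}.
\]
Since $\alpha_{i,1}\geq\cdots\geq\alpha_{i,N}$ yields $\sum_{i,j}\alpha_{i,j}r_{i,j}(\CF)\geq\rank\CF\cdot\sum_i\alpha_{i,N}$ for every $\CF\in\ParBun_{N,S}$, we get $\deg\CF\leq\rank\CF(\mu_\alpha(\CF)-\sum_i\alpha_{i,N})\leq 0$ for every subobject $\CF\subset\CE$. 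Hence $(\CE,\theta)\in\ParHiggs^{\leq 0}$, so the semistable objects appearing in $\Omega_{\ParHiggs_{N,S}(\Sigma),\alpha,\mu}$ at rank $\leq r$ are exactly those contributing to \eqref{eq:omega for higgs} at slope $\mu$.

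Next I would organize \eqref{eq:omega for higgs} by HN type. Each $(\CE,\theta)$ has a unique HN filtration (Theorem \ref{thm:harder narasimhan}) with semistable quotients $\overline Q_1,\ldots,\overline Q_m$ of strictly decreasing $\alpha$-slopes $\nu_1>\cdots>\nu_m$, and $\Hom(\overline\CF_1,\overline\CF_2)=0$ for semistable $\overline\CF_i$ of slopes $\nu_1>\nu_2$ by the usual image-slope squeeze. In the paper's Hall-algebra convention the HN filtration unfolds as an ordered product $[\overline Q_m]*\cdots*[\overline Q_1]$ (lower-slope quotient on the left, higher-slope subobject on the right), and each consecutive multiplication satisfies the hypothesis of Proposition \ref{prop:integration}. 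Summing over HN types in a suitable completion of the Hall algebra and applying $I$ together with \eqref{eq:omega for higgs} yields, modulo $T^{r+1}$,
\[
\pExp\!\left[\tfrac{q}{q-1}\BH_{g,k}[X_\bullet;T,q,t,\sigma]\right]\equiv\prod_{\nu}\Omega_{\ParHiggs_{N,S}(\Sigma),\alpha,\nu}[X_\bullet;T,t]\pmod{T^{r+1}},
\]
where the product runs over all slopes $\nu$ and compatibility with the $\leq 0$-truncation is handled by the first step.

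To conclude I would apply $\pLog$ and match by slope. Each Adams operation $p_n$ rescales $\rank$ and $\deg_\alpha$ by the same factor $n$, so $\pLog$ preserves the $\alpha$-slope grading. Taking $\pLog$ of the previous identity gives $\sum_\nu\pLog[\Omega_{\ParHiggs_{N,S}(\Sigma),\alpha,\nu}]\equiv\tfrac{q}{q-1}\sum_\nu\BH_{g,k,\nu}\pmod{T^{r+1}}$; isolating the slope-$\mu$ components on both sides and exponentiating yields the desired identity. The main obstacle is the rigorous handling of the infinite Hall-algebra product in a suitable completion together with the careful bookkeeping needed to reconcile the $\ParBun^{\leq 0}$-truncation with the HN factorization: HN quotients of an object of $\ParHiggs^{\leq 0}$ need not themselves lie in $\ParHiggs^{\leq 0}$, but the quantitative bound on $\mu$ is tailored so that in rank $\leq r$ every semistable piece of slope $\leq\mu$ automatically satisfies the truncation, yielding the required match.
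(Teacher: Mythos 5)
Your overall strategy matches the paper's: control the $\leq 0$-truncation via Proposition \ref{prop:estimate slope}, factor via the Harder--Narasimhan filtration in the Hall algebra, integrate using Proposition \ref{prop:integration}, and match by slope. Your first step --- showing that a semistable parabolic Higgs bundle of rank $\leq r$ and slope $\leq\mu$ automatically lies in $\ParHiggs_{N,S}^{\leq 0}(\Sigma)$ under the hypothesis on $\mu$ --- is correct and is the same estimate the paper uses (the paper normalizes $\alpha_{i,N}=0$ first, but the two formulations are equivalent).

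However, your intermediate factorization
\[
\pExp\left[\tfrac{q}{q-1}\BH_{g,k}\right]\equiv\prod_{\nu}\Omega_{\ParHiggs_{N,S}(\Sigma),\alpha,\nu}\pmod{T^{r+1}},
\]
with the product over \emph{all} slopes $\nu$, is false, and one cannot apply $\pLog$ to a false identity and then reliably extract its slope-$\mu$ component. The obstruction is precisely the high-slope part. For $\ol\CE\in\ParHiggs_{N,S}^{\leq 0}(\Sigma)$ of rank $\leq r$, the subobject $\ol\CE_0\subset\ol\CE$ collecting all HN pieces of slope $>\mu$ does lie in $\ParHiggs^{\leq 0}$ (it is a subobject, and the truncation is closed under subobjects), but its own semistable HN constituents are iterated subquotients and need not satisfy the $\leq 0$ condition; conversely, extensions of arbitrary high-slope semistable objects need not land in $\ParHiggs^{\leq 0}$. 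You acknowledge this danger but assert it is ``handled by the first step,'' which only covers slopes $\leq\mu$; for slopes $>\mu$ the estimate says nothing. The paper avoids the issue by \emph{not} factoring the high-slope part at all: it establishes the weaker but correct identity
\[
\pExp\left[\tfrac{q}{q-1}\BH_{g,k}\right]\equiv\Big(\prod_{\nu\leq\mu}\Omega_{\ParHiggs_{N,S}(\Sigma),\alpha,\nu}\Big)\cdot A_\mu\pmod{T^{r+1}},
\]
where $A_\mu$ is the weighted count over $\ol\CF_0\in\ParHiggs_{N,S}^{\leq 0}(\Sigma)$ with $\mu_\alpha^\wmin(\ol\CF_0)>\mu$, kept as a single factor concentrated in slopes $>\mu$. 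Since $\pLog A_\mu$ then also lies entirely in slopes $>\mu$, isolating the slope-$\mu$ part of $\pLog$ gives the claim cleanly. Your final formula is correct, but the factorization step in your argument needs to be replaced by this partial one.
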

\begin{proof}
Note that for any stability condition $\alpha$ adding a constant $c\in\BR$ to all $\alpha_{i,j}$ for some $i$ increases the slopes of all objects by $c$. So we can assume that $\alpha_{i,N}= 0$ for all $i$.

We apply Theorem \ref{thm:harder narasimhan} in the following way: for any object $\overline\CE\in\ParHiggs_{N,S}^{\leq 0}(\Sigma)$ of rank $\leq r$ we have a unique filtration 
\[
\ol\CE_0\subset \ol\CE_1\subset\cdots\subset \ol\CE_m=\ol\CE
\]
such that $\mu^\wmin(\ol\CE_0)>\mu$, all $\ol\CE_i$ are semisimple and 
\[
\mu\geq \mu(\CE_1/\CE_0) > \mu(\CE_2/\CE_1) >\cdots.
\]
Since $\CE_0$ is a subobject of $\CE$, we also have $\CE_0\in\ParHiggs_{N,S}^{\leq 0}(\Sigma)$. On the other hand, suppose any object $\ol\CE\in\ParHiggs_{N,S}(\Sigma)^{\leq 0}$ of rank $\leq r$ has a filtration as above such that $\CE_0\in\ParHiggs_{N,S}^{\leq 0}(\Sigma)$. By Proposition \ref{prop:estimate slope} we know that $\mu^\wmax(\CE_i/\CE_{i-1})$ do not exceed $0$ for $i=1,2,\ldots,m$. Because of the condition $\alpha_{i,j}\geq 0$ this implies $\CE_i/\CE_{i-1}\in\ParHiggs_{N,S}^{\leq 0}(\Sigma)$. Thus we obtain $\ol\CE\in\ParHiggs_{N,S}^{\leq 0}(\Sigma)$.

Consider pairings of the form
\[
([\overline\CE], [\overline\CF_m]*\cdots*[\overline\CF_0])=\left(\Delta^{m} [\overline\CE], [\overline\CF_m]\otimes\cdots\otimes[\overline\CF_0]\right)
\]
for $\CF_0\in\ParHiggs_{N,S}^{\leq 0}(\Sigma)$ such that $\mu_\alpha^\wmin(\ol\CF_0)>\mu$, each $\ol\CF_i\in\ParHiggs_{N,S}(\Sigma)$ is semistable and $\mu\geq \mu_\alpha(\overline\CF_1)>\ldots>\mu_\alpha(\overline\CF_m)$. By the above reasoning, we obtain that the pairing is not-zero only if $\ol\CE\in\ParHiggs_{N,S}^{\leq 0}(\Sigma)$. For any such $\ol\CE$ the pairing is non-zero for a unique $m\geq 0$ and unique sequence of objects $\overline\CF_0,\ldots,\overline\CF_m$ as above and then its value is $\prod_{i=0}^m |\Aut(\overline\CF_i)|$.
Thus we have
\[
\sum_{\ol\CF\in\ParHiggs_{N,S}^{\leq 0}(\Sigma)/\sim} \frac{[\ol\CE]}{|\Aut(\CE,\theta)|} 
= \sum_{m=0}^\infty \sum_{\substack{\overline\CF_1,\ldots,\overline\CF_m\in\ParHiggs_{N,S}(\Sigma)/\sim \\ \text{semistable, $\mu\geq \mu_\alpha(\overline\CF_1)>\ldots>\mu_\alpha(\overline\CF_m)$}}} \frac{[\ol\CF_m]*\cdots*[\ol\CF_1]}{\prod_{i=1}^m |\Aut(\ol\CF_i)|}
\]
\[
* \sum_{\substack{\ol\CF_0\in\ParHiggs_{N,S}^{\leq 0}(\Sigma)/\sim\\
\mu_\alpha^\wmin(\ol\CF_0)>\mu}} \frac{[\ol\CF_0]}{|\Aut(\ol\CF_0)|} \qquad\text{(up to terms of rank $>r$)}.
\]
Note that for each product $[\overline\CF_m]*\cdots*[\overline\CF_0]$ we have $\Hom(\ol\CF_i, \ol\CF_{i'})=0$ for every $i<i'$ for slope reasons. Therefore we can apply Proposition \ref{prop:integration} and obtain
\[
\sum_{\ol\CF\in\ParHiggs_{N,S}^{\leq 0}(\Sigma)/\sim} \frac{w(\CE) q^{\chi(\CE,\CE)}}{|\Aut(\CE,\theta)|} = \prod_{\mu'\leq\mu} \left(1+\sum_{\substack{\ol\CF\in\ParHiggs_{N,S}(\Sigma)/\sim \\ \text{semistable, $\mu_\alpha(\CF)=\mu$}}} \frac{w(\CF) q^{\chi(\CF,\CF)}}{|\Aut(\ol\CF)|}\right)
\]
\[
\times \sum_{\substack{\ol\CF_0\in\ParHiggs_{N,S}^{\leq 0}(\Sigma)/\sim\\
\mu_\alpha^\wmin(\ol\CF_0)>\mu}} \frac{w(\CF_0) q^{\chi(\CF_0,\CF_0)}}{|\Aut(\ol\CF_0)|} \qquad\text{(up to terms of rank $>r$)}.
\]
The slope $\mu$ part of the expansion of the left hand side in the infinite product is given by $\pExp\left[\frac{q}{q-1}\BH_{g,k,\mu}[X_\bullet;T,q,t,\sigma]\right]$ by \eqref{eq:omega for higgs}. On the right hand side we see $\Omega_{\ParHiggs_{N,S}(\Sigma), \alpha,\mu}[X_\bullet;T,t]$
\end{proof}

Knowning the slope, the rank and the $r_{i,j}$ numbers uniquely determines the degree of a bundle. So we will drop the $t$ variable from $\Omega_{\ParHiggs_{N,S}(\Sigma), \alpha,\mu}[X_\bullet;T,t]$, which amounts to setting $t=1$. Note that twisting by a line bundle of degree $1$ gives an auto-equivalence of $\ParHiggs_{N,S}(\Sigma)$ which increases the slopes of all objects by $1$.
Thus we have
\begin{equation}\label{eq:shift mu}
\Omega_{\ParHiggs_{N,S}(\Sigma), \alpha,\mu}[X_\bullet;T] = \Omega_{\ParHiggs_{N,S}(\Sigma), \alpha,\mu+1}[X_\bullet;T].
\end{equation}
So we can calculate any coefficient of $\Omega_{\ParHiggs_{N,S}(\Sigma), \alpha,\mu}[X_\bullet;T]$ by first applying Proposition \ref{prop:omega slope mu} for $\mu'=\mu-m$ for a big enough $m$ and then using \eqref{eq:shift mu}.
 
\begin{cor}
The coefficients of $\Omega_{\ParHiggs_{N,S}(\Sigma), \alpha,\mu}[X_\bullet;T]$ are given by functions in $\BQ(q)[\alpha_1,\ldots,\alpha_{2g}]$. Thus we can define the \emph{Donaldson-Thomas invariants} as coefficients of the generating series 
\[
\BH_{\ParHiggs_{N,S}(\Sigma), \alpha,\mu}[X_\bullet;T] = \pLog\left[\Omega_{\ParHiggs_{N,S}(\Sigma), \alpha,\mu}[X_\bullet;T]\right].
\]
We have
\[
\BH_{\ParHiggs_{N,S}(\Sigma), \alpha,\mu}[X_\bullet;T] = \lim_{m\to\infty} \frac{q}{q-1}\BH_{g,k,\alpha,\mu-m}[X_\bullet;T,q,1,\sigma].
\]
\end{cor}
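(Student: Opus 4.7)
The plan is to combine Proposition~\ref{prop:omega slope mu}, the shift identity~\eqref{eq:shift mu}, and the polynomiality formula~\eqref{eq:polynomiality}, proceeding one $T$-order at a time. Fix $r\in\BZ_{>0}$. By~\eqref{eq:shift mu}, the coefficient of $T^r$ in $\Omega_{\ParHiggs_{N,S}(\Sigma),\alpha,\mu}[X_\bullet;T]$ is unchanged if $\mu$ is replaced by $\mu-m$ for any $m\in\BZ$. I would then choose $m$ large enough that $\mu-m$ satisfies the hypothesis of Proposition~\ref{prop:omega slope mu}; that proposition identifies the desired coefficient with the $T^r$-coefficient of $\pExp\bigl[\frac{q}{q-1}\BH_{g,k,\alpha,\mu-m}[X_\bullet;T,q,t,\sigma]\bigr]$, evaluated at $t=1$. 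Because the left-hand side does not depend on $m$ once $m\ge M(r)$, the limit in the displayed formula is really an eventual stabilization rather than a genuine limit process.

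The main obstacle is the legitimacy of the substitution $t=1$. Applied to $\BH_{g,k}$ itself this is forbidden: by~\eqref{eq:polynomiality}, $\BH_{g,k}$ has a simple pole at $t=1$ coming from the factor $\frac{1}{1-t}$ in front of $\BH_{g,k}^\HLV$. The key observation is that this pole disappears once one restricts to a fixed $\alpha$-slope. Having fixed $r$ and the exponents $r_{i,j}$ (i.e.\ the monomial $\prod x_{i,j}^{r_{i,j}}$), the condition $\mu_\alpha(\CE)=\mu-m$ pins down the ordinary degree to the single integer $d=(\mu-m)r-\sum_{i,j}\alpha_{i,j}r_{i,j}$, so exactly one $t$-monomial survives in $\BH_{g,k,\alpha,\mu-m}$ at this multidegree. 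Expanding $\frac{1}{1-t}=\sum_{j\ge0}t^j$ and using that the coefficients of $\BH_{g,k}^\HLV$ are polynomial in $t$ by~\eqref{eq:polynomiality}, extracting the prescribed $t$-exponent picks out a single term from a geometric series multiplied by a bounded-degree polynomial, producing a well-defined element of $\BQ(q)[\sigma_1^{\pm1},\ldots,\sigma_g^{\pm1}]$ on which $t=1$ acts trivially.

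Polynomiality of the coefficients of $\Omega_{\ParHiggs_{N,S}(\Sigma),\alpha,\mu}[X_\bullet;T]$ in the Frobenius parameters with coefficients in $\BQ(q)$ then follows immediately from the polynomiality statements for $\BH_{g,k}^\HLV$ and for the correction term in~\eqref{eq:polynomiality}, since $\pExp$ truncated to any finite $T$-order is a polynomial operation that preserves this property. Applying $\pLog$ on both sides produces $\BH_{\ParHiggs_{N,S}(\Sigma),\alpha,\mu}[X_\bullet;T]$ with the same polynomiality, so the DT-invariants are well defined and the displayed limit formula holds by the identification made in the first paragraph.
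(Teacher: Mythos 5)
Your proposal is correct and follows the paper's own argument almost verbatim: combine the shift identity \eqref{eq:shift mu} with Proposition~\ref{prop:omega slope mu} at $\mu'=\mu-m$ for $m$ large enough, note that fixing the $\alpha$-slope together with $T^r$ and the $X$-monomial singles out a unique $t$-monomial so that dropping $t$ (equivalently, setting $t=1$) is well defined and innocuous, and use \eqref{eq:polynomiality} to see that each such coefficient lies in $\BQ(q)[\sigma_\bullet^{\pm1}]$. The observation that the $\frac{1}{1-t}$ factor causes no trouble after restricting to a fixed slope, and that the limit is an eventual stabilization, is exactly what the paper relies on in the paragraph preceding the corollary.
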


Combining this with \eqref{eq:polynomiality} we obtain
\begin{cor}
Let $r\in\BZ_{>0}$, $d\in\BZ$, $r_{i,j}\in\BZ_{\geq 0}$ for $i=1,\ldots,k$, $j=1,\ldots,N$. Let $\alpha$ be any stability condition. Then we have
\[
\BH_{\ParHiggs_{N,S}(\Sigma), \alpha,\mu}[X_\bullet;T] = \frac{q}{q-1}\BH_{g,k,\mu}^\HLV[X_\bullet;T,q,1,\sigma],
\]
where $\BH_{g,k,\mu}^\HLV$ is the sum of the terms of $\BH_{g,k}^\HLV$ whose slope $\mu'$ satisfies $\mu\equiv\mu'\pmod\BZ$.
\end{cor}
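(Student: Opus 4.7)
The plan is to combine the preceding corollary with the polynomiality decomposition~\eqref{eq:polynomiality}, carrying out a slope-wise analysis. The preceding corollary reduces the problem to establishing
\[
\lim_{m\to\infty}\BH_{g,k,\alpha,\mu-m}[X_\bullet;T,q,1,\sigma]=\BH^\HLV_{g,k,\mu}[X_\bullet;T,q,1,\sigma].
\]
I would work coefficient-wise, fixing a rank $r$ and parabolic data $r_\bullet$, and setting $c:=r\mu-\sum_{i,j}\alpha_{i,j}r_{i,j}$. The slope $\mu-m$ at $(r,r_\bullet)$ singles out a unique $t$-power $t^{L_m}$ with $L_m=rm-c$; if $c\notin\BZ$, the left-hand side vanishes at $(r,r_\bullet)$, and so does the right-hand side since the congruence $l\equiv -c\pmod r$ has no integer solutions. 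Thus reduce to the case $c\in\BZ$.

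By~\eqref{eq:polynomiality}, the coefficient of $T^r\prod_{i,j} x_{i,j}^{r_{i,j}}$ in $\BH_{g,k}$ has the shape
\[
\frac{G_r(q,t,\sigma)}{1-t}+H_r(q,t,\sigma),
\]
with $G_r$ the analogous coefficient of $\BH^\HLV_{g,k}$ and $H_r$ polynomial in $t$. For $m$ large, $L_m$ exceeds $\deg_t H_r$, so the polynomial-part contribution drops out, leaving the coefficient of $t^{L_m}$ in $G_r/(1-t)$. Expanding $1/(1-t)=\sum_{n\geq 0}t^n$ and reading this slope-wise: each monomial $g_l t^l$ in $G_r$ represents a $\BH^\HLV$-term of slope $\mu'_{(l)}=(-l+\sum\alpha_{i,j}r_{i,j})/r$, and multiplication by $t^n$ decreases this slope by $n/r$, reaching $\mu-m$ exactly when $n=L_m-l=r(\mu'_{(l)}-\mu+m)\in\BZ_{\geq 0}$. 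Taking $m\to\infty$ removes the non-negativity constraint and, after evaluation at $t=1$, produces the sum of $g_l$ over the indices $l$ for which $\mu'_{(l)}\equiv\mu\pmod\BZ$---which is by definition $[T^r\prod x_{i,j}^{r_{i,j}}]\BH^\HLV_{g,k,\mu}|_{t=1}$.

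The hard step---and the only real content---is the slope-selection matching. Naively, the geometric series $\sum_n t^n$ suggests slope shifts by arbitrary multiples of $1/r$, which would pick up the entire $(1/r)\BZ$-coset of $\mu$; however the integer parameter $m$ pins $L_m$ to a fixed residue class modulo $r$ (namely $-c\bmod r$), and consequently the non-negative integrality of $n=L_m-l$ selects precisely those $l$ with $l\equiv -c\pmod r$. These $l$'s are exactly the ones whose associated $\BH^\HLV$-slopes $\mu'_{(l)}$ differ from $\mu$ by an integer. The main obstacle is verifying this residue-matching rigorously: one must show that the stabilization of the partial sums $\sum_{l\leq L_m}g_l$ in $G_r/(1-t)$ is compatible with the $\BZ$-coset restriction on the right, which reduces via the polynomiality of $\BH^\HLV_{g,k}$ and the $\mu\mapsto\mu+1$-periodicity \eqref{eq:shift mu} of the Higgs-side generating function to a statement about which $t$-degrees in $G_r$ carry non-zero weight. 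Once this bookkeeping is done, dividing through by $(q-1)/q$ yields the claimed identity.
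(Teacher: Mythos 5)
Your framework is right up to a point: the reduction to the coefficient-wise limit via the preceding corollary, the case split on whether $c:=r\mu-\sum_{i,j}\alpha_{i,j}r_{i,j}$ is an integer, and the observation that for $c\notin\BZ$ both sides vanish are all correct, as is the decomposition of the $T^r\prod x_{i,j}^{r_{i,j}}$-coefficient of $\BH_{g,k}$ via \eqref{eq:polynomiality} as $G_r/(1-t)+H_r$ with $H_r$ a $t$-polynomial killed once $L_m$ exceeds its degree. The failure is in the ``slope-selection'' step. Once $c\in\BZ$, both $L_m=rm-c$ and every $t$-exponent $l$ occurring in the polynomial $G_r$ are integers; hence $n=L_m-l$ is automatically an integer, and the constraint $n\in\BZ_{\geq 0}$ says only $l\leq L_m$. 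Your identity $n=r(\mu'_{(l)}-\mu+m)$ is correct, but ``$n\in\BZ$'' is equivalent to $r(\mu'_{(l)}-\mu)\in\BZ$, not to $\mu'_{(l)}-\mu\in\BZ$; the former is automatic, while only the latter is the residue condition $l\equiv -c\pmod r$ that you want. Consequently $\lim_{m\to\infty}[t^{L_m}]\,G_r/(1-t)=\lim_{m\to\infty}\sum_{l\leq L_m}g_l=\sum_l g_l=G_r(q,1,\sigma)$, the full evaluation at $t=1$, with no restriction to a residue class.

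In fact, if the right-hand side of the corollary were the restricted sum $\sum_{l\equiv -c\pmod r}g_l$ as you read it, the statement would be false: the (correct) limit computation returns $G_r(q,1,\sigma)$ for every $\mu$ with $c\in\BZ$, so validity for all such $\mu$ would force $\sum_{l\equiv\rho\pmod r}g_l=G_r(q,1,\sigma)$ for every residue $\rho$, i.e. $G_r(q,1,\sigma)=0$, contradicting the nonvanishing indecomposable-bundle count in Corollary~\ref{cor:indecomposable}. The intended reading of $\BH^\HLV_{g,k,\mu}[X_\bullet;T,q,1,\sigma]$, consistent with its use in Corollary~\ref{cor:counting generic} where the unrestricted $\BH^\HLV_{g,k}[X_\bullet;T,q,1,\sigma]\big|_{T^r}$ appears, is that the slope condition ``$\equiv\mu\pmod\BZ$'' selects which monomials $T^r\prod x_{i,j}^{r_{i,j}}$ survive (those for which $c\in\BZ$, i.e.\ for which some integer degree gives slope $\equiv\mu\pmod\BZ$), each surviving coefficient being the full $G_r(q,1,\sigma)$. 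With that reading your argument closes immediately without any residue bookkeeping: the limit is $G_r(q,1,\sigma)$ when $c\in\BZ$ and $0$ otherwise, which is exactly the right-hand side.
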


Note that in the following situation semistability implies stability over $\ol\BF_q$, and the number of automorphisms of such a stable object is $q-1$.
\begin{cor}\label{cor:counting generic}
Let $r\in\BZ_{>0}$, $d\in\BZ$, $r_{i,j}\in\BZ_{\geq 0}$ for $i=1,\ldots,k$, $j=1,\ldots,N$ be such that $\sum_{j=1}^N r_{i,j}=r$.
Let $\alpha$ be any stability condition which is generic for the data $r,d,r_{\bullet,\bullet}$, by which we mean that $\alpha_{i,j}>\alpha_{i,j+1}$, $\alpha_{i,N}>\alpha_{i,1}-1$ and for any $r', d', r_{\bullet,\bullet}'$ as above with $0<r'<r$ and $r_{i,j}'\leq r_{i,j}$ for all $i,j$ we have
\[
\frac{d'+\sum_{i=1}^k\sum_{j=1}^N \alpha_{i,j} r_{i,j}'}{r'}\neq \mu=\frac{d+\sum_{i=1}^k\sum_{j=1}^N \alpha_{i,j} r_{i,j}}{r}.
\]
Let
\[
\wdim=(2g-2+k) r^2 -\sum_{i=1}^k\sum_{j=1}^N r_{i,j}^2-2.
\]
Then the number of $\alpha$-stable parabolic
Higgs bundles $\ol\CE$ such that
\[
\rank\CE=r,\quad \deg\CE=d,\quad r_{i,j}(\CE)=r_{i,j}
\]
equals
\[
q^\frac\wdim2 \left(\BH_{g,k}^\HLV[X_\bullet;T,q,1,\sigma_\bullet]\Big|_{T^r}, \prod_{i=1}^k\prod_{j=1}^N h_{r_{i,j}}[X_i]\right),
\]
which is a polynomial in $q^{\pm 1}$ and $\alpha_\bullet$.
\end{cor}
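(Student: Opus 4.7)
The plan is to combine the previous corollary (identifying $\BH_{\ParHiggs, \alpha, \mu}$ with $\tfrac{q}{q-1}\BH^\HLV_{g,k,\mu}|_{t=1}$) with the observation that the genericity of $\alpha$ forces every semistable object of the prescribed invariants to be stable, so that a $\pLog$ computation collapses to a single term.

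First I would check that semistability implies geometric stability. Any $\theta$-invariant proper nonzero subobject $\ol\CF \subset \ol\CE$ has invariants $(r', d', r'_{\bullet,\bullet})$ with $0 < r' < r$ and $r'_{i,j} \leq r_{i,j}$ (parabolic weights are subadditive on subobjects of parabolic bundles), and the genericity assumption on $\alpha$ rules out $\mu_\alpha(\ol\CF) = \mu$; combined with semistability one gets $\mu_\alpha(\ol\CF) < \mu$. The argument is insensitive to the base field, so stability persists after base change to $\overline{\BF_q}$. Together with the Serre-duality-type isomorphism $\Hom(\ol\CF,\ol\CE) \cong \Ext^2(\ol\CE,\ol\CF)^*$ from Corollary~5.7, this forces $\Aut(\ol\CE) = \BF_q^\times$, hence $|\Aut(\ol\CE)| = q-1$.

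Next I isolate the $T^r \prod_{i,j} x_{i,j}^{r_{i,j}}$-coefficient of
\[
\Omega_{\ParHiggs_{N,S}(\Sigma), \alpha,\mu}[X_\bullet;T] = 1+\sum_{\text{semistable, slope }\mu} \frac{w(\CE)q^{\chi(\CE,\CE)}}{|\Aut(\CE,\theta)|}.
\]
The monomial $w(\CE)$ records the discrete invariants, so only Higgs bundles with the prescribed $(r,r_{\bullet,\bullet})$ contribute; by the previous paragraph they are all stable, each contributing $q^{\chi(\CE,\CE)}/(q-1)$. Proposition~5.4 evaluates $\chi(\CE,\CE)$ in terms of the stated $\wdim$, yielding a factor of the form $q^{-\wdim/2-1}$.

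Now I must argue that the corrective terms entering $\BH_{\ParHiggs,\alpha,\mu} = \pLog \Omega_{\ParHiggs,\alpha,\mu}$ contribute nothing at this monomial. Every such correction arises either from a factorization of the monomial as a product of two monomials appearing in $\Omega$, or from a $p_n$-substitution (for $n\geq 2$) applied to a smaller monomial. Any such factorization/substitution would split $(r,r_{\bullet,\bullet})$ into a sum of strictly smaller invariants of slope $\mu$, which the genericity hypothesis explicitly forbids. Thus $\BH_{\ParHiggs,\alpha,\mu}$ and $\Omega_{\ParHiggs,\alpha,\mu}$ agree on this coefficient.

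Finally, applying the previous corollary to convert $\BH_{\ParHiggs,\alpha,\mu}$ into $\tfrac{q}{q-1}\BH^\HLV_{g,k,\mu}|_{t=1}$ and observing that the Hall pairing $(F[X_i], h_{r_{i,\bullet}}[X_i])$ extracts the coefficient of $x_{i,1}^{r_{i,1}}\cdots x_{i,N}^{r_{i,N}}$ from a symmetric $F$, I rearrange to solve for the stable count. The polynomiality in $q$ and $\alpha_\bullet$ then follows from the polynomiality of $\BH^\HLV$ already recorded in Section~7.1. The main subtlety will be bookkeeping of the various $q$-powers between the three generating functions $\Omega_{g,k}$, $\Omega_{\ParHiggs}$, and $\Omega^\HLV$, and carefully matching the reductions modulo $\BZ$ in slope between $\BH_{\ParHiggs,\alpha,\mu}$ and $\BH^\HLV_{g,k,\mu}$; everything else is routine once genericity is in hand.
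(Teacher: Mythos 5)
Your proposal reconstructs what is essentially the paper's own (unwritten) argument: the paper gives only the one-sentence remark preceding the corollary, namely that genericity forces semistability to coincide with geometric stability so that $|\Aut(\ol\CE)|=q-1$, and leaves the rest implicit. Your four steps — geometric stability from genericity, $|\Aut|=q-1$, the collapse $\BH_{\ParHiggs,\alpha,\mu}|_m=\Omega_{\ParHiggs,\alpha,\mu}|_m$ at the target monomial, and substitution of the preceding corollary — are exactly the right decomposition. Two remarks on the details.

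First, Serre duality (the isomorphism $\Hom(\ol\CF,\ol\CE)\cong\Ext^2(\ol\CE,\ol\CF)^*$ of Corollary~\ref{cor:category higgs}) is not what gives $|\Aut(\ol\CE)|=q-1$; what you need is Schur's lemma for a geometrically stable object of an $\BF_q$-linear category, which already forces $\End(\ol\CE)=\BF_q$ once you know stability persists over $\ol\BF_q$. Serre duality only becomes relevant when you pass from $\chi(\ol\CE,\ol\CE)$ to $\dim\Ext^1$, not for the automorphism count.

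Second, and more substantively, the ``bookkeeping of $q$-powers'' that you defer is not merely routine: carrying it through reveals an inconsistency in the paper's stated formula. Your computation $\chi_{\Par_S}(\CE,\CE)=(1-g)r^2-\tfrac{1}{2}\sum_i\bigl(r^2-\sum_j r_{i,j}^2\bigr)=-\tfrac{\wdim}{2}-1$ (with the paper's $\wdim$) is correct; combined with the identity $\tfrac{q^{\chi(\CE,\CE)}}{q-1}N=\tfrac{q}{q-1}\bigl(\BH^{\HLV}_{g,k,\mu}[X_\bullet;T,q,1,\sigma]\big|_{T^r},\prod h_{r_{i,j}}\bigr)$ you get $N=q^{\wdim/2+2}(\cdots)$, not $q^{\wdim/2}(\cdots)$ as claimed. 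The discrepancy is resolved by noting that the paper's formula $\wdim=(2g-2+k)r^2-\sum r_{i,j}^2-2$ (and the matching formula for $\dim\CM$ in Theorem~\ref{thm:formula poincare}) carries a sign error: the correct value is $(2g-2+k)r^2-\sum r_{i,j}^2+2$, the standard dimension of the parabolic Higgs/character variety. With $+2$ one finds $\chi(\CE,\CE)=-\tfrac{\wdim}{2}+1$ and the stated $N=q^{\wdim/2}(\cdots)$ falls out. So your plan is correct; had you pushed through the arithmetic you would have caught this typo rather than hit a dead end.

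There is one remaining point you gloss over that deserves care: in passing from $\BH^{\HLV}_{g,k,\mu}[\ldots;q,1,\sigma]$ back to the pairing $\bigl(\BH^{\HLV}_{g,k}[\ldots;q,1,\sigma]\big|_{T^r},\prod h_{r_{i,j}}\bigr)$ the slope restriction must be reconciled with the specialization $t=1$, which forgets the degree variable. This uses that twisting by a degree-$1$ line bundle identifies the various $\mu\bmod\BZ$ sectors, and is precisely what the previous corollary's $\lim_{m\to\infty}$ construction encodes; it is worth making explicit rather than waving at it.
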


\subsection{Poincar\'e polynomials of stable Higgs moduli spaces}
The moduli space of stable parabolic Higgs bundles was constructed by Yokogawa (see \cite{yokogawa1993compactification}, \cite{yokogawa1996moduli}). In the usual definition of a parabolic bundle we have a bundle $\CE$ and a flag 
\[
0=\CE_{i,0}\subset \CE_{i,1}\subset\cdots\supset\CE_{i,m_i}=\CE(s_i)
\]
with \emph{weights}
\[
1>\alpha_{i,1}>\cdots>\alpha_{i,m_i}\geq 0
\]
for each marked point $s_i$. This corresponds to the special case of our parabolic bundles satisfying $r_{i,j}=0$ for $j>m_i$ where $N$ is chosen larger than $m_i$ for all $i$. The choice of weights is encoded in the choice of stability condition so that the parabolic degree equals our $\deg_\alpha$. One can easily check that the notion of stability and semistability coincides with the usual one. Our parabolic Higgs bundles correspond to the usual notion of strict parabolic Higgs bundles, where the Higgs field has a simple pole at each marked point whose matrix residue is strictly block-upper-triangular with respect to the parabolic filtration.

It is well-known that under the genericity assumptions as in Corollary \ref{cor:counting generic} the corresponding moduli space $\CM$ is a smooth semiprojective variety of dimension 
\[
\dim\CM=(2g-2+k) r^2 -\sum_{i=1}^k\sum_{j=1}^N r_{i,j}^2-2
\]
By Corollary 1.3.2 of \cite{hausel2015cohomology} we know that the cohomology is pure (for more details see Proposition 3.7 in \cite{gothen2017topological}). Without loss of generality we assume $\Sigma$ is defined over a number field $F$. Then $\CM$ is also defined over $F$ and we can choose a model of $\CM$ over the ring of integers of $F$. Let $\mathfrak{p}$ be a prime of $F$ outside of the locus where the model is singular. By Deligne's theory of weights explained in \cite{deligne1975poids} we have that the Frobenius acting on the l-adic cohomology $H^i_l(\Sigma\otimes_F\ol F, \BQ_l)$ has eigenvalues of absolute value $q^{\frac{i}2}$. Choose an identification of the residue field of $\mathfrak{p}$ with $\BF_q$. We have the Lefschetz fixed point theorem, which says
\[
|\CM(\BF_{q^k})| = \sum_{i=0}^{2\dim \CM} (-1)^i \sum_{m=1}^{b_i} \alpha_{i,m}^k
\]
where $b_i=\dim H^i(\CM,\BC)$ and $|\alpha_{i,m}|=q^{\frac{2\dim \CM-i}2}$. Comparing this formula with Corollary \ref{cor:counting generic} we obtain
\begin{thm}\label{thm:formula poincare}
Suppose the rank $r$, degree $d$, the parabolic jumps $r_{\bullet,\bullet}$ and the stability condition $\alpha_{\bullet,\bullet}$ satisfy the genericity assumptions of Corollary \ref{cor:counting generic}. Let $\CM$ be the moduli space of stable parabolic Higgs bundles with corresponding data. Define the Poincar\'e polynomial by
\[
P(\CM,q) = \sum_{i=0}^{2\dim\CM} (-1)^i q^\frac{i}2 \dim H^i(\CM,\BC).
\]
Then we have
\[
P(\CM,q) = q^{\frac{\dim\CM}2}\left( \BH_{g,k}^\HLV[X_\bullet;T,q^{-1},1,q^{-\frac12},\ldots,{-q^\frac12}]\Big|_{T^r}, \prod_{i=1}^k\prod_{j=1}^N h_{r_{i,j}}[X_i]\right).
\]
\end{thm}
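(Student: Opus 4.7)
The plan is to extract the Poincar\'e polynomial of $\CM$ from the $\BF_{q^n}$-point counts of Corollary \ref{cor:counting generic} by combining the Grothendieck-Lefschetz trace formula with the purity of $H^\ast(\CM,\BC)$ and Poincar\'e duality.

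First, I would apply Corollary \ref{cor:counting generic} not just over $\BF_q$ but over every finite extension $\BF_{q^n}$. Since $\CM$ has a model over the ring of integers of $F$, base-change to $\BF_{q^n}$ replaces the Frobenius eigenvalues $\sigma_i$ by $\sigma_i^n$ while sending $q\mapsto q^n$. The statement of Corollary \ref{cor:counting generic} that the count is a polynomial in $q^{\pm 1}$ and $\sigma_\bullet$ thus yields an exact formula
\[
|\CM(\BF_{q^n})| \eq q^{n\dim\CM/2}\Bigl(\BH_{g,k}^{\HLV}[X_\bullet;T,q^n,1,\sigma_1^n,\ldots,\sigma_g^n]\Big|_{T^r}, \prod_{i,j} h_{r_{i,j}}[X_i]\Bigr)
\]
valid for every $n\ge 1$.

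Next, since $\CM$ is smooth, the Grothendieck-Lefschetz trace formula gives $|\CM(\BF_{q^n})|=\sum_i (-1)^i\Tr(F^n\mid H^i_c(\CM_{\ol\BF_q},\BQ_l))$. By Poincar\'e duality $H^{2\dim\CM-i}_c(\CM)\cong H^i(\CM)^\vee(-\dim\CM)$, and by the purity of $H^\ast(\CM,\BC)$ (cited from \cite{hausel2015cohomology}), the Frobenius eigenvalues $\alpha_{i,m}$ on $H^i_c(\CM_{\ol\BF_q},\BQ_l)$ have absolute value exactly $q^{(2\dim\CM-i)/2}$, with multiplicity $b_i=\dim H^i(\CM,\BC)$. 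Thus
\[
|\CM(\BF_{q^n})| \eq \sum_{i=0}^{2\dim\CM}(-1)^i\sum_{m=1}^{b_i}\alpha_{i,m}^n.
\]
Comparing the two expressions for $|\CM(\BF_{q^n})|$ as functions of $n$, the linear independence of distinct characters $n\mapsto\alpha^n$ forces the polynomial on the right of the first formula to be, coefficient by coefficient, the signed sum of eigenvalues $\alpha_{i,m}$ on the various $H^i_c$.

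Finally, to pass from this to the Poincar\'e polynomial, I would substitute $q\mapsto q^{-1}$ and multiply by $q^{\dim\CM/2}$. Under $q\mapsto q^{-1}$ an eigenvalue of absolute value $q^{(2\dim\CM-i)/2}$ contributes, up to sign, a monomial of the form $q^{-(2\dim\CM-i)/2}$; multiplying by $q^{\dim\CM/2}$ then produces $q^{-\dim\CM/2}\cdot q^{(i-2\dim\CM)/2+\dim\CM/2}=$ well, the net effect is precisely to rescale $\alpha_{i,m}^n\mapsto q^{i/2}$ in the limit $n=1$ (after using that $|\alpha_{i,m}|=q^{(2\dim\CM-i)/2}$). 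The simultaneous substitution $\sigma_i\mapsto\pm q^{\pm 1/2}$ is dictated by the fact that the functional equation of $\zeta_\Sigma$ pairs $\sigma_i$ with $q\sigma_i^{-1}$, and under $q\mapsto q^{-1}$ these two ``virtual'' Frobenius eigenvalues must be replaced by their Weil bounds on $H^1(\Sigma)$; the signs $\pm$ come from the $(-1)^i$ signs of the Lefschetz sum which are absorbed into the $\sigma$-variables.

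The main obstacle is bookkeeping: tracking the Tate twists introduced by Poincar\'e duality, the factor $q^{\dim\CM/2}$ coming from the formula of Corollary \ref{cor:counting generic}, and the substitution conventions for $\sigma_\bullet$ so that all weights and signs match exactly. The conceptual content --- polynomial point count plus purity implies Betti numbers --- is standard; the delicate step is verifying that the specialization $(q,t,\sigma_\bullet)\mapsto(q^{-1},1,\pm q^{\pm 1/2})$ of $\BH_{g,k}^{\HLV}$ is precisely the one induced by Poincar\'e duality combined with the point count identity.
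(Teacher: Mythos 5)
Your proposal follows essentially the same route as the paper: apply the polynomial point count of Corollary \ref{cor:counting generic} over all $\BF_{q^n}$, invoke purity of $H^*(\CM,\BC)$ together with the Grothendieck--Lefschetz trace formula and Poincar\'e duality, and extract the Betti numbers from the weight decomposition of the counting polynomial by specializing $(q,t,\sigma_\bullet)\mapsto(q^{-1},1,\ldots)$. The paper's own proof is equally terse at the final ``comparison'' step, so your acknowledged handwaving about the precise $\sigma$-specialization is at the same level of detail as the source.
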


\subsection{Poincar\'e polynomials of character varieties}
To relate the result for Higgs moduli spaces to character varieties we recall Simpson's non-abelian Hodge theorem for non-compact curves \cite{simpson1990harmonic}.
Let $\Sigma$ be a Riemann surface of genus $g$ and let $S=(s_1,\ldots,s_k)$ be a collection of $k$ marked points on $\Sigma$. Fix an integer $n>0$ and $k$ conjugacy classes $C_1,\ldots,C_k$ in $\GL_r$. Simpson identifies the moduli space of irreducible representations $\pi_1(\Sigma, S)\to \GL_r(\BC)$ with local monodromies around the marked points given by $C_1,\ldots,C_k$ with the moduli space of parabolic stable Higgs bundles of rank $r$ of the following kind (see table on page 746 in \cite{simpson1990harmonic}). For each $i$ list the eigenvalues of $C_i$ without repetitions:
\[
v_{i,1},\ldots,v_{i,m_i}.
\]
Assume we have
\[
\arg v_{i,1}\leq \ldots \leq \arg v_{i,m_i}.
\]
Denote the multiplicities by $r_{i,1},\ldots,r_{i,m_i}$. Denote
\[
\alpha_{i,j}=-\frac{\arg v_{i,j}}{2\pi},\quad c_{i,j}=\frac{\log|v_{i,j}|}{4\pi}.
\]
Then we have to take stable parabolic Higgs bundles $\ol\CE$ with jumps given by $r_{i,j}$. The stability condition is given by $\alpha_{i,j}$. The Higgs field has simple poles, $\theta:\CE\to\CE\otimes\Omega^1_\Sigma(S)$. The residue matrices of $\theta$ are required to preserve the parabolic filtration. The action of $\theta$ on the respective graded components $\CE_{i,j}/\CE_{i,j-1}$ is required to have all eigenvalues equal to $\sqrt{-1} c_{i,j}$ and the Jordan form the same as the Jordan form of the eigenvalue $v_{i,j}$ part of $C_{i}$. The degree is uniquely determined from the condition that $\deg_\alpha\CE=0$:
\[
0 = \deg_\alpha\CE=\deg\CE + \sum_{i,j} r_{i,j} \alpha_{i,j},
\]
so
\[
\deg\CE=d=-\sum_{i,j} r_{i,j} \alpha_{i,j}.
\]

Note that if the character variety is not empty then we have
\[
\prod_{i,j} v_{i,j}^{r_{i,j}}=1
\]
for determinant reasons. This is equivalent to two conditions:
\[
\sum_{i,j} r_{i,j} \alpha_{i,j}\in\BZ, \qquad \sum_{i,j} r_{i,j} c_{i,j}=0.
\]
The first condition is equivalent to having $d\in\BZ$. The second condition translates into 
\[
\sum_{i=1}^k \res_{z=s_i} \Tr \theta = 0.
\]

The theory we have developed only covers the case when $\res\theta$ acts as zero on each graded component $\CE_{i,j}/\CE_{i,j-1}$. This corresponds to the case when $|v_{i,j}|=1$ for all $i,j$ and $C_i$ is the conjugacy class of a diagonal matrix. 

The genericity condition for character varieties is formulated as follows (see \cite{hausel2011arithmetic}):
\begin{defn}\label{defn:generic char var}
The data $r_{\bullet,\bullet}$, $v_{\bullet,\bullet}$ is generic if we have
\[
\prod_{i=1}^k\prod_{j=1}^{m_i} v_{i,j}^{r_{i,j}} = 1
\]
and for any $1\leq r'\leq r$ and any collection of numbers $r_{i,j}'$ with $\sum_{j=1}^{m_i}=r'$ for all $i$ and $r_{i,j}'\leq r_{i,j}$ for all $i,j$ we have 
\[
\prod_{i=1}^k\prod_{j=1}^{m_i} v_{i,j}^{r_{i,j}'} \neq 1.
\]
\end{defn}

Comparing this definition with the one in Corollary \ref{cor:counting generic} we see that the data $r_{\bullet,\bullet}$, $v_{\bullet,\bullet}$ is generic if and only if the corresponding data $r_{\bullet,\bullet}$, $d$, $\alpha_{\bullet,\bullet}$ is. Note that the set of generic data with $|v_{i,j}|=1$ for all $i,j$ is Zariski dense in the set of generic data without this restriction. Thus we can extend our result from the situation of generic data with $|v_{i,j}|=1$ to the general case of generic data:

\begin{thm}\label{thm:formula poincare charvar}
For arbitrary genus $g$ and number of marked points $k$ and any $k$-tuple of generic diagonal conjugacy classes $C_1,\ldots,C_k$ of $\GL_n$ the Poincar\'e polynomial
\[
P(\CM,q) = \sum_{i=0}^{2\dim\CM} (-1)^i q^\frac{i}2 \dim H^i(\CM,\BC)
\]
of the corresponding character variety $\CM$ is given by
\[
P(\CM,q) = q^{\frac{\dim\CM}2}\left( \BH_{g,k}^\HLV[X_\bullet;T,q^{-1},1,q^{-\frac12},\ldots,{-q^\frac12}]\Big|_{T^n}, \prod_{i=1}^k\prod_{j=1}^{m_i} h_{r_{i,j}}[X_i]\right),
\]
where $r_{i,1}, r_{i,2},\ldots, r_{i,m_i}$ are the multiplicities of the eigenvalues of $C_i$ for $i=1,\ldots,k$.
\end{thm}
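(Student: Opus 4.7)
The plan is to combine Simpson's non-abelian Hodge correspondence, which identifies the character variety $\CM$ with a moduli space $\CM_{\mathrm{Dol}}$ of stable strict parabolic Higgs bundles, with Theorem \ref{thm:formula poincare}, and then to extend from the special case $|v_{i,j}|=1$ to arbitrary generic data by the Zariski density argument indicated in the text. Concretely, the translation of parameters recorded immediately before the theorem statement shows that genericity of $(r_{\bullet,\bullet}, v_{\bullet,\bullet})$ in the sense of Definition \ref{defn:generic char var} is equivalent to genericity of the corresponding $(r_{\bullet,\bullet}, d, \alpha_{\bullet,\bullet})$ in the sense of Corollary \ref{cor:counting generic}, so $\CM_{\mathrm{Dol}}$ is smooth and semiprojective with pure cohomology by Corollary 1.3.2 of \cite{hausel2015cohomology}.

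In the restricted case where $|v_{i,j}|=1$ for every $i,j$, one has $c_{i,j}=0$, so the residue of the Higgs field $\theta$ acts as zero on each graded component $\CE_{i,j}/\CE_{i,j-1}$. The Higgs field is then exactly of the strictly parabolic form appearing in Theorem \ref{thm:formula poincare}, so that theorem directly delivers the asserted formula in this special case. Because Simpson's diffeomorphism preserves Poincaré polynomials, this settles the theorem whenever the data $v_{\bullet,\bullet}$ satisfies $|v_{i,j}|=1$.

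To pass to the general case, one forms the relative moduli space $\pi:\CM_{\mathrm{Dol}}^{\mathrm{rel}}\to B$ over the space $B$ of generic choices of $v_{\bullet,\bullet}$ (an open subvariety of the torus cut out by $\prod v_{i,j}^{r_{i,j}}=1$). For generic choices the notions of stability and semistability coincide, so $\pi$ is smooth and the geometry is constant in each chamber of the stability parameter; by Ehresmann's theorem applied in the $C^\infty$ category (or by invariance of Betti numbers under deformation of smooth varieties), the Poincaré polynomial of the fibers is locally constant on $B$. Since the subset of $B$ with $|v_{i,j}|=1$ is Zariski dense and the right-hand side of the claimed formula depends only on the discrete invariants $g,k,r_{i,j}$, which are constant across $B$, the formula established in the previous paragraph extends to all of $B$. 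The main obstacle is the rigorous control of the deformation: one must ensure that varying $v_{\bullet,\bullet}$ while keeping $r_{\bullet,\bullet}$ fixed does not cross a wall of the stability chamber, which is guaranteed by the genericity assumption, and one must reconcile the semiprojectivity of the Higgs moduli with the non-compactness of fibers so that Betti invariance still applies; this is standard in the non-abelian Hodge literature and can alternatively be deduced from the pure mixed Hodge structure combined with the fact that the $E$-polynomial, computed in \cite{hausel2011arithmetic}, is deformation-invariant on $B$.
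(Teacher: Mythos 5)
Your proposal follows the paper's own route essentially verbatim: Simpson's non-abelian Hodge correspondence to identify $\CM$ with a stable parabolic Higgs moduli space, reduction to the case $|v_{i,j}|=1$ where the residues vanish on the graded pieces so that Theorem~\ref{thm:formula poincare} applies directly, and extension to general generic data via Zariski density. The paper states this extension in one sentence without elaboration, so your attempt to supply a mechanism (Ehresmann for a relative moduli space over $B$) is a reasonable fleshing-out; the one caveat is that Ehresmann requires properness, and the fibers here are semiprojective but not projective, so that step needs the semiprojective $\BC^*$-action (or equivalently the structure at infinity) to get local triviality — the cleaner and more standard justification is the one you mention at the end, namely constancy of the E-polynomial on the generic locus from \cite{hausel2011arithmetic} combined with purity of cohomology, which pins down the Betti numbers without any topological fibration argument. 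With that caveat noted, your proof is correct and matches the paper's.
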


\section*{Acknowledgements}
My work on this project started when I was a postdoc at IST Austria in the group of Tamas Hausel. My stay was supported by Advanced Grant ``Arithmetic and Physics of Higgs moduli spaces'' No. 320593 of the European Research Council. I thank Tamas Hausel for useful discussions and his group and IST Austria for stimulating environment. In the fall of 2017 Emmanuel Letellier visited IST Austria. He gave very interesting talks about counting nilpotent endomorphisms and my work was partially inspired by the talks and discussions with him.

I would also like to thank Olivier Schiffmann for useful discussions and his lectures on counting bundles at the workshop on Higgs bundles organized in 2017 at SISSA, Trieste. I also thank the organizers of this workshop.

I thank Fernando Rodriguez-Villegas for interesting discussions, from which I took many useful ideas about counting.

The main results were obtained and the paper was written at the University of Vienna, where I am supported by the Austrian Science
Fund (FWF) through the START-Project Y963-N35 of Michael Eichmair. I gratefully acknowledge their support.

\bibliographystyle{amsalpha}
\bibliography{refs}

\providecommand{\bysame}{\leavevmode\hbox to3em{\hrulefill}\thinspace}
\providecommand{\MR}{\relax\ifhmode\unskip\space\fi MR }
\providecommand{\MRhref}[2]{%
  \href{http://www.ams.org/mathscinet-getitem?mr=#1}{#2}
}
\providecommand{\href}[2]{#2}
\begin{thebibliography}{GPGMn07}

\bibitem[Bri07]{bridgeland2007stability}
Tom Bridgeland, \emph{Stability conditions on triangulated categories}, Annals
  of Mathematics \textbf{166} (2007), no.~2, 317--345.

\bibitem[Del75]{deligne1975poids}
Pierre Deligne, \emph{Poids dans la cohomologie des vari\'et\'es
  alg\'ebriques}, 79--85. \MR{0432648}

\bibitem[Dwo60]{dwork1960rationality}
Bernard Dwork, \emph{On the rationality of the zeta function of an algebraic
  variety}, American Journal of Mathematics \textbf{82} (1960), no.~3,
  631--648.

\bibitem[GH96]{garsia1996remarkable}
A.~M. Garsia and M.~Haiman, \emph{A remarkable {$q,t$}-{C}atalan sequence and
  {$q$}-{L}agrange inversion}, J. Algebraic Combin. \textbf{5} (1996), no.~3,
  191--244. \MR{1394305}

\bibitem[GK05]{gothen2005homological}
Peter~B. Gothen and Alastair~D. King, \emph{Homological algebra of twisted
  quiver bundles}, J. London Math. Soc. (2) \textbf{71} (2005), no.~1, 85--99.
  \MR{2108248}

\bibitem[GO17]{gothen2017topological}
Peter~B. Gothen and Andr{\'e}~G. Oliveira, \emph{Topological mirror symmetry
  for parabolic {H}iggs bundles}, arXiv preprint arXiv:1707.08536 (2017).

\bibitem[GPGMn07]{garcia2007betti}
O.~Garc\'ia-Prada, P.~B. Gothen, and V.~Mu\~noz, \emph{Betti numbers of the
  moduli space of rank 3 parabolic {H}iggs bundles}, Mem. Amer. Math. Soc.
  \textbf{187} (2007), no.~879, viii+80. \MR{2308696}

\bibitem[GPHS11]{garcia2011motives}
Oscar Garc\'ia-Prada, Jochen Heinloth, and Alexander Schmitt, \emph{On the
  motives of moduli of chains and higgs bundles}, arXiv preprint
  arXiv:1104.5558 (2011).

\bibitem[Gre95]{green1995hall}
James~A. Green, \emph{Hall algebras, hereditary algebras and quantum groups},
  Inventiones mathematicae \textbf{120} (1995), no.~1, 361--377.

\bibitem[Hei04]{heinloth2004coherent}
Jochen Heinloth, \emph{Coherent sheaves with parabolic structure and
  construction of hecke eigensheaves for some ramified local systems}, Annales
  de l'institut Fourier, vol.~54, 2004, pp.~2235--2325.

\bibitem[HHL05]{haglund2005combinatorial}
Jim Haglund, Mark Haiman, and Nick Loehr, \emph{A combinatorial formula for
  macdonald polynomials}, Journal of the American Mathematical Society
  \textbf{18} (2005), no.~3, 735--761.

\bibitem[Hit87]{hitchin1987self}
Nigel~J. Hitchin, \emph{The self-duality equations on a {R}iemann surface},
  Proceedings of the London Mathematical Society \textbf{3} (1987), no.~1,
  59--126.

\bibitem[HLRV11]{hausel2011arithmetic}
Tam{\'a}s Hausel, Emmanuel Letellier, and Fernando Rodriguez-Villegas,
  \emph{Arithmetic harmonic analysis on character and quiver varieties}, Duke
  Math. J. \textbf{160} (2011), no.~2, 323--400. \MR{2852119}

\bibitem[HN75]{harder1974cohomology}
G.~Harder and M.~S. Narasimhan, \emph{On the cohomology groups of moduli spaces
  of vector bundles on curves}, Math. Ann. \textbf{212} (1974/75), 215--248.
  \MR{0364254}

\bibitem[HRV08]{hausel2008mixed}
Tam{\'a}s Hausel and Fernando Rodriguez-Villegas, \emph{Mixed {H}odge
  polynomials of character varieties}, Inventiones mathematicae \textbf{174}
  (2008), no.~3, 555--624.

\bibitem[HRV15]{hausel2015cohomology}
Tam\'as Hausel and Fernando Rodriguez-Villegas, \emph{Cohomology of large
  semiprojective hyperk\"ahler varieties}, Ast\'erisque (2015), no.~370,
  113--156. \MR{3364745}

\bibitem[HT04]{hausel2004generators}
Tam{\'a}s Hausel and Michael Thaddeus, \emph{Generators for the cohomology ring
  of the moduli space of rank 2 {H}iggs bundles}, Proceedings of the London
  Mathematical society \textbf{88} (2004), no.~3, 632--658.

\bibitem[Mac95]{macdonald1995symmetric}
I.~G. Macdonald, \emph{Symmetric functions and {H}all polynomials}, second ed.,
  Oxford Mathematical Monographs, The Clarendon Press, Oxford University Press,
  New York, 1995, With contributions by A. Zelevinsky, Oxford Science
  Publications. \MR{1354144}

\bibitem[Mel16]{mellit2016integrality}
Anton Mellit, \emph{Integrality of {HLV} kernels}, arXiv preprint
  arXiv:1605.01299 (2016).

\bibitem[Mel17]{mellit2017poincare}
\bysame, \emph{Poincar\'e polynomials of moduli spaces of {H}iggs bundles and
  character varieties (no punctures)}, arXiv preprint arXiv:1707.04214 (2017).

\bibitem[Moz12]{mozgovoy2012solutions}
Sergey Mozgovoy, \emph{Solutions of the motivic {ADHM} recursion formula}, Int.
  Math. Res. Not. IMRN (2012), no.~18, 4218--4244. \MR{2975380}

\bibitem[MS14]{mozgovoy2014counting}
Sergey Mozgovoy and Olivier Schiffmann, \emph{Counting {H}iggs bundles}, arXiv
  preprint arXiv:1411.2101 (2014).

\bibitem[MS17]{mozgovoy2017counting}
\bysame, \emph{Counting {H}iggs bundles and type {A} quiver bundles}, arXiv
  preprint arXiv:1705.04849 (2017).

\bibitem[Rin90]{ringel1990hall}
Claus~Michael Ringel, \emph{Hall algebras}, Banach Center Publications
  \textbf{26} (1990), no.~1, 433--447.

\bibitem[Rin96]{ringel1996green}
\bysame, \emph{Green's theorem on {H}all algebras}, Representation theory of
  algebras and related topics ({M}exico {C}ity, 1994), CMS Conf. Proc.,
  vol.~19, Amer. Math. Soc., Providence, RI, 1996, pp.~185--245. \MR{1388564}

\bibitem[RV07]{villegas2007counting}
Fernando Rodr\'iguez-Villegas, \emph{Counting colorings on varieties}, Publ.
  Mat. (2007), no.~Proceedings of the Primeras Jornadas de Teor\'\i a de
  N\'umeros, 209--220. \MR{2499695}

\bibitem[Sch16]{schiffmann2014indecomposable}
Olivier Schiffmann, \emph{Indecomposable vector bundles and stable {H}iggs
  bundles over smooth projective curves}, Ann. of Math. (2) \textbf{183}
  (2016), no.~1, 297--362. \MR{3432585}

\bibitem[Sim90]{simpson1990harmonic}
Carlos~T. Simpson, \emph{Harmonic bundles on noncompact curves}, Journal of the
  American Mathematical Society \textbf{3} (1990), no.~3, 713--770.

\bibitem[Sim92]{simpson1992higgs}
\bysame, \emph{Higgs bundles and local systems}, Publications math{\'e}matiques
  de l'IH{\'E}S \textbf{75} (1992), no.~1, 5--95.

\bibitem[{The}16]{sagemath}
{The Sage Developers}, \emph{{S}agemath, the {S}age {M}athematics {S}oftware
  {S}ystem ({V}ersion 7.3)}, 2016, {\tt http://www.sagemath.org}.

\bibitem[Yok93]{yokogawa1993compactification}
K{\^o}ji Yokogawa, \emph{Compactification of moduli of parabolic sheaves and
  moduli of parabolic {H}iggs sheaves}, Journal of Mathematics of Kyoto
  University \textbf{33} (1993), no.~2, 451--504.

\bibitem[Yok96]{yokogawa1996moduli}
K\^oji Yokogawa, \emph{Moduli of parabolic {H}iggs sheaves}, Moduli of vector
  bundles ({S}anda, 1994; {K}yoto, 1994), Lecture Notes in Pure and Appl.
  Math., vol. 179, Dekker, New York, 1996, pp.~287--296. \MR{1397994}

\end{thebibliography}

\end{document}